\documentclass[12pt]{amsart}
\usepackage{color}
\usepackage[all]{xy}
\usepackage{amssymb,amsmath}

\date{Feb.~8, 2010 (revision)}

\setlength{\textwidth}{16truecm}
\setlength{\textheight}{22.2truecm}
\calclayout
\newtheorem{dummy}{anything}[section]
\newtheorem{theorem}[dummy]{Theorem}
\newtheorem*{thma}{Theorem A}
\newtheorem*{thmb}{Theorem B}
\newtheorem*{thmc}{Theorem C}
\newtheorem{lemma}[dummy]{Lemma}
\newtheorem{proposition}[dummy]{Proposition}
\newtheorem{corollary}[dummy]{Corollary}

\theoremstyle{definition}
\newtheorem{definition}[dummy]{Definition}
  \newtheorem{example}[dummy]{Example}
  
  \newtheorem{remark}[dummy]{Remark}
   
    \newtheorem*{question}{Question}
  \newtheorem*{acknowledgement}{Acknowledgement}

  


\newcommand
{\eqncount}{\setcounter{equation}{\value{dummy}}%
\addtocounter{dummy}{1}}

\newcommand{\cD}{\mathcal D}
\newcommand{\cE}{\mathcal E}
\newcommand{\cF}{\mathcal F}
\newcommand{\cG}{\mathcal G}

\newcommand{\cO}{\Or}

\newcommand{\bC}{\mathbf C}
\newcommand{\bD}{\mathbf D}

\newcommand{\bH}{\mathbf H}
\newcommand{\bZ}{\mathbf Z}

\newcommand{\bP}{\mathbf P}

\newcommand{\bbF}{\mathbb F}

\DeclareMathOperator{\Hom}{Hom}
\DeclareMathOperator{\Aut}{Aut}
\DeclareMathOperator{\Image}{im}
 \DeclareMathOperator{\Ext}{Ext}

 \DeclareMathOperator{\Ind}{Ind}
\DeclareMathOperator{\Res}{Res}
\DeclareMathOperator{\Mor}{Mor}
\DeclareMathOperator{\Map}{Map}

\DeclareMathOperator{\coker}{coker}
\DeclareMathOperator{\free}{free}
\DeclareMathOperator{\ind}{Ind}
\DeclareMathOperator{\res}{Res}
 \DeclareMathOperator{\Dim}{Dim}
\DeclareMathOperator{\hdim}{hdim}
 \DeclareMathOperator{\Syl}{Syl}

\newcommand{\cy}[1]{C_{#1}}
\newcommand{\la}{\langle}
\newcommand{\ra}{\rangle}
\newcommand{\disjointunion}{\bigsqcup}

\newcommand{\vv}{\, | \,}
\newcommand{\bd}{\partial}
\newcommand{\id}{\mathrm{id}}

\newcommand\Fp{\bbF_p}
\def\bZp{\bZ_{(p)}}
\def\Zphat{\widehat\bZ_{p}}

\def\G{\varGamma}

\def\L{\varLambda}

\DeclareMathOperator{\Is}{Iso}

\DeclareMathOperator{\Ob}{Ob}
\newcommand\Mod{\textrm{Mod}}
\newcommand{\leftexp}[2]{{\vphantom{#2}}^{ #1}{\hskip-1pt#2}}

\DeclareMathOperator{\Or}{Or}
\DeclareMathOperator{\Sub}{Sub}
\newcommand\RG{R\G}
\newcommand\ZG{\bZ\G}

\newcommand\RMod{R\textrm{-Mod}}
\newcommand\OrG{\Or _{\cF}G}
\newcommand\OrH{\Or_{\cF}H}
\newcommand\SubG{\Sub _{\cF}G}
\newcommand\SubH{\Sub_{\cF}H}
\newcommand\un{\underline}
\newcommand\uR[1]{R[{#1}\,^{\textbf{?}\,}]}
\newcommand\vR[1]{\RG(\textbf{?},{#1})}
\newcommand\uZ[1]{\mathbf{Z}[{#1}\, ^{\textbf{?}\,}]}

\newcommand\uC[1]{\CC({#1}^{\textbf{?}};R)}
\newcommand\uCZ[1]{\CC({#1}^{\textbf{?}};\bZ)}
\newcommand\CC{\bC}
\newcommand\DD{\bD}
\newcommand\PP{\bP}

\newcommand\HH{\bH}
\newcommand\RR{\un{R}}
\newcommand\uH[1]{H_*({#1}^{\textbf{?}};R)}

\newcommand\uHom{\Hom_{\RG_\textbf{?}}}
\newcommand\uExt{\Ext_{\RG_\textbf{?}}}

\newcommand\join{\divideontimes}

\begin{document}

\title{Equivariant CW-Complexes and the Orbit Category}
\author{Ian Hambleton}
\author{Semra Pamuk}
\author{Erg\"un Yal\c c\i n}

\address{Department of Mathematics, McMaster University, Canada}

\email{hambleton@mcmaster.ca }

\address{Department of Mathematics, McMaster University, Canada}

\email{pamuks@math.mcmaster.ca }

\address{Department of Mathematics, Bilkent University,
06800 Bilkent, Ankara, Turkey}

\email{yalcine@fen.bilkent.edu.tr }

\thanks{Research partially supported by NSERC Discovery Grant A4000.
The third author is partially supported by T\" UB\. ITAK-BDP and T\"
UBA-GEB\. IP/2005-16.}

\begin{abstract} We give a general framework for studying $G$-CW
complexes via the orbit category. As an application we show that the symmetric
group $G=S_5$ admits a finite $G$-CW complex $X$ homotopy equivalent
to a sphere, with cyclic isotropy subgroups.
\end{abstract}

\maketitle

\section{Introduction}
\label{sect:intro}

A good algebraic setting for studying actions of
a group $G$ with isotropy in a given family of subgroups $\cF$ is
provided by the category of $R$-modules  over the orbit category
$\G_G=\OrG$, where $R$ is a commutative ring with unit. 
 This theory was established by
Bredon \cite{bredon2},  tom Dieck \cite{tomDieck2} and L\"uck
\cite{lueck3}, and further developed by many authors (see, for
example, Jackowski-McClure-Oliver \cite[\S
5]{jackowski-mcclure-oliver2}, Brady-Leary-Nucinkis
\cite{brady-leary-nucinkis1}, Symonds \cite{symonds3},
\cite{symonds2}, Grodal \cite{grodal1}, Grodal-Smith \cite{grodal-smith1}).
In particular, the category of $\RG_G$-modules is an
abelian category with $\Hom$ and tensor product, and has enough
projectives for standard homological algebra.

\smallskip
In this paper, we will study finite group actions on spheres with
non-trivial isotropy, generalizing the approach of  Swan
\cite{swan1} to  the spherical space form problem  through periodic
projective resolutions. A finite group is said to have \emph{rank}
$k$ if $k$ is the largest integer such that $G$ has an elementary
abelian subgroup $\cy p \times \dots \times \cy p$ of rank $k$ for
some prime $p$. A rank 1 group $G$ has periodic cohomology, and Swan
showed that this was a necessary and sufficient condition for the
existence of a finite free $G$-CW complex $X$, homotopy equivalent
to a sphere.

The work of Adem-Smith \cite{adem-smith} concerning free actions on
products of spheres led to the following open problem:

\begin{question}
If $G$ is a rank 2 finite group, does there exist a finite $G$-CW
complex $X\simeq S^n$ with rank 1 isotropy~?
\end{question}

If $G$ is a finite $p$-group of rank 2, then there exist orthogonal linear
representations $V$ so that $S(V)$ has rank 1 isotropy (see
\cite{dotzel-hamrick1}). If $G$ is not of prime power order,
representation spheres with rank 1 isotropy do not exist in general:
a necessary condition is that $G$ has a $p$-effective character for
each prime $p$ dividing $|G|$ (see \cite[Thm.~47]{jackson1}). In
\cite[Prop.~48]{jackson1} it is claimed that this condition is also
sufficient for an affirmative answer to the $G$-CW question above,
but the discussion on \cite[p.~831]{jackson1} does not provide a
construction for $X$.

Our main result concerns the first non-trivial case: the permutation group
$G=S_5$ of order 120, which has rank 2 but no linear action with rank 1
isotropy on any sphere, although it does admit $p$-effective
characters for $p=2,3,5$.

\begin{thma} The permutation group $G=S_5$ admits a finite $G$-CW
complex $X \simeq S^n$, such that $X^H \neq \emptyset$ implies that
$H$ is a rank 1 subgroup of $2$-power order.
\end{thma}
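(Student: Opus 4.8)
The plan is to run Swan's periodic-resolution strategy over the orbit category $\OrG$, taking $\cF$ to be the family of cyclic $2$-subgroups of $G=S_5$, i.e.\ the conjugacy classes of $1$, $\la(12)\ra$, $\la(12)(34)\ra$, and $\la(1234)\ra$. Since every subgroup in this $\cF$ has rank $1$ and $2$-power order, it suffices to produce a finite $G$-CW complex $X\simeq S^n$ all of whose isotropy groups lie in $\cF$. Using the framework developed in the body of the paper, this reduces to a purely algebraic problem: to build a finite chain complex $\underline C_*$ of finitely generated free $\bZ\OrG$-modules which is ``sphere-like'' — with $H_0(\underline C_*)\cong\underline{\bZ}$, homology concentrated in degrees $0$ and $n$, and evaluation at the free orbit $G/1$ giving a complex with the homology of $S^n$ — equivalently, a finite free periodic resolution of $\underline{\bZ}$ over $\bZ\OrG$.

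I would construct $\underline C_*$ one prime at a time and then glue. For $p=3$ and $p=5$ the family $\cF$ forces $X$ to be $\cy p$-free, so here I would use that $\cy p$ has periodic cohomology (equivalently, that the $p$-rank of $S_5$ is $1$) together with the hypothesized $p$-effective character to produce a $\bZ_{(p)}\OrG$-chain complex modelling, equivariantly, a $\cy p$-free homotopy sphere. For $p=2$ I would start from the Dotzel--Hamrick representation $W$ of the Sylow $2$-subgroup $D_8\le S_5$, for which $S(W\oplus\dots\oplus W)$ has cyclic (in particular rank $\le 1$) isotropy, extend it over $G$ using the $2$-effective character, and extract a $\bZ_{(2)}\OrG$-chain complex whose isotropy data already lies in $\cF$; this is what makes the orbit-category periodicity hold at the prime $2$. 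After choosing a suitable odd $n$ (large enough to absorb the cohomological periods at $3,5$ and the dimension of $W$), I would splice these local models along their rational types via a pullback square over $\bZ$, obtaining a finitely dominated chain complex $\underline C_*$ over $\bZ\OrG$.

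It then remains to remove the finiteness obstruction of $\underline C_*$, which lies in $\widetilde K_0(\bZ\OrG)$. This group decomposes along the conjugacy classes of subgroups $H\in\cF$ into contributions governed by the integral group rings of the Weyl groups $N_G(H)/H$; for $G=S_5$ these Weyl groups are $S_5$, $S_3$, $C_2\times C_2$, and $C_2$, and in each case the reduced projective class group of the integral group ring is known to vanish. Hence the obstruction is zero (after the usual doubling of the complex if necessary), $\underline C_*$ may be taken to consist of free modules, and the realization theorem turns it into a finite $G$-CW complex $X$ with cells of orbit type $G/H$, $H\in\cF$; evaluating at $G/1$ confirms $X\simeq S^n$.

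The main obstacle is the middle step: arranging the local algebraic models so that they are simultaneously periodic, have isotropy confined to $\cF$ — in particular \emph{completely} $\cy p$-free at $p=3$ and $p=5$, not merely of rank $1$ — and have compatible rational types so that they actually glue over $\bZ$ into a finite complex. I expect the real work to be a careful analysis of the subgroup lattice of $S_5$ together with the explicit $p$-effective characters, and a verification that the orbit-category finiteness obstruction genuinely splits into the vanishing pieces above.
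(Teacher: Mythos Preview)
Your outline matches the paper's---build $p$-local projective complexes over $\OrG$, glue them, kill the finiteness obstruction, and realize---but several of the concrete steps are wrong or missing. First, $\widetilde K_0(\bZ S_5)=\bZ/2$, not zero (Reiner), so the finiteness obstruction does not vanish for the reason you give; the paper clears it with the join construction (Theorem~\ref{thm:K-theory}), which works because tensor products of projectives over $\bZ G$ are stably free (Oliver--Segev). Second, at $p=2$ you cannot lift a $D_8$-complex to $G$ via the fusion-control mechanism of Theorem~C, since $D_8$ does not control $2$-fusion in $S_5$: the central involution $(13)(24)$ and the non-central $(12)(34)$ are $S_5$-conjugate but not $D_8$-conjugate. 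The paper lifts from $H=S_4$ instead (which does control $2$-fusion, as $[S_5:S_4]=5$ is odd), starting from the $S_4$-action on the octahedron. ``Extending via the $2$-effective character'' is not a construction: a virtual character does not hand you a chain complex over the orbit category, and the paper explicitly notes that Jackson's character condition does not by itself produce $X$.

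The most serious gap is the compatibility needed for gluing. The $p$-local complexes must be $R$-homology $\un{n}$-spheres for the \emph{same} dimension function $\un{n}\colon\cF\to\bZ$; they are not complexes with homology only in degrees $0$ and $n$, and the target is not a periodic resolution of $\un{\bZ}$. The $p=2$ complex coming from a representation sphere has fixed-point spheres of some dimension $k<n$ at each nontrivial $H\in\cF$, so its $\OrG$-homology is nonzero in degree $k$ at $C_2^A$, $C_2^B$, $C_4$. By contrast, a periodic resolution over $\bZ_{(3)}G$ or $\bZ_{(5)}G$, pushed into $\OrG$ by $E_1$, lives entirely at the trivial subgroup and carries no homology at those $2$-subgroups. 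Before any arithmetic-square or Postnikov gluing is possible you must \emph{insert} the missing homology $R$ in degree $k$ at each nontrivial $H\in\cF$ into the $p=3,5$ complexes; the paper does this with Proposition~\ref{prop:adding-subtracting}, after checking that the relevant $\RG_G$-modules have length-two projective resolutions. Without this step the local models have incompatible homology as $\ZG_G$-modules and there is nothing to glue along.
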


\begin{remark}  
It is an interesting problem for future work to decide if the group
$G=S_5$ can act \emph{smoothly} on  $S^n$ with rank 1 isotropy.
\end{remark}

In order to prove this result we develop further techniques over the
orbit category, which may have some independent interest. A
well-known theorem of Rim \cite{rim1} shows that a module $M$ over
the group ring $\bZ G$ is projective if and only if its restriction
$\res^G_PM$ to any $p$-Sylow subgroup is projective. Over the
orbit category we have a similar statement localized at $p$ (see
Theorem \ref{thm:rim_orbit}).

\begin{thmb} Let $G$ be a finite group and let $R=\bZp$. Then an
$\RG_G$-module $M$ has a finite projective resolution with respect to
a family of $p$-subgroups if and only if its restriction $\res^G_PM$
has a finite projective resolution over any $p$-Sylow subgroup $P\leq G$.
\end{thmb}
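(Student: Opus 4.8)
The plan is to follow the classical Rim argument, transplanted to the orbit category and localized at $p$. One direction is immediate: if $M$ has a finite projective resolution $0 \to P_n \to \cdots \to P_0 \to M \to 0$ over $\OrG$ with the $P_i$ projective relative to the family $\cF$ of $p$-subgroups, then applying the restriction functor $\res^G_P$ to a $p$-Sylow $P \le G$ yields a finite resolution of $\res^G_P M$ by $\res^G_P P_i$, and one checks that $\res^G_P$ carries $\cF$-projectives over $\OrG$ to projectives over $\Or_{\cF\cap P}P$ (it sends representable modules $R[\mathrm{map}_G(-,G/K)]$, $K\in\cF$, to sums of representables, by the double-coset / Mackey description of $\mathrm{map}_G(G/H,G/K)$ after restriction). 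So the content is the converse.

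For the converse, the key input is the induction–restriction machinery over the orbit category together with the fact that $[G:P]$ is a unit in $R=\bZp$. First I would record the existence of an induction functor $\ind_P^G$ left adjoint (and, suitably interpreted, also right adjoint up to the usual Mackey bookkeeping) to $\res^G_P$ on modules over the orbit categories, and the norm/transfer map showing that $\id_M$ factors, up to the unit $[G:P]$, through $\ind_P^G\res_P^G M$; equivalently, $M$ is a direct summand of $\ind_P^G \res_P^G M$ as an $\OrG$-module, because $[G:P]^{-1}\in R$. Granting this, suppose $\res^G_P M$ has a finite projective resolution over $\Or_{\cF\cap P}P$ of length $\le n$; then $\ind_P^G\res^G_P M$ has finite projective dimension $\le n$ over $\OrG$ (induction is exact and sends projectives to $\cF$-projectives, since it sends a representable at a $p$-subgroup of $P$ to a representable at a $p$-subgroup of $G$), and hence so does its direct summand $M$. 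This shows $M$ has \emph{finite projective dimension}; to upgrade to a genuine finite projective resolution by finitely generated projectives one runs the standard syzygy argument: build a resolution by finitely generated projectives (possible as $\OrG$ is Noetherian over $R$ since $G$ is finite), and observe that the $n$-th syzygy $\Omega^n M$ is projective because $\Ext^1_{\OrG}(\Omega^n M, -) \cong \Ext^{n+1}_{\OrG}(M,-) = 0$; finiteness of the resolution and of each term follows.

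The main obstacle I expect is the careful treatment of induction and restriction over the \emph{orbit} category as opposed to the group ring: one must check that $\res^G_P$ applied to the family $\cF$ of $p$-subgroups of $G$ lands in the family of $p$-subgroups of $P$ (clear, since subgroups of $P$ are $p$-subgroups) but also that the induced functor genuinely sends $\Or_{\cF\cap P}P$-projectives to $\cF$-projectives over $\OrG$ and is exact, and — most delicately — that the splitting "$M$ is a summand of $\ind_P^G\res^G_P M$" really holds at the level of $\OrG$-modules after inverting $[G:P]$. This requires identifying $\ind_P^G\res^G_P$ on a representable module $\uR{G/K}$ with $K\in\cF$ in terms of a disjoint union over double cosets $P\backslash G/K$ of representables, and exhibiting the averaged transfer as a splitting; this is exactly where the prime $p$ and the choice $R=\bZp$ enter, and getting the functoriality and the module (not just object) structure right is the crux. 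Everything else — exactness of $\res$ and $\ind$, the $\Ext$-vanishing syzygy step, and Noetherianity of the orbit-category module theory — is standard homological algebra over $\OrG$ as set up in the references of the introduction.
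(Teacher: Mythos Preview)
Your proposal has a genuine gap at the step you yourself flag as most delicate: the claim that $M$ is a direct summand of $\ind_P^G\res_P^G M$ over $\RG_G$ is \emph{false} in general, even with $R=\bZp$ and $\cF$ a family of $p$-subgroups. The classical Rim splitting does not survive the passage to the orbit category.

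To see this, take $M=\un{R}$. By Lemma~\ref{lem:ind-res formulas1} one has $\ind_P^G\res_P^G\un{R}\cong\uR{G/P}$, so you would be claiming that $\un{R}$ is a summand of the free module $\uR{G/P}$. But in fact $\Hom_{\RG_G}(\un{R},\uR{G/P})=0$ whenever $\cF$ contains a subgroup not contained in the $p$-core $O_p(G)$. Indeed, a natural transformation $s\colon\un{R}\to\uR{G/P}$ gives, at the object $G/1$, an element $s_1(1)\in R[G/P]$ which must be $G$-invariant (by naturality under $\Aut(G/1)\cong G$), hence a scalar multiple of $\sum_{gP}gP$; but naturality under the morphisms $G/1\to G/K$ for $K\in\cF$ forces $s_1(1)$ to lie in the image of $R[(G/P)^K]\hookrightarrow R[G/P]$, hence to be supported on $(G/P)^K$. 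Unless $K$ fixes every coset of $P$ (i.e.\ $K\leq O_p(G)$), these two constraints force $s=0$. Concretely, for $G=S_3$, $p=2$, $P=\langle(12)\rangle$, one has $(G/P)^P=\{eP\}$ and there is no nonzero map. The averaged transfer $1\mapsto[G:P]^{-1}\sum gP$ is simply not a morphism of $\RG_G$-modules: it fails to respect the structure maps $\uR{G/P}(K)\to\uR{G/P}(1)$, which are inclusions of \emph{proper} subsets of cosets.

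The paper's proof is therefore organized quite differently. It proceeds by induction on the length $l(M)$, peeling off the top stratum. For $Q\in\cF$ maximal with $M(Q)\neq0$ one has $S_Q=\Res_Q$, so evaluating a finite projective resolution of $\res_P^G M$ at $Q$ shows that $M(Q)$ has finite projective dimension, hence is projective, over $R[N_P(Q)/Q]$. A short group-theoretic lemma lets one choose $P$ so that $N_P(Q)$ is a $p$-Sylow of $N_G(Q)$; then the \emph{classical} Rim theorem (applied to the group $N_G(Q)/Q$) gives $M(Q)$ projective over $R[N_G(Q)/Q]$, so $E_Q\Res_Q M$ is projective over $\RG_G$. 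One then builds a surjection from a projective onto $M$ whose kernel has strictly smaller length, and inducts. This stratified use of classical Rim at each automorphism group is where the hypothesis that $\cF$ consists of $p$-subgroups is actually consumed; Example~\ref{example three-four} shows the theorem fails for more general families, which is another indication that no global transfer argument of the type you propose can succeed.
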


\begin{remark} For modules over the group ring $RG$, those having
finite projective resolutions are already projective. Over the orbit
category, these two properties are distinct.
\end{remark}

Another useful feature of homological algebra over group rings is
the detection of group cohomology by restriction to the $p$-Sylow
subgroups. Here is an important concept in group cohomology (see for
example \cite{symonds1}).

\begin{definition} For a given prime $p$, we say that a subgroup
$H \subseteq G$ \emph{controls $p$-fusion} provided that
\begin{enumerate}
\item $p \nmid |G/H|$, and
\item whenever $Q \subseteq H$ is a $p$-subgroup, and there exists $g
\in G$
such that $Q^g:=g^{-1}Q g \subseteq H$, then $g = ch$ where $c \in
C_G(Q)$ and $h \in H$.
\end{enumerate}
\end{definition}

One reason for the importance of this definition is the fact that
the restriction map $$H^*(G; \Fp) \to H^*(H;\Fp)$$ is an isomorphism
if and only if $H$ controls $p$-fusion in $G$ (see \cite{mislin1},
\cite{symonds1}). We have the following generalization (see Theorem
\ref{thm:mislin_orbit}) for functors of cohomological type over the
orbit category (with respect to any family $\cF$).

\begin{thmc} Let $G$ be a finite group, $R=\bZp$, and $H\leq G$ a
subgroup
which controls $p$-fusion in $G$. If $M$ is an $\RG_G$-module and
$N$ is a cohomological Mackey functor, then the restriction map
$$\res^G_H\colon \Ext^n_{\RG_G}(M,N) \to  \Ext^n_{\RG_H}(\res^G_HM,
\res^G_HN)$$
is an isomorphism for $n\geqslant 0$, provided that the centralizer
$C_G(Q)$ of any $p$-subgroup $Q \leq H$, with $Q \in \cF$, acts trivially on $M(Q)$ and
$N(Q)$.
\end{thmc}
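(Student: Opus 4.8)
The plan is to restrict everything to a Sylow $p$-subgroup, reduce to a Cartan--Eilenberg type description of the image of restriction, and then feed in control of $p$-fusion. Choose $P\in\Syl_p(G)$ with $P\le H$; this exists because condition~(i) gives $p\nmid[G:H]$, so $|H|$ and $|G|$ have the same $p$-part. Since $\cF$ is a family, $H\cap K^g$ and $P\cap K^g$ lie in $\cF$ whenever $K\in\cF$, so restriction sends a free $\RG_G$-module to a direct sum of free $\RG_H$- (resp.\ $\RG_P$-) modules and hence preserves projectives; thus $\Ext^n_{\RG_G}(M,N)$, $\Ext^n_{\RG_H}(\res^G_HM,\res^G_HN)$ and $\Ext^n_{\RG_P}(\res^G_PM,\res^G_PN)$ can all be computed from one projective resolution $\PP_*\to M$ over $\RG_G$. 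Because $N$ is a cohomological Mackey functor, its transfer maps induce transfer homomorphisms $\operatorname{tr}$ on these $\Ext$-groups obeying the double coset (Mackey) formula and the relation $\operatorname{tr}\circ\res=(\text{index})\cdot\id$. As $[G:H]$ and $[H:P]$ are units in $R=\bZp$, the maps $\res^G_H$, $\res^H_P$ and $\res^G_P=\res^H_P\circ\res^G_H$ are all split injective. Formally, then, $\res^G_H$ is an isomorphism if and only if $\Image(\res^G_P)=\Image(\res^H_P)$ inside $\Ext^n_{\RG_P}(\res^G_PM,\res^G_PN)$.

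The core step is to identify, for $K\in\{G,H\}$, the image of $\res^K_P$ with the submodule of \emph{$\cF_P(K)$-stable elements}: those classes whose restrictions to the $p$-subgroups $Q\le P$ with $Q\in\cF$ are compatible with every morphism of the fusion system $\cF_P(K)$. This is the orbit-category form of the Cartan--Eilenberg stable element theorem: one argues that $\Image(\res^K_P)$ is the image of the idempotent $\res^K_P\circ\frac1{[K:P]}\operatorname{tr}^K_P$, expands this by the double coset formula as $\frac1{[K:P]}\sum_{PgP}\operatorname{tr}^P_{P\cap P^g}\circ\kappa_g\circ\res^P_{P^{g^{-1}}\cap P}$, and runs the usual stability bookkeeping, using $\operatorname{tr}\circ\res=(\text{index})$ and $\sum_{PgP}[P:P\cap P^g]=[K:P]$. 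The delicate point, and the one I expect to be the main obstacle, is to make sense of the conjugation operators $\kappa_g$ on these orbit-category $\Ext$-groups and to see that they depend on $g$ only through the fusion system: the group $N_G(Q)/Q$ acts on $\Ext^n_{\RG_Q}(\res M,\res N)$ for each $Q\in\cF$, and the hypothesis that $C_G(Q')$ acts trivially on $M(Q')$ and $N(Q')$ for every $p$-subgroup $Q'\le H$ in $\cF$, applied to all $Q'\le Q$, forces $C_G(Q)$ to act trivially; hence the action descends to one of the fusion automorphism group, so that ``$\cF_P(K)$-stable'' is meaningful. (Some care is also needed to verify that the cohomological Mackey structure on $N$ really does produce transfers and a double coset formula over the orbit category with these mixed coefficients; the centralizer hypothesis is precisely what keeps the conjugation part of that structure tractable.)

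Granting the stable element description, the theorem follows at once. Condition~(ii) says that any $g\in G$ conjugating a $p$-subgroup $Q\le P$ into $H$ can be written $g=ch$ with $c\in C_G(Q)$ and $h\in H$, so the induced map $Q\to Q^g$ differs from an $H$-conjugation only by the centralizer element $c$, which acts trivially on the relevant $\Ext$-groups. Hence $\Hom_{\cF_P(G)}(Q,Q')=\Hom_{\cF_P(H)}(Q,Q')$ for all $Q,Q'\le P$ in $\cF$, i.e.\ $\cF_P(G)=\cF_P(H)$. Therefore the $\cF_P(G)$- and $\cF_P(H)$-stability conditions on $\Ext^n_{\RG_P}(\res^G_PM,\res^G_PN)$ coincide, so $\Image(\res^G_P)=\Image(\res^H_P)$, and by the reduction of the first paragraph $\res^G_H$ is an isomorphism for every $n\ge0$.
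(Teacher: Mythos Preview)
Your proposal is correct and follows essentially the same route as the paper. The paper organizes the argument more abstractly: it first shows (Theorem~\ref{thm:Mackey structure} and Proposition~\ref{prop:ext-properties}) that the assignment $K\mapsto\Ext^n_{\RG_K}(\res M,\res N)$ is itself a cohomological Mackey functor on all subgroups of $G$ which is moreover conjugation invariant, and then simply applies the standard Cartan--Eilenberg/Mislin argument to this new Mackey functor. Your version unpacks exactly the same ingredients---transfers satisfying the double coset formula, the stable element description of $\Image(\res^K_P)$, and the collapse of the conjugation operators under the centralizer hypothesis---without first packaging them into the statement ``$\uExt^*(M,N)$ is cohomological and conjugation invariant.'' The ``delicate point'' you flag, namely that $c\in C_G(Q)$ acts trivially on the orbit-category $\Ext$-groups, is precisely what the paper isolates as Proposition~\ref{prop:ext-properties}(ii) and proves by a chain-homotopy argument on a projective resolution of $M$; your heuristic that the hypothesis on all $Q'\le Q$ forces this is the right intuition, and the paper's proof makes it rigorous.
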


The construction of the $G$-CW complex $X$ for $G=S_5$ and the proof of Theorem A is carried
out in Section \ref{sect:construction-new}.  We first construct finite projective chain complexes
$\CC^{(p)}$ over the orbit categories $\RG_G$, with $R = \bZp$,  separately for the prime $p = 2, 3, 5$ dividing $|G|$. In each case, the isotropy family  $\cF$ consists of
the rank 1 subgroups of $2$-power order in $G$. 

The chain complexes $\CC ^{(p)}$ all have the same \emph{dimension function} (see Definition \ref{def: monotone}).   We prescribe a non-negative function
$\un{n}\colon \cF \to \bZ$, with the property that $\un{n}(K) \leq \un{n}(H)$ whenever $H$ is conjugate to a subgroup of $K$. Then, by construction, 
 each complex $\CC ^{(p)}$ has the $R$-homology of an $\un{n}$-sphere: 
for each $K\in \cF$, the
complexes $\CC^{(p)}(K)$ have homology  $H_i =R$ only in two
dimensions $i=0$ and $i= \un{n}(K)$. In other words, the complexes $\CC ^{(p)}$ are algebraic versions of tom Dieck's \emph{homotopy representations} \cite[II.10]{tomDieck2}.

In the case $p=2$, we start with the group $H = S_4$ acting by orthogonal rotations on
 the $2$-sphere. A regular $H$-equivariant triangulation of an inscribed cube or octahedron gives a finite projective chain complex over $\RG _H$. 
 Then we use Proposition \ref{prop:mislin_chain}, a chain complex version of Theorem C, to lift
it to a finite projective complex over $\RG _G$. 
For $p=3$ and
$p=5$, the $p$-rank of $S_5$ is $1$, and there exists a periodic
complex over the group ring $RG$ (see Swan \cite[Theorem B]{swan1}). 
We start with a periodic complex
over $RG$ and add chain complexes to this complex, for every
nontrivial subgroup $K \in \cF$, to obtain the required complex $\CC^{(p)}$ over $\RG_G$. 

 We use the theory of algebraic Postnikov
sections by Dold \cite{dold1} to glue the complexes together to form
a finite projective $\ZG_G$ chain complex (see Section \ref{sect:chain complexes}). We complete the chain
complex construction by varying the finiteness obstruction  to obtain
a complex of free $\ZG_G$-modules, and then we prove a realization
theorem (see Section \ref{sect:realization}) to construct the
required $G$-CW complex $X \simeq S^n$.

Throughout the paper, a \emph{family} of subgroups will always mean
a collection of subgroups which is closed under conjugation and
taking subgroups. Also, unless otherwise  stated, all  modules are
\emph{finitely generated}.

\begin{acknowledgement} The authors would like to thank the referee for many valuable criticisms and suggestions. The third author would also like to thank McMaster University for the support provided by a H.~L.~Hooker Visiting Fellowship, and the Department of Mathematics \& Statistics at McMaster for its hospitality while this paper was written. 
\end{acknowledgement}
\section{Modules over small categories }
\label{sect:definitions}

Our main source for the material in this section is L\"uck 
 \cite[\S 9, \S 17]{lueck3} (see also \cite[\S I.10, \S I.11]{tomDieck2}). We include it here for the
convenience of the reader.

Let $R$ be a commutative ring.  We denote the category of
$R$-modules by $\RMod$. For a small category $\G$ (i.e., the objects
$\Ob(\G)$ of $\G$ form a set), the category of right $\RG$-modules is defined
as the category of contravariant functors $\G \rightarrow  \RMod$,
where the objects are functors $M(-)\colon \G\rightarrow R$-$\Mod$
and morphisms are natural transformations. Similarly, we define the
category of left $\RG$-modules as the category of covariant functors
$N(-)\colon \G\rightarrow  R$-$\Mod$. We denote the category of
right $\RG$-modules by Mod-$\RG$ and the category of left
$\RG$-modules by $\RG$-Mod.

The category of covariant or contravariant functors from a small
category to an abelian category has the structure of abelian
category which is object-wise induced from the abelian category
structure on abelian groups (see \cite[Chapter 9, Prop.
3.1]{maclane1}).  Hence the category of $\RG$-modules is an abelian
category where the notions submodule, quotient module, kernel,
image, and cokernel are defined object-wise.  The direct sum of
$\RG$-modules is given by taking the usual direct sum object-wise.

\begin{example} The most important example for our applications is
the orbit category of a finite group. Let $G$ be a finite group and
let $\cF$ be a family of subgroups of $G$ which is closed under
conjugation and taking subgroups. The orbit category $\cO (G)$ is
the category whose objects are subgroups $H$ of $G$ or coset spaces
$G/H$ of $G$, and the morphisms $\Mor (G/H,G/K)$ are given by the
set of $G$-maps $f\colon G/H\to G/K$. 

The category $\G_G=\cO _{\cF }G$
is defined as the full subcategory of $\cO (G)$ where the objects
satisfy $H\in\cF $. The category of right $\RG_G$-modules is the
category of contravariant functors from $\OrG$ to $R$-modules. A
right $\RG_G$-module $M$ is often called a \emph{coefficient system}
\cite{symonds2}. We will sometimes denote $M(G/H)$ by $M(H)$ if the group
$G$ is clear from the context. When $\cF =\{e\}$, $\RG_G$-$\Mod$ is
just the category of left $RG$-modules and $\Mod$-$\RG_G$ is just the
category of right $RG$-modules. \qed
\end{example}

Now, we will introduce the tensor product and $\Hom$ functors for
modules over small categories. Let $\G$ be a small category and let
$M\in\Mod$-$\RG$ and $N\in \RG$-$\Mod$.  The tensor product over $R
\G$ is given by
$$M\otimes_{\RG} N=\bigoplus_{x \in \Ob(\G) }M(x)\otimes N(x)/\sim $$ where
$\sim$ is the equivalence relation defined by $\varphi ^*(m)\otimes
n \sim m\otimes\varphi _*(n)$ for every morphism $\varphi : x \to
y$. For $\RG$-modules $M$ and $N$, we mean by $\Hom_{\RG}(M,N)$ the
$R$-module of $\RG$- homomorphisms from $M$ to $N$. In other words,
$$\Hom_{\RG}(M,N)\subseteq \bigoplus_{x \in \Ob(\G) }\Hom_{R}(M(x),N(x))$$
is the submodule satisfying the relations $f(x)\circ \varphi^* =
\varphi^*\circ f(y)$, for every morphism $\varphi\colon x \to y$. We
sometimes consider a second tensor product, namely the tensor
product over $R$, which is defined for $\RG$-modules $M$ and $N$
which are both left modules or both right modules. The tensor
product $M\otimes_R N$ is defined by the formula
$$[M \otimes _R N] (x) =M(x)\otimes _R N(x)$$ on objects $x \in \Ob (\G)$
and on morphisms, one has $[M \otimes _R N ] (f)= M(f) \otimes _R
N(f)$.

The tensor product over $\RG$ and $\Hom _{\RG}$ are adjoint to each
other. This can be described in the following way:

\begin{proposition}
\label{prop:adjoint} Given two small categories $\G$ and $\L$, the
category of $\RG$-$R\L$-bimodules is defined as the category of
functors $\G \times \L ^{{\rm op}} \to \RMod$. For a right
$\RG$-module $M$, an $\RG$-$R\L$-bimodule $B$, and a right
$R\L$-module $N$, one has a natural transformation $$ \Hom _{R\L} (M
\otimes _{\RG} B, N ) \cong \Hom _{\RG } (M, \Hom _{R\L } (B, N
)).$$
\end{proposition}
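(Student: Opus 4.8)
The plan is to mimic the classical tensor–Hom adjunction for bimodules over rings, being careful that now everything is computed object-wise over the index categories $\G$ and $\L$. First I would unwind the definitions: an element of $\Hom_{R\L}(M\otimes_{\RG}B, N)$ is a family of $R$-linear maps indexed by $\Ob(\L)$, namely for each $y\in\Ob(\L)$ a map $(M\otimes_{\RG}B)(y)\to N(y)$, compatible with morphisms of $\L$; and $(M\otimes_{\RG}B)(y)=\bigoplus_{x\in\Ob(\G)}M(x)\otimes_R B(x,y)/\!\sim$, where $\sim$ identifies $\varphi^*(m)\otimes b$ with $m\otimes\varphi_*(b)$ for morphisms $\varphi$ of $\G$. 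On the other side, an element of $\Hom_{\RG}(M,\Hom_{R\L}(B,N))$ is a family indexed by $\Ob(\G)$, for each $x\in\Ob(\G)$ a map $M(x)\to\Hom_{R\L}(B(x,-),N)$, compatible with morphisms of $\G$.

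Next I would write down the candidate bijection explicitly. Given $\Phi\in\Hom_{R\L}(M\otimes_{\RG}B,N)$, define $\widehat{\Phi}$ by $\widehat{\Phi}(x)(m)\bigl(y\bigr)(b)=\Phi(y)\bigl(\overline{m\otimes b}\bigr)$ for $x\in\Ob(\G)$, $m\in M(x)$, $y\in\Ob(\L)$, $b\in B(x,y)$, where $\overline{m\otimes b}$ denotes the class in $(M\otimes_{\RG}B)(y)$. Conversely, given $\Psi\in\Hom_{\RG}(M,\Hom_{R\L}(B,N))$, define $\check{\Psi}(y)\bigl(\overline{m\otimes b}\bigr)=\Psi(x)(m)(y)(b)$ when $m\in M(x)$, $b\in B(x,y)$. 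I would then check, in turn: (a) $\widehat{\Phi}(x)(m)$ is a well-defined morphism of right $R\L$-modules $B(x,-)\to N$ (uses that $\Phi$ commutes with $\L$-morphisms); (b) $x\mapsto\widehat{\Phi}(x)$ is natural in $x$, i.e.\ a morphism of $\RG$-modules (uses the defining relation $\varphi^*(m)\otimes b\sim m\otimes\varphi_*(b)$ of the tensor product); (c) $\check{\Psi}(y)$ is well-defined on the quotient by $\sim$ (again the tensor relation, together with naturality of $\Psi$ in the $\G$-variable) and is natural in $y$; (d) the two constructions are mutually inverse, which is immediate from the formulas; and (e) the bijection is $R$-linear and natural in $M$ and $N$, which again is a direct check from the formulas.

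The only genuine subtlety — and the step I would flag as the main obstacle — is well-definedness on the tensor-product quotient in (c): one must verify that $\check{\Psi}(y)$ respects the equivalence $\varphi^*(m)\otimes b\sim m\otimes\varphi_*(b)$ for a morphism $\varphi\colon x\to x'$ in $\G$, which boils down to the identity $\Psi(x)\bigl(\varphi^*(m')\bigr)(y)(b)=\Psi(x')(m')(y)\bigl(\varphi_*(b)\bigr)$; this is exactly the statement that $\Psi$ is a natural transformation together with the compatibility of $\Hom_{R\L}(B,N)$'s structure maps with $\varphi$. Everything else is bookkeeping of indices. Alternatively, and more cleanly, I would note that this is a special case of the standard adjunction between extension and restriction of scalars along the bimodule $B$, viewed as a functor, so one could instead cite the general categorical result (e.g.\ \cite[\S 9]{lueck3}); but since the paper includes this section ``for the convenience of the reader,'' giving the explicit maps above and checking (a)--(e) is the appropriate level of detail.
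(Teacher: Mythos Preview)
Your argument is correct and complete; the paper itself does not spell out a proof but simply refers to \cite[9.21]{lueck3}, so your explicit construction of the mutually inverse maps and the checks (a)--(e) are precisely the details that the citation defers to. You even anticipated this at the end of your proposal.
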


\begin{proof}
See \cite[9.21.]{lueck3}
\end{proof}

We will be using this isomorphism later when we are discussing
induction and restriction.

\subsection{Free and finitely generated modules} 
For a small category $\G$, a sequence $$ M'\rightarrow  M
\rightarrow  M''$$ of $\RG$-modules is exact if and only if
$$ M'(x)\rightarrow M(x)\rightarrow M''(x)$$ is exact for
all $x\in \Ob( \G )$. Recall that a module $P$ in $\Mod$-$\RG$ is
projective if the functor $$ \Hom_{\RG}(P,-)\colon \Mod\textrm{-}\RG
\rightarrow \RMod$$ is exact. For an object $x \in \G$, we
define a right $\RG$-module $\vR{x}$ by setting $$\vR{x}(y) = R\Mor
(y,x)$$ for all $y \in \Ob (\G )$. Here, $R\Mor (y,x)$ denotes the
free abelian group on the set of morphisms $\Mor (y,x)$ from $y$ to
$x$. As a consequence of the Yoneda lemma, we have $$ \Hom_{\RG}(
\vR{x}, M) \cong M(x).$$ So, for each $x \in \Ob (\G )$, the module
$\vR{x}$ is a projective module. When working with modules over
small categories one uses the following notion of free modules.

\begin{definition} Let $\G$ be a small category. A $\Ob (\G)$-set is
defined
as a set $S$ together with a map $\beta \colon S \to \Ob (\G)$. We say a
$\RG$-module $M$ is free if it is isomorphic to a module of the form
$$\RG (S)= \bigoplus _{b \in S} \vR{\beta (b) }$$
for some $\Ob (\G)$-set $S$. A free module $\RG (S)$ is called
finitely generated if the set $S$ is finite.
\end{definition}

Note that for every $\RG$-module $M$, there is a free $\RG$-module
$\RG(S)$ and a map $f\colon \RG (S) \to M$ such that $f$ is
surjective. We can take such a free module by choosing a set of
generators $S_x$ for the $R$-module $M(x)$ for each $x\in \Ob (\G)$,
and then taking $S$ as the $\Ob (\G)$-set which has the property
$\beta ^{-1} (x)=S_x$. A free module $\RG (S)$ which maps
surjectively on $M$ is called a \emph{free cover} of $M$. A
$\RG$-module is called \emph{finitely generated} if it has a
finitely generated free cover. 

It is clear from our description
of free modules  that an $\RG$-module $M$ is projective
if and only if it is a direct summand of a free module. This shows
that the module category of a small category has enough projectives.
We will later give a more detailed description of projective
$\RG$-modules.

\begin{example}\label{ex: basic-modules}
 For the orbit category $\G=\Or(G)$, the free modules described above
have a more specific meaning. For any subgroup $K\leq G$,
 the $\RG$-module $\vR{G/K}$ is given by  
 $$\vR{G/K}(G/H) = R\Mor(G/H,G/K)=R[(G/K )^H]$$
  where $R[(G/K)^H]$ is the free abelian group
on the set of fixed points of the $H$ action on $G/K$. Because of
this, we denote the free module $\vR{G/K}$ by $\uR{G/K}$. 

If $\cF$ is a family of subgroups, and $\G_G = \OrG$, we obtain free $\RG_G$-modules
$\uR{G/K}$ by restriction whenever $K \in \cF$. 
The \emph{constant} $\RG_G$-module $\un{R}$ defined by $\un{R}
(H)=R$, for all $H\in \cF$, is just the restriction to $\RG_G$ of the module $\un{R}=\uR{G/G}$.
This shows that the constant module $\un{R}$ is projective if $G\in
\cF$.
More generally, if $K \in \cF$, a non-empty  fixed set 
$$(G/K)^H = \{ gK \vv g^{-1}Hg \subseteq K\} \neq \emptyset$$
 implies $H \in \cF$, since $\cF$ is closed under conjugation and taking subgroups. Therefore, $\uR{G/K}(H) = 0$ for $H \notin \cF$, whenever $K \in \cF$.
 
\end{example}

\subsection{Induction and Restriction} We now recall the definitions and terminology  for these terms presented in L\"uck \cite[9.15]{lueck3}.
Let $\G$ and $\L$ be two small categories. Given a covariant functor
$F\colon \L \rightarrow \G$, we define an $R\L$-$\RG$-bimodule
$$R(\mathbf{??}, F(\mathbf{?}))\colon \L\times \G ^{{\rm op}} \to
\RMod$$ on objects by $(x,y) \to R\Hom (y, F(x))$. We define the
restriction map
$$\Res_F\colon \Mod\textrm{-}\RG \rightarrow  \Mod\textrm{-}R \L $$
as the composition with $F$. The induction map $$\Ind _F\colon
\Mod\textrm{-} R \L \rightarrow  \Mod\textrm{-}\RG$$ is defined by
$$\Ind _F(M)(??)=M\otimes_{R\L}R(\mathbf{??},F(\mathbf{?})) $$
for every $R\L$-module $M$. For every right $\RG$-module $N$, the
$R\L$-module $$\Hom_{\RG}( R(\mathbf{??},F(\mathbf{?})), N)$$ is the
same as the composition $\L \xrightarrow{F} \G \xrightarrow{N}
\RMod$.  So, by Proposition \ref{prop:adjoint}, we can conclude the
following:

\begin{proposition}\label{prop: induction-restriction}
Induction and restriction are adjoint functors: for any $\RG$-module
$M$ and $R \L$-module $N$, there is a natural isomorphism
$$\Hom_{\RG} (\Ind _F M,N)=\Hom_{R\L}(M,\Res_F N).$$
The induction functor respects direct sum, finitely generated, free,
and projective but it is not exact in general. The restriction
functor is exact but does not respect finitely generated, free, or
projective in general.
\end{proposition}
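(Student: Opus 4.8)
The plan is to prove Proposition~\ref{prop: induction-restriction} by combining the adjunction of Proposition~\ref{prop:adjoint} with the explicit bimodule description of $\Ind_F$ and $\Res_F$ given just above the statement. The adjunction isomorphism $\Hom_{\RG}(\Ind_F M, N) \cong \Hom_{R\L}(M, \Res_F N)$ is almost immediate: by definition $\Ind_F M = M \otimes_{R\L} R(\mathbf{??},F(\mathbf{?}))$, so Proposition~\ref{prop:adjoint} with $B = R(\mathbf{??},F(\mathbf{?}))$ gives
$$\Hom_{\RG}(M \otimes_{R\L} B, N) \cong \Hom_{R\L}(M, \Hom_{\RG}(B,N)),$$
and the remark preceding the statement identifies $\Hom_{\RG}(B,N)$ with $\Res_F N = N \circ F$. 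Naturality in $M$ and $N$ follows from the naturality built into Proposition~\ref{prop:adjoint}.

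Next I would treat the elementary preservation properties. That $\Res_F$ is exact follows immediately from the object-wise definition of exactness for $\RG$-modules: a sequence is exact in $\Mod$-$\RG$ iff it is exact at every object $y \in \Ob(\G)$, and $(\Res_F N)(x) = N(F(x))$, so exactness of the original sequence at the objects $F(x)$ gives exactness of the restricted sequence at all objects $x$. For $\Ind_F$: it respects direct sums because $-\otimes_{R\L} B$ commutes with direct sums (the tensor product over $R\L$ is defined as a direct sum modulo relations, and both constructions commute with coproducts in the first variable); it sends the free module $\vR[\L]{x}$ to a free $\RG$-module because a Yoneda-type computation shows $\Ind_F(\vR[\L]{x}) \cong \vR{F(x)}$ (tensoring the representable with the bimodule collapses via the equivalence relation), hence it sends arbitrary free modules to free modules and finitely generated frees to finitely generated frees; finitely generated and projective then follow formally, since $\Ind_F$ is additive and preserves frees, so it preserves direct summands of frees (projectives) and quotients of finitely generated frees.

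For the negative assertions I would give the standard counterexamples rather than a general argument: $\Ind_F$ is left adjoint to an exact functor, hence right exact, but need not be left exact — one can exhibit this already for the inclusion $\{e\} \hookrightarrow \OrG$, where induction is the familiar $RG \otimes_{R} -$ type construction whose failure of exactness is classical; and $\Res_F$ fails to preserve finitely generated or free modules — again the inclusion $\{e\} \hookrightarrow \OrG$ shows this, since restricting a free $\RG_G$-module $\uR{G/K}$ to the trivial subgroup yields $R[(G/K)]$ as an $RG$-module, which is free over $RG$ only when $K = \{e\}$, and in the other direction restriction along a functor collapsing infinitely many objects destroys finite generation.

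The main obstacle, such as it is, will be the bookkeeping in the Yoneda-type identification $\Ind_F(\vR[\L]{x}) \cong \vR{F(x)}$ and the verification that the isomorphism of Proposition~\ref{prop:adjoint} is genuinely natural in both variables after substituting $B = R(\mathbf{??},F(\mathbf{?}))$ — everything else is formal nonsense or a citation to \cite[9.15]{lueck3}. I would state these verifications and refer the reader to L\"uck for the details, since they are routine manipulations with the equivalence relation defining $\otimes_{R\L}$.
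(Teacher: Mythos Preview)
Your approach matches the paper's: the adjunction is exactly Proposition~\ref{prop:adjoint} applied to the bimodule $B = R(\mathbf{??},F(\mathbf{?}))$, together with the identification $\Hom_{\RG}(B,N) \cong N\circ F = \Res_F N$ made just before the statement, and the paper simply refers to L\"uck \cite[9.15]{lueck3} for the remaining preservation assertions. Your expansion of those (exactness of $\Res_F$ objectwise, $\Ind_F(\,R\L(?,x)\,)\cong \RG(?,F(x))$ via co-Yoneda, and the formal consequences for free, finitely generated, and projective) is correct.

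One genuine error: your proposed counterexample for the failure of exactness of $\Ind_F$ does not work. If ``the inclusion $\{e\}\hookrightarrow \OrG$'' means the inclusion of the full subcategory on $G/e$, then induction is the extension functor $E_1$, and the paper notes right after defining $E_x$ that this is exact whenever $\G$ is free --- which $\OrG$ is. If you instead mean the one-object category with trivial automorphisms mapping to $G/e$, then $\Ind_F(M)(G/e)=M\otimes_R RG$, which is exact since $RG$ is $R$-free; the phrase ``$RG\otimes_R -$ \dots\ whose failure of exactness is classical'' is simply false. A correct example must make some $R\Mor_\G(y,F(-))$ non-flat as a left $R\L$-module: e.g.\ take $\L$ and $\G$ to be one-object categories with automorphism groups $G$ and $G/N$ (for a nontrivial normal $N\trianglelefteq G$) and $F$ the quotient, so that $\Ind_F(M)=M\otimes_{RG}R[G/N]\cong M_N$ is $N$-coinvariants, which is right but not left exact.
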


Now we will define functors which are special cases of the restriction
and induction functors. Let $\G$ be a small category. For $x\in
\Ob(\G)$, we define $R[x]=R\Aut(x)$ to be the group ring of the
automorphism group $\Aut(x)$ and denote the category of right
$R[x]$-modules by $\Mod$-$R[x]$. Let $\G _x$ denote the full
subcategory of $\G$ with single object $x$ and let $F\colon \G _x
\to
\G$ be the inclusion natural transformation. The restriction functor
associated to $F$ gives a functor
$$\Res _x\colon  \Mod\textrm{-}\RG \to \Mod\textrm{-}R[x]$$
which is called the {\bf restriction functor}. This functor behaves
like an evaluation map $\Res _x (M) =M(x)$. In the other direction, 
the induction functor associated to $F$ gives a functor
$$E_x\colon \Mod\textrm{-}R[x]\rightarrow
\Mod\textrm{-}\RG$$ which is called the {\bf extension functor}. For
a $R[x]$-module $M$, we  define $E_x(M)(y)=M\otimes_{R[x]}R\Mor
(y,x)$ for every $y\in \Ob(\G)$. They form an adjoint pair: for every
$R[x]$-module $M$ and an $\RG$-module $N$, we have
$$ \Hom _{\RG} (E_x M , N) \cong \Hom _{R[x]} (M, \Res_x N).$$

By general properties of restriction and induction, the functor
$\Res _x$ is exact and $E_x$ takes projectives to projectives. In
general, $E_x$ is not exact and $\Res _x$ does not take projectives
to projectives. But in some special cases, we can say more. For
example, when $\G$ is free, i.e.~$R\Mor (y,x)$ is a free
$R[x]$-module for all $y\in\G$, then it is easy to see that $E_x$ is
exact \cite[16.9]{lueck3}.

\begin{example} In the case of an orbit category $\G_G=\OrG$, we denote
the extension function for $H \in \cF $ simply by $E_H$ and the
restriction functor by $\Res _H$.  In this case, the automorphism
group $\Aut (G/H)$ for $H \in \cF$ is isomorphic to the quotient
group $N_G (H)/H$. The isomorphism $ N_G (H)/H \cong \Aut (G/H)$ is
given by the isomorphism $nH\to f_n$ where $f_n (gH)=gn^{-1}H$ for
$n \in N_G (H)$ (see \cite[Example 11.2]{tomDieck2}). This
isomorphism takes right $R[x]$-modules to right $R[N_G
(H)/H]$-modules, so given a right $\RG$-module $M$, the evaluation
at $H \in \cF$ gives a right $R[N_G (H)/H]$-module. 

It is easy to see
that the morphism set $\Mor (G/K, G/H)$ is a free $[N_G(H)/H]$-set,
so $\OrG$ is free in the above sense \cite[Example 16.2]{lueck3}.
Therefore, the functor $E_H$ is exact and preserves projectives,
whereas $\Res _H$ is exact but does not necessarily preserve
projectives. For example, the module $\uZ{G/G}$ is free over $\bZ\cO (G)$ by definition, but
$\Res_H\uZ{G/G}=\bZ$ is not projective whenever $N_G(H)/H\neq 1$.
\end{example}

\subsection{Inclusion and Splitting Functors}
We will introduce two more functors. These are also
special cases of induction and restriction, but they are defined
through a bimodule rather than just a natural transformation $F$. We
first describe these functors and then give their interpretations as
restriction and induction functors.

Let $\G$ be an EI-category. By this, we mean that $\G$ is a small
category where every endomorphism $x\to x$ is an isomorphism for all
$x\in \Ob (\G)$. This allows us to define a partial ordering on the
set $\Is(\G)$ of isomorphism classes $\bar x$ of objects $x$ in $\G$. 
For $x,y \in \Ob (\G)$, we say $\bar x\leq \bar y$ if
and only if $\Mor(x,y)\neq\emptyset$. The EI-property ensures that $\bar x\leq \bar y\leq \bar x$ implies $\bar x = \bar y$.

For each object $x\in \G$, and $M \in  \Mod\textrm{-}
R[x]$, the
{\bf inclusion functor}, $$I_x\colon \Mod\textrm{-}
R[x]\rightarrow  \Mod\textrm{-}\RG$$ is defined by
$$I_xM(y)={\begin{cases}
M\otimes_{R[x]}R\Mor(y,x) & \, \textrm{if} \quad \bar{y}=\bar{x} \\
\{0\} &\,  \textrm{if} \quad \bar{y}\neq \bar{x}.
\end{cases} }$$
In the other direction, we define the {\bf splitting functor}
$$ S_x\colon \Mod\textrm{-}\RG\rightarrow  \Mod\textrm{-}R[x]$$
by $S_x(M)=M(x)/M(x)_s$ where $M(x)_s$ is the $R$-submodule of
$M(x)$ which is generated by the images of $M(f)\colon
M(y)\rightarrow M(x)$ for all $f\colon x\rightarrow  y$ with $\bar x \leq
\bar y$ and $\bar x \neq \bar y$.

There is a $\RG$-$R[x]$-bimodule $B$ defined in such a way that
the inclusion functor $I_x$ can be described as $M\to \Hom _{R[x]}
(B, M)$ and the splitting functor $S_x$ is the same as the functor
$M \to M \otimes _{\RG } B $ (see \cite[page 171]{lueck3} for
details). So $(S_x, I_x)$ is an adjoint pair, meaning that
$$\Hom_{R[x]}(S_xM,N)\cong\Hom_{\RG}(M,I_xN)$$
for every $\RG$-module $M$ and $R[x]$-module $N$.

 From general properties of induction and restriction, we can
conclude that $I_x$ is exact and $S_x$ preserves projectives. Some
of the other properties of these functors are listed in \cite[Lemma
9.31]{lueck3}. It is interesting to note that the composition $S_x
\circ E_x$ is naturally equivalent to the identity functor. Also,
the composition $S_y \circ E_x$ is zero when $\bar{x} \neq \bar{y}$.
These are used to give a splitting for projective $\RG$-modules.

\begin{theorem}
Let $P$ be a finitely generated projective $\RG$-module. Then
$$P\cong\bigoplus_{x\in \Is(\G)} E_xS_x(P).$$
\end{theorem}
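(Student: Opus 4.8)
The plan is to prove the splitting theorem for a finitely generated projective $\RG$-module $P$ by induction on the size of the support of $P$, using the EI-structure of $\G$ to strip off one isomorphism class at a time, starting from the minimal ones.

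First I would recall the two facts already quoted: $S_x \circ E_x \simeq \id$ on $\Mod\text{-}R[x]$, and $S_y \circ E_x = 0$ whenever $\bar x \neq \bar y$. From these, $E_x S_x$ is an idempotent-like operator, and for distinct classes $\bar x \neq \bar y$ one gets $S_y(E_x S_x(P)) = S_y S_x(P) \circ(\cdots)=0$, so the summands $E_x S_x(P)$ are "supported away from each other" under $S_{(-)}$. The natural map in question is the sum of the counits of the adjunction $(S_x, I_x)$ together with the natural transformations $E_x S_x \to \id$; concretely, for each $x$ there is a canonical $\RG$-map $\varepsilon_x\colon E_x S_x(P) \to P$ coming from the fact that $E_x S_x(P)$ is built from the quotient $S_x(P)=P(x)/P(x)_s$ and the evaluation maps of $P$. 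I would assemble $\varepsilon = \bigoplus_x \varepsilon_x \colon \bigoplus_{x \in \Is(\G)} E_x S_x(P) \to P$ and show it is an isomorphism.

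Since $P$ is finitely generated projective, its support $\{\bar x : P(x) \neq 0\}$ is finite, so the direct sum is finite and $\varepsilon$ is a genuine map of $\RG$-modules; I would now check it is an isomorphism by evaluating at each object $y$ and inducting on the poset $\Is(\G)$. Pick $\bar x$ minimal in the support. Because $\G$ is free (in the sense that $R\Mor(y,x)$ is free over $R[x]$), $E_x$ is exact and preserves projectives, so $E_x S_x(P)$ is a finitely generated projective $\RG$-module; one computes $E_x S_x(P)(x) = S_x(P) \otimes_{R[x]} R\Mor(x,x) = S_x(P) \otimes_{R[x]} R[x] = S_x(P) = P(x)$ (the submodule $P(x)_s$ vanishes at a minimal $x$ because there are no $f\colon x \to y$ with $\bar x < \bar y$ meeting the support appropriately — actually $P(x)_s$ is generated by images from $M(y)$, so one must be a little careful: $P(x)_s$ is the sum of images of $P(f)\colon P(y)\to P(x)$ over $f\colon x\to y$ with $\bar x<\bar y$; this can be nonzero, but the key point is that $\varepsilon_x$ at the object $x$ is the quotient map $P(x) \twoheadrightarrow S_x(P)$ composed with the splitting given by projectivity). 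The cleanest route: use projectivity of $P$ to lift the quotient $P \to I_x S_x(P)$, or dually use that $S_x(P)$ is projective over $R[x]$ to split $P(x) \to S_x(P)$ $R[x]$-equivariantly, producing a section and hence a direct summand $E_x S_x(P) \hookrightarrow P$ with complement $P'$ supported on strictly fewer classes; then apply the inductive hypothesis to $P'$, noting $S_y(P') = S_y(P)$ for all $y \neq x$ and $S_x(P') = 0$.

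The main obstacle I expect is handling the submodule $M(x)_s$ correctly and verifying that after splitting off $E_x S_x(P)$ the complementary summand $P'$ genuinely has strictly smaller support and satisfies $S_y(P') \cong S_y(P)$ for the remaining classes; this requires the vanishing $S_y E_x = 0$ for $\bar y \neq \bar x$ and, for the case $\bar y < \bar x$ or incomparable, a short argument that evaluating $E_x S_x(P)$ at such $y$ either vanishes or is killed by $S_y$. Once the bookkeeping on supports is in place, the induction closes: at the bottom of the poset the statement is trivial ($P$ supported on a single class $\bar x$ forces $P = I_x S_x(P) = E_x S_x(P)$ since there are no higher morphisms), and each inductive step peels off one minimal class. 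I would also double-check that the isomorphism is natural in $P$, which follows because every map in sight ($\varepsilon_x$, the sections, the splitting) is built from the functors $E_x$, $S_x$ and the adjunction counits, all of which are natural.
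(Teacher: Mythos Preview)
The paper does not prove this statement itself; it simply cites L\"uck \cite[Corollary 9.40]{lueck3}. So there is no in-paper argument to compare against, but your outline contains a genuine gap in the choice of which isomorphism class to peel off.

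You propose to strip off $E_xS_x(P)$ at a \emph{minimal} class $\bar x$ in the support. This does not reduce the support. Take the rank-one free module $P=E_zR[z]$ for some object $z$ with $l(z)>0$: its support is $\{\bar y:\bar y\le\bar z\}$, and for any $\bar x<\bar z$ one has $S_x(P)=S_xE_z(R[z])=0$ by the very relation $S_yE_x=0$ for $\bar y\neq\bar x$ that you quote. Hence $E_xS_x(P)=0$, the proposed ``complement'' $P'$ equals $P$, and the induction stalls. Your own hesitation about $P(x)_s$ is exactly the symptom: at a minimal $\bar x$ there can be many morphisms $x\to y$ with $\bar y>\bar x$ and $P(y)\neq 0$, so $P(x)_s$ can be all of $P(x)$.

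The standard argument works from the \emph{maximal} classes downward. If $\bar x$ is maximal in the support of $P$, then $P(x)_s=0$, so $S_x(P)=\Res_xP$, and the counit $E_x\Res_xP\to P$ of the $(E_x,\Res_x)$ adjunction is already the canonical map you want; it is the identity at $x$. One then shows this map is a split monomorphism (here is where projectivity of $E_xS_x(P)$ and of $P$ is used) and inducts on the length $l(P)$, which genuinely drops. With ``minimal'' replaced by ``maximal'' throughout, your outline becomes essentially L\"uck's proof. One further correction: drop the closing claim about naturality. For non-maximal $\bar x$ the section $S_xP\hookrightarrow\Res_xP$ requires a choice, and a map $f\colon P\to Q$ of projectives need not respect such choices; the splitting $P\cong\bigoplus_x E_xS_x(P)$ is not functorial in $P$.
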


\begin{proof}
For proof see \cite[Corollary 9.40]{lueck3}.
\end{proof}
In the statement,  the notation $\bigoplus_{x\in \Is(\G)}$ means that the
sum is over a set of representatives  $x\in \Ob(\G)$ for $\bar x \in \Is(\G)$.

\subsection{Resolutions for $\RG$-modules}\label{subsection:resolutions}
Let $\G$ be an EI-category.  For a non-negative integer $l$ we define an
$l$-chain $c$ from $x\in \Ob(\G)$ to $y\in \Ob(\G)$ to be a sequence
$$ c: \ \bar x=\bar x_0<\bar x_1<\cdots <\bar x_l=\bar y\ .$$
Define the length $l(y)$ of
$y\in \Ob(\G)$ to be the largest integer $l$ such that there exists
an $l$-chain  from some $x \in \Ob (x)$ to $y$. The
\emph{length} $l(\G)$ of $\G$ is $\textrm{max}\{l(x)\vv x\in
\Ob(\G)\}$.  Given an $\RG$-module $M$, its length $l(M)$ is defined
by $\textrm{max}\{l(x)\vv M(x) \neq 0 \}$ if $M$ is not the zero
module and $l(\{0\})=-1$.

We call a category $\G$ \emph{finite} if $\Is(\G)$ and $\Mor(x,y)$
are finite for all $x,y\in \Ob(\G)$. Denote by $m(\G)$ the least
common multiple of all the integers $|\Aut (x)|$.

Given an $\RG$-module $M$, consider the map
$$\phi\colon \bigoplus_{x\in \Is(\G)}E_x\Res_xM \to M$$
where for each $x\in \Ob(\G)$, the map $\phi_x\colon E_x \Res _x M
\to M$ is the map adjoint to the identity homomorphism. It is easy
to see that $\phi$ is surjective. Let
$$EM:=\bigoplus_{x\in
\Is(\G)}E_x\Res_xM$$
and let $KM$ denote the kernel of $\phi\colon
EM\to M$. Note that if $x$ is an object with $l(x)=l(M)$, then $\Res
_x=S_x$ which also gives that $$\Res _x \phi\colon \Res _x E_x \Res
_x M\to \Res _x M$$ is an isomorphism. Note that this implies $l(KM)
< l(M)$ which allows one to proceed by induction and obtain the
following:

\begin{proposition}\label{prop:resolution}
If $\G$ is finite EI-category, then every nonzero $M$ has a finite
resolution of the form $$0\rightarrow  EK^tM\rightarrow
\cdots\rightarrow EKM\rightarrow  EM\rightarrow  M\rightarrow  0\
.$$ where $t=l(M)$.
\end{proposition}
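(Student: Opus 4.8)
The plan is to prove Proposition \ref{prop:resolution} by induction on the length $l(M)$ of the module $M$, using the exact sequence
$$0 \to KM \to EM \xrightarrow{\phi} M \to 0$$
already constructed in the text, together with the key observation that $l(KM) < l(M)$.

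\textbf{Base case.} If $l(M) = 0$, then $M(x) = 0$ whenever $l(x) > 0$, so $M$ is supported on the minimal isomorphism classes. For each such minimal $x$ we have $\Res_x = S_x$, and the text already notes that $\Res_x \phi$ is an isomorphism in this situation. Since $EM$ and $M$ are both supported only on objects of length $0$, and $\phi$ is an isomorphism after applying $\Res_x$ for every object $x$ with $M(x) \neq 0$ (and both sides are zero for the others), $\phi$ itself is an isomorphism. Hence $0 \to EM \to M \to 0$ is the desired resolution, with $t = 0$.

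\textbf{Inductive step.} Suppose the statement holds for all modules of length $< n$, and let $M$ have length $l(M) = n \geq 1$. First I would verify that $\phi\colon EM \to M$ is surjective: this is immediate because each component $\phi_x\colon E_x\Res_x M \to M$ is adjoint to the identity on $M(x) = \Res_x M$, and the composite $\Res_x(\phi_x)$ hits all of $M(x)$; summing over $x \in \Is(\G)$ gives surjectivity objectwise, hence surjectivity of $\RG$-modules. Therefore $KM = \ker\phi$ sits in the short exact sequence displayed above. Next, $l(KM) < n$: for an object $x$ with $l(x) = n = l(M)$ one has $\Res_x = S_x$, so $\Res_x\phi$ is an isomorphism by the text's remark, forcing $(KM)(x) = \ker(\Res_x\phi) = 0$; thus $KM$ is supported in lengths $\leq n-1$. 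By the inductive hypothesis applied to $KM$ (or $KM = 0$, in which case we are already done), there is a finite resolution
$$0 \to EK^{t'}(KM) \to \cdots \to EK(KM) \to E(KM) \to KM \to 0$$
with $t' = l(KM) \leq n-1$. Writing $K(KM) = K^2 M$, etc., and splicing this resolution of $KM$ onto the sequence $0 \to KM \to EM \to M \to 0$ — i.e. replacing $KM$ by $EK M \to K^2 M \to \cdots$ — yields
$$0 \to EK^{t}M \to \cdots \to EKM \to EM \to M \to 0$$
where $t = t' + 1 = l(KM) + 1$.

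\textbf{Finiteness and the length bound.} Each term $EK^i M$ is of the form $\bigoplus_x E_x \Res_x(K^i M)$, and since $\G$ is a finite EI-category and $K^i M$ is finitely generated (restriction and the formation of $EM$ preserve finite generation, as do kernels of maps of finitely generated modules over a Noetherian-type setup — or more simply, one checks $l$ decreases and the relevant modules are objectwise finitely generated), the resolution has finitely many terms all of which are of the claimed shape. It remains to confirm the exact value $t = l(M)$ rather than merely $t \leq l(M)$: one needs $l(K^{i}M) = l(M) - i$ for each $i$, i.e. the length drops by exactly one at each stage. This follows by showing that $K M$ has length \emph{exactly} $l(M) - 1$ whenever $l(M) \geq 1$: pick an object $y$ with $l(y) = l(M) - 1$ lying below some object $x$ with $l(x) = l(M)$; analyzing $\Res_y\phi$ shows it cannot be injective (the source $\Res_y E M$ receives contributions from $E_x$ with $\bar x > \bar y$ in addition to the summand $E_y$ on which it restricts to an isomorphism), so $(KM)(y) \neq 0$. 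Iterating gives the exact length bound and hence $t = l(M)$.

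\textbf{Main obstacle.} The routine part is the induction; the part requiring genuine care is pinning down that $l(KM)$ is \emph{exactly} $l(M) - 1$ (not just $< l(M)$) so that the resolution length is precisely $t = l(M)$ as stated, and ensuring finite generation propagates through the $K^i M$ so that each $EK^iM$ is a finitely generated projective. Both hinge on a careful bookkeeping of which isomorphism classes contribute to $\Res_y(EM)$ via the maps $S_y \circ E_x$, using that $S_y \circ E_x = 0$ for $\bar x \neq \bar y$ and $S_x \circ E_x \cong \id$, which are exactly the facts recalled just before the statement.
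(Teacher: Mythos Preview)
Your approach is correct and is exactly the one the paper outlines in the paragraphs preceding the proposition (and attributes to L\"uck): induct on $l(M)$ using the short exact sequence $0 \to KM \to EM \xrightarrow{\phi} M \to 0$ together with $l(KM) < l(M)$, then splice.

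One point is over-engineered. You spend effort arguing that $l(KM)$ equals \emph{exactly} $l(M)-1$ so that the resolution has length precisely $t=l(M)$. This is not what the proposition asserts, and your argument for it (``$\Res_y\phi$ cannot be injective because the source receives extra contributions'') is not rigorous and need not hold in a general EI-category. All that is required is that the displayed sequence with $t=l(M)$ is exact, i.e.\ that $K^{l(M)+1}M=0$. Since lengths are integers, $l(KM)<l(M)$ gives $l(KM)\le l(M)-1$, hence $l(K^iM)\le l(M)-i$ whenever $K^iM\ne 0$, so $K^{l(M)+1}M=0$; if some earlier $K^iM$ already vanishes, the tail of the sequence is just zeros and exactness still holds. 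Your ``finiteness'' paragraph is also inessential here: the proposition claims only the existence of this specific resolution, not that the terms are finitely generated or projective (that discussion comes afterwards in the paper).
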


\begin{proof} See {\cite[17.13 ]{lueck3}}. Here $K^0M = M$ and $K^sM = K(K^{s-1}M)$.
\end{proof}

From the description above it is easy to see that
$$EK^sM:=\bigoplus_{x\in \Is(\G)}E_x \res _x K^sM$$ where $\res _x
K^s M$ is isomorphic to a direct sum  of $R[x]$-modules
$$M(c):= M(x_0)\otimes_{R[x_0]}R\Mor (x_1,x_0)\otimes_{R[x_1]}\cdots
\otimes_{R[x_{s-1}]}R\Mor (x,x_{s-1})$$ over
 representatives in $\Ob(\G)$ for all the chains of the
form $c: \bar x<\bar x_{s-1}<\cdots<\bar x_0$  (see \cite[17.24]{lueck3}). Note that
if $\G$ is a finite, free EI-category, then the resolution given in
Proposition \ref{prop:resolution} will be a finite projective
resolution if $M(c)$ is projective as an $R[x]$-module for every
chain $c$. This gives the following:

\begin{proposition} Let $M$ be $\RG_G$ module where $\G_G=\OrG$ for some
finite group $G$ and $R$ is a commutative ring such that $|G|$ is
invertible in $R$. Suppose also that $M(H)$ is projective as an
$R$-module for all $H \in \cF$. Then, $M$ has a projective
resolution with length less than or equal to $l(\G)$.
\end{proposition}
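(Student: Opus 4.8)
The plan is to apply Proposition \ref{prop:resolution} to the category $\G = \G_G = \OrG$ and then verify that the hypotheses force the resulting finite resolution to be projective. First I would observe that $\G_G$ is a finite EI-category: the isomorphism classes of objects are in bijection with conjugacy classes of subgroups in $\cF$ (so finitely many), the morphism sets $\Mor(G/H, G/K)$ are finite, and every endomorphism $G/H \to G/H$ is an isomorphism since it must be induced by an element of $N_G(H)/H \cong \Aut(G/H)$. Moreover $\G_G$ is \emph{free} in the sense recalled in the excerpt: $R\Mor(G/K, G/H)$ is a free $R[N_G(H)/H]$-module because $\Mor(G/K, G/H)$ is a free $N_G(H)/H$-set (see \cite[Example 16.2]{lueck3}). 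Hence, by Proposition \ref{prop:resolution}, $M$ has a finite resolution of length $t = l(M) \leq l(\G_G)$, and from the explicit description of the terms, it suffices to check that for every chain $c \colon \bar x < \bar x_{s-1} < \cdots < \bar x_0$ the $R[x]$-module
$$M(c) = M(x_0) \otimes_{R[x_0]} R\Mor(x_1, x_0) \otimes_{R[x_1]} \cdots \otimes_{R[x_{s-1}]} R\Mor(x, x_{s-1})$$
is projective as an $R[x]$-module, where $x = x_s$ realizes the bottom of the chain.

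The key point is the invertibility of $|G|$ in $R$. Each automorphism group $\Aut(G/H) = N_G(H)/H$ has order dividing $|G|$, so $|\Aut(x_i)|$ is invertible in $R$ for every object $x_i$ appearing; consequently each group ring $R[x_i] = R[N_G(x_i)/x_i]$ is semisimple (Maschke's theorem over a commutative ring in which the group order is a unit), and in particular every $R[x_i]$-module that is projective as an $R$-module is in fact projective over $R[x_i]$. I would then argue inductively along the tensor product from the left: $M(x_0)$ is projective as an $R$-module by hypothesis, hence projective as an $R[x_0]$-module by semisimplicity; the bimodule $R\Mor(x_1, x_0)$ is free as a right $R[x_1]$-module (freeness of $\G_G$) and also free as an $R$-module, so $M(x_0) \otimes_{R[x_0]} R\Mor(x_1,x_0)$ is a direct summand (indeed a direct sum of copies, after using semisimplicity of $R[x_0]$ to split $M(x_0)$ off a free module) of a free $R[x_1]$-module, hence projective over $R[x_1]$, and being a finite tensor product of $R$-free things it is again $R$-free. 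Iterating this through the chain, $M(c)$ ends up a direct summand of a free $R[x]$-module, hence $R[x]$-projective, which is exactly what is needed. Then $EK^sM = \bigoplus_{x} E_x \res_x K^s M$ with each $\res_x K^sM$ a direct sum of such projective $M(c)$'s, and since the extension functor $E_x$ preserves projectives, each term of the resolution is a projective $\RG_G$-module.

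The main obstacle — really the only subtle point — is making the inductive step with the iterated tensor product over the various group rings clean: one must keep track simultaneously of the left $R[x_i]$-module structure (to apply semisimplicity at the next stage) and the right $R[x_{i+1}]$-module structure (to use freeness of the morphism sets), and check that tensoring a summand of a free module with a bimodule that is free on the appropriate side again yields a summand of a free module. Given that $R[x_i]$ is semisimple, every $R[x_i]$-module is a direct summand of a free one, and the tensor product of a direct summand with anything is a direct summand of the corresponding tensor product, so this goes through, but it deserves to be spelled out carefully. Everything else — finiteness and freeness of $\G_G$, the length bound $l(M) \leq l(\G_G)$, and the preservation of projectivity by $E_x$ — is quoted directly from the material recalled above. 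Finally, the length of the resolution is $t = l(M) \leq l(\G_G)$ by construction, giving the stated bound.
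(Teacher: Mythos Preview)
Your approach is exactly the one the paper sets up (and then cites to L\"uck \cite[17.31]{lueck3}): use the $E$-resolution of Proposition~\ref{prop:resolution} and verify that each $M(c)$ is $R[x]$-projective. The overall strategy is sound, but there is a slip in the inductive step. The assertion that $R\Mor(x_{i+1},x_i)$ is free as a \emph{right} $R[x_{i+1}]$-module is not what ``freeness of $\G_G$'' gives you: that hypothesis says $R\Mor(y,x)$ is free for the \emph{left} action of $R[x]=R\Aut(x)$ by post-composition, not for the right action of $R[y]=R\Aut(y)$ by pre-composition. Indeed, for the orbit category the right action is typically not free: take $x_{i+1}=G/1$ and $x_i=G/H$ with $H\neq 1$; then $\Mor(G/1,G/H)\cong G/H$ has $|G|/|H|$ elements, which cannot carry a free action of $\Aut(G/1)\cong G$.

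The fix is immediate and is already latent in what you wrote: you only need $R$-projectivity at each stage, together with the Maschke-type statement (valid since $|\Aut(x_{i+1})|$ divides $|G|$ and hence is a unit in $R$) that an $R$-projective $R[x_{i+1}]$-module is automatically $R[x_{i+1}]$-projective. Since $M(x_0)$ is $R[x_0]$-projective, the module $M(x_0)\otimes_{R[x_0]}R\Mor(x_1,x_0)$ is a direct summand of a finite direct sum of copies of $R\Mor(x_1,x_0)$, which is $R$-free because $\Mor(x_1,x_0)$ is a finite set; hence the tensor product is $R$-projective, hence $R[x_1]$-projective, and the induction goes through unchanged. (As a minor terminological point, ``semisimple'' is not quite right for $R[G]$ over a general commutative ring $R$; the precise fact you are using is that $R$-projective implies $R[G]$-projective whenever $|G|\in R^\times$, proved by the usual averaging of an $R$-linear splitting.)
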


\begin{proof} See \cite[17.31]{lueck3}.
\end{proof}

In particular, if $R=\bZp$ with $p \nmid |G|$ and if $M$ is a
$\RG$-module such that $M(H)$ is $R$-torsion free for all $H \in \cF$,
then $M$ has a finite projective resolution of length less than or
equal to $l(M)$.


\section{The proof of Theorem B}
\label{sect:Rim's thm}

The main result of this section is Theorem \ref{thm:rim_orbit}, which is an orbit category version of a well-known result of Rim \cite{rim1}. We first collect some further information about induction and restriction for subgroups.

Let $G$ be a finite group and let $H$ be a subgroup of $G$. Given a
family of subgroups $\cF$ of $G$, we  consider the orbit
categories $\G_G=\OrG$ and $\G_H =\OrH$, where the objects of $\G_H$
are orbits of $H$ with isotropy in $\cF_H=\{K \leq H\vv K \in
\cF\}$. Let $F \colon \G_H \to \G_G$ be the functor which takes
$H/K$ to $G/K$ and sends an $H$-map $f\colon H/K \to H/L$ to the
induced $G$-map
$$\ind _H ^G (f) \colon G/K = G \times_H H/K \to G \times_H H/L =G/L$$
for every $K, L\in \cF_ H$. Note that if $f$ is the map which takes
$eK$ to $hL$, then $\ind _H ^G (f) (gK)=ghL$. The restriction and
induction functors (see Proposition \ref{prop: induction-restriction}) associated to this functor gives us two adjoint
functors
$$ \res ^G _H \colon \Mod\text{-}\G _G \to \Mod\text{-}\G _H$$
and
$$\ind _H ^G \colon \Mod\text{-}\G _H \to \Mod\text{-}\G _G .$$
The restriction functor is  defined  as the composition
with $F$. So, for a $\RG _G$-module  $M$, we have $(\res ^G _H
M)(K)=M(K)$, for all $K \in \cF_H$. For the induced module we have the
following formula:

\begin{lemma}\label{lem:induction} Let $N$ be a $\RG_H$-module
and $K \leq G$. Then,
\begin{equation*}
(\ind ^G _H N) (K)\cong\bigoplus_{gH\in G/H,\ K^g\leq H} N(K^g)
\end{equation*}
where $K^g=g^{-1}Kg$.
\end{lemma}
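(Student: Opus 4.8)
The plan is to compute $(\ind_H^G N)(K)$ directly from the definition of the induction functor as a tensor product over $\RG_H$, and then simplify the resulting coend using the structure of the morphism sets in the orbit category. Recall from Proposition~\ref{prop: induction-restriction} and the surrounding discussion that, for the functor $F\colon \G_H\to\G_G$ sending $H/K$ to $G/K$, one has
$$(\ind_H^G N)(G/K) = N\otimes_{\RG_H} R(G/K, F(-)) = \bigoplus_{H/L\in\Ob(\G_H)} N(H/L)\otimes_R R\Mor_G(G/K, G/L)\ \big/\sim,$$
where the relation identifies $\varphi^*(n)\otimes\psi$ with $n\otimes(\psi\circ F(\varphi))$ for every $H$-map $\varphi\colon H/L\to H/L'$ and every $G$-map $\psi\colon G/K\to G/L$. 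So the first step is to understand $\Mor_G(G/K, G/L)$ for $K\leq G$ and $L\leq H$: a $G$-map $G/K\to G/L$ sending $eK$ to $gL$ exists precisely when $g^{-1}Kg\subseteq L$, and two such data $g, g'$ give the same map iff $gL = g'L$. Thus $\Mor_G(G/K,G/L)$ is in bijection with $\{gL\in G/L : K^g\subseteq L\}$.

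Next I would set up the counting. Fix $K\leq G$. Each generator of the tensor product is represented by a pair $(n, gL)$ with $n\in N(H/L)$, $L\in\cF_H$, and $K^g\subseteq L$. Using the relation, I want to push every such generator to a canonical form. Given $gL$ with $K^g\subseteq L$, write $g = g_0 h$ where $g_0 H$ is the coset of $g$ in $G/H$ and $h\in H$; then $K^{g_0} = (K^g)^{h^{-1}} \subseteq L^{h^{-1}}$, and conjugation by $h^{-1}$ gives an $H$-isomorphism $H/L\to H/L^{h^{-1}}$ which, via the relation, lets me replace the representative so that the "$H$-part" $h$ is absorbed. After this reduction, each generator is equivalent to one indexed by a coset $gH\in G/H$ with $K^g\leq H$ (only these contribute, since we need $K^g\subseteq L\subseteq H$ for some $L\in\cF_H$), together with an element of $N$ evaluated at the subgroup $K^g$ itself — the point being that the $H$-maps $H/K^g\to H/L$ whose image contains a point fixing all of $K^g$ let us slide $n\in N(H/L)$ back to $N(K^g)$ via $N$ applied to that map, and conversely. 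More precisely, for a fixed $gH$ with $K^g\leq H$, the subcoproduct of generators $(n,g'L)$ with $g'H = gH$ collapses exactly to $N(H/K^g) = N(K^g)$, because among the $H$-orbit maps $H/K^g\to H/L$ there is a canonical one (inclusion-type) hitting the distinguished coset, and the coend over the slice category of such maps is a left Kan extension that evaluates to $N(K^g)$. Summing over $gH\in G/H$ with $K^g\leq H$ then yields the claimed direct sum decomposition.

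To make the middle step rigorous I would phrase it as: the restriction of the bimodule $R(G/K, F(-))$ to $\G_H$, as a functor $\G_H^{\mathrm{op}}\to\RMod$, decomposes as a direct sum over $gH\in G/H$ with $K^g\leq H$ of the representable functors $L\mapsto R\Mor_H(H/K^g, H/L)$ — this is just the orbit decomposition $(G/K)\!\downarrow_H = \bigsqcup_{gH:\,K^g\leq H} H/K^g$ of the restricted $G$-set, applied fixed-point-set-wise, together with the fact (Example~\ref{ex: basic-modules}) that $R\Mor_H(H/K^g,H/L) = R[(H/K^g)^L] = \uR{H/K^g}(L)$. Granting that, $N\otimes_{\RG_H}$ turns each representable summand $\uR{H/K^g}$ into $N(K^g)$ by the Yoneda-type identity $N\otimes_{\RG_H}\uR{H/J}\cong N(H/J)$ (the tensor analogue of $\Hom_{\RG}(\vR{x},M)\cong M(x)$), and tensor commutes with direct sums, giving $(\ind_H^G N)(K)\cong\bigoplus_{gH\in G/H,\,K^g\leq H} N(K^g)$.

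The main obstacle I anticipate is the bookkeeping in the decomposition of the restricted bimodule: one must check that the $G$-set isomorphism $\Res^G_H(G/K)\cong\bigsqcup_{gH:\,K^g\leq H} H/K^g$ is compatible, after taking $L$-fixed points for each $L\in\cF_H$ and freely generating, with the $\G_H$-module structure (i.e.\ is natural in $L$ and respects the $H$-maps), and that the chosen coset representatives $g$ do not matter up to the stated isomorphism — replacing $g$ by $gh$ with $h\in H$ conjugates $K^g$ to $(K^g)^h$ and relabels the summand by an $H$-conjugation isomorphism, so the direct sum is well-defined independent of choices. Once the $G$-set orbit decomposition is in hand, everything else is a formal consequence of the Yoneda identity and additivity of the tensor product, so the essential content is really just the orbit count $\{gH\in G/H : g^{-1}Kg\leq H\}$.
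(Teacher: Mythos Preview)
Your overall architecture is correct and is essentially the paper's argument: both come down to the decomposition of the left $\RG_H$-module $H/L \mapsto R\Mor_G(G/K, G/L)$ as $\bigoplus_{gH:\,K^g\leq H} R\Mor_H(H/K^g, H/L)$, after which the co-Yoneda identity $N\otimes_{\RG_H} R\Mor_H(H/J,-)\cong N(J)$ (equivalently, the paper's use of the coend relation to rewrite $n\otimes \varphi f_g$ as $\varphi^*(n)\otimes f_g$) finishes the proof.

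However, the justification you give for that decomposition is faulty in several places. The functor $H/L\mapsto R\Mor_G(G/K,G/L)$ is \emph{covariant} in $L$, not a functor $\G_H^{\mathrm{op}}\to\RMod$; the equality $R\Mor_H(H/K^g, H/L)=R[(H/K^g)^L]$ is false (the right-hand side is $R\Mor_H(H/L,H/K^g)$, the contravariant representable $\uR{H/K^g}(L)$); and most importantly, the claimed orbit decomposition $\Res^G_H(G/K)=\bigsqcup_{gH:\,K^g\leq H} H/K^g$ is incorrect --- the actual formula is $\bigsqcup_{HgK} H/(H\cap\leftexp{g}{K})$ (cf.\ Lemma~\ref{lem:ind-res formulas2}(ii)), and for instance with $G=S_3$, $H=K=\langle(12)\rangle$ the left side has three points while your sum has one. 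The decomposition you need does \emph{not} arise from restricting the $G$-set $G/K$ to $H$; it comes instead from writing each $G$-map $f\colon G/K\to G/L$ uniquely as $f = F(\phi)\circ f_g$, where $f_g\colon G/K\to G/K^g$ sends $xK\mapsto xgK^g$ and $\phi\colon H/K^g\to H/L$ is an $H$-map --- which is exactly how the paper argues. Replace the orbit-decomposition heuristic with this direct factorization of morphisms and your proof goes through.
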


\begin{proof}
For a (right) $\RG _H$-module $N$, the induced module $\ind _H ^G N$
is defined as the direct sum
$$\bigoplus _{L\leq H} N (L) \otimes _R
R\Mor (G/K, G/L)$$ modulo the relations $n \otimes \varphi f \sim
\varphi ^* (n) \otimes f$ where $n \in N(L)$, $f \in \Mor (G/K,
G/L')$ and $\varphi =\ind _H ^G (\phi)$ for some $\phi : H/L'\to
H/L$. Every morphism $G/K \to G/L$ which satisfies the condition $L
\leq H$ can be written as a composition $\varphi f_g$ where $\varphi
\colon G/K ^g \to G/L$ is induced from an $H$-map and $f _g \colon
G/ K \to G/K^g $ is given by $xK\to xgK^g$, for some $g \in G$. 

This
shows that every element in the above sum is equivalent to an
element of the form $n \otimes f_g$ where $n \in N(K^g)$ and $f _g
\colon G/ K \to G/ K^g $ is as above with $K^g \leq H$. There is one
summand for each $gH$ satisfying $K^g \leq H$.
\end{proof}

Note that we can also express the above formula by
$$(\ind ^G _H N) (K)\cong\bigoplus_{gH\in (G/H)^K} N(K^g).$$ If
$J\leq K$, then the argument above can be extended to show that
restriction map
$$(\ind ^G _H N) (K)\to (\ind ^G _H N) (J)$$ is given by the
coordinate-wise restriction maps $N(K^g ) \to N(J^g)$. Note that if
$gH \in (G/H )^K$, then $gH \in (G/H )^J$. Similarly, the conjugation
map $$(\ind^G _H N) (K)\to (\ind ^G _H N) (\leftexp{x} K)$$ can be
described by coordinate-wise conjugation maps. From these, it is
easy to see that $\ind _H ^G \underline{R} \cong \uR{G/H}$. A
generalization of this argument gives the following:

\begin{lemma}\cite[Cor.~2.12]{symonds2}.\label{lem:ind-res formulas1}
Let $G$ be a finite group and let $H$ be a subgroup of $G$. For
every $\RG_G$-module $M$, we have $\ind_H^G\res_H^G M\cong M
\otimes_R \uR{G/H}$.
\end{lemma}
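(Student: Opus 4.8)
The statement to prove is Lemma~\ref{lem:ind-res formulas1}: for every $\RG_G$-module $M$, one has $\ind_H^G\res_H^G M \cong M \otimes_R \uR{G/H}$ as $\RG_G$-modules.

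The plan is to evaluate both sides on an arbitrary object $G/K$ with $K \in \cF$ and exhibit a natural isomorphism. On the right, by the definition of $\otimes_R$ of $\RG_G$-modules and Example~\ref{ex: basic-modules}, we have $(M \otimes_R \uR{G/H})(K) = M(K) \otimes_R R[(G/H)^K]$, which (since $R[(G/H)^K]$ is $R$-free on the set $(G/H)^K = \{gH \mid K^g \leq H\}$) is isomorphic to $\bigoplus_{gH \in (G/H)^K} M(K)$. On the left, Lemma~\ref{lem:induction} applied to $N = \res_H^G M$ gives $(\ind_H^G \res_H^G M)(K) \cong \bigoplus_{gH \in (G/H)^K} (\res_H^G M)(K^g) = \bigoplus_{gH \in (G/H)^K} M(K^g)$, where we used $(\res_H^G M)(K^g) = M(K^g)$ since $K^g \leq H$. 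So objectwise the two sides are sums indexed by the same set $(G/H)^K$, but the left-hand summands are $M(K^g)$ while the right-hand summands are all $M(K)$. The first key step is therefore to identify these: for each $gH \in (G/H)^K$ the conjugation isomorphism $c_g \colon M(K^g) \isomto M(K)$ (coming from the $G$-map $G/K \to G/K^g$, $xK \mapsto xgK^g$, which is an isomorphism of orbits) provides the identification summand by summand. Assembling these over all cosets $gH \in (G/H)^K$ gives an $R$-module isomorphism $\alpha_K \colon (\ind_H^G\res_H^G M)(K) \isomto (M\otimes_R \uR{G/H})(K)$; one should pin down the sign/normalization conventions so that the map is well defined independent of the choice of representative $g$ in $gH$ (here one uses that $K^g$ depends only on $gH$ up to the action of $H$, and that conjugation by elements of $H$ is absorbed — this is where one must be slightly careful, but it is forced by the bimodule description).

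The second step is to check naturality: for a morphism $G/K \to G/J$ in $\G_G$ one must verify that the square relating $\alpha_K$ and $\alpha_J$ commutes. It suffices to check this on the generating morphisms, namely (i) the projections $G/K \to G/J$ for $K \leq J$ and (ii) the conjugation isomorphisms $G/K \to G/\leftexp{x}{K}$. For (i), the remark following Lemma~\ref{lem:induction} already records that the structure map $(\ind_H^G N)(J) \to (\ind_H^G N)(K)$ is given coordinatewise by the restrictions $N(J^g) \to N(K^g)$ together with the inclusion $(G/H)^J \hookrightarrow (G/H)^K$; on the right side, $(M \otimes_R \uR{G/H})$ has structure map $M(J)\otimes R[(G/H)^J] \to M(K) \otimes R[(G/H)^K]$ which is the restriction $M(J) \to M(K)$ tensored with the inclusion of fixed sets. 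Compatibility with the conjugation isomorphisms $c_g$ is then a diagram chase using the cocycle-type identity $c_{g} \circ (\text{restriction}) = (\text{restriction}) \circ c_g$ for $M$. For (ii) the argument is entirely analogous using the coordinatewise conjugation description, again quoted from the discussion after Lemma~\ref{lem:induction}.

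A cleaner alternative, which I would actually prefer to present, is to avoid elementwise bookkeeping by using adjunction: for any $\RG_G$-module $P$,
\[
\Hom_{\RG_G}(\ind_H^G\res_H^G M, P) \cong \Hom_{\RG_H}(\res_H^G M, \res_H^G P)
\]
by Proposition~\ref{prop: induction-restriction}, and one checks (again using Lemma~\ref{lem:induction}/Example~\ref{ex: basic-modules} or directly from definitions) that $\res_H^G \uR{G/H} \cong \bigoplus_{HgH} \uR{H/(H \cap \leftexp{g}{H})}$ realizes $M \otimes_R \uR{G/H}$ on the left of the adjunction isomorphism as well, so Yoneda gives the isomorphism of modules. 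Either way, the main obstacle is purely one of bookkeeping: making the identification of indexing sets and conjugation isomorphisms compatible with \emph{all} the orbit-category morphisms simultaneously, i.e.\ genuinely producing a natural transformation rather than just an objectwise isomorphism. The underlying input — that restriction of $M$ along the orbit category is unchanged on objects and that the induced module's structure maps are "coordinatewise" — is already laid out in Lemma~\ref{lem:induction} and the paragraph after it, so the proof reduces to assembling those pieces and invoking \cite[Cor.~2.12]{symonds2} for the precise matching of conventions.
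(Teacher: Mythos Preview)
Your proposal is correct and follows essentially the same route as the paper. The paper does not give a standalone proof of this lemma; rather, the paragraph preceding it establishes the objectwise formula $(\ind_H^G N)(K)\cong \bigoplus_{gH\in (G/H)^K} N(K^g)$, records that the structure maps are given by coordinatewise restriction and conjugation, observes the special case $\ind_H^G \un{R}\cong \uR{G/H}$, and then simply says ``A generalization of this argument gives the following''---which is exactly the objectwise-plus-naturality verification you carry out in detail.
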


We also have the following formulas:

\begin{lemma}\addtolength{\itemsep}{0.2\baselineskip}
\label{lem:ind-res formulas2} Let $G$ be a finite group and let $H$
be a subgroup of $G$.
\begin{enumerate}
\item For every $ K \leq H$, we have $\ind _H ^G
\uR{H/K}\cong\uR{G/K}$.
\item For every $K\leq G$, we have $\res ^G _H \uR{G/K}\cong \bigoplus
_{K\backslash G/H} \uR{H/(H\cap\leftexp{g}K)}$.
\end{enumerate}
\end{lemma}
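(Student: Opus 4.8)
Both parts will follow by evaluating the modules on an arbitrary object $G/J$ and using the formula from Lemma \ref{lem:induction}, together with the explicit description of $\uR{G/K}$ from Example \ref{ex: basic-modules}, namely $\uR{G/K}(G/J) = R[(G/K)^J]$. For part (i), let $K \leq H$ and fix a subgroup $J \leq G$. Applying Lemma \ref{lem:induction} to $N = \uR{H/K}$ gives
$$(\ind_H^G \uR{H/K})(J) \cong \bigoplus_{gH \in (G/H)^J} \uR{H/K}(H/J^g) \cong \bigoplus_{gH\in (G/H)^J} R\bigl[(H/K)^{J^g}\bigr],$$
where in each summand $J^g \leq H$ by the indexing condition, so $(H/K)^{J^g}$ makes sense. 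The goal is to identify the right-hand side with $\uR{G/K}(J) = R[(G/K)^J]$. The set $(G/K)^J$ consists of cosets $xK$ with $J^x \subseteq K$; since $K \leq H$, such an $x$ satisfies $J^x \subseteq H$, so $xH \in (G/H)^J$, and the assignment $xK \mapsto (xH, \text{class of } xK \text{ in } H/K \text{ viewed via } g = x)$ should give the required bijection — concretely, the fiber over a given $gH \in (G/H)^J$ is $\{xK \in (G/K)^J \vv xH = gH\}$, which is in bijection with $(H/K)^{J^g}$ via $xK \mapsto (g^{-1}x)K$. One then checks this bijection is natural in $J$ (compatible with restriction maps $(G/J')\to(G/J)$ for $J \leq J'$ and with conjugation), using the coordinate-wise description of the structure maps of $\ind_H^G N$ recorded just before Lemma \ref{lem:ind-res formulas1}; this naturality upgrades the object-wise bijection to an isomorphism of $\RG_G$-modules. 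Alternatively, and perhaps more cleanly, part (i) follows by a Yoneda/adjunction argument: $\uR{H/K} = E_{H/K}^{\G_H}(R)$-type reasoning combined with the observation that $\ind_H^G$ sends the representable $\vR{H/K}$ over $\G_H$ to the representable $\vR{G/K}$ over $\G_G$, which can be seen directly from $\ind_H^G(M)(G/J) = M \otimes_{R\G_H} R\Mor_{\G_G}(G/J, F(-))$ and the fact that $\Mor_{\G_G}(G/J, G/K) \cong \Mor_{\G_G}(G/J, F(H/K))$ is, after Frobenius reciprocity for $G$-sets, built from $\Mor_{\G_H}(H/J^g, H/K)$.

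For part (ii), fix $K \leq G$ and compute $(\res_H^G \uR{G/K})(H/J)$ for $J \leq H$, which by definition of restriction equals $\uR{G/K}(G/J) = R[(G/K)^J]$, regarded now as an $R\Aut(H/J) = R[N_H(J)/J]$-module and, more importantly, as a contravariant functor on $\G_H$. The point is to decompose the $H$-set $(G/K)^J$ — no, more precisely to decompose $G/K$ as an $H$-set and take fixed points compatibly. The double coset formula for restricting a transitive $G$-set to $H$ gives $\Res^G_H(G/K) \cong \coprod_{KgH \in K\backslash G/H} H/(H \cap \leftexp{g}K)$ as $H$-sets, and this decomposition is natural in the orbit variable. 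Taking the free $R$-module on $J$-fixed points of both sides — and noting that the functor $H/J \mapsto R[(-)^J]$ applied to an $H$-set $S$ is exactly $\bigoplus_{H\text{-orbits } Hs \text{ of } S} \uR{H/H_s}$ evaluated at $H/J$ — yields
$$\res^G_H \uR{G/K} \cong \bigoplus_{KgH \in K\backslash G/H} \uR{H/(H\cap \leftexp{g}K)}$$
as $\RG_H$-modules, where $\leftexp{g}K = gKg^{-1}$. One should be slightly careful that $H \cap \leftexp{g}K$ need not lie in the family $\cF_H$; but since $\cF$ is closed under conjugation and subgroups and $\uR{G/K}(J) \neq 0$ only forces $J \in \cF$, the nonzero summands are automatically indexed by those $g$ with $H \cap \leftexp{g}K \in \cF_H$, and $\uR{H/L}$ is by convention the zero module (restricted to $\G_H$) when $L \notin \cF_H$, exactly as in the convention of Example \ref{ex: basic-modules}, so the formula is consistent as stated.

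The main obstacle, in both parts, is bookkeeping rather than any genuine difficulty: one must keep straight the direction of the conjugation (the paper writes $K^g = g^{-1}Kg$ but $\leftexp{g}K = gKg^{-1}$), verify that the object-wise bijections of $G$- and $H$-sets are natural with respect to \emph{all} morphisms in the orbit category (not just the identity automorphisms), and track the vanishing conventions for $\uR{H/L}$ when $L$ leaves the family. The cleanest route is to reduce everything to the corresponding identities for $G$-sets and $H$-sets — Frobenius reciprocity $G \times_H(H/K) = G/K$ for (i), and the double coset formula for (ii) — and then apply the functor $S \mapsto R[S^{(-)}]$, which by its very definition converts a disjoint union of transitive $H$-sets into the corresponding direct sum of free modules $\uR{H/\,\cdot\,}$; this sidesteps manipulating tensor products over $R\G_H$ directly. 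I expect part (ii) to be essentially immediate once the double coset decomposition is invoked, while part (i) is the one where the naturality check deserves a line or two of care.
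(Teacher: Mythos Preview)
Your proposal is correct, and for part (ii) it is essentially identical to the paper's argument: extend $\uR{-}$ to all $G$-sets by $\uR{S}(G/J) = R\Map_G(G/J,S)$, observe $\res_H^G \uR{S} \cong \uR{(\res_H^G S)}$, and plug in the double-coset decomposition of $\res_H^G(G/K)$ as an $H$-set. Your caveat about $H\cap \leftexp{g}K$ and the family is a fair observation, though the paper does not dwell on it here.

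For part (i) the paper takes a slightly different and cleaner route than either of your two suggestions. Rather than computing on objects via Lemma \ref{lem:induction} and checking naturality, or arguing with representables, the paper simply uses transitivity of induction: just before Lemma \ref{lem:ind-res formulas1} it is observed that $\ind_K^H \un{R} \cong \uR{H/K}$, so
\[
\ind_H^G \uR{H/K} \;\cong\; \ind_H^G \ind_K^H \un{R} \;\cong\; \ind_K^G \un{R} \;\cong\; \uR{G/K},
\]
where $\ind_H^G \ind_K^H = \ind_K^G$ follows from the general identity $\ind_F \ind_{F'} = \ind_{F\circ F'}$ (verified by adjointness and the obvious $\res_{F'}\res_F = \res_{F\circ F'}$). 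This avoids all object-wise bookkeeping and naturality checks. Your explicit-bijection approach would work but is more laborious; your ``Yoneda/adjunction'' alternative is morally the same as the paper's, but the paper's phrasing via composition of inductions is more direct.
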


\begin{proof}
Part (i) follows from the fact that
$\ind _H ^G \ind _K ^H = \ind _K ^G $ which is a consequence of a
more general formula $\ind _F \ind _{F'}=\ind _{F\circ F'}$. We can
prove this more general formula by using adjointness and the formula
$\res _{F'} \res _{F}=\res _{F \circ F'}$. For (ii), observe that
the definition of $\uR{G/H}$ can be extended to define a $\RG
_G$-module $\uR{S}$  for every $G$-set $S$,  by taking
\eqncount
\begin{equation}\label{basic G-set modules}
\uR{S}(G/K) = R\Map _G(G/K ,S)
\end{equation}
 for every $K \in \cF$, where
$\Map _G(G/K ,S)$ denotes the set of $G$-maps from $G/K$ to $S$. For
$G$-sets $S$ and $T$, we have an isomorphism $\uR{(S\disjointunion T)}\cong
\uR{S} \oplus \uR{T}$. By the definition of restriction map, we get
$$\Bigl (\res _H ^G \uR{S}\Bigr )(H/K)= R\Map _G (G/K, S)=
R\Map _H (H/K, \res ^G _H S).$$ It is easy to see that this induces
an $\RG_H$-module isomorphism
$$\res _H ^G \uR{S}\cong \uR{(\res^G _H S)}.$$ Since
$$ \res ^G _H (G/K)\cong \coprod _{H\backslash G /K } H/ (H \cap
\leftexp{g} K)$$ as $G$-sets, we obtain the formula given in (ii).
\end{proof}

\begin{example}\label{exmp:restrictionformula}
Let $G=S_5$ be the symmetric group on $\{1,2,3,4,5\}$
and $H=S_4$ be the subgroup of symmetries that fix $5$. Let $C_2
=\la (12) \ra$ and $C_3=\la (345) \ra$. The formula in Lemma
\ref{lem:ind-res formulas2} (ii) gives
$$ \res ^G _H \uR{G/(C_2\times C_3)} =\uR{H/C_2} \oplus \uR{H/\leftexp{g}{C_3}}$$
where $\leftexp{g}{C_3} = \la (123)\ra$.
 From this expression we obtain
$$\uR{G/(C_2\times C_3)}(G/C_2)\cong \uR{H/C_2} (H/C_2) \cong R[N_H (C_2)/C_2 ],$$
 as an $N_H (C_2)/C_2$-module,
where $N_H (C_2) = C_2\times C_2$. Note that $N_G (C_2)=C_2 \times
S_3$ and as an $N_G (C_2)/C_2$-module $\uR{G/(C_2\times
C_3)}(G/C_2)$ is isomorphic to $R[C_2 \times S_3 / C_2 \times C_3]$.  \qed
\end{example}

We can give a more general formula for $\uR{G/H} (G/K)$ as follows:

\begin{lemma}\label{lem:fixedpointformula}
Let $G$ be a finite group, and $H$ and $K$ be two subgroups of $G$.
Then, as an $R[N_G (H)/H]$-module $$\uR{G/K}(G/H) \cong \bigoplus
_{v(H, K)} R\big [N_G (H) / N_{\leftexp{g}{K}} (H ) \big ]$$ where the sum is
over the set $v(H,K)$ of representatives of $K$-conjugacy classes  of
subgroups $H^g$ such that  $H^{g} \leq K$.
\end{lemma}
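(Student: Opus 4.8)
The statement is about computing $\uR{G/K}(G/H) = R[(G/K)^H]$ as a module over $R[N_G(H)/H]$. The plan is to analyze the $H$-fixed points of $G/K$ directly as a set with $N_G(H)/H$-action, and then linearize. First I would recall that $gK \in (G/K)^H$ if and only if $Hg K = gK$, i.e. $g^{-1}Hg \subseteq K$, i.e. $H^g \le K$. So the fixed set $(G/K)^H$ is the set of cosets $gK$ with $H^g \le K$. The group $N_G(H)/H$ acts on this set: for $nH \in N_G(H)/H$, the automorphism $f_n$ of $G/K$ (as described in the excerpt, $f_n(gK) = gn^{-1}K$) permutes $(G/K)^H$, since $H^g \le K$ implies $H^{gn^{-1}} = H^{n^{-1}}{}^{\text{conj by }g}\cdots$ — more carefully, $n \in N_G(H)$ gives $H^{gn^{-1}} = (H^{n^{-1}})^{g}$... wait, I need to be careful with the order: $H^{gn^{-1}} = n g^{-1} H g n^{-1}$, and since $n$ normalizes $H$, conjugating $H^g \le K$ doesn't obviously stay in $K$. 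Let me reconsider: the action should be on the right, $gK \mapsto gnK$ or similar. The key point to nail down is: the correct action of $N_G(H)/H$ on $(G/K)^H$, and then decompose it into orbits.

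Next I would identify the orbits. Two fixed points $g_1 K$ and $g_2 K$ lie in the same $N_G(H)$-orbit precisely when $g_2 = n g_1 k$ for some $n \in N_G(H)$, $k \in K$ — but we also need both to be $H$-fixed. The subgroups $H^{g_1}$ and $H^{g_2}$ are then $K$-conjugate: $H^{g_2} = H^{n g_1 k} = (H^{g_1})^k$ (using $n \in N_G(H)$, so $H^{ng_1} = H^{g_1}$... again checking: $H^{ng_1} = g_1^{-1} n^{-1} H n g_1 = g_1^{-1} H g_1 = H^{g_1}$, good). Conversely, if $H^{g_2} = (H^{g_1})^k$ for some $k \in K$, then $H^{g_2} = H^{g_1 k}$, so $g_2(g_1 k)^{-1}$ normalizes $H$, hence $g_2 K$ and $g_1 K$ are in the same $N_G(H)$-orbit. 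This shows the orbits of $N_G(H)$ on $(G/K)^H$ are indexed exactly by the set $v(H,K)$ of $K$-conjugacy classes of subgroups $H^g \le K$. Then I would compute the stabilizer of a fixed point $gK$ with $H^g \le K$: $nH$ (with $n \in N_G(H)$) stabilizes $gK$ iff $gnK = gK$ — need to match with whatever the precise action is — iff $n \in g K g^{-1} = {}^g K$, so the stabilizer is $(N_G(H) \cap {}^g K)/H = N_{{}^g K}(H)/H$ (noting $H \le {}^gK$ since $H^g \le K$). Hence the $N_G(H)/H$-set $(G/K)^H$ decomposes as $\bigsqcup_{v(H,K)} (N_G(H)/H)/(N_{{}^gK}(H)/H)$, and linearizing gives the claimed direct sum $\bigoplus_{v(H,K)} R[N_G(H)/N_{{}^gK}(H)]$.

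The main obstacle is purely bookkeeping: getting all the conjugation conventions consistent — which side the $N_G(H)/H$-action is on, whether it is $gK \mapsto gnK$ or $gn^{-1}K$, and correspondingly whether the relevant subgroups are $H^g$, ${}^gK$, $N_{{}^gK}(H)$ or their inverses/transposes — so that the final answer matches the stated formula with ${}^gK = gKg^{-1}$ in the denominator. I would fix the convention at the start using the isomorphism $N_G(H)/H \cong \Aut(G/H)$ recalled earlier in the excerpt ($nH \mapsto f_n$, $f_n(gH) = gn^{-1}H$) and track it carefully through the stabilizer computation. A secondary point worth stating explicitly is that $H \le {}^gK$ exactly when $H^g \le K$, so that $N_{{}^gK}(H)$ genuinely contains $H$ and the quotient makes sense. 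Everything else — that a transitive $N_G(H)/H$-set with stabilizer $S/H$ linearizes to $R[(N_G(H)/H)/(S/H)] = R[N_G(H)/S]$, and that a disjoint union of $G$-sets linearizes to a direct sum — is immediate from the definitions of $\uR{-}$ recalled in Example \ref{ex: basic-modules}.
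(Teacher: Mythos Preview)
Your proposal is correct and follows exactly the approach the paper sketches: determine the $N_G(H)$-orbits on $(G/K)^H=\{gK \mid H^g\le K\}$ (showing they are indexed by $v(H,K)$) and compute the isotropy at $gK$ to be $N_{\leftexp{g}{K}}(H)/H$. The paper's own proof is only an outline referring the details to another source, and your plan fills in precisely those details; the one slip in your stabilizer line (``$gnK=gK$'' should read ``$n^{-1}gK=gK$'' under the precomposition action you set up) is caught by your own caveat and does not affect the outcome $n\in {}^gK$.
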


\begin{proof}
This formula can  easily be proved by first determining the orbits of
$N_G (H)$ action on $(G/K) ^H = \{ gK \vv H^g \leq K\}$, and then by calculating the isotropy
subgroups for each of these orbits. A similar computation can be
found in the proof of Theorem 4.1 in \cite{coskun-yalcin1}.
\end{proof}

\begin{proposition}
Both $\res ^G _H$ and $\ind _H ^G$ are exact and take projectives to
projectives.
\end{proposition}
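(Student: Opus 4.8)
The plan is to show the two halves separately, noting that they are in fact elementary consequences of what has already been established.

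\textbf{Restriction.} The restriction functor $\res^G_H$ is defined as composition with the functor $F\colon \G_H \to \G_G$, hence it is exact: a sequence of $\RG_G$-modules is exact if and only if it is object-wise exact (over all $G/K$), and $(\res^G_H M)(H/K) = M(G/K)$, so object-wise exactness over $\G_G$ restricts to object-wise exactness over $\G_H$. For the preservation of projectives, the key point is that, although Proposition \ref{prop: induction-restriction} warns that restriction need not preserve projectives in general, here the source category is an orbit category. First I would reduce to free modules: a projective $\RG_G$-module is a summand of a free one, and $\res^G_H$ respects direct sums and summands, so it suffices to check that $\res^G_H$ carries the free module $\uR{G/K}$ to a projective $\RG_H$-module. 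But this is exactly Lemma \ref{lem:ind-res formulas2}(ii), which gives
\[
\res^G_H \uR{G/K} \cong \bigoplus_{K\backslash G/H} \uR{H/(H\cap \leftexp{g}{K})},
\]
a finite direct sum of free (hence projective) $\RG_H$-modules. So $\res^G_H$ takes projectives to projectives (indeed, free to free up to summands — more precisely, free to free).

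\textbf{Induction.} That $\ind^G_H$ takes projectives to projectives is the general statement in Proposition \ref{prop: induction-restriction} (induction respects projectives, being a left adjoint composed with a tensor construction), so nothing new is needed there; alternatively, Lemma \ref{lem:ind-res formulas2}(i) shows $\ind^G_H \uR{H/K} \cong \uR{G/K}$, carrying free to free. The only real content is the exactness of $\ind^G_H$, which fails for a general induction functor. Here I would use the explicit formula of Lemma \ref{lem:induction}: for any $\RG_H$-module $N$ and any $K \leq G$,
\[
(\ind^G_H N)(G/K) \cong \bigoplus_{gH \in (G/H)^K} N(K^g),
\]
a direct sum of evaluations of $N$. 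Thus $\ind^G_H$ is, object-wise, a finite direct sum of (exact) evaluation functors $N \mapsto N(K^g)$, and the induced maps on morphisms are the coordinate-wise restriction and conjugation maps as noted after Lemma \ref{lem:induction}. Since exactness of $\RG_H$- and $\RG_G$-modules is tested object-wise, and a finite direct sum of exact functors is exact, $\ind^G_H$ is exact.

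\textbf{Main obstacle.} The genuinely non-formal step is the exactness of $\ind^G_H$ — induction over an arbitrary functor $F$ is only right exact — and the resolution of this difficulty is precisely the closed form in Lemma \ref{lem:induction} exhibiting $(\ind^G_H N)(G/K)$ as a \emph{direct sum} of values of $N$ with no quotient; once that is in hand the argument is routine. (One should double-check that the identification of the transition maps with coordinate-wise restriction/conjugation is naturally compatible with the direct-sum decomposition, so that a short exact sequence $0 \to N' \to N \to N'' \to 0$ really does induce, object-wise, the direct sum of the short exact sequences of evaluations; this was already observed in the discussion following Lemma \ref{lem:induction}.) I would therefore organize the proof as: exactness of $\res^G_H$ (immediate); preservation of projectives by $\res^G_H$ (reduce to free modules, apply Lemma \ref{lem:ind-res formulas2}(ii)); exactness of $\ind^G_H$ (apply Lemma \ref{lem:induction} and object-wise testing); preservation of projectives by $\ind^G_H$ (cite Proposition \ref{prop: induction-restriction}, or Lemma \ref{lem:ind-res formulas2}(i)).
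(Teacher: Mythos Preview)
Your proof is correct and follows essentially the same route as the paper: exactness of $\res^G_H$ and projectivity-preservation of $\ind^G_H$ from the general Proposition~\ref{prop: induction-restriction}, exactness of $\ind^G_H$ from the direct-sum formula in Lemma~\ref{lem:induction}, and projectivity-preservation of $\res^G_H$ by reducing to the free generators $\uR{G/K}$ and applying Lemma~\ref{lem:ind-res formulas2}(ii). The one point the paper spells out that you leave implicit is why each summand $\uR{H/(H\cap\leftexp{g}{K})}$ is actually a free $\RG_H$-module: one needs $H\cap\leftexp{g}{K}\in\cF_H$, which holds because $\cF$ is closed under conjugation and taking subgroups.
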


\begin{proof}
The fact that $\res ^G _H$ is exact and $\ind _H ^G$ preserves
projectives follows from the general properties of restriction and
induction functor associated to a natural transformation $F$. The
fact that $\ind^G _H$ is exact follows from the formula given in
Lemma \ref{lem:induction}. Finally, to show that $\res^G _H$ takes
projective to projectives, it is enough to show it takes free
modules to projective modules. An indecomposable free $\RG
_G$-module $M$ is of the form $\uR{G/K}$ for some $K \in \cF$. By
Lemma \ref{lem:ind-res formulas2}, $\res ^G _H (\uR{G/K} )$ will be
projective if $H \cap \leftexp{g} K$ is in $\cF$ for all $HgK \in H
\backslash G /K$. But this is always true since the family $\cF$ is
closed under conjugation and taking subgroups.
\end{proof}

A result of Rim \cite{rim1} relates projectivity over the group ring
$\bZ G$ to projectivity over the $p$-Sylow subgroups.

\begin{proposition}[Rim's Theorem] Let $G$ be a finite group, and $M$
be a finitely generated $\bZ G$-module. Then $M$ is projective over
$\bZ G$ if and only if $\res^G_P M$ is projective over $\bZ P$ for
any $p$-Sylow subgroup $P \leq G$.
\end{proposition}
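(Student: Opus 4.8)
The plan is to reduce the statement to the classical Rim criterion over group rings, which applies after the two extension-of-scalars style reductions are stripped away. One direction is immediate: if $M$ is projective over $\bZ G$, then $\res^G_P M$ is projective over $\bZ P$ because restriction along the inclusion $P \leq G$ takes direct summands of free $\bZ G$-modules to direct summands of the free $\bZ P$-module $\bZ G \cong \bigoplus_{Pg \in P\backslash G} \bZ P$. So the content is in the converse.

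For the converse, the standard argument is a transfer/averaging argument combined with the fact that projectivity is detected by the Ext functor. First I would observe that $M$ is projective over $\bZ G$ if and only if $\Ext^1_{\bZ G}(M,N) = 0$ for all finitely generated $\bZ G$-modules $N$, and more efficiently, if and only if every short exact sequence $0 \to N \to E \to M \to 0$ of $\bZ G$-modules splits. So take such an extension. Over each $p$-Sylow subgroup $P$, the restriction $0 \to \res^G_P N \to \res^G_P E \to \res^G_P M \to 0$ splits because $\res^G_P M$ is $\bZ P$-projective; let $s_P \colon \res^G_P M \to \res^G_P E$ be a $\bZ P$-linear splitting. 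The classical device is to average these over the cosets: for each prime $p \mid |G|$, with $P = P_p$ a chosen $p$-Sylow, form $\sigma_p = \tfrac{1}{[G:P]}\sum_{gP \in G/P} g \cdot s_P(g^{-1}\cdot -)$, which is a $\bZ[\tfrac{1}{[G:P]}]G$-linear splitting — so a $\bZ_{(p)}G$-linear splitting after localizing at $p$. Then one combines the splittings across the primes dividing $|G|$: since the integers $\{[G:P_p]\}_p$ are coprime, a $\bZ$-linear combination $\sum_p a_p [G:P_p] = 1$ lets one patch the $\sigma_p$ into a genuine $\bZ G$-linear splitting of the original sequence (one checks that $\sum_p a_p [G:P_p]\,\sigma_p$ is $\bZ G$-linear and still splits the projection). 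Hence the sequence splits over $\bZ G$, and $M$ is projective.

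The main obstacle — and the reason this is not entirely formal — is verifying that the averaged maps $\sigma_p$ are genuinely $G$-equivariant (not just $P$-equivariant) and that the coprime-index patching produces a map that is simultaneously $\bZ G$-linear and a one-sided inverse to the projection $E \to M$; this is a careful but routine bookkeeping computation with the cocycle/transfer formulas, using that the number of cosets cancels the averaging denominator exactly. One should also note the role of finite generation: it guarantees $M$ is finitely presented so that the Ext-vanishing and splitting formulations of projectivity agree, and it is needed for the localization step $\bZ_{(p)} \otimes_{\bZ} - $ to commute with $\Hom$. Since in the present paper this proposition is quoted as Rim's original result, the actual write-up will likely just cite \cite{rim1}; the sketch above is the argument behind that citation and is the template the authors then adapt to the orbit-category setting in Theorem \ref{thm:rim_orbit}.
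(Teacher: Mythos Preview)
Your argument is correct but takes a different route than the paper's. The paper does not merely cite \cite{rim1}; it gives a short proof in two steps. First it reduces to $\bZp$ coefficients by observing, via the $\Ext^1$ criterion and flatness of $\bZp$, that $M$ is $\bZ G$-projective iff $\bZp\otimes_\bZ M$ is $\bZp G$-projective for every $p\mid |G|$. Then, over $R=\bZp$, it uses that the permutation module $R[G/P]$ splits off a trivial summand $R$ (since $[G:P]$ is a unit in $R$), so $M$ is a direct summand of $M\otimes_R R[G/P]\cong \ind_P^G\res_P^G M$, which is projective because $\res_P^G M$ is. Your approach instead fixes an extension, averages a $\bZ P$-splitting to obtain a $\bZ[\tfrac{1}{[G:P]}]G$-linear section $\sigma_p$, and then patches the integral maps $[G:P_p]\sigma_p$ across primes via a B\'ezout identity $\sum_p a_p[G:P_p]=1$ to produce a $\bZ G$-linear section in one stroke. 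Both are standard and valid; the paper's version is more module-theoretic and isolates precisely the identity $\ind_H^G\res_H^G M\cong M\otimes_R \uR{G/H}$ (Lemma~\ref{lem:ind-res formulas1}) that recurs throughout the orbit-category development, whereas your transfer-and-B\'ezout argument is more elementary and avoids localizing the module category, but does not point as directly toward the structural framework used later.
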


\begin{proof}
A module $M$ is $\bZ G$-projective if and only if $\Ext^1_{\bZ
G}(M,N)=0$ for every $\bZ G$-module $N$. Therefore $M$ is projective
if and only if $\bZp \otimes _{\bZ} M$ is projective over $\bZp G$
for all primes $p$ dividing the order of $G$.

For any $p$-Sylow subgroup $P \leq G$,  the permutation module
$R[G/P] \cong R \oplus N$ splits when $R = \bZp$. Therefore, if $M$ is any
$RG$-module, $M\otimes_R R[G/P] \cong M \oplus (M\otimes_R N)$.
Since $M\otimes_R R[G/P] \cong \ind_P^G\res^G_P M$, the projectivity
of $M$ is equivalent to the projectivity of $\res^G_P M$.
\end{proof}

Here is an orbit category version of this result. 

\begin{theorem}[Rim's Theorem for the Orbit Category]
\label{thm:rim_orbit} Let $G$ be a finite group and let $M$ be a $R
\G_G$-module where $R=\bZp$. Suppose that $\cF $ is a family of
$p$-subgroups in $G$. Then $M$ has a finite projective resolution
 if and only if $\res^G_PM$ has a finite projective resolution for any
$p$-Sylow subgroup $P$ of $G$.
\end{theorem}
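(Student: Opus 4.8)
The plan is to reduce the orbit-category statement to the classical Rim theorem over group rings, applied one isomorphism class of objects at a time, using the filtration of $\RG_G$-modules by length together with the inclusion/splitting functors $I_x, S_x$ and the extension/restriction functors $E_x, \Res_x$ from Section~\ref{sect:definitions}. The key observation is that "having a finite projective resolution" is equivalent to "being of finite projective dimension", and this property can be detected objectwise in a suitable sense because of the EI-structure of $\OrG$.

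First I would establish the easy direction: restriction $\res^G_P$ is exact and takes projectives to projectives (by the Proposition just before the theorem), so it carries a finite projective resolution of $M$ to a finite projective resolution of $\res^G_PM$ over $\OrP$. But over $\OrP$ with $P$ a $p$-group and $R = \bZp$, I need to know this is what is being asserted — here one should note that a finite projective resolution over $\OrP$ restricts further, via the evaluation/splitting functors $S_Q$, to finite projective resolutions over each $R[N_P(Q)/Q]$; since $N_P(Q)/Q$ is again a $p$-group and $R=\bZp$, finite projective dimension over $R[N_P(Q)/Q]$ forces projectivity (finitistic dimension of $\bZp[\pi]$ is $0$ for $\pi$ a $p$-group). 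So the hypothesis "$\res^G_P M$ has a finite projective resolution" is genuinely the $\OrP$-version of Rim's hypothesis, and the forward implication is immediate.

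For the converse, the strategy is induction on the length $l(M)$. When $l(M) = 0$, $M$ is supported on minimal objects and $M \cong \bigoplus_x E_x S_x(M) = \bigoplus_x I_x(M(x))$, so the whole question is about the $R[N_G(H)/H]$-modules $M(H)$; here one applies classical Rim over each group ring $R[N_G(H)/H]$ after checking that restricting along $\res^G_P$ controls the Sylow subgroups of the $N_G(H)/H$ — this uses that $\cF$ consists of $p$-subgroups, so $N_G(H)/H$ need not be a $p$-group, but its $p$-Sylow subgroups do appear after restricting to $P$ and splitting. For the inductive step, use the short exact sequence $0 \to KM \to EM \to M \to 0$ from Section~\ref{subsection:resolutions}, where $EM = \bigoplus_x E_x\Res_x M$ and $l(KM) < l(M)$: since $E_x$ preserves projectives, $EM$ has a finite projective resolution iff each $E_x\Res_x M$ does iff each $R[N_G(H)/H]$-module $M(H)$ has finite projective dimension, which (by classical Rim over $R[N_G(H)/H]$, exactly as in the length-zero case) follows from the hypothesis on $\res^G_P M$ together with the fact that $\Res_H$ commutes appropriately with $\res^G_P$. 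Then $M$ has a finite projective resolution iff both $EM$ and $KM$ do; the first is handled, and $KM$ satisfies the inductive hypothesis once one checks $\res^G_P(KM)$ still has finite projective resolution — which holds because $\res^G_P$ is exact, applied to $0 \to KM \to EM \to M \to 0$, and both $\res^G_P(EM)$ and $\res^G_P M$ have finite projective resolutions by construction.

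The main obstacle I expect is the bookkeeping in the converse direction: one must verify carefully that detecting "$M(H)$ has finite projective dimension over $R[N_G(H)/H]$" really does reduce, via $\res^G_P$, to projectivity over $p$-Sylow subgroups of $N_G(H)/H$, and that this is implied by "$\res^G_P M$ has finite projective resolution over $\OrP$". The subtle point is that a $p$-Sylow subgroup of $N_G(H)/H$ is $\bar{P}/H$ for a suitable $H \le \bar{P} \le N_G(H)$ with $\bar P$ a $p$-subgroup of $G$, hence $\bar P \in \cF$, so it is visible as an object $G/\bar P$ over which one can evaluate $\res^G_P M$ after conjugating into $P$; chasing through the splitting functors and Lemma~\ref{lem:induction} to see that the relevant $R[\bar P/H]$-module is a summand of (a restriction of) $\res^G_P M$ evaluated at $\bar P$ is the technical heart. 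Once that detection statement is in place, both directions and the induction go through cleanly.
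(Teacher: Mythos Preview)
Your overall strategy --- induction on the length $l(M)$, with the classical Rim theorem supplying the base case --- matches the paper's. The gap is in the inductive step: you form $EM = \bigoplus_{H} E_H \Res_H M$ over \emph{all} $H \in \Is(\G_G)$ and assert that $EM$ has finite projective dimension because each $M(H)$ does over $R[N_G(H)/H]$. That last claim is false for non-maximal $H$, and the detection argument you sketch in the final paragraph cannot be made to work. Concretely, take $\cF$ to be all $p$-subgroups of $G$ (with $p \mid |G|$) and $M = \un{R}$. Then $\res^G_P M = \un{R} = \uR{P/P}$ is \emph{free} over $\RG_P$, so the hypothesis is satisfied as strongly as possible; yet $M(1) = R$ with trivial $G$-action has infinite projective dimension over $RG$, so $E_1 M(1)$ and hence $EM$ has infinite projective dimension. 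The point is that for non-maximal $H$ the functor $\Res_H$ does not take projectives to projectives (e.g.\ $\Res_1 \uR{P/P} = R$), so a finite projective resolution of $\res^G_P M$ over $\RG_P$ yields no control on $M(H)$ as an $R[N_P(H)/H]$-module.

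The paper repairs this by only peeling off the top layer. For $Q$ with $l(Q) = l(M)$ one has $S_Q = \Res_Q$ on every module of length $\leq l(M)$; since $S_Q$ preserves projectives and $\Res_Q$ is exact, applying $S_Q$ to a finite projective resolution of $\res^G_P M$ (taken with $l(P_i)\leq l(M)$) gives a genuine finite projective resolution of $M(Q)$ over $R[N_P(Q)/Q]$, whence $M(Q)$ is projective there. A short lemma --- for each $p$-subgroup $Q$ one can choose $P \in \Syl_p(G)$ with $N_P(Q) \in \Syl_p(N_G(Q))$ --- together with classical Rim then gives $M(Q)$ projective over $R[N_G(Q)/Q]$, so $\bigoplus_{l(Q)=s} E_Q M(Q)$ is already projective over $\RG_G$. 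One then augments by a small projective of length $<s$ to get a surjection onto $M$ with kernel $K$ of length $<s$; the middle term of the resulting short exact sequence is projective, so $\res^G_P K$ inherits finite projective dimension and the induction runs. If you rework your inductive step to sum only over $Q$ of maximal length, your outline becomes the paper's proof.
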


\begin{proof}  One direction is clear since $\res^G_P$ is
exact and takes projectives to projectives.  For the other
direction, we will give the proof by induction on the length $l(M)$
of $M$.  Without loss of generality, we can assume that $M(H)$ is $R$-torsion free for all $H\in \cF$. Suppose $M$ is a $\RG_G$-module with $l(M)=0$.  Then, we can
regard $M$ as an $RG$-module.  If $\res^G_P M$ has a finite
projective resolution, then $\res^G _PM$ must be projective (see
\cite[page 348]{lueck3}). Then, by Rim's theorem, $M$ is a
projective $RG$-module, hence has finite projective length.

Now, assume $M$ is an $\RG _G$-module with $l(M)=s>0$.  Let
$$ 0\rightarrow  P_n\rightarrow \cdots\rightarrow  P_0
\rightarrow \res^G_PM\rightarrow  0$$ be a projective resolution for
$\res^G_PM$. We can assume that $l(P_i) \leqslant s$ for all $i$. Then,
for $Q \in \cF$ with $l(Q)=s$, we have
$$S_Q P_i=\Res_Q P_i=P_i (Q).$$ Since $S_Q$ takes projectives to
projectives, the resolution
$$ 0\rightarrow  P_n (Q) \rightarrow \cdots\rightarrow
P_0 (Q) \rightarrow  (\res^G_P M) (Q)\rightarrow  0$$ is a finite
projective resolution of $(\res^G_P M)(Q)=M(Q)$ as an
$R[N_P(Q)/Q]$-module. This gives that $M(Q)$ is projective as an
$R[N_P(Q)/Q]$-module.
\begin{lemma}
 For every $p$-group $Q$, there is a $p$-Sylow subgroup $P$
  of $G$ such that $N_P (Q)$ is a $p$-Sylow subgroup
of $N_G (Q)$.
\end{lemma}
\begin{proof}
Let $S$ be a $p$-Sylow subgroup of $N_G(Q)$, and pick a $p$-Sylow subgroup $P$ of $G$ containing $S$.  Since $N_P(Q) = N_G(Q) \cap P$ is a $p$-subgroup of $N_G(Q)$, we have $|N_P(Q)| \leq |S|$. But $S \leq P$ and $S \leq N_G(Q)$ implies $S \leq N_P(Q)$. Therefore $S=N_P(Q)$.
\end{proof}
 We can assume $P$ is a $p$-Sylow subgroup which
has this property. Then, by the $p$-local version of Rim's theorem,
we can conclude that
$M(Q)$ is projective as an $R[N_G(Q)/Q]$-module.
Now, consider the map
$$ \psi = (\psi_Q)\colon \bigoplus_{Q \in \Is(\G _G),\ l(Q)=s} E_Q\circ
\Res_Q M\rightarrow  M$$ where $\psi _Q\colon E_Q\circ
\Res_Q M\rightarrow  M$ is the map adjoint to the
identity map $\id\colon \Res_Q M\rightarrow \Res_Q M$. For every $K
\in \cF$ with $l(K)=s$, the induced map
$\psi (K)$
 is an isomorphism. This is because
$$(E_Q\circ \Res_Q M )(K)=\Res _K E_Q \Res _Q M= S_K E_Q \Res _Q
M\cong M(K)$$ if $K$ is conjugate to $Q$ and zero otherwise. So, we
have $l(\coker \psi )<s$. Therefore, there is a finitely generated
projective $\RG _G$-module $P$ with $l(P)<s$, and a map $\alpha
\colon P\rightarrow M$ such that $\psi \oplus\alpha$ is surjective.
If $K$ is the kernel of $\psi \oplus\alpha$, we get an exact
sequence of $\RG _G$-modules
$$ 0\rightarrow K\rightarrow P\oplus\bigoplus_{Q \in \Is(\G _G),\
l(Q)=s}E_Q\circ \Res_Q M\rightarrow M\rightarrow  0$$ where the
middle term is projective as an $\RG _G$-module, and $l(K)< s$. Note
that $\res^G_PK$ must have a finite projective resolution by
\cite[Lemma 11.6]{lueck3}. So, by induction, $K$ has a finite
projective resolution, and hence $M$ has a finite projective
resolution as well.
\end{proof}

\begin{remark} The inductive argument we use in the above proof
is similar to the argument used by L\" uck to prove
Proposition 17.31 in \cite{lueck3}.  By this result, any module $M$ over a finite EI-category $\G$ which has a finite projective resolution, admits a resolution of length $\leqslant l(M)$ provided that $M(x)$ is $R$-projective for all $x\in \Ob(\G)$. \qed
\end{remark}

It isn't clear to us how to generalize Theorem \ref{thm:rim_orbit} to integer   coefficients. For $R = \bZp$, the following example shows that the result does not hold for an arbitrary family $\cF$.

\begin{example}\label{example three-four}
 Let $G=S_5$ and $R=\bZ _{(2)}$
 , and  take $\cF$ as the
family of all $2$-subgroups and $3$-subgroups in $G$. Consider the
$\RG _G$-module $M=\uR{G/(C_2 \times C_3)}$ where $C_2$ and $C_3$
are as in Example \ref{exmp:restrictionformula}. It is clear that
the restriction of $M$ to a $2$-Sylow subgroup is projective (since its restriction to $H=S_4$ is already projective), but
$M$ does not have a finite projective resolution as an $\RG _G$-module. 

To see this, suppose that $M$ has a finite projective
resolution $ \PP \twoheadrightarrow M$. Then, $ \PP (C_3)$ will be a
finite projective resolution for $M(C_3)$ over $R[N_G (C_3)/C_3]$.
This is because $C_3=\la (123)\ra$ is a maximal subgroup in $\cF$.
This implies
$$M(C_3 )\cong R [ S_3\times C_2 / C_3\times C_2 ] \cong R[C_2]$$ is
projective as an $R[N_G(C_3)/C_3]$-module. But,
$$R[N_G (C_3)/C_3 ]=R[S_3\times C_2/C_3]\cong R[C_2\times C_2],$$
and it is clear that $R[C_2]$ is not projective as an $R[C_2\times
C_2]$-module. So, $M$ does not have a finite projective resolution.  \qed
\end{example}

On the other hand, the following holds for modules over orbit
categories:

\begin{proposition}\label{prop:almost Rim} Let $G$ be a finite group,
and $\cF $ be a family of subgroups of $G$. Then, a $\ZG
_G$-module $M$ has a finite projective resolution  if and only if
$\bZp \otimes _{\bZ} M$ has a finite projective resolution over
$\bZp \G _G$, for all primes $p$ dividing the order of $G$.
\end{proposition}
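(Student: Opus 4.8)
\textbf{Proof strategy for Proposition \ref{prop:almost Rim}.}
The plan is to reduce the integral statement to the prime-by-prime statement using the standard localization principle for finiteness of projective dimension, exactly as in the group ring case recalled in the proof of Rim's Theorem above. First I would dispose of the easy direction: if $M$ has a finite projective resolution $\PP \twoheadrightarrow M$ over $\ZG_G$, then applying the exact functor $\bZp \otimes_{\bZ} (-)$ termwise yields a finite resolution of $\bZp \otimes_{\bZ} M$ by $\bZp \G_G$-modules, and each $\bZp \otimes_{\bZ} P_i$ is projective over $\bZp \G_G$ since tensoring up a commutative ground ring preserves projectivity (a direct summand of a free module stays a direct summand of a free module). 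This works for every prime, not just those dividing $|G|$, but we only need it for $p \bigm| |G|$.

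For the converse, the key point is that having a finite projective resolution over $\ZG_G$ is equivalent to $\mathrm{pd}_{\ZG_G} M < \infty$, and one detects finiteness of projective dimension one prime at a time. Concretely, I would argue via $\Ext$: a $\ZG_G$-module $M$ has $\mathrm{pd} \leq n$ if and only if $\Ext^{n+1}_{\ZG_G}(M, N) = 0$ for all $N$, and by the standard primary decomposition of the (finitely generated) abelian groups $\Ext^{n+1}_{\ZG_G}(M,N)$, this vanishing can be checked after tensoring with each $\bZp$. Here one uses that $M$ is finitely generated, hence admits a resolution by finitely generated projectives (each $\ZG_G$-module has a finitely generated free cover), so the $\Ext$ groups are finitely generated over $\bZ$; and that $\bZp \otimes_{\bZ} \Ext^*_{\ZG_G}(M,N) \cong \Ext^*_{\bZp\G_G}(\bZp\otimes M, \bZp \otimes N)$ for the relevant $N$, which follows since a finitely generated projective resolution of $M$ localizes to one of $\bZp \otimes M$ and $\Hom$ commutes with the flat base change $\bZ \to \bZp$ on these modules. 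Since $\Ext^*_{\ZG_G}(M,N)$ is annihilated by $|G|^k$ for suitable $k$ once $\ast > 0$ (the usual transfer/averaging argument over the orbit category, or simply the fact that $\bZp \otimes M$ has finite projective dimension automatically when $p \nmid |G|$ by the last remark of Section \ref{sect:definitions} applied to $R = \bZp$), only the primes $p$ dividing $|G|$ can contribute, and for those the hypothesis gives the vanishing of $\bZp \otimes \Ext^{n+1}$. Taking $n$ to be the maximum of the finite projective dimensions of the $\bZp \otimes M$ over $p \bigm| |G|$ gives $\Ext^{n+1}_{\ZG_G}(M,N) = 0$ for all $N$, hence $\mathrm{pd}_{\ZG_G} M \leq n$.

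I expect the main obstacle to be the bookkeeping in the base-change isomorphism $\bZp \otimes_{\bZ} \Ext^*_{\ZG_G}(M,N) \cong \Ext^*_{\bZp\G_G}(\bZp\otimes_{\bZ} M, \bZp\otimes_{\bZ} N)$: one must be careful that $M$ genuinely has a resolution by finitely generated projective $\ZG_G$-modules (so that $\Hom_{\ZG_G}$ out of each term is a finitely generated abelian group and commutes with localization), and that the projective $\bZp \G_G$-modules appearing after base change are precisely the localizations of the integral ones. Both points are handled by the free-cover construction of Section \ref{sect:definitions} together with the fact that $E_x$, $S_x$ and the free modules $\uR{G/K}$ are compatible with the ring change $\bZ \to \bZp$. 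Once this is in place, the argument is a direct transcription of the classical proof that finite projective dimension over $\bZ G$ is detected $p$-locally.
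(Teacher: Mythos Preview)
Your approach is essentially the paper's: the proof there is deferred to Propositions~\ref{prop:p-local decomposition} and~\ref{prop:projectives}, which are exactly your two ingredients --- the characterization of finite projective dimension via vanishing of $\Ext$ in high degrees, and the compatibility $\bZp \otimes_{\bZ} \Ext^*_{\ZG_G}(M,N) \cong \Ext^*_{\bZp\G_G}(M_p,N_p)$ coming from flat base change.

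One imprecision worth flagging: your claim that $\Ext^*_{\ZG_G}(M,N)$ is annihilated by a power of $|G|$ for all $\ast>0$ is false as stated (already for trivial $G$ it would force $\Ext^1_{\bZ}(\bZ/2,\bZ)=0$); the paper's Proposition~\ref{prop:Ext is finite} only gives this for $\ast>l(M)$ and assuming each $M(H)$ is $\bZ$-torsion free. Your parenthetical alternative --- that $\bZp\otimes_{\bZ} M$ automatically has finite projective dimension when $p\nmid|G|$ --- is the correct way to handle those primes and rescues the argument; just note that the remark you cite at the end of Section~\ref{sect:definitions} also carries a torsion-free hypothesis, though the conclusion survives with the bound $l(\G_G)+1$ in place of $l(M)$ once you observe that every finitely generated $\bZp[W]$-module with $|W|$ invertible has projective dimension at most $1$.
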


The proof of this statement follows from Propositions
\ref{prop:p-local decomposition} and \ref{prop:projectives} in the
next section.  We end this section with some corollaries of Theorem
\ref{thm:rim_orbit}.

\begin{corollary} Let $G$ be a finite group and $R=\bZp$. Suppose
that $\cF $ is a family of $p$-subgroups. Then, $\uR{G/H}$ has a
finite projective resolution over $\RG _G$ if a $p$-Sylow subgroup
of $H$ is included in $\cF$.
\end{corollary}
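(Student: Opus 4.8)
The plan is to deduce this from Theorem~\ref{thm:rim_orbit} by checking the Sylow-restriction hypothesis for the module $M = \uR{G/H}$. By Theorem~\ref{thm:rim_orbit}, since $\cF$ is a family of $p$-subgroups and $R = \bZp$, it suffices to show that $\res^G_P \uR{G/H}$ has a finite projective resolution over $\RG_P$ for every $p$-Sylow subgroup $P \leq G$. So the whole problem reduces to an explicit analysis of the restriction of the free module $\uR{G/H}$ to the orbit category of a Sylow subgroup.

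First I would apply Lemma~\ref{lem:ind-res formulas2}(ii), which gives
$$\res^G_P \uR{G/H} \cong \bigoplus_{P\backslash G /H} \uR{P/(P\cap \leftexp{g}H)}.$$
Each summand $\uR{P/(P\cap \leftexp{g}H)}$ is a free $\RG_P$-module provided the subgroup $P \cap \leftexp{g}H$ lies in $\cF$; since $\cF$ consists of $p$-subgroups and $P\cap \leftexp{g}H$ is a $p$-group (being a subgroup of $P$), and $\cF$ is closed under conjugation and taking subgroups, the relevant question is whether $P\cap \leftexp{g}H$ sits inside some member of $\cF$ — equivalently whether it is a $p$-subgroup of the form we allow. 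But $\cF$ is merely \emph{a} family of $p$-subgroups, not necessarily all of them, so this is where the hypothesis that a $p$-Sylow subgroup of $H$ lies in $\cF$ enters: one must show each $P \cap \leftexp{g}H$ is subconjugate to a Sylow $p$-subgroup of $H$. Indeed $P\cap \leftexp{g}H = P \cap g^{-1}Hg$ is a $p$-subgroup of $g^{-1}Hg$, hence contained in some Sylow $p$-subgroup of $g^{-1}Hg$, which is conjugate (in $G$) to a Sylow $p$-subgroup of $H$; since that Sylow subgroup is assumed in $\cF$ and $\cF$ is closed under conjugation and subgroups, we get $P\cap \leftexp{g}H \in \cF$. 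Therefore every summand is a free $\RG_P$-module, so $\res^G_P\uR{G/H}$ is free, in particular projective, hence trivially has a finite projective resolution.

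With that in hand, Theorem~\ref{thm:rim_orbit} applies directly and yields that $\uR{G/H}$ has a finite projective resolution over $\RG_G$, completing the proof. The only real content is the subconjugacy argument for $P\cap \leftexp{g}H$, and the one point to be careful about is that $\cF$ being a family of $p$-subgroups does not by itself force containment in $\cF$ — one genuinely needs the Sylow-in-$\cF$ hypothesis together with the conjugacy of Sylow subgroups of $H$. I expect that to be the main (and essentially only) obstacle; everything else is a formal application of the results already established, in particular Lemma~\ref{lem:ind-res formulas2}(ii) and Theorem~\ref{thm:rim_orbit}.
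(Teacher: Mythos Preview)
Your proof is correct and follows essentially the same approach as the paper: apply Lemma~\ref{lem:ind-res formulas2}(ii) to see that $\res^G_P\uR{G/H}$ is a direct sum of free modules $\uR{P/(P\cap\leftexp{g}H)}$, observe via the Sylow-in-$\cF$ hypothesis that each $P\cap\leftexp{g}H$ lies in $\cF$, and then invoke Theorem~\ref{thm:rim_orbit}. The paper's proof is simply a terse one-line version of exactly this argument; you have merely filled in the subconjugacy detail that the paper leaves implicit.
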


\begin{proof} If a $p$-Sylow subgroup of $H$ is in $\cF$, then
$\res ^G _P \uR{G/H}$ is a free $\RG _P$-module for any $P \in \Syl
_p (G)$. So, by Theorem \ref{thm:rim_orbit}, it has a finite
projective resolution.
\end{proof}

As a special case of this corollary, we obtain the following  known result 
 (see  \cite[6.8]{bouc8}, \cite[2.5 and p.~296]{symonds2}, \cite{jackowski-mcclure-oliver2}, \cite{grodal1}).

\begin{corollary}
Let $G$ be a finite group and $R=\bZp$. Then, $\un{R} $ has a finite
projective resolution over $\RG _G$ relative to the family of all
$p$-subgroups of $G$.
\end{corollary}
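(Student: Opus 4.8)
The plan is to reduce to Theorem~\ref{thm:rim_orbit} applied to a family of $p$-subgroups. The obstruction in Example~\ref{example three-four} came from the family $\cF$ containing subgroups that are \emph{not} $p$-groups, so the natural move is to replace $\cF$ by the sub-family $\cF_p$ of $p$-subgroups in $\cF$, and to arrange that the constant module $\un{R}$ over $\RG_G$ (with respect to $\cF$) is built in a controlled way from modules over the smaller orbit category $\Or_{\cF_p}G$. First I would observe that, since $R=\bZp$, the relevant statement to prove is that $\un R$ has a finite projective resolution over $\RG_G=\Or_{\cF}G$ when $\cF$ is the family of \emph{all} $p$-subgroups of $G$; this is exactly the setting in which Theorem~\ref{thm:rim_orbit} applies directly with $\cF=\cF_p$.

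The core step is then: compute $\res^G_P\un R$ for a $p$-Sylow subgroup $P\leq G$. Since $P\in\cF$ (as $\cF$ is the family of all $p$-subgroups), the group $G$ itself plays no role on the $H$-side; more precisely, $\res^G_P\un R$ is the constant module $\un R$ over $\Or_{\cF_P}P$, where $\cF_P=\{K\leq P\}$ is the family of \emph{all} subgroups of $P$. But when the family is the family of all subgroups and the ambient group $P$ itself lies in the family, Example~\ref{ex: basic-modules} tells us that the constant module $\un R=\uR{P/P}$ is \emph{projective} — indeed free — over $\Or_{\cF_P}P$. Hence $\res^G_P\un R$ is projective over $\RG_P$ for every $p$-Sylow $P$, so in particular it has a finite (length-zero) projective resolution.

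Now Theorem~\ref{thm:rim_orbit} applies: $G$ is a finite group, $R=\bZp$, the family $\cF$ of all $p$-subgroups is a family of $p$-subgroups by construction, $\un R$ is an $\RG_G$-module (with $\un R(H)=R$ torsion-free for all $H\in\cF$, so the reduction at the start of that proof is automatic), and we have just verified that $\res^G_P\un R$ has a finite projective resolution over $\RG_P$ for every $p$-Sylow $P$. Therefore $\un R$ has a finite projective resolution over $\RG_G$, which is the assertion of the corollary. The only point requiring a little care — the main ``obstacle,'' though it is routine given Lemma~\ref{lem:ind-res formulas2} and the discussion in Example~\ref{ex: basic-modules} — is the identification of $\res^G_P\un R$ with the free module $\uR{P/P}$ over $\Or_{\cF_P}P$; this uses that $\un R=\uR{G/G}$ restricts compatibly and that $\cF_P$ is the full family of subgroups of $P$ (so $P\in\cF_P$), which makes $\uR{P/P}$ free by definition. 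Alternatively, one can simply quote the preceding Corollary with $H=G$: a $p$-Sylow subgroup of $G$ obviously lies in the family of all $p$-subgroups, so $\uR{G/G}=\un R$ has a finite projective resolution, and no separate argument is needed.
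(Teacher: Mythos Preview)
Your proposal is correct and essentially matches the paper's approach. The paper's one-line proof simply invokes the preceding Corollary with $H=G$ (noting $\un R=\uR{G/G}$), which is exactly your ``alternative'' at the end; your main argument just unwinds that Corollary in this special case by directly showing $\res^G_P\un R\cong\uR{P/P}$ is free and then applying Theorem~\ref{thm:rim_orbit}.
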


\begin{proof} This follows from $\un{R} = \uR{G/G}$.
\end{proof}


\section{Mackey structures on $\Ext^*_{\RG_G}(M,N)$}
\label{sect:Mackey}

The notation and results of the previous sections will now be used to establish some structural and computational facts about the $\Ext$-groups over the orbit category.  Our main sources are Cartan-Eilenberg \cite{cartan-eilenberg1} and  tom Dieck \cite[\S II.9]{tomDieck2}
(see also \cite{jackowski-mcclure-oliver2}, \cite{grodal1}).

We have seen that the category of right $\RG$-modules has enough
projectives to define the bifunctor
$$\Ext^*_{\RG}(M,N)=H^*(\Hom_{\RG}(\PP,N))$$
 via any projective
resolution $\PP\twoheadrightarrow M$  (see \cite[Chap.~III, \S
17]{lueck3}, \cite[Chap.~III.6]{maclane1}). The following property
is also useful (see  L\"uck \cite[17.21]{lueck3}).

\begin{lemma}\label{lem:E_x property}
If $\G$ is a 
free EI-category, then
$\Ext^*_{\RG}(E_xM,N)\cong\Ext^*_{R[x]}(M,\Res_xN)$.
\end{lemma}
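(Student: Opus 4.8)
The plan is to prove Lemma \ref{lem:E_x property} by a standard projective-resolution argument, using the two facts about the extension functor $E_x$ that have already been recorded: on a free EI-category $E_x$ is \emph{exact}, and it always \emph{sends projectives to projectives}. First I would take a projective resolution $\PP_\bullet \twoheadrightarrow M$ of the $R[x]$-module $M$ (as a module over the group ring $R[x] = R\Aut(x)$, these exist since $R[x]$-$\Mod$ has enough projectives). Applying $E_x$ termwise and using exactness of $E_x$ on a free EI-category, the complex $E_x\PP_\bullet \to E_xM$ stays exact, so $E_x\PP_\bullet \twoheadrightarrow E_xM$ is a projective resolution of $E_xM$ over $\RG$ (each $E_x\PP_i$ is $\RG$-projective because $E_x$ preserves projectives). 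Hence $\Ext^*_{\RG}(E_xM,N)$ may be computed as $H^*\big(\Hom_{\RG}(E_x\PP_\bullet, N)\big)$.

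Next I would invoke the adjunction $\Hom_{\RG}(E_xP,N)\cong \Hom_{R[x]}(P,\Res_xN)$ recorded earlier in the excerpt (the $(E_x,\Res_x)$ adjoint pair). This isomorphism is natural in $P$, so it gives an isomorphism of cochain complexes
\[
\Hom_{\RG}(E_x\PP_\bullet, N)\;\cong\;\Hom_{R[x]}(\PP_\bullet,\Res_xN).
\]
Taking cohomology of both sides, the left-hand side is $\Ext^*_{\RG}(E_xM,N)$ by the previous paragraph, while the right-hand side is $\Ext^*_{R[x]}(M,\Res_xN)$, computed from the resolution $\PP_\bullet \twoheadrightarrow M$. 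This yields the desired isomorphism $\Ext^*_{\RG}(E_xM,N)\cong \Ext^*_{R[x]}(M,\Res_xN)$, and naturality of the adjunction makes it natural in $M$ and $N$.

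The only real point requiring care is the exactness of $E_x$ on a free EI-category, which is what guarantees that $E_x\PP_\bullet$ is still a resolution (and not merely a complex) — but this is exactly the statement attributed to \cite[16.9]{lueck3} in the excerpt, that $E_x$ is exact when $R\Mor(y,x)$ is a free $R[x]$-module for all $y$, so I would simply cite it. Everything else is formal: enough projectives over $R[x]$, the preservation of projectives by $E_x$, and the naturality of the $(E_x,\Res_x)$ adjunction. So I do not anticipate a genuine obstacle; the proof is a two-line application of the machinery assembled in Section \ref{sect:definitions}, and indeed one can just refer to L\"uck \cite[17.21]{lueck3} as the excerpt already does.
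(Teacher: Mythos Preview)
Your proposal is correct and follows essentially the same approach as the paper's proof: take a projective resolution of $M$ over $R[x]$, apply the exact projective-preserving functor $E_x$ to obtain a projective resolution of $E_xM$, then use the $(E_x,\Res_x)$ adjunction to identify the $\Hom$-complexes and take cohomology. The paper's argument is simply a more condensed version of exactly what you wrote.
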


\begin{proof}  
Take a projective resolution $\PP$ of $M$.  Since $\G$ is free, the extension functor $E_x$ is exact \cite[16.9]{lueck3}. In addition,  $E_x$ preserves projectives and is adjoint to the restriction functor $\Res_x$ by Proposition \ref{prop: induction-restriction}.
Therefore
$$\cdots\rightarrow E_x P_n \rightarrow
\cdots \rightarrow E_xP_1\rightarrow E_xP_0\rightarrow
E_xM\rightarrow 0$$ 
is a projective resolution of $E_xM$, and applying
$\Hom$ over the orbit category gives
\begin{eqnarray*}
\Ext^n_{\RG}(E_xM,N)&=& H^n(\Hom_{\RG}(E_x\PP, N))\\
&\cong& H^n(\Hom_{R[x]}(\PP, \Res _x N))=\Ext^n_{R[x]}(M,\Res_xN). \qedhere\end{eqnarray*}
\end{proof}

In the rest of  this section, we assume that $\G_G = \OrG$ for a
finite group $G$, where $\cF$ is a family of subgroups in $G$. 
Note that $\G_G$ is both finite and free as an EI-category.
 If there are
two groups $H \leq G$, we use the notations $\G _G = \OrG$  for the orbit category with respect to the family $\cF$,  and $\G
_H =\OrH$  for the orbit category with respect to the family  $\cF_H = \{ H \cap K \vv K \in \cF\}$. 

\begin{proposition}
\label{prop:Ext is finite} Let $M$ and $N$ be two $\ZG_G$-modules, where $M(H)$ is $\bZ$-torsion free for all $H \in \cF$.
Then for every $n>l(M)$, the groups $\Ext^n_{\ZG_G}(M, N)$ are
finite abelian, with exponent dividing some power of $|G|$.
\end{proposition}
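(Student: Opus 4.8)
The plan is to exploit two facts: first, that $\Ext^*_{\ZG_G}(M,-)$ can be computed from a projective resolution whose structure is controlled by Propositions \ref{prop:resolution} and the chain-description following it; and second, that the building blocks $E_xS_x(P)$ of such resolutions reduce $\Ext$-computations to ordinary group cohomology of the automorphism groups $\Aut(G/H) = N_G(H)/H$, which are finite quotients of $G$. Concretely, since $M(H)$ is $\bZ$-torsion free for all $H$, the inductive construction preceding Proposition \ref{prop:resolution} (applied with $R=\bZ$; torsion-freeness guarantees that each $M(c)$ is $\bZ$-free, hence each $EK^sM$ is a sum of modules $E_x(L)$ with $L$ a $\bZ$-free $\bZ[\Aut(x)]$-module) gives a resolution $0 \to EK^tM \to \cdots \to EM \to M \to 0$ with $t = l(M)$, in which every term is a direct sum of extension modules $E_x(L_x)$. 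This is not a projective resolution, but it lets us control $\Ext$ in degrees above $l(M)$.

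The key step is then a dimension-shifting / hyper-Ext spectral sequence argument. Splicing the $EK^sM$ resolution into a projective resolution of $M$, or more cleanly running the hyper-$\Ext$ spectral sequence of the finite-length resolution $EK^\bullet M \to M$, one obtains for $n > l(M)$ that $\Ext^n_{\ZG_G}(M,N)$ is a subquotient of $\bigoplus_{s} \Ext^{n-s}_{\ZG_G}(EK^sM, N)$ with $0 \le s \le t = l(M)$, so each contributing term has cohomological degree $n-s \ge n-l(M) \ge 1$. By Lemma \ref{lem:E_x property}, $\Ext^{n-s}_{\ZG_G}(E_x L_x, N) \cong \Ext^{n-s}_{\bZ[\Aut(x)]}(L_x, \Res_x N)$ with $\Aut(x) = N_G(x)/x$ a finite group. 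Since $n-s \ge 1$, these are reduced $\Ext$-groups over a finite group ring, hence are finite abelian groups annihilated by $|\Aut(x)|$, which divides $|G|$; indeed for any finite group $\Gamma$ and any $\Gamma$-modules $A,B$ one has $|\Gamma| \cdot \Ext^i_{\bZ\Gamma}(A,B) = 0$ for $i \ge 1$ (a standard transfer/norm argument). A finite abelian group which is a subquotient of a direct sum of groups each annihilated by $|G|$ is itself finite and annihilated by $|G|$ — or, allowing the bookkeeping of the spectral sequence across $l(M)+1$ columns, by $|G|^{l(M)+1}$, which is still a power of $|G|$, as claimed.

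The main obstacle is making the "$\Ext^n_{\ZG_G}(M,N)$ is a subquotient of $\bigoplus_s \Ext^{n-s}_{\ZG_G}(EK^sM, N)$ for $n > l(M)$" step rigorous, since the $EK^\bullet M$ complex is only a resolution by $E_x$-modules, not by projectives. The clean route is: take a projective resolution $\PP \to M$; the identity of $M$ lifts to a chain map $\PP \to (EK^\bullet M \to M)$, and one compares via the hyper-cohomology of the total complex $\Hom_{\ZG_G}(\PP, N)$ versus the second filtration, where the $E_2$-page involves $\Ext^p_{\ZG_G}(EK^q M, N)$ with $q$ bounded by $l(M)$ — so in total degree $n > l(M)$ every surviving entry has $p \ge 1$. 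One must check finite generation of the $L_x$ (from the finitely-generated hypothesis on $M$ and finiteness of $\G_G$, which holds by the standing assumptions) so that the $\Ext$-groups over $\bZ[\Aut(x)]$ are genuinely finite, not merely torsion. Once the bookkeeping is set up, the rest is the elementary fact that higher $\Ext$ over a finite group ring is $|G|$-torsion and finitely generated abelian, hence finite.
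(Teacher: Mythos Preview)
Your proposal is correct and follows essentially the same route as the paper's proof, which simply cites Lemma~\ref{lem:E_x property}, Proposition~\ref{prop:resolution}, and ``the corresponding result for modules over finite groups''. You have supplied the details the paper omits: the spectral-sequence/dimension-shifting bookkeeping, the reason $\bZ$-torsion freeness of $M(H)$ is needed (so that each $M(c)$ is $\bZ$-free and the identification $\Ext^k_{\bZ[\Aut(x)]}(L_x,\Res_xN)\cong H^k(\Aut(x);\Hom_\bZ(L_x,\Res_xN))$ applies), and the observation that the exponent bound may be $|G|^{l(M)+1}$ rather than $|G|$ itself, which is still a power of $|G|$ as required.
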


\begin{proof} This follows from the Lemma \ref{lem:E_x property},
Proposition \ref{prop:resolution}, and the corresponding result for
modules over finite groups.
\end{proof}

Note that the $\Ext$-groups in lower dimensions are not finite in
general. But, it is still true  in all dimensions that the
$\Ext$-groups over $\ZG_G$ vanish if and only if they vanish over
$\bZp\G_G$, for all primes $p$. To see this, we note that tensoring over $\bZ$ with $\bZp$ preserves exactness, and hence
\eqncount
\begin{equation}\label{p-localization}
\Ext^n_{\ZG_G}(M, N) \otimes_\bZ \bZp \cong 
\Ext^n_{\bZp \G_G}(M\otimes_\bZ \bZp, N\otimes_\bZ \bZp).
\end{equation}
We also have the following:

\begin{proposition}\label{prop:p-local decomposition}
Let $M$ and $N$ be two $\ZG_G$-modules, where $M(H)$ is $\bZ$-torsion free for all $H \in \cF$. Then, for every $n> l(M)$,
there is an isomorphism $$ \Ext^n _{\ZG_G } (M, N ) \cong \bigoplus
_{p \mid |G|} \Ext^n _{\bZp \G_G } ( M_p , N_p)$$ where $M_p=\bZp
\otimes _{\bZ } M$ and $N_p=\bZp \otimes _{\bZ } N$.
\end{proposition}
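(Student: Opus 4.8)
The plan is to deduce this from the $p$-localization formula \eqref{p-localization} together with the fact that, in the relevant range $n > l(M)$, the $\Ext$-group $\Ext^n_{\ZG_G}(M,N)$ is a finite abelian group whose exponent divides a power of $|G|$ (Proposition \ref{prop:Ext is finite}). Indeed, recall the standard fact that a finite abelian group $A$ annihilated by a power of $|G|$ decomposes canonically as the direct sum of its $p$-primary components over the primes $p$ dividing $|G|$, and that the $p$-primary component of $A$ is naturally isomorphic to $A \otimes_\bZ \bZp$. Applying this with $A = \Ext^n_{\ZG_G}(M,N)$ gives
$$ \Ext^n_{\ZG_G}(M,N) \cong \bigoplus_{p \mid |G|} \Ext^n_{\ZG_G}(M,N) \otimes_\bZ \bZp .$$

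The second step is to identify each summand. By \eqref{p-localization}, which holds because $\bZp$ is flat over $\bZ$ and hence tensoring a projective resolution $\PP \twoheadrightarrow M$ with $\bZp$ yields a projective resolution $\PP \otimes_\bZ \bZp \twoheadrightarrow M \otimes_\bZ \bZp$ of $\ZG_G$-modules localized at $p$, while $\Hom_{\RG_G}(\PP, N) \otimes_\bZ \bZp \cong \Hom_{\bZp\G_G}(\PP\otimes_\bZ\bZp, N\otimes_\bZ\bZp)$ since each $P_i$ is finitely generated, we have
$$ \Ext^n_{\ZG_G}(M,N) \otimes_\bZ \bZp \cong \Ext^n_{\bZp\G_G}(M_p, N_p).$$
Substituting this into the $p$-primary decomposition above yields the claimed isomorphism. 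One should check that the isomorphism \eqref{p-localization} is natural enough that the assembled map is well-defined; this is routine since the $p$-localization map on $\Ext$ is induced by a map of complexes.

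I expect the only genuinely delicate point to be the commutation of $\Hom_{\RG_G}(\PP,-)$ with $-\otimes_\bZ \bZp$ used to justify \eqref{p-localization}, which relies on the finite generation hypothesis on the modules $P_i$ (equivalently, on $M$); this is exactly where the running convention that all modules are finitely generated is used, together with the $\bZ$-torsion-freeness of $M(H)$ to ensure $\PP$ can be taken with the $P_i$ finitely generated and the resolution of length controlled by $l(M)$. Everything else — the flatness of $\bZp$, the structure theory of finite abelian groups, and Proposition \ref{prop:Ext is finite} — is either elementary or already available in the excerpt, so the proof is short.
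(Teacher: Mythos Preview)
Your proof is correct and follows essentially the same approach as the paper: invoke Proposition \ref{prop:Ext is finite} to get finiteness with exponent a power of $|G|$, split into $p$-primary parts, and apply the localization isomorphism \eqref{p-localization}. One small quibble: in your last paragraph you attribute the $\bZ$-torsion-freeness hypothesis to controlling the length of a projective resolution of $M$, but its actual role is as a hypothesis of Proposition \ref{prop:Ext is finite} (via the $E$-resolution of \S\ref{subsection:resolutions}), not in justifying \eqref{p-localization} itself.
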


\begin{proof} From Proposition \ref{prop:Ext is finite} we know that 
$ \Ext^n _{\ZG_G } (M, N )$ is a finite abelian group with exponent dividing some power of $|G|$, when $n> l(M)$. Now the 
 flatness of $\bZp$ over $\bZ$ implies as above that $ \Ext^n _{\ZG_G } (M, N )$ is the direct sum of its $p$-localizations, for all $p \mid |G|$. We then apply the isomorphism (\ref{p-localization}).
 \end{proof}

To complete the proof of Proposition \ref{prop:almost Rim}, we also
need the following  standard result in homological algebra
 (see \cite[Chap.~VI, 2.1]{cartan-eilenberg1} for the case of modules over rings):

\begin{proposition}\label{prop:projectives}
A right $\RG_G$-module $M$ admits a finite projective resolution if
and only if there exists an integer $\ell_0\geqslant 0$ such that
$\Ext^n_{\RG_G}(M,N)=0$, for all $n>\ell_0$ and all right
$\RG_G$-modules $N$.
\end{proposition}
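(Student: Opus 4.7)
The plan is to establish both implications, with the forward direction being immediate and the reverse requiring the standard dimension-shifting argument, transcribed to modules over the orbit category. If $M$ admits a finite projective resolution $0 \to P_d \to \cdots \to P_0 \to M \to 0$, then the definition of $\Ext$ via any projective resolution (cf.\ the beginning of Section \ref{sect:Mackey}) shows that $\Ext^n_{\RG_G}(M, N) = 0$ for $n > d$ and every $\RG_G$-module $N$; one takes $\ell_0 = d$.

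For the converse, suppose $\Ext^n_{\RG_G}(M, N) = 0$ for all $n > \ell_0$ and all $N$. Since the category of $\RG_G$-modules has enough projectives, I would pick any projective resolution $\PP \twoheadrightarrow M$ and let $K = \ker\bigl(P_{\ell_0 - 1} \to P_{\ell_0 - 2}\bigr)$ denote the $\ell_0$-th syzygy of $M$. Once $K$ is shown to be projective, truncating yields the finite projective resolution
$$0 \to K \to P_{\ell_0 - 1} \to \cdots \to P_0 \to M \to 0$$
of length $\ell_0$, as required.

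To prove projectivity of $K$, I would dimension-shift. Breaking the resolution into short exact sequences $0 \to K_i \to P_{i-1} \to K_{i-1} \to 0$ with $K_0 = M$ and $K_{\ell_0} = K$, the long exact sequences of $\Ext^*_{\RG_G}(-, N)$ together with the vanishing of $\Ext^n_{\RG_G}(P_{i-1}, -)$ for $n \geq 1$ yield iterated isomorphisms
$$\Ext^1_{\RG_G}(K, N) \cong \Ext^{\ell_0 + 1}_{\RG_G}(M, N) = 0$$
for every $\RG_G$-module $N$. To convert this vanishing into projectivity, I would choose a surjection $q \colon Q \twoheadrightarrow K$ from a free $\RG_G$-module $Q$ with kernel $K'$; applying the displayed vanishing with $N = K'$ makes the restriction $\Hom_{\RG_G}(Q, K') \to \Hom_{\RG_G}(K', K')$ surjective, so $\id_{K'}$ lifts, the short exact sequence $0 \to K' \to Q \to K \to 0$ splits, and $K$ is exhibited as a direct summand of $Q$, hence projective.

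The main point deserving care is that the two classical homological tools invoked here, namely the long exact sequence of $\Ext$ and the implication ``$\Ext^1(K, -) = 0 \Rightarrow K$ is projective'', remain valid in $\Mod$-$\RG_G$. Both are purely formal consequences of working in an abelian category with enough projectives, which is guaranteed by the framework recalled in Section \ref{sect:definitions}, so I do not anticipate any genuinely new difficulty beyond the dimension-shifting bookkeeping.
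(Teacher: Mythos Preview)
Your proof is correct and follows essentially the same dimension-shifting argument as the paper: both truncate a projective resolution at the $\ell_0$-th syzygy and use $\Ext^1_{\RG_G}(K,N)\cong\Ext^{\ell_0+1}_{\RG_G}(M,N)=0$ to conclude that this syzygy is projective. You supply a bit more detail on the implication ``$\Ext^1(K,-)=0\Rightarrow K$ projective'' than the paper does, but the approach is the same.
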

\begin{proof}
If $M$ admits a finite projective resolution of length
$k$, then $\Ext^n_{\RG_G}(M,N)=0$ for $n > k$ and any $\RG_G$-module $N$. Conversely,
if $\Ext^n_{\RG_G}(M,N)=0$ for $n>\ell_0$ and any $N$, then consider
the kernel $Z_m$ of the boundary map $\bd _{m}: P_m \to P_{m-1}$ in
the projective resolution $\PP$ of $M$. It follows that $$\Ext^1
_{\RG_G}(Z_m ,N)\cong \Ext _{\RG_G} ^{m+2} (M, N )=0$$ for any $\RG_G$-module $N$,
provided $m+2>\ell_0$, and so $Z_m$ is projective if we take $m= \ell _0
-1$. This gives a finite projective resolution of length $\ell _0$
over $\RG_G$.
\end{proof}

We now recall the definition of a Mackey functor (following
Dress \cite{dress2}). Let $G$ be a finite group and $\cD(G)$ denote
the Dress category of finite $G$-sets and $G$-maps.  A bivariant functor
$$M=(M^*,M_*)\colon \cD(G) \rightarrow \RMod$$
consists of a contravariant functor
$$ M^*\colon\cD(G)\rightarrow \RMod$$ and a covariant
functor
$$M_*\colon \cD(G)\rightarrow \RMod.$$ The functors
are assumed to coincide on objects.  Therefore, we write
$M(S)=M_*(S)=M^*(S)$ for a finite $G$-set $S$.  If $f\colon
S\rightarrow  T$ is a morphism, we often use the notation
$f_*=M_*(f)$ and $f^*=M^*(f)$. If $S = G/H$ and $T=G/K$ with $H\leq
K $ and $f\colon G/H \to G/K$ is given by $f(eH)=eK$, then we use the
notation $f_* = \ind_H^K$ and $f^* = \res_H^K$.

\begin{definition}[{Dress \cite{dress2}}]
A bivariant functor is called a \emph{Mackey functor} if it has the following
properties:
\begin{enumerate}\addtolength{\itemsep}{0.2\baselineskip}
\item [(M1)]For each pullback diagram $$\xymatrix{X\ar[r]^h\ar[d]_g
&Y\ar[d]^k\\S\ar[r]_f&T }$$ of finite $G$-sets, we have $h_*\circ
g^*=k^*\circ f_*$.
\item [(M2)] The two embeddings $S\rightarrow  S\disjointunion
T\longleftarrow  T$ into the disjoint union define an isomorphism
$M^*(S\disjointunion T)\cong M^*(S) \oplus M^*(T)$.
\end{enumerate}
\end{definition}

\begin{remark} There is a functor $\cO(G) \to \cD(G)$ defined on
objects by $H \mapsto G/H$ for every subgroup $H \leq G$, and as the
identity on morphism sets. By composition, any contravariant functor $\cD(G) \to \RMod$ gives a
right $\RG_G$-module,  with respect to any given family of subgroups $\cF$ of $G$.

In the  statement of Theorem \ref{thm:Mackey structure}  we will use the examples
$\uR{S}\colon \cD(G) \to \RMod$, defined in (\ref{basic G-set modules}) for any finite $G$-set $S$.
\end{remark}

The following example and lemma will be used in the proof of Theorem \ref{thm:filtered mislin_orbit}.

\begin{example}\label{ex: coMackey}
Let  $Q \in \cF$ and let $V$ be a right $R[W_Q(Q)]$-module, where $W_G(Q) = N_G(Q)/Q$. Then we define a bivariant functor $D_Q(V)\colon \cD(G) \to \RMod$ on objects by setting
$$ D_Q(V)(S) = \Hom_{R[W_G(Q)]}(R[S^Q],V)$$
for any finite $G$-set $S$. For any $G$-map $f\colon S \to T$ we have a $W_G(Q)$-map $f^Q\colon S^Q \to T^Q$, which induces a homomorphism
$$f^*\colon  \Hom_{R[W_G(Q)]}(R[T^Q],V) \to  \Hom_{R[W_G(Q)]}(R[S^Q],V)$$
by composition. To define the covariant map $f_*$, let $\varphi_S\colon R[S^Q] \to V$ be an $R[W_G(Q)]$-homomorphism, and define $f_*(\varphi_S) = \varphi_T$ by
$$f_*(\varphi_S)(t) = \varphi_T (t)= \sum_{s \in S^Q, f(s) = t} \varphi_S(s)$$
It is not hard to verify that $D_Q(V)$ is actually a Mackey functor. The axiom (M1) follows because the $Q$-fixed sets in a pull-back diagram of $G$-sets give again a pull-back diagram. The axiom (M2) is immediate.
\end{example}
\begin{definition}\label{def: coresolution}
For any $\RG_G$-module $N$, we define 
$ DN=\sum _{Q\in \Is(\G_G)} D_Q (N(Q))$
and define $j\colon N \to DN$  as the direct sum of the adjoints of 
$\id \colon N(Q)\to N(Q)$, for each
$Q \in \Is(\G_G)$. Let $CN$ denote the cokernel of $j$.
For $k \geq 0$, define inductively $C^0N=N$ and $C^kN=C(C^{k-1} N)$, together with the induced maps $C^{k} \to DC^{k}$.
\end{definition}

Here is a dual construction to the E-resolution given in \cite[17.13]{lueck3}.
\begin{lemma}\label{lem: coresolution}
For any $\RG_G$-module $N$, the finite length sequence 
$$0 \to N \xrightarrow{j} DN \to DCN \to \dots \to DC^mN \to 0$$
is an exact coresolution of Mackey functors, for some $m\geq 0$. 
\end{lemma}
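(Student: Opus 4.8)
The plan is to mimic the proof of L\"uck's E-resolution (Proposition \ref{prop:resolution}), but dualized: instead of building projective covers from the bottom up using the extension functors $E_x$, we build injective-like coresolutions from the top down using the coinduction-type functors $D_Q$. The key structural input is that for an object $Q$ of maximal length $l(Q) = l(N)$, the adjoint map $N(Q) \to D_Q(N(Q))(G/Q)$ should be an isomorphism, so that the cokernel $CN$ has strictly smaller length, $l(CN) < l(N)$. This is the exact dual of the observation in \S\ref{subsection:resolutions} that $\Res_x \phi$ is an isomorphism when $l(x) = l(M)$, and it is what makes the induction terminate after $m = l(N)$ steps.

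First I would verify that $D_Q(V)$, as defined in Example \ref{ex: coMackey}, restricted via $\cO(G) \to \cD(G)$ to an $\RG_G$-module, evaluates as $D_Q(V)(G/K) = \Hom_{R[W_G(Q)]}(R[(G/K)^Q], V)$, and that $(G/K)^Q \neq \emptyset$ forces $Q$ to be subconjugate to $K$; hence $D_Q(V)(G/K) = 0$ unless $\bar Q \leq \bar K$. In particular $l(D_Q(V)) \leq l(Q)$. Next, I would check that the adjunction defining $j$ is the right one: $D_Q$ should be right adjoint (as a functor of $\RG_G$-modules) to the evaluation-at-$Q$ functor $\Res_Q$, up to the identification of $R[W_G(Q)]$-modules, so that $\Hom_{\RG_G}(M, D_Q(V)) \cong \Hom_{R[W_G(Q)]}(\Res_Q M, V)$; this is the formal dual of the $(E_Q, \Res_Q)$ adjunction and follows from Proposition \ref{prop:adjoint}. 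Then $j\colon N \to DN = \bigoplus_{Q} D_Q(N(Q))$ is the map whose $Q$-component is adjoint to $\id_{N(Q)}$.

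The core of the argument is then the injectivity of $j$ and the length drop. For injectivity: evaluating at $K \in \cF$, the component $N(K) \to D_K(N(K))(G/K) = \Hom_{R[W_G(K)]}(R[W_G(K)], N(K)) \cong N(K)$ is (up to the standard identification) the identity, since the only morphisms $G/K \to G/K$ fixed by no smaller subgroup are the automorphisms; the remaining summands $D_Q(N(Q))(G/K)$ with $\bar Q < \bar K$ only add to the target. Hence $j(K)$ is a split injection of $R$-modules for every $K$, so $j$ is a monomorphism of $\RG_G$-modules. For the length drop: if $l(K) = l(N)$ then no $Q$ with $\bar Q \lneq \bar K$ has $N(Q) \neq 0$ and larger length available, and one checks that $j(K)$ is an isomorphism onto $DN(K)$; I would make this precise by noting that for maximal $K$ the only contribution to $DN(K)$ comes from $D_K(N(K))$ and equals $N(K)$, so $CN(K) = \coker(j(K)) = 0$, giving $l(CN) < l(N)$. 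Iterating, $C^{l(N)+1}N = 0$, which terminates the sequence; exactness in the middle is the dual bookkeeping of the E-resolution case, using that each $D(C^k N)$ surjects onto the "top layer" of $C^k N$ with kernel $C^{k+1}N$. Set $m \leq l(N)$.

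The main obstacle I anticipate is pinning down the identification $D_Q(N(Q))(G/Q) \cong N(Q)$ as $R[W_G(Q)]$-modules and, relatedly, showing $j$ restricted to the top is an isomorphism rather than merely a split injection --- i.e.\ controlling the "off-diagonal" contributions $D_Q(N(Q))(G/K)$ for $\bar Q \lneq \bar K$ and confirming they vanish precisely when $l(K)$ is maximal. This requires a clean analysis of the $W_G(Q)$-set structure of $(G/K)^Q$, parallel to Lemma \ref{lem:fixedpointformula}, together with the EI-property so that $\bar Q \leq \bar K$ and $\bar K \leq \bar Q$ force $\bar Q = \bar K$. Once that is in hand, the induction on $l(N)$ runs exactly as in the proof of Proposition \ref{prop:resolution}, with $D$ in place of $E$ and $C$ in place of $K$, and the finite length $m$ of the coresolution is bounded by $l(N)$.
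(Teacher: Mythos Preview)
Your overall strategy---dualize the $E$-resolution, use the adjunction $(\Res_Q, D_Q)$, show $j$ is a split monomorphism at each object, then induct on a length invariant---is exactly the paper's approach, and your argument for the injectivity of $j$ is fine. The gap is in the termination step: you have the direction of the filtration backwards.

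You claim that for $K$ with $l(K) = l(N)$ (so $K$ \emph{maximal} in the support of $N$), ``the only contribution to $DN(K)$ comes from $D_K(N(K))$''. This is false. Since $D_Q(V)(G/K)\neq 0$ exactly when $(Q)\leq (K)$, the summand $DN(K)=\bigoplus_Q D_Q(N(Q))(K)$ receives a term from \emph{every} $Q$ subconjugate to $K$ with $N(Q)\neq 0$, and there is no reason for those smaller $Q$ to have $N(Q)=0$. Concretely, take $\cF=\{1,H\}$ with $H\neq 1$ and $N$ with $N(1)\neq 0$, $N(H)\neq 0$: then $DN(G/H)\cong N(H)\oplus N(1)^H$, the map $j(G/H)$ is the inclusion $x\mapsto (x, \text{res}(x))$, and $CN(G/H)\cong N(1)^H$ need not vanish. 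So $l(CN)=l(N)$ in general, and the induction does not run on $l(-)$.

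The fix is to work at the \emph{bottom} of the support. Set $s(N)=\min\{\,l(K): N(K)\neq 0\,\}$. If $l(K)=s(N)$, then $(Q)\leq (K)$ and $N(Q)\neq 0$ together force $l(Q)\leq l(K)=s(N)\leq l(Q)$, hence $l(Q)=l(K)$; the EI-property then gives $(Q)=(K)$. So at such $K$ the only summand of $DN(K)$ is $D_K(N(K))(K)\cong N(K)$, the map $j(K)$ is an isomorphism, and $CN(K)=0$. Thus $s(CN)>s(N)$, and since $s$ is bounded above by $l(\G_G)$ the process stops after at most $l(\G_G)+1$ steps. (The paper's proof establishes exactly the two facts you need---$D_Q(V)(K)\neq 0$ only for $(Q)\leq(K)$, and $D_K(V)(K)\cong V$---but its concluding sentence about ``length'' decreasing is loose; the invariant that actually moves is the minimum level, not the maximum.)
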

\begin{proof}
For any $\RG_G$-module $N$, the map $j\colon N \to DN$ defined above is injective, so we have a short exact sequence
$$0 \to N\xrightarrow{j} DN \to CN \to 0.$$
Iterating the above process, we obtain
$$0 \to CN \to DCN \to C^2N \to 0$$
and so on. By slicing, we get an exact sequence, or coresolution:
$$ 0 \to N \xrightarrow{j} DN \to DCN \to \cdots \to DC^{k-1} N \to DC^{k}N\to \cdots$$
 When $N$ is a $\RG_G$-module of a finite length, which is the
case in our situation, this coresolution has a finite length. 
To check this, we use the definition of $D_Q(V)$ in Example \ref{ex: coMackey} 
to get
$$D_Q (V) (K)=\Hom _{R[W_G(Q)]} ( R[(G/K)^Q], V)$$
for any $R[W_G(Q)]$-module $V$.
Therefore $D_Q (V) (K)$ is only nonzero for $(Q) \leq (K)$, and at $(Q)=(K)$ the $R[W_G(K)]$-module $D_Q (V )(K)$
is isomorphic to $V$, via the isomorphism $W_G(Q) \cong W_G(K)$ induced by conjugation.
This shows that the length of the module $C^k N$ is properly smaller than the length of $C^{k-1} N$ for all $k\geq 1$. 
\end{proof}

We will prove Theorem \ref{thm:mislin_orbit} by showing that $H\mapsto
\Ext^*_{\RG_H}(M,N)$ has a cohomological Mackey functor structure
which is conjugation invariant.  
First
we describe the Mackey functor structure on $\Hom_{\RG_{?}}(M,N)$.

\begin{theorem}
\label{thm:Mackey structure} For a right $\RG_G$-module $M$ and a
Mackey functor $N$, let $$\uHom(M,N)\colon \cD(G) \to \RMod$$ denote
the function defined by $S\mapsto \Hom_{\RG_G}(M\otimes_R\uR{S},N)$
for any finite $G$-set $S$. Then $\Hom_{\RG_?}(M,N)$ inherits a
Mackey functor structure.
\end{theorem}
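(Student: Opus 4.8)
The plan is to construct the two variances of the bivariant functor $\uHom(M,N)$ separately, and then to reduce the Mackey axioms for it to the single double-coset identity holding inside the Mackey functor $N$.

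\emph{Contravariant part and axiom (M2).} First, $S\mapsto M\otimes_R\uR{S}$ is a covariant functor on $\cD(G)$ with values in right $\RG_G$-modules: a $G$-map $f\colon S\to T$ induces an $\RG_G$-homomorphism $\uR{S}\to\uR{T}$ by post-composition, $\sigma\mapsto f\circ\sigma$ on $\uR{S}(G/K)=R\Map_G(G/K,S)$, and one then tensors with $M$ objectwise over $R$. Composing with the contravariant functor $\Hom_{\RG_G}(-,N)$ produces the contravariant functor $M^*:=\uHom(M,N)$. Axiom (M2) is immediate: $\uR{S\disjointunion T}\cong\uR{S}\oplus\uR{T}$ gives $M\otimes_R\uR{S\disjointunion T}\cong(M\otimes_R\uR{S})\oplus(M\otimes_R\uR{T})$, and $\Hom_{\RG_G}(-,N)$ carries finite direct sums to direct sums.

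\emph{Covariant part.} The transfer maps must use the Mackey structure of $N$, since in general there is no $\RG_G$-homomorphism $M\otimes_R\uR{T}\to M\otimes_R\uR{S}$ covering a $G$-map $f\colon S\to T$. Given $f\colon S\to T$ and $\alpha\colon M\otimes_R\uR{S}\to N$, I would define $f_*\alpha\colon M\otimes_R\uR{T}\to N$ at an object $G/K$ (with $K\in\cF$) by the following formula. For $m\in M(G/K)$ and a point $t\in T^K=\Map_G(G/K,T)$, the fiber $f^{-1}(t)$ is a finite $K$-set, say $f^{-1}(t)=\disjointunion_j K/K_j$ with $K_j\le K$ (hence $K_j\in\cF$); choosing representatives $s_j\in\bigl(f^{-1}(t)\bigr)^{K_j}\subseteq S^{K_j}$, set
$$(f_*\alpha)_{G/K}(m\otimes t)=\sum_j\ind_{K_j}^{K}\Bigl(\alpha_{G/K_j}\bigl(\res^{K}_{K_j}(m)\otimes s_j\bigr)\Bigr)\in N(G/K),$$
where $\res^{K}_{K_j}$ is the structure map of $M$ along the projection $G/K_j\to G/K$, the inner term lies in $N(G/K_j)$, and $\ind_{K_j}^{K}\colon N(G/K_j)\to N(G/K)$ is the covariant (transfer) map of the Mackey functor $N$. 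The identification $\uHom(M,N)(G/H)\cong\Hom_{\RG_H}(\res^G_H M,\res^G_H N)$ — from Lemma~\ref{lem:ind-res formulas1}, which gives $M\otimes_R\uR{G/H}\cong\ind_H^G\res^G_H M$, together with the adjunction of Proposition~\ref{prop: induction-restriction} — shows that on transitive $G$-sets this formula reproduces the usual restriction and transfer of Mackey functors.

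\emph{Well-definedness and the axioms.} I expect the main work to be the following two points. (i) $f_*\alpha$ does not depend on the choice of orbit representatives $s_j$: replacing $s_j$ by $ks_j$ with $k\in K$ conjugates the isotropy group $K_j$ to ${}^kK_j$, and the corresponding summand is unchanged because of the standard relation $\ind_{{}^kK_j}^{K}\circ c_k=\ind_{K_j}^{K}$ in $N$ combined with the naturality of $\alpha$ under the conjugation isomorphisms of $\OrG$. (ii) $f_*\alpha$ is a morphism of $\RG_G$-modules: compatibility with a restriction map $G/K'\to G/K$ with $K'\le K$ is checked by comparing the decomposition of $f^{-1}(t)$ into $K$-orbits with its decomposition into $K'$-orbits, and the required identity is exactly the Mackey double-coset formula (axiom (M1)) for $N$ applied to $\res^{K}_{K'}\circ\ind_{K_j}^{K}$. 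Once these are in place, functoriality $(g\circ f)_*=g_*\circ f_*$ follows from transitivity of the transfer in $N$ and the way the fibers of a composite refine, and axiom (M1) for $\uHom(M,N)$ follows by fixing a point in the lower-right corner of a pullback square of finite $G$-sets, expanding $h_*g^*$ and $k^*f_*$ in terms of fibers (which agree since the square is a pullback), and invoking (M1) for $N$. The one genuinely nontrivial ingredient throughout is the double-coset relation in $N$; the orbit-category formalism of Section~\ref{sect:definitions} is what lets us organize the bookkeeping on transitive $G$-sets and recognize the operations that occur as the restriction and transfer maps of $N$.
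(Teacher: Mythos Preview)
Your proposal is correct and the underlying idea is the same as the paper's: the contravariant structure comes for free from functoriality of $S\mapsto\uR{S}$, and the transfer must be built from the covariant part of $N$. The difference is in packaging. You work in coordinates, fixing $G/K$, decomposing the fiber $f^{-1}(t)$ into $K$-orbits, and writing $f_*\alpha$ as an explicit sum of $\ind_{K_j}^K$'s. The paper instead uses a coordinate-free formula: for $\varphi_S\colon M\otimes_R\uR{S}\to N$, it sets
\[
(f_*\varphi_S)(V)(x\otimes\alpha)=F_*\bigl(\varphi_S(U)(F^*(x)\otimes\beta)\bigr),
\]
where $(U,F,\beta)$ is the pullback of $f\colon S\to T$ along $\alpha\colon V\to T$. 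When $V=G/K$ and $\alpha$ corresponds to $t\in T^K$, the pullback $U$ is exactly $\coprod_j G/K_j$, so this unpacks to your sum; the two definitions agree.

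What the pullback formulation buys is that axiom (M1) becomes almost automatic: given a pullback square with corners $X,Y,S,T$, one pastes it against the pullback defining $f_*$ to obtain the pullback defining $h_*$, and the identity $h_*g^*=k^*f_*$ drops out from (M1) for $N$ applied once. Your route requires separately checking independence of orbit representatives, naturality in $G/K$, functoriality of $f\mapsto f_*$, and then (M1), each time invoking the double-coset formula for $N$; this is correct but more bookkeeping. Since you have identified exactly the right nontrivial input (the Mackey axiom for $N$), either route completes the proof.
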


\begin{proof}
We will first define the induction and restriction maps to see that
$\uHom(M,N)$ is a bifunctor. For $f\colon S\rightarrow  T$ a
$G$-map,  the \emph{restriction} map
$$f^*\colon  \Hom_{\RG_G}(M\otimes_R\uR{T},N)\rightarrow
\Hom_{\RG_G}(M\otimes_R\uR{S},N)$$ is the composition with
$M\otimes_R \uR{S}\xrightarrow{\id\otimes\tilde{f}}M\otimes_R
\uR{T}$ where $\tilde{f}$ denotes  is the linear extension of the
map induced by $f$. Since the functors $\uR{S}$ satisfy axiom (M2),
so does $\uHom(M,N)$.

For $f\colon S\rightarrow  T$ a $G$-map, we  define  the
\emph{induction} map
$$f_*\colon  \Hom_{\RG_G}(M\otimes_R\uR{S},N)\rightarrow
\Hom_{\RG_G}(M\otimes_R\uR{T},N)$$ in the following way: let
$\varphi _S\colon M\otimes_R \uR{S}\rightarrow  N$ be given. We
will describe the homomorphism $\varphi _T=f_*(\varphi _S)$.
$$\varphi _T(V)(x\otimes\alpha)=F_*\Bigl (\varphi _S(U)(F^*(x)\otimes
\beta ) \Bigr )$$ for $x\in M(V)$ and $\alpha\colon V\rightarrow  T$,
where $U$, $\beta $ and $F$ are given by the pull-back
$$\xymatrix{U\ar[r]^\beta \ar[d]_F&S\ar[d]^f\\V\ar[r]_\alpha&T}$$
It is easy to check that this formula for $\varphi _T$ gives an
$\RG_G$-homomorphism, using the assumption that $N$ is a Mackey functor.

We need to check axiom (M1) for $\uHom(M,N)$. For a given  pull-back
square
$$\xymatrix{X\ar[r]^h\ar[d]_g&Y\ar[d]^k\\S\ar[r]_f&T}$$
we need to show that $h_*\circ g^*=k^*\circ f_*$. Let $\gamma\colon
V \to Y$ be any $G$-map, and consider the extended pull-back diagram
$$\xymatrix{U\ar[r]^\delta \ar[d]_F&X\ar[r]^g\ar[d]_h&S\ar[d]^f\\
V\ar[r]_\gamma&Y\ar[r]_k&T}$$ The maps $\alpha = k \circ \gamma$ and
$\beta = g\circ \delta$ may be used to compute $f_*(\varphi_S)$ as
above, and the left-hand square may be used to compute $h_*$.

For any element $\varphi_S\colon M\otimes_R \uR{S}\to  N$, we have
\begin{eqnarray*}
(k^*\circ f_*(\varphi _S))(V)(x\otimes\gamma)&=&(f_*(\varphi _S)
\circ (\id\otimes k))(V)(x\otimes\gamma)\\
&=&f_*(\varphi _S)(V) (x\otimes (k\circ\gamma))\\
&=&F_*(\varphi _S(U) (F^*(x)\otimes (g\circ \delta ))
\end{eqnarray*}
for any $x\in M(V)$ and $\gamma\colon V\to  Y$.
On the other hand,
\begin{eqnarray*}
(h_*\circ g^*(\varphi _S))(V)(x\otimes\gamma)&=&F_*((g^*\varphi _S)(U)
(F^*(x)\otimes\delta ))\\
&=&F_*(\varphi _S(U) (F^*(x)\otimes (g\circ\delta ))
\end{eqnarray*}
for any $x\in M(V)$ and $\gamma\colon V\to  Y$, so the formula (M1)
is verified.
\end{proof}

As an immediate consequence, for any subgroup $H\leq K$ the $G$-map
$f\colon G/H \to G/K$ induces  a restriction map $$\res^K_H\colon
\Hom_{\RG_K}(M,N) \to \Hom_{\RG_H}(M,N)$$ defined as the composition
of the  map
$$f^*\colon \Hom_{\RG_G}(M\otimes_R\uR{G/K},N)\to \Hom_{\RG_G}
(M\otimes_R\uR{G/H},N)$$ with the `Shapiro' isomorphisms:
$$\Hom_{\RG_G}(M\otimes_R\uR{G/H},N) \cong \Hom_{\RG_H}(M,N)$$
and
$$\Hom_{\RG_G}(M\otimes_R\uR{G/K},N) \cong \Hom_{\RG_K}(M,N)$$
given
by \cite[Cor.~2.12]{symonds2} and the adjointness  property (compare \cite[Lemma 2.8.4]{benson1}). Similarly,
we have the induction map
$$\ind_H^K\colon  \Hom_{\RG_H}(M,N) \to  \Hom_{\RG_K}(M,N)$$
defined by composing the Shapiro isomorphisms with $f_*$.

\begin{remark}
Since $\Res_H^G$ preserves projectives, we see that
$P \otimes_R \uR{G/H}$ is projective over $\RG_G$ whenever $P$
is projective over $\RG_G$ (check the categorical lifting
property directly or apply Lemma \ref{lem:ind-res formulas1}).
\end{remark}

\begin{proposition}
Let $\CC$ be a chain complex of right $\RG_G$-modules and $N$ be a
Mackey functor. Then, the cochain complex
$$\bC^*=\uHom(\CC,N)$$ with the differential $\delta\colon
\uHom(C_i,N)\rightarrow \uHom(C_{i+1},N)$ given by
$\delta(\varphi )=\varphi \circ\bd$ is a cochain complex of Mackey
functors.
\end{proposition}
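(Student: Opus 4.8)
The plan is to treat the construction of Theorem \ref{thm:Mackey structure} as a contravariant functor $\uHom(-,N)$ from the category of right $\RG_G$-modules to the category of Mackey functors (morphisms being the bivariant natural transformations), and then to apply it to the chain complex $\CC$. Theorem \ref{thm:Mackey structure} is precisely the object-level statement: for each $i$ the bivariant functor $\bC^i=\uHom(C_i,N)$ defined by $S\mapsto\Hom_{\RG_G}(C_i\otimes_R\uR{S},N)$ is a Mackey functor. The differential $\delta\colon\bC^i\to\bC^{i+1}$ is, at the $G$-set $S$, precomposition with $\bd\otimes_R\id_{\uR{S}}$ (taking $S=G/G$ recovers $\varphi\mapsto\varphi\circ\bd$), so $\delta\circ\delta$ is precomposition with $\bd^2\otimes\id=0$. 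Thus everything reduces to the claim that any morphism $g\colon M'\to M$ of right $\RG_G$-modules induces a morphism of Mackey functors $g^*\colon\uHom(M,N)\to\uHom(M',N)$, given at $S$ by precomposition with $g\otimes_R\id_{\uR{S}}$; applying this with $g=\bd\colon C_{i+1}\to C_i$ then gives that each $\delta$ is a morphism of Mackey functors.

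To prove this claim I would check that $g^*$ commutes with the restriction and induction maps $f^*,f_*$ of Theorem \ref{thm:Mackey structure} for every $G$-map $f\colon S\to T$; compatibility with axiom (M2) is automatic, since the splitting was built from the modules $\uR{S}$, which do not involve the argument $M$. Commutation with $f^*$ is formal: on $\uHom(-,N)$ the map $f^*$ is precomposition with $\id\otimes\tilde f$, so both $f^*\circ g^*$ and $g^*\circ f^*$ are precomposition with the single map $g\otimes\tilde f\colon M'\otimes_R\uR{S}\to M\otimes_R\uR{T}$, and they agree.

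The substance is commutation with $f_*$. Here I would use the explicit formula $\varphi_T(V)(x\otimes\alpha)=F_*\bigl(\varphi_S(U)(F^*(x)\otimes\beta)\bigr)$ from the proof of Theorem \ref{thm:Mackey structure}, noting that the pull-back data $(U,\beta,F)$ of $\alpha\colon V\to T$ along $f\colon S\to T$ depends only on $\alpha$ and $f$, not on the module in the first slot. For $\varphi\colon M\otimes_R\uR{S}\to N$ and $x\in M'(V)$ one then computes
\[
\begin{aligned}
\bigl(f_*(g^*\varphi)\bigr)(V)(x\otimes\alpha) &= F_*\bigl(\varphi(U)(g(F^*(x))\otimes\beta)\bigr),\\
\bigl(g^*(f_*\varphi)\bigr)(V)(x\otimes\alpha) &= F_*\bigl(\varphi(U)(F^*(g(x))\otimes\beta)\bigr),
\end{aligned}
\]
and these coincide because $g$, being a natural transformation of $\RG_G$-modules (hence of their evident extensions to functors on finite $G$-sets), satisfies $g\circ F^*=F^*\circ g$. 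Taking $g=\bd$ finishes the proof that $\delta$ is a morphism of Mackey functors, and together with $\delta^2=0$ this gives the proposition. I expect the only real difficulty to be bookkeeping in this last step: one must make sure $F^*$ and $g$ are applied inside a single module component so that naturality of $g$ applies verbatim, and that the same pull-back data is legitimately reused to evaluate $f_*(g^*\varphi)$, $f_*\varphi$, and $g^*(f_*\varphi)$. Everything else is formal.
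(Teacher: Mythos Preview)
Your proof is correct and follows essentially the same approach as the paper: both invoke Theorem~\ref{thm:Mackey structure} for the object-level statement, dispose of compatibility with $f^*$ by the evident commutativity $(\bd\otimes\id)\circ(\id\otimes\tilde f)=(\id\otimes\tilde f)\circ(\bd\otimes\id)$, and verify compatibility with $f_*$ by unwinding the pull-back formula and using naturality $\bd F^*=F^*\bd$. The only difference is cosmetic---you phrase the argument as functoriality of $\uHom(-,N)$ in the first variable and then specialize to $g=\bd$, whereas the paper works directly with $\bd$ throughout.
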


\begin{proof}
We have seen that each $C^i=\uHom(C_i,N)$ is a Mackey functor
by Theorem \ref{thm:Mackey structure}.  We just need to show that
the coboundary maps are Mackey functor maps.  Given $f\colon
S\rightarrow  T$ we need to show the following diagram commutes:
$$\xymatrix{ \Hom_{\RG_G}(C_i\otimes
\uR{S},N)\ar[r]^{\delta_S}\ar@<2pt>[d]^{f_*}
&\Hom_{\RG_G}(C_{i+1}\otimes \uR{S},N)\ar@<2pt>[d]^{f_*}\\
\Hom_{\RG_G}(C_i\otimes \uR{T},N)\ar@<2pt>[u]^{f^*}\ar[r]^{\delta_T}
&\Hom_{\RG_G}(C_{i+1}\otimes \uR{T},N)\ar@<2pt>[u]^{f^*}}$$ The
proof of commutativity for $f^*$ is easy.  In this case, it follows
from the commutativity of the following diagram:
$$\xymatrix{ C_i\otimes \uR{S}\ar[d]^{\id\otimes f}&C_{i+1}\otimes
\uR{S}
\ar[l]_{\bd\otimes \id}\ar[d]^{\id\otimes f}\\
C_i\otimes \uR{T}&C_{i+1}\otimes \uR{T}\ar[l]_{\bd\otimes \id}}$$
For $f_*$ we check the commutativity directly: let $\varphi _S\colon
C_i\otimes \uR{S} \rightarrow N$ be an $\RG_G$-map. For $x\in
C_{i+1}(V)$ and $\alpha\colon V\rightarrow  T$, we have
\begin{eqnarray*}
[(\delta_T\circ f_*)\varphi _S](x\otimes\alpha)&=&(f_*\varphi _S)(\bd
x\otimes\alpha)\\
&=&F_*[\varphi _S(F^*(\bd x)\otimes\beta )]
\end{eqnarray*}
where
$$\xymatrix{U\ar[r]^\beta \ar[d]_F&S\ar[d]^f\\
V\ar[r]_\alpha&T}$$ on the other hand,
\begin{eqnarray*}
[(f_*\circ\delta_S )\varphi _S](x\otimes\alpha)&=&F_*[(\delta_S
\varphi _S)(F^*(x)\otimes\beta )]\\
&=&F_*[\varphi _S\circ(\bd\otimes \id)(F^*(x)\otimes\beta )]\\
&=&F_*[\varphi _S(\bd F^*(x)\otimes\beta )]
\end{eqnarray*}
since $\bd F^*=F^*\bd$, we are done.
\end{proof}

\begin{corollary}
Let $M$ be an $\RG_G$-module and $N$ be a Mackey functor.  Then,
$$\uExt^*(M,N)$$ has a Mackey functor structure.  As a Mackey
functor $\uExt^*(M,N)$ is equal to the homology of the
cochain complex of Mackey functors $\uHom(\PP,N)$ where $\PP$
is a projective resolution of $M$ as an $\RG_G$-module.
\end{corollary}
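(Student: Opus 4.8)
The plan is to combine the previous proposition — that $\uHom(\CC,N)$ is a cochain complex of Mackey functors when $\CC$ is a chain complex of $\RG_G$-modules — with the definition of $\Ext^*$ via projective resolutions and the fact that the Mackey functor structure is computed objectwise.

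\begin{proof}
Choose a projective resolution $\PP \twoheadrightarrow M$ of $M$ as a right $\RG_G$-module; such a resolution exists since the category of $\RG_G$-modules has enough projectives. Applying the preceding proposition to the chain complex $\PP$ and the Mackey functor $N$, we obtain a cochain complex $\uHom(\PP,N)$ of Mackey functors, with coboundary $\delta(\varphi) = \varphi \circ \bd$. Since the category of Mackey functors (with natural transformations as morphisms) is abelian, with kernels, images and cokernels formed objectwise, the cohomology $H^*(\uHom(\PP,N))$ is again a Mackey functor: for each finite $G$-set $S$ the value at $S$ is $H^*(\Hom_{\RG_G}(\PP \otimes_R \uR{S}, N))$, and the restriction and induction maps $f^*, f_*$ of the complex $\uHom(\PP,N)$ descend to cohomology because they are cochain maps commuting with $\delta$. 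This equips $\uExt^*(M,N)$ with a Mackey functor structure, and by construction $\uExt^*(M,N)(S) = H^*(\Hom_{\RG_G}(\PP \otimes_R \uR{S},N))$ for every finite $G$-set $S$.

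It remains to check two points of consistency. First, evaluating at $S = G/G$ gives $\uR{G/G} = \un{R}$, and $M \otimes_R \un{R} = M$, so $\uExt^*(M,N)(G/G) = H^*(\Hom_{\RG_G}(\PP,N)) = \Ext^*_{\RG_G}(M,N)$; thus the Mackey functor refines the ordinary $\Ext$-groups. Second, the Mackey functor structure does not depend on the choice of projective resolution: any two projective resolutions of $M$ are chain homotopy equivalent, and since $\uHom(-,N)$ carries a chain homotopy equivalence of complexes of $\RG_G$-modules to a cochain homotopy equivalence of complexes of Mackey functors (the homotopies are natural in $S$), the induced isomorphism on cohomology is an isomorphism of Mackey functors.

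The only mildly delicate step is the last one — verifying that a chain homotopy between two comparison maps of projective resolutions is sent by $\uHom(-,N)$ to a cochain homotopy through Mackey functor maps. But this is immediate from the explicit formula for $f_*$ in the proof of Theorem \ref{thm:Mackey structure}: the same pull-back construction that shows $\delta$ is a Mackey functor map shows that pre-composition with any $\RG_G$-map $\PP_i \to \PP'_{i+1}$ yields a Mackey functor map $\uHom(\PP'_{i+1},N) \to \uHom(\PP_i,N)$, and hence the standard homotopy operators assemble into maps of Mackey functors. Everything else is the formal machinery of derived functors applied in the abelian category of Mackey functors.
\end{proof}
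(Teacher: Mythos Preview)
Your proof is correct and follows essentially the same idea as the paper's. The paper's proof is a single line: it observes that for every finite $G$-set $S$, the complex $S \mapsto \PP \otimes_R \uR{S}$ is a projective resolution of $S \mapsto M \otimes_R \uR{S}$ (projectivity of the terms was noted in the remark preceding the proposition). This one observation simultaneously identifies the cohomology at each $S$ with $\Ext^*_{\RG_G}(M\otimes_R\uR{S},N)$ and makes independence of the resolution automatic, since any projective resolution of $M$ yields a projective resolution of $M\otimes_R\uR{S}$. Your argument reaches the same conclusion but routes independence through chain homotopies carried by the functor $\uHom(-,N)$; that works too, and your extra checks (value at $G/G$, naturality of homotopies) are all fine, just slightly more elaborate than necessary once the projectivity observation is made.
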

\begin{proof} To compute the $\Ext$-groups, 
note that $S\mapsto \PP\otimes_R \uR{S}$ is a projective resolution
of the module $S\mapsto M \otimes_R \uR{S}$, for every finite
$G$-set $S$. 
\end{proof}
\begin{remark}
It follows that a version of the Eckmann-Shapiro isomorphism
$$\Ext^*_{\RG_G}(M\otimes \uR{G/H},N)\cong \Ext^*_{\RG_H}(\Res_H^G
M,\Res_H^G N)$$ holds for the $\Ext$-groups over the orbit category (compare  \cite[2.8.4]{benson1}).
\end{remark}

\begin{remark} If $N$ is a Green module over a Green ring $\cG$, then
the Mackey functor $\uExt^*(M,N)$ also inherits a Green module
structure over $\cG$. The basic formula is a pairing $$\cG(S) \times
\Hom_{\RG_?}(M\otimes_R\uR{S},N) \to
\Hom_{\RG_?}(M\otimes_R\uR{S},N)$$ induced by the Green module
pairing $\cG \times N \to N$. For any $z\in \cG(S)$, $x\in M(V)$,
and $\alpha \colon V \to S$, we define
$$(z\cdot \varphi_S)(V)(x\otimes \alpha) = \alpha^*(z)\cdot
\varphi_S(V)(x\otimes \alpha)$$
for any $\varphi_S(V)\colon M(V) \otimes_R R\Mor(S, V) \to N(V)$.
The check that this pairing gives a Green module structure is left
to the reader. \qed
\end{remark}

\section{The proof of Theorem C}\label{sec: thmC}
The main purpose of this section to prove the following theorem.
\begin{theorem}
\label{thm:mislin_orbit} Let $G$ be a finite group, $R=\bZp$, and
$\cF$ be a family of subgroups in $G$. Suppose $H\leq G$ controls
$p$-fusion in $G$. Then,
$$ \res_H^G\colon \Ext^n_{\RG_G}(M,N)\rightarrow
\Ext^n_{\RG_H}(\res_H^GM, \res_H^GN)$$ is an isomorphism for $n\geqslant
0 $, provided that $M$ is an $\RG_G$-module and $N$ is a
cohomological Mackey functor satisfying the condition that $C_G(Q)$
acts trivially on $N(Q)$ and $M(Q)$ for all $p$-subgroups $Q\leq H$, with  $Q \in\cF $.
\end{theorem}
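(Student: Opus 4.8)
The plan is to reduce the statement to the classical Mislin--Symonds theorem for group cohomology by exhibiting a Mackey functor structure on $H \mapsto \Ext^*_{\RG_H}(\res_H^G M, \res_H^G N)$ and comparing dimension functions. First I would reduce to the case $N$ replaced by a suitable $D_Q(V)$. Using the coresolution of Lemma \ref{lem: coresolution},
$$0 \to N \xrightarrow{j} DN \to DCN \to \cdots \to DC^mN \to 0,$$
each term is a finite direct sum of functors of the form $D_Q(V)$ with $Q \in \cF$ a $p$-subgroup of $H$ (after conjugating $Q$ into $H$, which is possible since $H$ controls $p$-fusion, so $p \nmid |G/H|$). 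Since $\res^G_H$ is exact and commutes with these constructions, a five-lemma / hyper-cohomology spectral sequence argument reduces the theorem to proving that $\res^G_H$ induces an isomorphism $\Ext^n_{\RG_G}(M, D_Q(V)) \to \Ext^n_{\RG_H}(\res^G_H M, \res^G_H D_Q(V))$ for every $p$-subgroup $Q \leq H$ in $\cF$ and every $R[W_G(Q)]$-module $V$ on which $C_G(Q)$ acts trivially (so $V$ is really a module for $W_G(Q)/\text{image of }C_G(Q)$).

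Next I would identify $\Ext^*_{\RG_G}(M, D_Q(V))$ concretely. Since $D_Q(V)(S) = \Hom_{R[W_G(Q)]}(R[S^Q], V)$, the functor $M \mapsto \Hom_{\RG_G}(M, D_Q(V))$ should be adjoint (via the $Q$-fixed-point functor) to a Hom over the group ring $R[W_G(Q)]$; concretely I expect an Eckmann--Shapiro-type identity
$$\Ext^*_{\RG_G}(M, D_Q(V)) \cong \Ext^*_{R[W_G(Q)]}(S_Q M, V)$$
or a variant involving $M(Q)$ with its $W_G(Q)$-action, using that $D_Q(V)$ is coinduced from the single object $Q$ in the splitting-functor sense (the construction in Example \ref{ex: coMackey} is dual to the inclusion functor $I_Q$). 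Applying this both over $G$ and over $H$, the restriction map $\res^G_H$ becomes the restriction map in ordinary cohomology
$$\Ext^*_{R[W_G(Q)]}(-, V) \to \Ext^*_{R[W_H(Q)]}(-, V)$$
associated to $W_H(Q) = N_H(Q)/Q \leq N_G(Q)/Q = W_G(Q)$. Now I invoke the hypothesis: since $H$ controls $p$-fusion in $G$ and $C_G(Q)$ acts trivially on everything in sight, the pair $(W_G(Q), W_H(Q))$ satisfies the control-of-fusion condition after killing the normal subgroup generated by the image of $C_G(Q)$, so by Mislin's theorem \cite{mislin1}, \cite{symonds1} the restriction is an isomorphism on $H^*(-;\Fp)$, hence (after a Bockstein / $\bZp$ bootstrap, using that $R = \bZp$ and the $\Ext$-groups are finitely generated) an isomorphism with $R$-coefficients.

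The main obstacle I anticipate is the bookkeeping in the first reduction: verifying that $\res^G_H$ carries the coresolution $0 \to N \to DN \to \cdots$ over $G$ to the analogous coresolution over $H$ with matching terms, i.e.\ that $\res^G_H D_Q(V)$ decomposes as a sum of $D_{Q'}(V')$'s over the category $\G_H$ in a way compatible with the control-of-fusion bijection on $p$-subgroups and their $W$-groups. This is where the double-coset formula (Lemma \ref{lem:ind-res formulas2}) and the precise form of the control-of-$p$-fusion axiom (condition (ii), the factorization $g = ch$ with $c \in C_G(Q)$) are essential, and where the triviality hypothesis on the $C_G(Q)$-actions is used to make the comparison of $W$-modules actually work. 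The comparison of dimension functions (lengths) from Lemma \ref{lem: coresolution} ensures the coresolutions are finite, so the spectral-sequence argument terminates; I would organize this as a separate lemma (a "filtered" version of Theorem C, presumably Theorem \ref{thm:filtered mislin_orbit}) before deducing the stated theorem.
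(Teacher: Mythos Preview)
Your first sentence actually names the paper's strategy correctly, but then you abandon it and pursue something considerably more involved. The paper's proof of Theorem~\ref{thm:mislin_orbit} is short and direct: since $N$ is \emph{already} a cohomological Mackey functor, Proposition~\ref{prop:ext-properties} shows that the assignment $K \mapsto \Ext^n_{\RG_K}(M,N)$ is itself a cohomological Mackey functor (part (i)) which is conjugation invariant (part (ii), using the $C_G(Q)$-triviality hypothesis). One then applies the Cartan--Eilenberg stable element method (Theorem~\ref{thm:stableisom}) exactly as in the standard proof of Mislin's theorem: control of $p$-fusion together with conjugation invariance forces every element of $F(H)$ to be stable, so $\res^G_H$ is an isomorphism. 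No coresolution, no reduction to group cohomology over $W_G(Q)$.

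Your route instead tries to coresolve $N$ by the $D_Q(V)$'s and then pass to ordinary $\Ext$ over $R[W_G(Q)]$. There are two genuine problems here. First, you are not using the hypothesis that $N$ is a cohomological Mackey functor; the coresolution device is what the paper uses for the \emph{subsequent} Theorem~\ref{thm:filtered mislin_orbit}, where $N$ is merely an $\RG_G$-module, and even there the paper does not reduce to group cohomology but instead invokes Theorem~\ref{thm:mislin_orbit} at each term (having shown $D_Q(V)\otimes\Fp$ is cohomological). You have the logical dependence reversed. Second, your Eckmann--Shapiro step is shaky: the adjunction $\Hom_{\RG_G}(M, D_Q(V)) \cong \Hom_{R[W_G(Q)]}(M(Q), V)$ is correct (with $M(Q)=\Res_Q M$, not $S_Q M$), but passing to $\Ext$ requires $D_Q(-)$ to be exact, i.e.\ $R[(G/K)^Q]$ to be $R[W_G(Q)]$-projective for every $K\in\cF$, which fails for a general family~$\cF$ (note Theorem~\ref{thm:mislin_orbit} does not assume $\cF$ consists of $p$-subgroups). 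What you actually get is hypercohomology of the complex $\PP(Q)$, not $\Ext^*_{R[W_G(Q)]}(M(Q),V)$. For the same reason, the coresolution of $N$ will involve $D_Q(V)$ for \emph{all} $Q\in\cF$, not just $p$-subgroups, and the $C_G(Q)$-triviality hypothesis says nothing about those. So the proposed argument does not close as written; the fix is simply to drop the coresolution and argue directly with the Mackey structure on $\uExt$, as the paper does.
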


Certain Mackey functors (called \emph{cohomological}) are computable
by restriction to the $p$-Sylow subgroups and the conjugation action
of $G$ (see \cite[Chap.~XII, \S 10]{cartan-eilenberg1}, \cite{lam1}).

If $H\leq G$ is a subgroup, and $n\in N_G(H)$ then the $G$-map
$f\colon G/H \to G/H$ defined by $f(eH) = nH$ has an associated
conjugation homomorphism $c_n(h) = n^{-1}hn \in H$, for all $h\in
H$. For an arbitrary $\RG_G$-module $M$, the induced maps $f^*$ need
not be the identity on $M(G/H)$ even if $c_n = \id$ (e.g.~if $n \in
C_G(H)$).

\begin{definition}\label{def: conjugation-invariant}
We say a Mackey functor is \emph{cohomological} (over $\cF$) if $$\ind _H^K\res^K_H
(u)=|K\colon H|\cdot u$$ for all $u\in M(K)$, and all $H \leq K$ (for all $K \in \cF$). An
$\RG_G$-module $M$ with respect to a family $\cF$ is called
\emph{conjugation invariant} if $C_G(Q)$ acts trivially on $M(Q)$
for all $Q \in \cF$. A Mackey functor is called \emph{conjugation
invariant} if it is conjugation invariant as a functor over the
corresponding orbit category.
\end{definition}
The following lemma will be used in the proof of Theorem \ref{thm:filtered mislin_orbit}.
\begin{lemma}\label{lem: coMackey}
Let  $Q \in \cF$ and let $V$ be a right $R[W_Q(Q)]$-module. If $\cF$ is a family of $p$-subgroups, and $R=\Fp$,  then  $D_Q(V)\colon \cD(G) \to \RMod$ is a cohomological Mackey functor over $\cF$. If $C_G(Q)$ acts trivially on $V$, then $D_Q(V)$ is conjugation invariant.
\end{lemma}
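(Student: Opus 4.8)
The statement has two parts: that $D_Q(V)$ is a cohomological Mackey functor over $\cF$, and that it is conjugation invariant when $C_G(Q)$ acts trivially on $V$. By Example \ref{ex: coMackey}, we already know $D_Q(V)$ is a Mackey functor, so the first task is only to verify the cohomological identity $\ind_H^K\res_H^K(u) = |K:H|\cdot u$ for $H\leq K$ in $\cF$ and $u\in D_Q(V)(K)$. My plan is to unwind this through the explicit formulas in Example \ref{ex: coMackey}. Writing $u = \varphi_{G/K}\colon R[(G/K)^Q]\to V$, the restriction map along $f\colon G/H\to G/K$ is precomposition with $f^Q\colon (G/H)^Q\to (G/K)^Q$, and the induction map is the "transfer" formula summing $\varphi$-values over fibers. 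The composite $\ind_H^K\res_H^K$ applied to $u$ evaluated at $gK\in(G/K)^Q$ therefore counts, with multiplicity, the points of $(G/H)^Q$ lying over $gK$ under $f^Q$. The key combinatorial step is to show that the number of such preimages equals $|K:H|$ \emph{for each} fixed point $gK$, which is where the hypothesis that $\cF$ consists of $p$-subgroups and $R=\Fp$ enters: the fiber of $G/H\to G/K$ over $gK$ is a $Q^g$-set of size $|K:H|$, and its number of $Q^g$-fixed points is congruent to $|K:H|$ modulo $p$ (orbits of the $p$-group $Q^g$ have $p$-power size), hence equals $|K:H|$ in $\Fp$. So the composite acts as multiplication by $|K:H|$, as required.

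\textbf{On conjugation invariance.} For the second assertion, I would spell out what it means for $D_Q(V)$, regarded as a module over $\OrG$, to be conjugation invariant: for each $K\in\cF$ and each $n\in C_G(K)$, the automorphism of $D_Q(V)(K)$ induced by the $G$-map $f_n\colon G/K\to G/K$, $gK\mapsto gn^{-1}K$, must be the identity. This induced map is precomposition with the map $f_n^Q$ on $(G/K)^Q$, which sends $gK\mapsto gn^{-1}K$. I would then argue that $f_n^Q$ is, after identifying $(G/K)^Q$ appropriately, the map induced by conjugation by an element of $C_G(Q^g)$ on each relevant summand, so that the hypothesis "$C_G(Q)$ acts trivially on $V$" makes the resulting endomorphism of $\Hom_{R[W_G(Q)]}(R[(G/K)^Q],V)$ trivial. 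Concretely, using Lemma \ref{lem:fixedpointformula} to decompose $(G/K)^Q$ into $N_G(Q)$-orbits of the form $N_G(Q)/N_{{}^gK}(Q)$, the automorphism $f_n^Q$ permutes these orbits and acts within each; one checks that on the summand of $V$ attached to a subgroup conjugate to $Q$ inside $K$, the action is through an element of $C_G$ of that subgroup, which is trivial by hypothesis (transported along the conjugation isomorphism $W_G(Q)\cong W_G(\text{that subgroup})$).

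\textbf{Main obstacle.} The genuinely delicate point is the first part: pinning down, carefully and without sign or indexing errors, that $\ind_H^K\res_H^K$ on $D_Q(V)(K)$ really is multiplication by $|K:H|$ rather than by the number of $Q$-fixed points in the fiber as an integer. The whole argument hinges on passing to $\Fp$-coefficients at exactly the right moment so that the congruence $|(G/H)^Q_{gK}| \equiv |K:H| \pmod p$ upgrades to an equality; I would be careful to phrase the transfer formula so the fiber over a $Q$-fixed point $gK$ is literally a set with a $Q^g$-action (one must check $Q$ actually fixes all of this fiber pointwise in $(G/H)^Q$, versus merely setwise — in general $Q^g$ permutes the fiber, and it is the $Q^g$-fixed subset that contributes). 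So the real content is: for $gK \in (G/K)^Q$, the number of $Q^g$-fixed points in the fiber $f^{-1}(gK)\subseteq G/H$ is $\equiv |K:H| \pmod p$. The conjugation-invariance part is comparatively routine bookkeeping with the decomposition of Lemma \ref{lem:fixedpointformula}, though one should be attentive to which conjugate of $Q$ governs each summand.
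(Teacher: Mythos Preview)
Your proposal is correct and matches the paper's approach in substance. For the cohomological identity, both you and the paper count $Q$-fixed points in the fibers of $(G/H)^Q \to (G/K)^Q$; the paper first reduces to the case $H \trianglelefteq K$ of index $p$ (where the fiber over a $Q$-fixed point has either $0$ or exactly $p$ fixed preimages, hence $f_*f^* = 0 = p\cdot\id$ in $\Fp$), whereas you handle general $H \leq K$ in one stroke via the congruence $|(K/H)^{Q^g}| \equiv |K:H| \pmod p$ for the $p$-group $Q^g$ --- your route is marginally slicker but identical in spirit. For conjugation invariance, you have exactly the right key observation (for $gK \in (G/K)^Q$ and $c \in C_G(K)$, the element $gcg^{-1}$ lies in $C_G(Q)$), but the paper executes it more directly than your plan: there is no need for the decomposition of Lemma~\ref{lem:fixedpointformula}; one simply computes, for $\varphi \in \Hom_{R[W_G(Q)]}(R[(G/K)^Q],V)$,
\[
(c\varphi)(gK) = \varphi(gcK) = \varphi((gcg^{-1})\cdot gK) = (gcg^{-1})\cdot\varphi(gK) = \varphi(gK),
\]
using $W_G(Q)$-equivariance of $\varphi$ and the hypothesis that $C_G(Q)$ acts trivially on $V$.
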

\begin{proof}
Since all subgroups in $\cF$ are $p$-groups, for the first part we only need to
show that the composite $\ind _H^K\res^K_H
(u)=p\cdot u$, for $K \in \cF$ and $H \leq K$ a normal  of index $p$.

Let  $f\colon G/H \to G/K$ be
the $G$-map given by $gH\mapsto gK$. Consider the induced map
$f^Q \colon (G/H)^Q \to (G/K)^Q$. Take $ t \in (G/K)^Q$. If there
is no $s\in (G/H)^Q$ such that $f(s)=t$, then the transfer is
trivially zero. Suppose that there is at least one element
$s=gH$ which is fixed by $Q$ and maps to $ t=gK$. Let
$k_1,\dots ,k_p$ be coset representatives of $H$ in $K$.
Since $k_i$ normalizes $H$,  the element $gk_i H\in (G/H)^Q$ for each $i$. Therefore, there are exactly $ p$ different $s\in (G/H)^Q$
that map to $t$. It follows that $f_*\circ f^*$ is multiplication by $p$, as required. Since we are working here over the finite field $\Fp$, all the composites
$f_*\circ f^*=0$.

We now show that $D_Q(V)$ is conjugation invariant if $C_G(Q)$ acts trivially on $V$. In other words, we claim that for all
$K \in \cF$, the centralizer $C_G(K)$ acts trivially on $\Hom  _{R]W_G(Q)]} (R[G/K]^Q, V)$.
Consider the way the action is defined: let
$c \in C_G(K)$ and $\varphi\colon  R[G/K]^Q \to V$ be an $R[W_G(Q)]$-map.
Then $(c\varphi) (gK)=\varphi(gcK )$. On the other hand since $ gK\in (G/K)^Q$,
we have $Q^g \leq K$. So, $c$ centralizes $Q^g$. This means
$gcg^{-1}$ centralizes $Q$ and hence acts trivially on $ V$.
This gives
$$\varphi(gcK)=\varphi( gcg^{-1} gK)=gcg^{-1} \varphi(gK)=\varphi(gK)$$
Therefore $(c\varphi)(gK)=\varphi(gK)$ for all $ gK$. This shows that
$c \in C_G(K)$ acts as the identity on $\Hom  _{R[W_G(Q)]} (R[G/K]^Q, V)$.
\end{proof}

The cohomological and conjugation properties are inherited by the $\Ext$-functors.
\begin{proposition}\label{prop:ext-properties}
Let $M$ and $N$ be $\RG_G$-modules relative to some family $\cF$.
\begin{enumerate}
\item If $N$ is a cohomological Mackey functor over $\cF$, then $\uExt^*(M,N)$ is
a cohomological Mackey functor over all subgroups $H \leq G$.
\item If both $M$ and $N$ are conjugation
invariant with respect to $\cF$, then $\uExt^*(M,N)$ is conjugation
invariant with respect to all subgroups $H\leq G$.
\end{enumerate}
\end{proposition}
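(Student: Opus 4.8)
The plan is to establish both statements by transporting the structure on the Mackey functor $S \mapsto \Hom_{\RG_G}(M\otimes_R \uR{S}, N)$ built in Theorem \ref{thm:Mackey structure} and the preceding Proposition through a projective resolution of $M$, and then tracking how the defining identities behave under the connecting maps in cohomology. Concretely, fix a projective resolution $\PP \twoheadrightarrow M$ over $\RG_G$. By the remark following Theorem \ref{thm:Mackey structure}, each $P_i \otimes_R \uR{G/H}$ is projective over $\RG_G$, and by the Corollary at the end of Section \ref{sect:Mackey}, the cochain complex $\uHom(\PP, N)$ computes $\uExt^*(M,N)$ as a complex of Mackey functors. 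So in both parts I am reduced to checking that a \emph{closed cochain-level} identity, valid after evaluation at the module level, descends to cohomology; since the operations involved ($\ind_H^K$, $\res^K_H$, and the $C_G(Q)$-action) are all induced by chain maps, they pass to homology functorially, and any identity of the form (chain map $A$) $=$ (scalar)$\cdot$(chain map $B$), or (chain map) $=$ (identity), that holds on $\uHom(P_i, N)$ for every $i$ will hold on $H^*$.

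For part (i): by Theorem \ref{thm:Mackey structure} and the Corollary, $\uExt^*(M,N)$ is already a Mackey functor over $\cD(G)$, hence over every subgroup $H \leq G$ by restriction along $\cD(H) \to \cD(G)$. It remains to verify the cohomological identity $\ind_H^K \res^K_H(u) = |K : H| \cdot u$ for all $u \in \uExt^*(M,N)(K)$ and all $H \leq K$. I would prove this one cochain-degree at a time: on $C^i = \uHom(C_i, N) = (S \mapsto \Hom_{\RG_G}(C_i \otimes_R \uR{S}, N))$ the composite $\ind_H^K \res^K_H$ is, after applying the Shapiro isomorphisms, exactly the map $f_* f^*$ for $f\colon G/H \to G/K$, and plugging the pullback of $f$ along itself into the explicit formula for $f_*$ from the proof of Theorem \ref{thm:Mackey structure} shows $f_* f^* = \sum_{k \in K/H}(\text{automorphism of } G/H)_*$ composed with $f^*$; using that $N$ is a cohomological Mackey functor (so the analogous identity holds for $N$ itself, i.e.\ $\ind_Q^{Q'}\res^{Q'}_Q$ is multiplication by the index on $N$) together with the definition of the $\uHom$-Mackey structure via $N$'s own transfers, one gets $f_* f^* = |K:H|\cdot \id$ on each $C^i$. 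Since this is a chain identity (it commutes with $\delta$, because $\ind$ and $\res$ are cochain maps by the Proposition just before the Corollary), it holds on cohomology.

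For part (ii): here the point is that if $c \in C_G(Q)$ (or more generally $C_G(H)$ for $H$ an object of the subgroup's orbit category) then the automorphism $f\colon G/Q \to G/Q$, $eQ \mapsto cQ$, acts on $\uHom(C_i, N)(Q) = \Hom_{\RG_Q}(\res^G_Q C_i, \res^G_Q N)$ by conjugating both $M$ and $N$; but $M$ and $N$ are conjugation invariant, so $c$ acts trivially on $C_i(Q')$ and $N(Q')$ for the relevant $Q'$, hence trivially on the whole $\Hom$-group, hence trivially on each $C^i(Q)$, hence trivially on $H^*(C^\bullet)(Q) = \uExt^*(M,N)(Q)$. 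The one subtlety is bookkeeping: one must check that the $C_G(Q)$-action on $\uExt^*(M,N)(Q)$ coming from the Mackey structure is genuinely induced by the chain-level conjugation action and not twisted by something else — but this is immediate from how $f^*$ was defined in Theorem \ref{thm:Mackey structure} (composition with $\id \otimes \tilde f$) together with naturality of Shapiro's lemma.

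The main obstacle I anticipate is purely notational rather than conceptual: carefully unwinding the Shapiro isomorphisms $\Hom_{\RG_G}(M\otimes_R\uR{G/H},N) \cong \Hom_{\RG_H}(\res^G_H M, \res^G_H N)$ so that the abstract transfer $\ind_H^K$ on $\uHom(M,N)$, the concrete formula for $f_*$ from Theorem \ref{thm:Mackey structure}, and $N$'s own cohomological identity all line up to give the clean factor $|K:H|$ in part (i). Once that identification is pinned down for $i=0$ it propagates through the resolution formally, so the effort is front-loaded.
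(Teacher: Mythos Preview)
Your treatment of part (i) is essentially the paper's argument: compute $f_* f^*$ at the cochain level via the pull-back square, reduce to $F_*F^*$ on $N$, and use that $N$ is cohomological to get the factor $|K:H|$. Fine.

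Part (ii) has a real gap. You write that $c \in C_G(Q)$ ``acts trivially on $C_i(Q')$'' because $M$ is conjugation invariant. But $C_i = P_i$ is a term of the projective resolution, not $M$ itself, and conjugation invariance of $M$ does \emph{not} propagate to the $P_i$. A free module $\uR{G/K}$ is typically not conjugation invariant: $C_G(Q)$ acts on $R[(G/K)^Q]$ by translating cosets, and this action is almost never trivial. So the chain-level identity $f^* = \id$ on $\uHom(P_i, N)(Q)$ that you are asserting simply fails.

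What you actually get from conjugation invariance of $M$ is weaker: the automorphism $J_c$ of $\Or_{\cF_Q}Q$ induces a chain self-map $\PP(J_c)$ of $\res^G_Q\PP$ that \emph{lifts} the identity on $\res^G_Q M$, and hence is chain \emph{homotopic} to $\id$ by uniqueness of liftings through projective resolutions. This is exactly the extra step the paper supplies: write $f^*$ on $\uHom(\PP,N)$ as $\Hom(\lambda,\id)$, observe $\lambda \simeq \lambda \circ (\PP^*(F)\otimes \id)$ via that chain homotopy, and then use conjugation invariance of $N$ to see the latter map induces the identity. The upshot is that $f^*$ is only chain-homotopic to $\id$, which is still enough to conclude triviality on $\Ext$, but you must invoke the homotopy-lifting argument rather than claim a termwise identity.
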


\begin{proof}
We have seen that for $f\colon S\rightarrow  T$, the induced maps
$$\xymatrix{\Hom_{\RG_G}(M\otimes \uR{S},N)\ar@<2pt>[r]^{f_*}
&\Hom_{\RG_G}(M\otimes \uR{T},N)\ar@<2pt>[l]^{f^*}}$$ satisfy the
property that
\begin{eqnarray*}
[(f_*\circ f^* )\varphi _T](V)(x\otimes\alpha)&=&F_*[f^*(
\varphi _T)(U)(F^*(x)\otimes\beta )]\\
&=&F_*[\varphi _T(U)(F^*(x)\otimes(f\circ\beta ))]\\
&=&F_*[\varphi _T(U)( F^*(x)\otimes(\alpha\circ F))]\\
&=&(F_* \circ F^*) [\varphi _T (V)(x\otimes\alpha)]
\end{eqnarray*}
for all $x \in M(V)$ and $\alpha\colon V \to T$. In the last
equality we used the invariance of $\varphi_T $ with respect to the
$G$-map $F\colon U \to V$ (our notation comes from the definition of
$f_*$ above). Hence, if $f\colon G/H\rightarrow G/K$ and $F_*\circ
F^*$ is multiplication by $|K\colon H|$ (this follows from a count
of double cosets), then $f_*\circ f^*$ is also multiplication by
$|K\colon H|$.
\smallskip

Let $M$ and $N$ be conjugation invariant right $\RG_G$-modules, and
let $\PP$ be a projective resolution of $M$ over $\RG _G$. To show
that $\uExt^*(M,N)$ is conjugation invariant, it is enough to show
that the chain map induced by the conjugation action on 
$\uHom (\PP, N)$ is homotopy equivalent to the identity. 
We remark that the action of an element $c \in C_G(H)$ gives an automorphism  $J_c\colon \OrH \to \OrH$, and induces an $\RG_H$-module chain map $\PP(J_c)\colon \Res_H^G(\PP) \to \Res_H^G(\PP)$.

 If $f\colon
G/H\rightarrow G/H$ is given by $eH\mapsto cH$ where $c\in C_G(H)$,
then for each degree $i$,
$$f^*_i \colon \Hom_{\RG_G}(P_i \otimes \uR{G/H} ,N) \rightarrow
\Hom_{\RG_G}(P_i \otimes \uR{G/H} ,N)$$ is given by
$$f_i^*(\varphi _S)(U)(x\otimes\alpha)= \varphi _S (U) ( x\otimes f\circ \alpha )$$
where $S= G/H$, $x\in P_i(U)$, and $\alpha\colon U \to G/H$ is a $G$-map.
In other words, $f^*_i = \Hom_{\RG_G}(\lambda_i, \id)$, where
$\lambda_i(x \otimes \alpha) = x\otimes f\circ \alpha$ defines a
chain map $$\lambda\colon \PP \otimes \uR{G/H} \to \PP \otimes \uR{G/H}.$$

We may assume that $U=G/K$ with $K  \in \cF$. Let $\alpha(eK) = gH$.
The conjugation action of $c\in C_G(H)$ on $M(U)$ or $N(U)$ is given
by the $G$-map $F\colon G/K \to G/K$, where $F(eK) = gcg^{-1}K$ and
$f\circ \alpha = \alpha\circ F$. We remark that $z:=gcg^{-1} \in
C_G(K)$, since $K \subseteq gHg^{-1}$, and that $\PP^*(F) = \PP(J_z)(K)$. Notice that
$$f_i^*(\varphi _S)(U)(x\otimes\alpha)=(\varphi _S (U) (x\cdot P_i^*(F)^{-1}\otimes \alpha))\cdot N^*(F),$$
showing that the maps $f^*_i$ are just given by the natural action  
maps of $c$ on the domain and range of the $\Hom$.
 Now observe that  $$\PP(J_z)\colon\Res_K^G(\PP) \to \Res_K^G(\PP)$$ is a chain map lifting $M(J_z)\colon \Res_K^G(M) \to \Res_K^G(M)$. Since
 $M$ is conjugation invariant, it follows that $\PP(J_z)\simeq \id$ by
 uniqueness (up to chain homotopy) of lifting in projective resolutions.
Therefore $\lambda_1 := \lambda\circ (\PP^*(F) \otimes \id)\simeq
\lambda$, and $f^* \simeq \Hom(\lambda_1, \id)$. But for all $x \in
P_i(U)$,  we have $$\Hom(\lambda_1, \id)(\varphi _S)(U)(x\otimes
\alpha)  = \varphi _S (U) ( x\cdot P_i^*(F)\otimes f\circ \alpha) =
(\varphi _S (U) ( x\otimes \alpha))\cdot N^*(F),$$ and hence
$f^*(\varphi _S) \simeq \varphi_S$, by the conjugation invariance of
$N$.
\end{proof}

\begin{definition} For any subgroup $H \leq G$, and any $\RG_G$-modules
$M$ and $N$, an element $\alpha \in \Ext^n_{\RG_H}(M, N)$ is called
\emph{stable} with respect to $G$ provided that
$$\res^H_{H \cap \leftexp{g}H}(\alpha) = \res^{\leftexp{g}H}_{H \cap
\leftexp{g}H} c_H ^g ( \alpha)$$ for any $g\in G$. The map $c_H ^g $
is the induced map $f_*$ where $f\colon G/H \to G/\leftexp{g}H$ is the
$G$-map given by $xH \to x g^{-1} (gH g^{-1})$.
\end{definition}

\begin{theorem}
\label{thm:stableisom} Let $R = \bZp$ and $G$ be a finite group. For
a right $\RG_G$-module $M$ and a cohomological Mackey functor
$N\colon  \cD(G) \to \RMod$, the restriction map $$\res^G_P\colon
\Ext^n_{\RG_G}(M,N) \to \Ext^n_{\RG_P}(M,N)$$ is an isomorphism for
$n\geqslant 0$ onto the stable elements, for any $p$-Sylow subgroup $P\leq
G$.
\end{theorem}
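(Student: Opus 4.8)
The plan is to follow the classical Cartan–Eilenberg double-coset / stable-element argument (as in \cite[Chap.~XII]{cartan-eilenberg1}), but carried out inside the Mackey functor $\uExt^*(M,N)$ over the Dress category that we constructed in Section~\ref{sect:Mackey}. The key point is that by Proposition~\ref{prop:ext-properties}(i), the functor $S\mapsto \Ext^n_{\RG_G}(M\otimes_R\uR{S},N)$ is a \emph{cohomological} Mackey functor once $N$ is, and by the Eckmann–Shapiro isomorphism (see the Remark following the Corollary in Section~\ref{sect:Mackey}) we may identify $\Ext^n_{\RG_H}(\res^G_HM,\res^G_HN)$ with the value of this Mackey functor on $G/H$, for every subgroup $H\leq G$. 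So the statement becomes a purely Mackey-functor statement: for a cohomological Mackey functor $\cM$ over $\bZp$, the restriction $\cM(G/G)\to \cM(G/P)$ is injective with image the stable elements, for $P$ a $p$-Sylow subgroup.

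First I would establish \emph{injectivity}. For this we use the cohomological property: the composite $\ind^G_P\res^G_P$ acts on $\cM(G/G)$ as multiplication by the index $[G:P]$, which is prime to $p$ and hence invertible in $R=\bZp$. Therefore $\res^G_P$ is split injective on $\cM(G/G)$, and in particular injective on $\Ext^n_{\RG_G}(M,N)$.

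Second I would show that the image of $\res^G_P$ lands in the stable elements. This is the double-coset formula: given $g\in G$, the composite $\res^{P}_{P\cap \leftexp{g}P}\circ\res^G_P$ equals $\res^{\leftexp{g}P}_{P\cap\leftexp{g}P}\circ c^g_P\circ\res^G_P$ because both compute the restriction of the same element of $\cM(G/G)$ along the $G$-maps in the relevant pull-back square (here one uses axiom (M1) and the fact that $c^g_P\circ\res^G_P=\res^G_{\leftexp{g}P}$ on $\cM(G/G)$, which is just functoriality of restriction composed with the conjugation $f_*$). So every element of $\Image(\res^G_P)$ is stable.

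Third, and this is the heart of the matter, I would show \emph{surjectivity onto the stable elements}. Let $\alpha\in\Ext^n_{\RG_P}(M,N)=\cM(G/P)$ be stable. The standard construction is to form $\beta := \tfrac{1}{[G:P]}\,\ind^G_P(\alpha)\in \cM(G/G)$, which makes sense since $[G:P]$ is invertible in $\bZp$, and then to check that $\res^G_P(\beta)=\alpha$. The verification of $\res^G_P\ind^G_P(\alpha)=[G:P]\cdot\alpha$ proceeds via the Mackey double-coset decomposition of $\res^G_P\ind^G_P$ into a sum over $P\backslash G/P$ of terms $\ind^P_{P\cap\leftexp{g}P}\circ c^g\circ\res^{\leftexp{g}P}_{P\cap\leftexp{g}P}$; using stability of $\alpha$ each such term collapses to $\ind^P_{P\cap\leftexp{g}P}\res^P_{P\cap\leftexp{g}P}(\alpha)$, and then the cohomological property turns this into multiplication by $[P:P\cap\leftexp{g}P]$. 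Summing over the double cosets gives exactly $[G:P]\cdot\alpha$ by the orbit-counting identity $\sum_{g\in P\backslash G/P}[P:P\cap\leftexp{g}P]=[G:P]$. I expect the main obstacle to be organizing this double-coset bookkeeping cleanly in the orbit-category setting: one must make sure the Mackey axioms (M1), (M2) proved in Theorem~\ref{thm:Mackey structure} for $\uHom(M,N)$ (and hence inherited by $\uExt^*(M,N)$) really do give the full double-coset formula $\res\circ\ind=\sum$ over the categorical pull-backs, and that the conjugation maps $c^g$ appearing in the definition of stability match the $f_*$ coming from the relevant $G$-maps. Once the double-coset formula is in hand, the computation is the classical one.
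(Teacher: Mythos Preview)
Your proposal is correct and follows essentially the same route as the paper: invoke Proposition~\ref{prop:ext-properties}(i) to conclude that $\uExt^n(M,N)$ is a cohomological Mackey functor, then apply the Cartan--Eilenberg stable-element method. The paper simply cites \cite[Chap.~XII, 10.1]{cartan-eilenberg1} and \cite[2.2]{symonds1} at this point, whereas you spell out the three-step argument (injectivity, image is stable, surjectivity via the double-coset formula); your concern about extracting the double-coset formula from axioms (M1) and (M2) is not a genuine obstacle, since that implication is the standard content of the Dress formulation of a Mackey functor.
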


\begin{proof} By Proposition \ref{prop:ext-properties}(i), $\Ext^n_{\RG_?}(M,N)$ is a cohomological Mackey functor. Now the result follows
 (as in \cite[2.2]{symonds1}) from the stable element method of
Cartan and Eilenberg \cite[Chap.~XII, 10.1]{cartan-eilenberg1}.
\end{proof}

\begin{remark} Since $\uExt^*(M,N)$ is a cohomological Mackey functor,
it is a Green module over the trivial module $\un{R}$, considered as
a Green ring by defining $\ind_H^K\colon \un{R}(G/H) \to
\un{R}(G/K)$ to be multiplication by $|K: H|$ (see
\cite[Ex.~2.9]{lam1}). It follows that $\uExt^*(M,N)$ is computable
in the sense of Dress in terms of the $p$-Sylow subgroups (see
\cite[Ex.~5.9]{htw2007}).
\end{remark}

\begin{proof}[The proof of Theorem \ref{thm:mislin_orbit}]
Let $R = \bZp$ and $G$ be a finite group.  Let $H\leq G$ be a
subgroup which controls $p$-fusion in $G$.  For any  cohomological Mackey functor $F$, the restriction map $\Res_P^G$ maps surjectively to the stable elements in $F(P)$, for any $p$-Sylow subgroup $P \leq G$. If $H$ controls $p$-fusion in $G$, and $F$ is conjugation invariant, then all elements in $F(H)$ are stable and
$$\res^G_H\colon  F(G)\xrightarrow{\approx} F(H)$$ is an
isomorphism. 
 This follows by
a standard argument used to prove one direction of Mislin's theorem
in group cohomology (see, for example,   Symonds \cite[Theorem 3.5]{symonds1} or  Benson
\cite[Proposition 3.8.4]{benson1}). We apply Proposition \ref{prop:ext-properties} and this remark to the cohomological Mackey
functor $F=\uExt^n(M,N)$, and the proof is complete.
\end{proof}

In the next section we will need a variation of this result.

\begin{definition}
We say the $N$ is an \emph{atomic} right $\RG_G$-module of type $Q \in
\cF$, if  $N=I_Q (N(Q))
$ where $I_Q$ is the inclusion functor
introduced in Section \ref{sect:definitions}.
\end{definition}

\begin{theorem}\label{thm:filtered mislin_orbit}
 Let $G$ be a finite group, $R=\bZp$, and let
$\cF$ be a family of $p$-subgroups in $G$. Suppose $H\leq G$
controls $p$-fusion in $G$. Then, for $\RG_G$-modules $M$ and $N$,
$$ \res_H^G\colon \Ext^n_{\RG_G}(M,N)\rightarrow
\Ext^n_{\RG_H}(\res_H^GM, \res_H^GN)$$ is an isomorphism for
$n\geqslant 0 $, provided that $C_G(Q)$ acts trivially on $M(Q)$ and
$N(Q)$ for all $Q \in \cF$.
\end{theorem}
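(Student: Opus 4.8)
The plan is to reduce Theorem \ref{thm:filtered mislin_orbit} to Theorem \ref{thm:mislin_orbit} by resolving the coefficient functor $N$ in the second variable. Theorem \ref{thm:mislin_orbit} already handles the case when $N$ is a \emph{cohomological} Mackey functor with $C_G(Q)$ acting trivially on $N(Q)$, so the issue is purely that here $N$ is an arbitrary conjugation-invariant $\RG_G$-module, not (yet) cohomological. The idea is to build a finite coresolution of $N$ by Mackey functors of the special form $D_Q(V)$ studied in Example \ref{ex: coMackey} and Lemma \ref{lem: coMackey}, and then do a long-exact-sequence / five-lemma induction.

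First I would invoke Lemma \ref{lem: coresolution}: every $\RG_G$-module $N$ of finite length admits a finite exact coresolution
$$0 \to N \xrightarrow{j} DN \to DCN \to \cdots \to DC^mN \to 0,$$
where each term $DC^kN = \bigoplus_{Q\in\Is(\G_G)} D_Q((C^kN)(Q))$ is a direct sum of Mackey functors of type $D_Q(V)$. The key point is that if $N$ is conjugation invariant then so is each $DC^kN$: by Lemma \ref{lem: coMackey} (applied with $R=\Fp$, and then lifted to $R=\bZp$ — see below), $D_Q(V)$ is a \emph{cohomological} Mackey functor over $\cF$, and it is conjugation invariant provided $C_G(Q)$ acts trivially on $V$; and $C_G(Q)$ acts trivially on $(C^kN)(Q)$ because conjugation invariance is inherited through the exact sequences defining $C^kN$ (one checks this inductively from the short exact sequences $0 \to C^{k-1}N \to DC^{k-1}N \to C^kN \to 0$, the outer terms being conjugation invariant). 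So each $DC^kN$ is a cohomological, conjugation-invariant Mackey functor to which Theorem \ref{thm:mislin_orbit} applies (its hypothesis on $M$ is exactly the one we are assuming).

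Next I would break the finite coresolution into short exact sequences $0 \to Z^{k} \to DC^kN \to Z^{k+1}\to 0$ (with $Z^0 = N$), apply the long exact sequence in $\Ext^*_{\RG_G}(M,-)$ and in $\Ext^*_{\RG_H}(\res^G_H M, \res^G_H(-))$, and use that $\res^G_H$ is exact and commutes with the connecting homomorphisms to get a map of long exact sequences. Since $\res^G_H$ is an isomorphism on the $DC^kN$-terms by Theorem \ref{thm:mislin_orbit}, a downward induction on $k$ together with the five lemma gives that $\res^G_H$ is an isomorphism on the $Z^k$-terms, and in particular on $\Ext^n_{\RG_G}(M,N) = \Ext^n_{\RG_G}(M,Z^0)$, for all $n\geq 0$. (One must also note that $\res^G_H$ of the coresolution of $N$ is a valid coresolution of $\res^G_H N$ by modules of the required type over $\OrH$, so that the right-hand long exact sequence genuinely computes $\Ext^*_{\RG_H}(\res^G_H M,\res^G_H N)$; this uses that restriction of $D_Q(V)$ along $H\leq G$ decomposes into functors of the same kind.)

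The main obstacle I anticipate is the coefficient ring: Lemma \ref{lem: coMackey} is stated for $R=\Fp$, whereas Theorem \ref{thm:filtered mislin_orbit} needs $R=\bZp$. Over $\bZp$ the transfer-restriction composite $\ind_H^K\res^K_H$ on $D_Q(V)$ is multiplication by $|K:H|$ (the counting argument in the proof of Lemma \ref{lem: coMackey} works verbatim, one simply does not kill $p$), so $D_Q(V)$ is still a genuine cohomological Mackey functor over $\bZp$; the conjugation-invariance argument is insensitive to the ring. The other delicate point is the finiteness and exactness of the $D$-coresolution over $\bZp$ — Lemma \ref{lem: coresolution} is stated for general $\RG_G$-modules and its proof (length drops strictly at each stage) does not use that $R$ is a field, so it applies. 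With these remarks in place the induction goes through cleanly; no new computations beyond bookkeeping with long exact sequences are needed.
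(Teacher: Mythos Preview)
Your overall architecture is the same as the paper's: coresolve $N$ by Mackey functors of the form $D_Q(V)$, apply Theorem~\ref{thm:mislin_orbit} to the pieces, and run a five-lemma induction. The gap is precisely at the point you flagged as ``the main obstacle'': your claim that $D_Q(V)$ is a cohomological Mackey functor over $R=\bZp$ is false, and the counting argument in Lemma~\ref{lem: coMackey} does \emph{not} work verbatim over $\bZp$.

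The issue is the case in that proof where the fibre of $f^Q\colon (G/H)^Q \to (G/K)^Q$ over some $t\in (G/K)^Q$ is empty. Then $(f_*f^*\varphi_T)(t)=0$, whereas the cohomological condition requires $p\cdot\varphi_T(t)$; these agree over $\Fp$ but not over $\bZp$. And the empty-fibre case genuinely occurs: take $G=K=C_p\times C_p$, $Q=C_p\times 1$, $H=1\times C_p$. Then $(G/K)^Q$ is a point but $(G/H)^Q=\emptyset$, so $D_Q(V)(G/H)=0$ while $D_Q(V)(G/K)=V^{W_G(Q)}$, and $\ind_H^K\res^K_H=0\neq p\cdot\id$ in general. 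Thus Theorem~\ref{thm:mislin_orbit} cannot be applied to the terms $DC^kN$ over $\bZp$, and your induction does not start.

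This is exactly why the paper's proof takes the longer route: it first reduces to an atomic, $R$-torsion-free $N$, passes to $N_p=N\otimes\Fp$ where Lemma~\ref{lem: coMackey} does apply, runs your coresolution/five-lemma argument there, and then climbs back to $\bZp$ by d\'evissage through the extensions $0\to N/p^{k-1}\to N/p^k\to N/p\to 0$ together with a universal-coefficient/inverse-limit argument comparing $\Ext^*(M,N)\otimes\Zphat$ with $\varprojlim\Ext^*(M,N/p^k)$. That passage from $\Fp$ to $\bZp$ is not bookkeeping; it is the substantive part of the proof that your proposal is missing.
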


\begin{proof}
By the 5-lemma (using the filtration in \cite[16.8]{lueck3})  it is enough to prove
the statement for $N$ an atomic $\RG_G$-module of type $Q$.
Without loss of generality we can also assume that
$N(Q)$ is $R$-torsion free. 
To see this, observe that as an
$N_G(Q)/ Q C_G(Q)$-module, $N(Q)$ fits into a short exact sequence
$ 0 \to L \to F \to N(Q)\to 0$, where $F$ is a free $N_G(Q)/Q C_G(Q)$-module.
By taking inflations of these modules, we can consider the sequence
as a sequence of $N_G(Q)/Q$ -modules and apply the functor $I_Q$.
This shows that $N$ fits into a sequence $0 \to N'' \to N' \to N\to 0$,
where both $N'$ and $N''$ are conjugation invariant and atomic, with an $R$-torsion free module
at $Q$.

Since $\cF$ is family of $p$-groups, Lemma \ref{lem: coMackey} shows that the Mackey functor $D_Q(V)\otimes \Fp$ defined in Example \ref{ex: coMackey} is cohomological over $\cF$ and conjugation invariant. Let $N_p = N \otimes \Fp = N/pN$.
By Lemma \ref{lem: coresolution} we have
a finite length coresolution of Mackey functors
\eqncount
\begin{equation}\label{coresolution}
0 \to N_p \to DN_p \to DCN_p \to \dots \to DC^mN_p \to 0
\end{equation}
for some $m\geq 0$. 

Now we can apply the functors $\Ext^*_{\RG_?}(M, N_p)$ to the coresolution (\ref{coresolution}). By Proposition \ref{prop:ext-properties} the functors 
$\Ext^*_{\RG_?}(M, N_p)$ are cohomological Mackey functors which are conjugation invariant. Therefore
$$\Res_H^G \colon \Ext^*_{\RG_G}(M, N_p) \to \Ext^*_{\RG_H}(M, N_p)$$
is an isomorphism by Theorem \ref{thm:mislin_orbit} and the 5-lemma using the coresolution. Furthermore,  we have a short exact sequence
$$ 0 \to N/p^{k-1} \to N/p^k \to N/p \to 0,$$
for every $k \geq 1$, and hence by ``d\'evissage"  we conclude that 
\eqncount
\begin{equation}\label{devissage}
\Res_H^G \colon \Ext^*_{\RG_G}(M, N/p^k) \xrightarrow{\approx} \Ext^*_{\RG_H}(M, N/p^k)
\end{equation}
is an isomorphism, for every $k \geq 1$.
 To finish the proof it is enough to show that
$$\Res_H^G\colon \Ext^*_{\RG_G}(M, N)\otimes \Zphat \to \Ext^*_{\RG_H}(M, N)\otimes \Zphat$$
is an isomorphism. However, the complex
$$\Hom_{\RG_G}(\PP, N/p^k) = \Hom_{\RG_G}(\PP, N)\otimes \bZ/p^k$$ 
is a cochain complex of finitely-generated $R$-modules. By the universal coefficient theorem in cohomology \cite[p.~246]{spanier}, we have an exact sequence
$$0 \to \Ext^n_{\RG_G}(M, N)\otimes \bZ/{p^k} \to  \Ext^n_{\RG_G}(M, N/p^k)
\to \text{Tor}_1^R( \Ext^{n+1}_{\RG_G}(M, N), \bZ/{p^k}) \to 0.$$
Since $\Zphat = \varprojlim \bZ/{p^k}$ and the inverse limit functor is left exact, we obtain an exact sequence
$$0 \to  \Ext^n_{\RG_G}(M, N)\otimes \Zphat \to  \varprojlim \Ext^n_{\RG_G}(M, N/p^k)
\to \varprojlim \text{Tor}_1^R( \Ext^{n+1}_{\RG_G}(M, N), \bZ/{p^k}).$$
Now we compare this sequence via $\Res_H^G$ to the corresponding sequence for the subgroup $H$, and use the d\'evissage isomorphisms (\ref{devissage}) on the middle term. This shows immediately that
$\Res_H^G$ is injective on the first term, for all $n \geq 0$. 
Since the functor $\text{Tor}^R_1$ is left exact, 
we get $\Res_H^G$ injective on the third term as well. But now a diagram chase shows that $\Res_H^G$ is surjective on the first term. 
 \end{proof}

\section{Chain complexes over orbit categories}
\label{sect:chain complexes}

In this section, we prove some theorems about chain complexes over
orbit categories. In particular, Proposition \ref{prop:adding-subtracting}, Proposition \ref{prop:mislin_chain}, and
Theorem \ref{thm:glueing}  will be used in the proof of Theorem A (see Section \ref{sect:construction-new}).  Most of
the results follow from Dold's theory of algebraic Postnikov
systems \cite{dold1}. 

As before, $G$ denote a finite group and $\cF
$ denote a family of subgroups of $G$. Throughout this section
$\G_G=\cO _\cF G$ and $R$ is a commutative ring. For chain complexes
$\CC$ and $\DD$, the notation $\CC \simeq \DD$ always means $\CC$ is
chain homotopy equivalent to $\DD$. For chain isomorphism the
standard notation is $\CC \cong \DD$. When we say $\CC$ is a
projective chain complex, we mean it is a chain complex of
projective modules (which also means that it is projective in the
category of chain complexes). A chain complex $\CC$ is  \emph{positive} if $C_i = 0$ for $i <0$.

We say that a chain complex $\CC$ over $\RG_G$ has \emph{finite homological dimension} (or $\hdim \CC$ is finite) if $\CC$ is positive, and there exists an integer $n$ such that $H_i(\CC) = 0$ for $i > n$. A chain complex $\CC$ is \emph{finite} if $\CC$ is positive, and there exists an integer $n$ such that $C_i = 0$ for $i >n$. We start with a well known observation about chain complexes.

\begin{lemma}
Let $\CC$ be a projective chain complex of $\RG_G$-modules which has
finite homological dimension.  Then, $\CC$ is homotopy equivalent to
a finite projective chain complex if and only if there is an integer $n$
such that $$\Ext^i_{\RG_G}(\CC,M)=0\,\, \textrm {for}\,\, i>n,$$ for
all $\RG_G$-modules $M$.
\end{lemma}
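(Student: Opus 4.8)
The plan is to prove the two implications separately, with the forward direction being routine and the backward direction requiring a standard "killing the tail" construction over the orbit category. For the forward direction, if $\CC$ is chain homotopy equivalent to a finite projective chain complex $\DD$ with $D_i = 0$ for $i > n$, then $\Ext^i_{\RG_G}(\CC, M) \cong \Ext^i_{\RG_G}(\DD, M) = 0$ for $i > n$ and all $M$, since $\Ext^*$ of a chain complex is a chain homotopy invariant and vanishes above the top degree of a finite projective complex. (Here $\Ext^i_{\RG_G}(\CC, M)$ denotes the hypercohomology, i.e.\ the cohomology of $\Hom_{\RG_G}(\CC, M)$; for a projective complex this is computed directly.)

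\medskip
\noindent\textbf{The nontrivial direction.} Assume $\CC$ is a positive projective chain complex with finite homological dimension, say $H_i(\CC) = 0$ for $i > d$, and suppose there is an integer $n$ with $\Ext^i_{\RG_G}(\CC, M) = 0$ for all $i > n$ and all $\RG_G$-modules $M$. Set $N = \max\{d, n\}$. First I would replace $\CC$ up to chain homotopy equivalence by its truncation: since $\CC$ is a complex of projectives and $H_i(\CC) = 0$ for $i > d$, the brutal/good truncation $\tau_{\leq N}\CC$ obtained by replacing $C_{N+1}$ by the kernel $Z_N = \ker(\bd_N \colon C_N \to C_{N-1})$ is chain homotopy equivalent to $\CC$ through the natural map, because the truncated-off part is a projective resolution (in negative-degree-shifted form) of $0$ — more precisely, because the acyclic tail $\cdots \to C_{N+1} \to Z_N \to 0$ splits off. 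This produces a complex $\CC'$ of the form $0 \to Z_N \to C_N \to \cdots \to C_0 \to 0$ which is homotopy equivalent to $\CC$ and agrees with $\CC$ in degrees $< N$; all terms except possibly $Z_N$ are projective.

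\medskip
\noindent\textbf{Projectivity of the top term.} The key step is to show $Z_N$ is projective, so that $\CC'$ is the desired finite projective complex. For this I would run the standard dimension-shifting argument over the orbit category: from the exact sequences obtained by slicing $\CC'$ (which is still chain homotopy equivalent to $\CC$, hence has the same $\Ext$-hypercohomology), one gets, for any $\RG_G$-module $M$,
$$\Ext^1_{\RG_G}(Z_N, M) \cong \Ext^{N+2}_{\RG_G}(\CC, M) = 0,$$
using that $N + 2 > n$ together with the hypothesis, and the fact that in the hypercohomology spectral sequence (or by direct dimension-shift through the short exact sequences $0 \to Z_k \to C_k \to Z_{k-1} \to 0$, all of whose middle terms are projective) the $\Ext^{N+2}$ of the complex is identified with $\Ext^1$ of the truncated kernel. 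This is exactly the argument already used in the proof of Proposition~\ref{prop:projectives} in Section~\ref{sect:Mackey}. Since $\Ext^1_{\RG_G}(Z_N, -) = 0$, the module $Z_N$ is projective, and $\CC' \colon 0 \to Z_N \to C_N \to \cdots \to C_0 \to 0$ is a finite projective chain complex chain homotopy equivalent to $\CC$, as required.

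\medskip
\noindent\textbf{Expected main obstacle.} The only point needing care is the identification $\Ext^1_{\RG_G}(Z_N, M) \cong \Ext^{N+2}_{\RG_G}(\CC, M)$: one must be precise about what "$\Ext^i_{\RG_G}(\CC, M)$" means for a chain complex (hypercohomology of $\Hom_{\RG_G}(\CC, M)$, which is legitimate since $\CC$ is a complex of projectives) and verify that the dimension-shifting through the short exact sequences $0 \to Z_k \to C_k \to Z_{k-1} \to 0$ really telescopes correctly down to degree $1$, using projectivity of each $C_k$ to kill the intermediate $\Ext$-terms. This is the same bookkeeping as in Proposition~\ref{prop:projectives}, and the orbit category poses no extra difficulty since $\RG_G$-Mod is an abelian category with enough projectives; I would simply cite that proposition's argument rather than repeat it.
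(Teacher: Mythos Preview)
Your approach is essentially the paper's own: the proof there simply cites Cartan--Eilenberg \cite[Chap.~XVII, 1.4]{cartan-eilenberg1} and refers to the dimension-shifting argument of Proposition~\ref{prop:projectives}, which is exactly what you carry out. One minor indexing slip: with $N=\max\{d,n\}$ you may have $N=d$ and $H_N(\CC)\neq 0$, in which case the tail $\cdots\to C_{N+1}\to Z_N\to 0$ is not acyclic (the image of $\bd_{N+1}$ is $B_N\subsetneq Z_N$) and your truncated complex kills $H_N$; take $N>d$, or equivalently work with $B_N=\Image\bd_{N+1}$ in degree $N+1$, and the rest goes through verbatim.
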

\begin{proof} See Cartan-Eilenberg \cite[Chap.~XVII, 1.4]{cartan-eilenberg1} for chain complexes over rings. A similar argument as in Proposition \ref{prop:projectives} gives the result over the orbit category.
\end{proof}

\begin{proposition}\label{lem:finiteness}
Let $\CC$ be a projective chain complex of $\ZG_G$-modules which has
a finite homological dimension. Suppose that $\bZp \otimes _{\bZ}
\CC$ is chain homotopy equivalent to a finite projective chain
complex for all $p \mid |G|$. Then, $\CC$ is chain homotopy
equivalent to a finite projective complex.
\end{proposition}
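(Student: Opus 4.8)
The plan is to combine the previous lemma (characterizing finiteness of a projective chain complex of finite homological dimension by the vanishing of $\Ext^i_{\RG_G}(\CC,M)$ for $i$ large) with the $p$-local decomposition of $\Ext$-groups over the orbit category. First I would apply the previous lemma to the hypothesis: since $\bZp \otimes_\bZ \CC$ is chain homotopy equivalent to a finite projective complex over $\bZp\G_G$, there is an integer $n_p$ such that $\Ext^i_{\bZp\G_G}(\bZp\otimes_\bZ\CC, N) = 0$ for all $i > n_p$ and all $\bZp\G_G$-modules $N$. Let $n_0 = \max_{p \mid |G|} n_p$.

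Next I would relate $\Ext^i_{\ZG_G}(\CC, M)$ to its $p$-localizations. Hypercohomology of the cochain complex $\Hom_{\ZG_G}(\PP, M)$, where $\PP \simeq \CC$ is a projective resolution (replace each $C_j$ by a projective resolution and take the total complex, or argue degreewise using Proposition~\ref{prop:p-local decomposition} together with a hyper-$\Ext$ spectral sequence): in the range $i > n_0 \geq l(\CC)$ — here I would need the homological-dimension hypothesis to bound the relevant lengths, perhaps after first reducing to the case where each $C_j(H)$ is $\bZ$-torsion free by replacing $\CC$ with a homotopy equivalent complex — the groups $\Ext^i_{\ZG_G}(\CC, M)$ are finite of exponent dividing a power of $|G|$, hence split as the direct sum of their $p$-localizations. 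Flatness of $\bZp$ over $\bZ$ and the isomorphism (\ref{p-localization}) then give
$$\Ext^i_{\ZG_G}(\CC, M) \otimes_\bZ \bZp \cong \Ext^i_{\bZp\G_G}(\bZp \otimes_\bZ \CC,\ M\otimes_\bZ \bZp)$$
for $i > n_0$, and the right-hand side vanishes by the choice of $n_p$. Since a finite abelian group of $|G|$-power exponent is zero iff all its $p$-localizations vanish, this yields $\Ext^i_{\ZG_G}(\CC, M) = 0$ for all $i > n_0$ and all $M$.

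Finally I would invoke the previous lemma in the other direction: $\CC$ is a positive projective chain complex of finite homological dimension with $\Ext^i_{\ZG_G}(\CC, M) = 0$ for $i > n_0$, so $\CC$ is chain homotopy equivalent to a finite projective complex over $\ZG_G$, as desired.

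I expect the main obstacle to be the bookkeeping needed to pass from the module-level isomorphism of Proposition~\ref{prop:p-local decomposition} (and the finiteness of Proposition~\ref{prop:Ext is finite}) to the chain-complex level: one must either set up a hyper-$\Ext$ spectral sequence and check that the finiteness/decomposition holds for $\Ext^i_{\ZG_G}(\CC, M)$ in the stable range $i > n_0$, or argue more directly using the naturality of (\ref{p-localization}) applied to the complex $\Hom_{\ZG_G}(\PP, M)$. The reduction to the $\bZ$-torsion-free case (so that the length bounds of Section~\ref{sect:definitions} apply) is a routine but necessary preliminary; after that the argument is a formal localization plus an application of the preceding lemma.
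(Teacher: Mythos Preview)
Your proposal is correct and matches the paper's approach: the paper uses the hypercohomology spectral sequence $E_2^{s,t}=\Ext^s_{\ZG_G}(H_t(\CC),M)\Rightarrow\Ext^{s+t}_{\ZG_G}(\CC,M)$ together with Proposition~\ref{prop:Ext is finite} to see that for $i>l(\G_G)+\hdim\CC$ the hyper-$\Ext$ groups are finite of $|G|$-power exponent, hence decompose as the sum of their $p$-localizations, each of which vanishes for large $i$ by the hypothesis and the previous lemma. Your bookkeeping worries are exactly what this spectral sequence handles, and the torsion-free reduction for the $C_j(H)$ is automatic since projective $\ZG_G$-modules are $\bZ$-free at every object.
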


\begin{proof} Let $M$ be an $\RG_G$-module. Consider the hypercohomology
spectral sequence (see \cite[3.4.3]{benson2}):
$$E_2 ^{s,t} = \Ext ^s _{\ZG_G} (H_t (\CC ) ,
M)$$ which converges to $\Ext^* _{\ZG_G}(\CC,M)$. Since $\CC$ has
finite homological dimension, for all $i > \bigl (l(\G_G)+ \hdim \CC
\bigr )$, the group $\Ext ^i _{\bZ} (\CC, M)$ is a finite abelian
group with exponent dividing a power of $|G|$. Here $l(\G_G)$ is the length of the orbit category, as defined in \S \ref{subsection:resolutions}, and $\hdim\CC$ denotes the largest integer $n$ such that $H_n(\CC) \neq 0$.

In particular, there
is an integer $k$, independent from $M$, such that
$$\Ext^i _{\ZG_G}(\CC,M)\cong \bigoplus _{p \mid |G|}
\Ext^i _{\bZp\G_G}(\bZp \otimes _{\bZ}\CC ,M _p)$$ for all $i >k$.
Here $M_p=\bZp \otimes _{\bZ} M$. Now, since $\bZp \otimes _{\bZ}
\CC$ is homotopy equivalent to a finite projective complex for all
$p \mid |G|$, there is an $n$ such that $$ \Ext ^i _{\ZG_G} (\CC ,
M)=0$$ for all $i>n$ and for all $M$. The result follows from the
previous lemma.
\end{proof}

A chain complex version of Rim's theorem also holds.

\begin{proposition}\label{prop: finite projective dimension} Let $R=\bZp$ and $\CC$ be a  projective chain
complex over $\RG_G$ with finite homological dimension. Assume that $\cF$ is a family of $p$-subgroups.
Then, $\CC$ is homotopy equivalent to a finite projective complex if
and only if $\res ^G _P \CC$ is homotopy equivalent to a finite
projective complex for any $p$-Sylow subgroup $P$ of $G$.
\end{proposition}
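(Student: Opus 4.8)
The strategy is to reduce the statement to its module-theoretic counterpart, Theorem~\ref{thm:rim_orbit}, by replacing the complex $\CC$ with a single module whose behaviour under restriction is controlled by exactness. The easy direction is immediate: restriction $\res^G_P$ is additive, exact, and carries projective $\RG_G$-modules to projective $\RG_P$-modules (see the Proposition in Section~\ref{sect:Rim's thm}), so it sends any finite projective $\RG_G$-complex chain homotopy equivalent to $\CC$ to a finite projective $\RG_P$-complex chain homotopy equivalent to $\res^G_P\CC$. The converse, and in fact both directions at once, will follow from a chain of equivalences.

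Fix an integer $d\geq 0$ with $H_i(\CC)=0$ for all $i>d$, and set $B=\Image(\bd_{d+1}\colon C_{d+1}\to C_d)$, a finitely generated $\RG_G$-module (it is a quotient of $C_{d+1}$). Since $\CC$ has no homology in degrees above $d$, the truncated complex
$$\cdots \to C_{d+2}\to C_{d+1}\to B\to 0$$
is a (possibly infinite) projective resolution of $B$ over $\RG_G$. Comparing this resolution with the cochain complex $\Hom_{\RG_G}(\CC,M)$ in high degrees yields a natural isomorphism $\Ext^i_{\RG_G}(\CC,M)\cong \Ext^{i-d-1}_{\RG_G}(B,M)$ for all $i\geq d+2$ and all $\RG_G$-modules $M$. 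Combining this with Proposition~\ref{prop:projectives} and the first Lemma of the present section, we conclude that $\CC$ is chain homotopy equivalent to a finite projective $\RG_G$-complex if and only if $B$ admits a finite projective resolution over $\RG_G$.

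The same argument applies verbatim over $\RG_P$. Because $\res^G_P$ is exact, applying it to the resolution above produces a projective resolution of $\res^G_P B$ over $\RG_P$, and $\res^G_P B=\Image(\res^G_P\bd_{d+1})$ is precisely the image module attached to $\res^G_P\CC$ in degree $d$ (whose homology also vanishes above degree $d$). Hence $\res^G_P\CC$ is chain homotopy equivalent to a finite projective $\RG_P$-complex if and only if $\res^G_P B$ has a finite projective resolution over $\RG_P$. Now Theorem~\ref{thm:rim_orbit}, applicable since $R=\bZp$ and $\cF$ is a family of $p$-subgroups, states that $B$ has a finite projective resolution over $\RG_G$ if and only if $\res^G_P B$ has one over $\RG_P$ for every $p$-Sylow subgroup $P$. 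Stringing together the three equivalences finishes the proof.

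The only point demanding care is the identification, over both $\RG_G$ and $\RG_P$, of ``$\CC$ is chain homotopy equivalent to a finite projective complex'' with ``the image module $B$ has finite projective dimension''. One must check that $\cdots\to C_{d+1}\to B\to 0$ is genuinely a projective resolution --- this uses that \emph{all} homology of $\CC$ above degree $d$ vanishes, not merely $H_{d+1}(\CC)$ --- and that the $\Ext$-degree shift is correct, so that the high-degree vanishing of $\Ext^*_{\RG_G}(\CC,-)$ is indeed governed by $B$. This is the orbit-category analogue of the classical perfectness criterion for complexes over a ring (Cartan--Eilenberg \cite[Chap.~XVII]{cartan-eilenberg1}), and is exactly where an indexing slip could occur; everything else is a formal consequence of the already-established module version of Rim's theorem together with the exactness of restriction to a subgroup.
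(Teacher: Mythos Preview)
Your proof is correct and follows essentially the same approach as the paper: both reduce to Theorem~\ref{thm:rim_orbit} by truncating above the homological dimension and passing to the image module $B=\Image(\bd_{d+1})$, then using the degree shift $\Ext^{i}_{\RG_G}(\CC,M)\cong \Ext^{i-d-1}_{\RG_G}(B,M)$ for $i\geq d+2$ together with Lemma~6.1 and Proposition~\ref{prop:projectives}. The paper chooses its truncation degree large enough that $\res^G_P B$ is already projective rather than merely of finite projective dimension, but this is a cosmetic difference; your symmetric formulation via a chain of equivalences is if anything slightly cleaner.
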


\begin{proof} One direction is clear (and holds without assumption on the family $\cF$). Conversely, suppose that $\res ^G _P \CC$ is
homotopy equivalent to a projective complex with $\hdim =l$. Let $n$
be an integer bigger than both $l$ and $\hdim \CC $. Consider
$$\xymatrix{
\cdots\ar[r]&\res_P^GC_{n+1}\ar[r]&\res_P^GC_{n}\ar[r]^{\bd _{n}}
&\res_P^GC_{n-1}\ar[r]&\cdots\ar[r]&\res_P^GC_{0}\ar[r]&0}.
$$
We have
$$ \Ext ^1 _{\RG _P} ( \res ^G _P \Image (\bd _{n}) , M )
\cong \Ext_{\RG _P} ^{n+1} ( \res ^G _P \CC, M)=0,$$ for every $\RG
_P$-module $M$. This gives that $\res ^G _P \Image(\bd _{n})$ is
projective. By Theorem \ref{thm:rim_orbit}, we obtain that
$\Image (\bd _{n})$ has finite projective resolution. Thus, $\CC$ is
chain homotopy equivalent to a finite projective complex.
\end{proof}

We also prove a chain complex version of Theorem \ref{thm:filtered
mislin_orbit}. Recall the definition of \emph{conjugation invariant} $\RG_G$-modules given in (\ref{def: conjugation-invariant}).

\begin{proposition}\label{prop:mislin_chain}
Let $G$ be a finite group, and $\cF$ be a family of $p$-subgroups in
$G$. Suppose $H\leq G$ controls $p$-fusion in $G$ and  $R=\bZp$. Let
$\CC^H$ be a positive projective chain complex of $\RG_H$-modules such that  the homology groups $H_i(\CC^H)$ are conjugation invariant right $\RG_H$-modules, for every $i \geqslant 0$. 
Then, the
following holds:
\begin{enumerate}
\item There exists a positive projective chain complex $\CC ^G $ of $\RG
_G$-modules such that $\res^G _H \CC ^G $ is homotopy equivalent to
$\CC ^H$.
\item If $\CC ^H$ is homotopy equivalent to a finite projective complex,
then $\CC ^G$ is also homotopy equivalent to a finite complex.
\end{enumerate}
\end{proposition}

For the proof we will need the theory of algebraic Postnikov systems
due to Dold \cite[\S 7]{dold1}. According to this theory, given a
positive projective chain complex $\CC$, there is a sequence of positive projective
chain complexes $\CC(i)$ indexed by positive integers such that
$f\colon \CC\rightarrow \CC(i)$ induces a homology isomorphism for
dimensions  $\leq i$. Moreover, there is a tower of
maps $$\xymatrix{&\CC(i)\ar[d]&\\
&\CC(i-1)\ar@{-->}[d]\ar[r]^{\alpha_{i}\ \ }&\Sigma ^{i+1}\PP (H_i)\\
\CC\ar[dr]\ar[ur]\ar[uur]\ar[r]&\CC(1)\ar[d]\ar[r]^{\alpha_2}&\Sigma ^3\PP (H_2)\\
&\CC(0)\ar[r]^{\alpha_1}&\Sigma ^2\PP (H_1)}$$ such that
$\CC(i)=\Sigma ^{-1} \CC(\alpha_i)$, where $\CC(\alpha_i)$ denotes
the algebraic mapping cone of $\alpha_i$, and $\PP(H_i)$ denotes a projective resolution of the homology module $H_i$. 

Recall that the algebraic
mapping cone of a chain map $f \colon \CC \to \DD$ is defined as the
chain complex $\CC(f)=\DD \oplus \Sigma \CC $ with boundary map
given by $\bd (x,y)=(\bd x +f(x), \bd y )$. Note that $\Sigma ^n$ is
the shift operator for chain complexes which is defined by $(\Sigma
^n \CC)_i= C_{i-n}$ for every integer $n$.

The algebraic Postnikov system has similar properties to the
Postnikov system in homotopy theory. The maps $\alpha_i\colon
\CC(i-1)\rightarrow \Sigma ^{i+1}\PP(H_i)$ are called $k$-\emph{invariants}
and they are well defined up to chain homotopy equivalence. We can
consider the $k$-invariants as classes in
$\Ext_{\RG_G}^{i+1}(\CC(i-1),H_i)$, since there is an isomorphism
$$[\CC(i-1),\Sigma^{i+1}\PP(H_i)]\cong\Ext_{\RG_G}^{i+1}(\CC(i-1),H_i)$$
between chain homotopy classes of chain maps and the $\Ext$-groups
of chain complexes (see Dold \cite{dold1} for details). The
$k$-invariants $\alpha _i \in \Ext_{\RG_G}^{i+1}(\CC(i-1),H_i)$ are
defined inductively and they uniquely specify $\CC$ up to chain
homotopy equivalence.

 We also need a lifting result for $\RG_H$-modules.
\begin{lemma}\label{lem: lifting}
Let $G$ be a finite group, and $\cF$ be a family of $p$-subgroups in
$G$. Suppose $H\leq G$ controls $p$-fusion in $G$. Then the restriction map  $M \mapsto \Res_H^G(M)$ gives a bijection between the isomorphism classes of conjugation invariant right $\RG_G$-modules and conjugation invariant right $\RG_H$-modules.
\end{lemma}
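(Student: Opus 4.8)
The plan is to prove something slightly stronger than stated: that $\Res_H^G$ restricts to an \emph{equivalence} between the full subcategory of conjugation invariant right $\RG_G$-modules and the full subcategory of conjugation invariant right $\RG_H$-modules; the asserted bijection on isomorphism classes is then immediate. I would use two elementary consequences of the hypotheses. Since $p \nmid |G/H|$, the subgroup $H$ contains a $p$-Sylow subgroup of $G$, so every $Q \in \cF$ is $G$-conjugate to a member of $\cF_H$. And control of $p$-fusion says exactly that whenever $P_1 \in \cF_H$ and $g \in G$ with $P_1^{\,g} \in \cF_H$, one can write $g = ch$ with $c \in C_G(P_1)$ and $h \in H$ (and then $P_1^{\,h} = P_1^{\,g} \in \cF_H$ as well). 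The recurring mechanism of the argument will be: split a conjugating element $g$ between subgroups in $\cF_H$ as $g = ch$; the factor $c \in C_G(-)$ acts trivially on any conjugation invariant module, while the factor $h \in H$ is visible inside $\OrH$; and if $g = ch = c'h'$ are two such splittings, then $h(h')^{-1} = c^{-1}c' \in C_G(-) \cap H = C_H(-)$, which again acts trivially, so every construction built this way is independent of the choices.

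For essential surjectivity I would write down an explicit inverse construction. Fix, for each $Q \in \cF$, an element $g_Q \in G$ with $Q^{g_Q} \in \cF_H$, taking $g_Q = e$ when $Q \in \cF_H$. Given a conjugation invariant $\RG_H$-module $N$, set $M(Q) := N(Q^{g_Q})$. For a $G$-map $\varphi\colon G/Q \to G/P$ with $\varphi(eQ) = aP$, so $Q^a \leq P$, the element $b := g_Q^{-1}ag_P$ satisfies $(Q^{g_Q})^{b} \leq P^{g_P}$; choosing a splitting $b = ch$ with $c \in C_G(Q^{g_Q})$ and $h \in H$, I would set $M(\varphi) := N(\psi_h)$, where $\psi_h\colon G/Q^{g_Q} \to G/P^{g_P}$ is the $H$-map $eQ^{g_Q} \mapsto hP^{g_P}$. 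By the mechanism above this is independent of the representative $a$ in the coset $aP$ and of the splitting $b = ch$, using only that $N$ is conjugation invariant; a short computation with a product of composable morphisms (splitting the product of the two conjugating elements) then shows that $M$ is a contravariant functor, i.e.\ an $\RG_G$-module. Taking $\varphi$ to be a loop $eQ \mapsto xQ$ with $x \in C_G(Q)$ forces $b \in C_G(Q^{g_Q})$, so $M(\varphi) = N(\id) = \id$; hence $M$ is conjugation invariant. And for $Q \in \cF_H$ one has $g_Q = e$, so $M(Q) = N(Q)$ and $M$ sends an $H$-map to itself, giving $\Res_H^G M = N$.

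For full faithfulness I would first note that $\Res_H^G$ is faithful on all modules: any $\RG_G$-morphism $\phi\colon M_1 \to M_2$ satisfies $\phi_Q = M_2(\rho_{g_Q})^{-1} \circ \phi_{Q^{g_Q}} \circ M_1(\rho_{g_Q})$, where $\rho_{g_Q}\colon G/Q \to G/Q^{g_Q}$ is the conjugation isomorphism in $\OrG$, so $\phi$ is determined by $\Res_H^G \phi$. For fullness, given conjugation invariant $M_1, M_2$ and a morphism $\psi\colon \Res_H^G M_1 \to \Res_H^G M_2$, I would define $\phi_Q$ by exactly this formula with $\psi_{Q^{g_Q}}$ in place of $\phi_{Q^{g_Q}}$; then $\Res_H^G \phi = \psi$ by construction, and only naturality of $\phi = (\phi_Q)$ remains to be checked. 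Factoring an arbitrary $G$-map as a conjugation isomorphism followed by an inclusion $G/Q' \hookrightarrow G/P'$ with $Q' \leq P'$ in $\cF$, naturality reduces to these two cases; in each, it follows by rewriting both sides in terms of the value of $\psi$ on the relevant morphism of $\OrH$, after splitting the comparison conjugating elements (between members of $\cF_H$) as $C_G \cdot H$ and discarding the $C_G$-parts by conjugation invariance of $M_1$ and $M_2$. Combined with essential surjectivity, this yields the equivalence, and hence the lemma.

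The main obstacle will be exactly these naturality verifications in the last step — equivalently the functoriality check for $M$ in the second step. They are elementary but delicate: one must track conjugation conventions carefully while simultaneously invoking control of $p$-fusion (to split conjugating elements), conjugation invariance (to discard the centralizer parts), and functoriality of $N$ (respectively naturality of $\psi$) over the orbit category $\OrH$.
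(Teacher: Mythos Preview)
Your proposal is correct, but the paper takes a cleaner route that sidesteps the delicate naturality verifications you flag as the main obstacle. The paper observes that a conjugation invariant right $\RG_G$-module is precisely a functor on the quotient category $\SubG$, whose objects are $K \in \cF$ and whose morphisms are $\Mor_{\OrG}(G/K,G/L)/C_G(K)$; similarly for $\RG_H$ and $\SubH$. It then shows directly that the induced functor $\SubH \to \SubG$ is essentially surjective (every $p$-subgroup is conjugate into $H$) and fully faithful (control of $p$-fusion gives a bijection on the quotient morphism sets), hence an equivalence of categories; the equivalence of module categories, and the bijection on isomorphism classes, then drops out of Mac~Lane's characterization of equivalences. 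What the paper's approach buys is that the ``split $g = ch$ and discard $c$'' mechanism is performed once, at the level of morphism sets of $\SubG$, rather than repeatedly inside module- and natural-transformation-level checks; your hands-on construction is doing the same work, but unpacked, which is why you correctly anticipate the bookkeeping as the chief difficulty.
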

\begin{proof} A conjugation invariant right $\RG_G$-module $M$ is a functor $\OrG \to \RMod$ which factors through the quotient category
$\OrG \to \SubG$. Here $\SubG$ has objects $K \in \cF$ and morphisms $\Mor_{\SubG}(K,L) = \Mor_{\OrG}(G/K, G/L)/C_G(K)$, where an element $c\in C_G(K)$ acts on a $G$-map defined by $f(eK) =gL$ via the composition  $eK \mapsto cgL$ (see \cite[p.~206]{lueck6}). 

Consider the functor $F\colon \OrH \to \OrG$ given on objects by
$H/K \mapsto G/K$ (see Section \ref{sect:Rim's thm}), and on morphisms by induced maps. First note that
every object of $\SubG$ is isomorphic to an object of $\SubH$, since every $p$-subgroup of $G$ is conjugate to a subgroup of $H$. 
In addition, $ F$ induces a bijection of morphism sets
$$\Mor_{\SubH}(K,L) \to \Mor_{\SubG}(K,L)$$
 since $H$ controls $p$-fusion in $G$.  Suppose that $F(f_1) \approx F(f_2)$, where $f_1(eK) = h_1L$ and $f_2(eK) = h_2L$, for some $h_1, h_2 \in H$. By assumption, there exists $c \in C_G(K)$ such that $ch_2L = h_1L$, or $h_1^{-1}ch_2 \in L\leq H$. But this implies $c \in C_H(K)$ so $f_1 \approx f_2$ and $F$ is injective on morphisms. Given $f\colon G/K \to G/L$ with $K\leq H$,  $f(eK) = gL$ and $g^{-1}Kg \subseteq L\leq H$, we have $g=ch$ for some $c\in C_G(K)$ and $h \in H$, because $H$ controls $p$-fusion in $G$. Hence $f \approx F(f_1)$, where $f_1(eK) = hL$ and $F$ is surjective on morphisms.

Therefore the  functor $F\colon \OrH \to \OrG$ induces an equivalence of categories $$\bar F\colon \SubH \approx \SubG$$ by
\cite[IV.4, Theorem 1, p.~91]{maclane2}. 
\end{proof}

\begin{proof}[Proof of Proposition \ref{prop:mislin_chain}] 
Part (ii) follows from Proposition \ref{prop: finite projective dimension}, so it is enough to prove the existence of $\CC^G$.
By Lemma \ref{lem: lifting}, for each $i \geqslant 0$ there exists a conjugation invariant right $\RG_G$-module $H_i^G$ such that
$\Res_H^G(H_i^G) = H_i(\CC ^H)$.

Consider the Postnikov tower for $\CC^H$. Since $\CC^H(0)=\PP
(H_0(\CC^H))$ there is a complex $\CC^G(0)$ such that $\res_H^G
\CC^G(0) \simeq \CC^H(0)$. In this case, the complex $\CC ^G (0) $
can be taken as a projective resolution of $H_0 ^G $.  Now, we will
show that such a lifting exists for $\CC ^H (i)$ for all $i$. For
this we prove a slightly stronger statement so that we can
carry out an induction. We claim that the following holds for all
$n\geqslant 0$.
\begin{enumerate}
\item $\CC^H(n)$ lifts to a chain complex $\CC^G(n)$
\item The restriction map
$$\res ^G _H\colon
\Ext^*_{\RG_G}(\CC^G(n),N)\rightarrow \Ext^* _{\RG_H}(\CC^H(n), \res
^G _H N)$$ is an isomorphism for all $*\geqslant 0$ and for every
$\RG _G$-module $N$ which is conjugation invariant.
\end{enumerate}
We have already shown that $\CC^H(0)$ lifts to $\CC^G(0)$.  For the
second property, first observe that that $\CC^G(0)$ is chain
homotopy equivalent to a chain complex with single module $H_0^G$
and similarly, $\CC^H(0)\simeq H_0^H$. So, we need to show that
$$\res ^G _H \colon \Ext^* _{\RG_G}(H_0 ^G ,H_1^G)\rightarrow
\Ext^* _{\RG_H}(H_0 ^H ,H_1^H)$$ is an isomorphism. This follows
from  Theorem \ref{thm:filtered mislin_orbit}, because of our assumption on
homology groups.

Now, assume that both (i) and (ii) hold for $n=i-1$.  Then, take
$$\alpha_i ^G \in\Ext^{i+1}_{\RG_G}(\CC^G(i-1),H_i^G)$$ which corresponds
to the class $\alpha_i ^H \in\Ext^{i+1}_{\RG_H}(\CC^H(i-1),H_i^H)$
under the isomorphism given in (ii).  Let $\CC^G(i)= \Sigma
^{-1}\CC(\alpha_i ^G)$.  Then, we have a short exact sequence of
chain complexes
$$\xymatrix{
0\ar[r]&\CC^G(i)\ar[r]&\CC^G(i-1)\ar[r]^{\alpha_i ^G}&\Sigma
^{i+1}\PP (H_i ^G) \ar[r]&0}$$ Since $\res_H^G\alpha_i ^G=\alpha_i
^H$, we have $\res_H^G \CC^G(i)\simeq \CC^H(i)$.  Now, we will show
that (ii) holds for $\CC^G(i)$.  By the Five Lemma, it is enough to show
that
$$\res ^G _H \colon \Ext^*_{\RG _G }(\Sigma^{i+1} \PP (H_i ^G), N) \rightarrow
\Ext^*_{\RG _H}(\Sigma ^{i+1} \PP (H_i ^H),N)$$ is an isomorphism
for all $*\geqslant 0$, and for every $\RG_G$-module $N$ which is
conjugation invariant. But, this follows from Theorem
\ref{thm:filtered mislin_orbit}. 
\end{proof}

Now, we prove one of the main results of this section which allows
us to glue $p$-local chain complexes. We first give a definition.

\begin{definition} Let $\CC$ be a  chain complex over $\RG_G$. We say that $\CC$ has  \emph{homology gaps of length $n$}, if
$H_{i+k}(\CC)= 0$ for  $0 <k< n$, whenever $H_i(\CC) \neq 0$.
\end{definition}

\begin{theorem}
\label{thm:glueing} Let $G$ be a finite group of order $m$. For each prime $p\mid m$, let $\CC^{(p)}$ be a positive
projective chain complex of $\bZ_{(p)}\G_G$-modules. 
Suppose that
\begin{enumerate}\addtolength{\itemsep}{0.2\baselineskip}
\item  $\CC ^{(p)}$ has  homology gaps of length
 $> l(\G_G)$, for all $p\mid m$. 
 \item there exists a graded $\ZG_G$-module
$\HH$ such that $H_i (\CC ^{(p)} )\cong \bZp \otimes _{\bZ} \HH_i $
for all $i\geq 0$, and for all $p \mid m$.
\end{enumerate}
 Then, there is a projective chain complex $\CC$ of
$\ZG_G$-modules  such that $\bZp \otimes _{\bZ} \CC \simeq
\CC^{(p)}$, for each prime $p\mid m$, and  $H_i(\CC) = \HH_i$ for $i \geq 0$.
\end{theorem}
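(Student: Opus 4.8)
The plan is to build $\CC$ by gluing the $p$-local complexes $\CC^{(p)}$ together, one prime at a time, using the theory of algebraic Postnikov systems \cite{dold1}. The homology gap condition (i) is exactly what makes the $k$-invariants accessible: if $\CC$ has homology gaps of length $> l(\G_G)$, then between the dimensions where homology is concentrated there is enough room for the obstruction groups $\Ext^{i+1}_{\RG_G}(\CC(i-1), H_i)$ to be controlled by Proposition \ref{prop:Ext is finite} and Proposition \ref{prop:p-local decomposition} — i.e. in the relevant range the integral $\Ext$-groups split as the direct sum of their $p$-localizations. So the strategy is: construct $\CC$ by induction up the Postnikov tower, where at each stage the integral $k$-invariant is assembled from the given $p$-local $k$-invariants of the $\CC^{(p)}$ via this splitting.

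First I would set up the integral homology: by hypothesis (ii) we are handed the graded $\ZG_G$-module $\HH$ with $\bZp \otimes_\bZ \HH_i \cong H_i(\CC^{(p)})$. Fix a projective resolution $\PP(\HH_i)$ of each $\HH_i$ over $\ZG_G$; then $\bZp \otimes_\bZ \PP(\HH_i)$ is a projective resolution of $H_i(\CC^{(p)})$ over $\bZp\G_G$. Now proceed by induction on the Postnikov stage $n$. Set $\CC(0) = \PP(\HH_0)$, so $\bZp\otimes_\bZ \CC(0)$ is chain homotopy equivalent to $\CC^{(p)}(0)$ for every $p$. Assume inductively that we have a positive projective $\ZG_G$-complex $\CC(n-1)$ with $\bZp \otimes_\bZ \CC(n-1) \simeq \CC^{(p)}(n-1)$ for all $p \mid m$. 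The next $k$-invariant must live in $\Ext^{n+1}_{\RG_G}(\CC(n-1), \HH_n)$. The key point is that because of the homology gaps, $\CC(n-1)$ has finite homological dimension with top nonzero homology well below $n$ (more precisely, $l(\CC(n-1)) < n$ once we account for $l(\G_G)$), so by Proposition \ref{prop:Ext is finite} this $\Ext$-group is finite of exponent dividing a power of $|G|$, and by the flatness/splitting argument of Proposition \ref{prop:p-local decomposition} together with isomorphism (\ref{p-localization}),
$$\Ext^{n+1}_{\ZG_G}(\CC(n-1),\HH_n) \;\cong\; \bigoplus_{p\mid m} \Ext^{n+1}_{\bZp\G_G}(\bZp\otimes_\bZ\CC(n-1),\; \bZp\otimes_\bZ\HH_n).$$
Under this isomorphism, and using the identification $\bZp\otimes_\bZ\CC(n-1)\simeq\CC^{(p)}(n-1)$, define $\alpha_n \in \Ext^{n+1}_{\ZG_G}(\CC(n-1),\HH_n)$ to be the element corresponding to the tuple of the given $p$-local $k$-invariants $(\alpha_n^{(p)})_p$ of the complexes $\CC^{(p)}$. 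Set $\CC(n) = \Sigma^{-1}\CC(\alpha_n)$, the desuspension of the algebraic mapping cone of a chain map $\CC(n-1)\to\Sigma^{n+1}\PP(\HH_n)$ representing $\alpha_n$; this is again a positive projective $\ZG_G$-complex, and tensoring the short exact sequence of complexes $0\to\CC(n)\to\CC(n-1)\to\Sigma^{n+1}\PP(\HH_n)\to 0$ with $\bZp$ shows $\bZp\otimes_\bZ\CC(n)$ has the desuspended mapping cone of $\alpha_n^{(p)}$, hence $\bZp\otimes_\bZ\CC(n)\simeq\CC^{(p)}(n)$ for each $p$. Finally, since each $\CC^{(p)}$ has homology bounded above (it is a chain complex with finitely many homology dimensions, by the gap hypothesis together with the implicit finiteness of $\HH$), the Postnikov towers stabilize: for $n$ large the maps $\CC(n)\to\CC(n-1)$ are homology isomorphisms in all degrees, so $\CC := \CC(N)$ for $N\gg 0$ is the desired complex, with $H_i(\CC) = \HH_i$ by construction and $\bZp\otimes_\bZ\CC \simeq \CC^{(p)}$ for every $p\mid m$.

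The main obstacle is making the bookkeeping precise at the step where we claim $\Ext^{n+1}_{\ZG_G}(\CC(n-1),\HH_n)$ splits $p$-locally: one has to verify that the homology gap of length $> l(\G_G)$ genuinely forces $n+1 > l(\CC(n-1)) + l(\G_G)$ (or whatever the exact threshold in Proposition \ref{prop:Ext is finite} and its consequences is, applied to $M = H_t(\CC(n-1))$ in the hypercohomology spectral sequence as in Proposition \ref{lem:finiteness}), so that the relevant $\Ext$-group is indeed finite of $|G|$-power exponent and the splitting of Proposition \ref{prop:p-local decomposition} applies. A secondary technical point is checking compatibility of the various chain homotopy equivalences $\bZp\otimes_\bZ\CC(n)\simeq\CC^{(p)}(n)$ with the identifications used to transport the $k$-invariants — i.e. that the isomorphism $[\CC(n-1),\Sigma^{n+1}\PP(\HH_n)]\cong\Ext^{n+1}_{\RG_G}(\CC(n-1),\HH_n)$ from Dold's theory is natural enough for the inductive construction to close up. Neither of these is deep, but both require care with the constants and with the naturality statements in \cite{dold1}.
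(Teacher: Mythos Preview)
Your proposal is correct and follows essentially the same route as the paper's proof: build $\CC$ inductively up the algebraic Postnikov tower, starting from $\CC(0)=\PP(\HH_0)$, and at each nonzero homology stage use the homology gap hypothesis to ensure $i+1 > l(\G_G) + \hdim\CC(i-1)$ so that the integral $\Ext^{i+1}$ splits as the direct sum of its $p$-localizations, then assemble the integral $k$-invariant $\alpha_i$ from the given $\alpha_i^{(p)}$ and set $\CC(i)=\Sigma^{-1}\CC(\alpha_i)$. The paper's argument is exactly this, with the one cosmetic difference that it explicitly separates off the trivial case $\HH_i=0$ (where one simply takes $\CC(i)=\CC(i-1)$), and it leaves the stabilization of the tower implicit rather than spelling it out as you do.
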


\begin{proof} We will construct $\CC$ inductively. The case $i=0$
is trivial, because in this case we can take $\CC(0)=\PP(\HH_0)$.
Assume now that  $\CC(i-1)$ has been constructed in such a way that
$\bZp \otimes _{\bZ} \CC(i-1)\simeq \CC_{i-1}^{(p)}$ for all $p \mid m$.
If $\HH_i=0$, then we can take $\CC (i)=\CC (i-1)$ and it will satisfy
the condition that $\bZp \otimes _{\bZ} \CC(i)\simeq \CC_i^{(p)}$.
So, assume $\HH_i$ is nonzero. If $i+1 >\bigl ( l(\G_G)+\hdim \CC (i-1)
\bigr )$, then we have
\begin{eqnarray*}
\Ext^{i+1}_{\ZG_G}(\CC(i-1),\PP (\HH_i))&\cong& \bigoplus_{p \mid m}
\Ext^{i+1}_{\bZp\G_G}(\bZp \otimes _{\bZ} \CC(i-1), \HH_i ^{(p)} )\\
&\cong& \bigoplus_{p \mid m}
\Ext^{i+1}_{\bZp\G_G}(\CC^{(p)}(i-1),\HH_i^{(p)})
\end{eqnarray*}
where $\HH_i ^{(p)}=\bZp \otimes _{\bZ} \HH_i$. Note that the above
condition on $(i+1)$ is satisfied since the distance between nonzero
homology groups of $\CC ^{(p)}$ is bigger than $l(\G_G)$. Choose
$\alpha _i \in\Ext^{i+1}_{\ZG_G}(\CC(i-1),\PP (\HH_i))$ so that under
the $p$-localization map, $\alpha _i$ is mapped to the $i$-th
$k$-invariant $\alpha_i ^{(p)}$ of the $p$-local complex $\CC
^{(p)}$, for every $p \mid m$. Let $\CC(i)=\Sigma
^{-1}\CC(\alpha_i)$. For each prime $p \mid m$, we have a diagram
of the form
$$
\xymatrix{0 \ar[r]&\CC(i)\ar[r]&\CC(i-1)\ar[d]^{\varphi _p}
\ar[r]^{\alpha_i}&\PP (\HH_i)\ar [d]\ar[r] & 0 \\ 0 \ar[r] &
\CC^{(p)}(i)\ar[r]&\CC^{(p)}(i-1)\ar[r]^{\alpha^{(p)}_i}&\PP (\HH_i^
{(p)}) \ar[r] & 0 }
$$
where the vertical map $\varphi _p$ is given by the composition
$$\varphi _p \colon \CC (i-1) \to \bZp \otimes _{\bZ} \CC (i-1) \cong \CC
^{(p)}(i-1).$$ The first map in the above composition is induced by
the usual inclusion of integers into $p$-local integers. From this
diagram, it is clear that there is a map $\CC(i)\to \CC^{(p)}(i)$
which induces an isomorphism on homology when it is localized at
$p$. Thus, it gives a chain homotopy equivalence $\bZp \otimes
_{\bZ} \CC(i) \simeq \CC^{(p)}(i)$, for $p\mid m$. This completes the proof.
\end{proof}


We conclude this section with a technique (used in the proof of Theorem A)  for modifying the homology of a given
(finite, projective) chain complex $\CC$ over the orbit category. A projective resolution $\PP\to M$ has \emph{length} $\leqslant \ell$, provided that $P_i = 0$ for $i > \ell$. 

\begin{proposition}\label{prop:adding-subtracting} Let $\G$ be an EI-category.
Let $\varphi\colon \HH_k \to \HH'_k$ be an $\RG$-module homomorphism,
where $\HH_k = H_k(\CC)$. Suppose that both kernel and cokernel of
$\varphi$ admit finite projective resolutions of length $\leqslant \ell$,
and that $H_{k + j} =0$ for $1\leqslant j < \ell$. Then there is a
$\RG$-chain complex $\CC'$ such that $H_i(\CC') = H_i(\CC)$, for $i
\neq k$, and $H_k(\CC') = \HH'_k$.
\end{proposition}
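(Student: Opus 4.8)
The plan is to carry out the modification entirely inside the band of degrees $k,\dots,k+\ell$, using that $\CC$ has no homology strictly between degrees $k$ and $k+\ell$. First I would factor $\varphi$ through its image, $\HH_k\twoheadrightarrow\Image\varphi\hookrightarrow\HH'_k$, writing $L:=\ker\varphi$ for the kernel of the first map and $N:=\coker\varphi$ for the cokernel of the second. Since each of these maps changes only $H_k$ and preserves the homology gap, it suffices to perform in succession: (I) replace $\HH_k$ by the quotient $\HH_k/L$, with $L$ admitting a finite projective resolution of length $\le\ell$; then (II) replace $\Image\varphi$ by the extension $\HH'_k$ of $N$ by $\Image\varphi$, with $N$ admitting such a resolution.

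For (I), fix a projective resolution $\PP\to L$ with $P_i=0$ for $i>\ell$ and regard $\Sigma^k\PP$ as the positive projective complex with $P_i$ in degree $k+i$, so $H_*(\Sigma^k\PP)=L$ in degree $k$. Lifting degree by degree, using projectivity of the $P_i$, one obtains a chain map $u\colon\Sigma^k\PP\to\CC$ realizing the inclusion $L\hookrightarrow\HH_k$; the only obstruction to continuing the lift lies in the groups $H_{k+j}(\CC)$, $1\le j<\ell$, which vanish by hypothesis. Let $\CC_1$ be the algebraic mapping cone of $u$. Its homology long exact sequence, together with the injectivity of $u_*\colon L\hookrightarrow\HH_k$, gives $H_i(\CC_1)\cong H_i(\CC)$ for $i\ne k$ and $H_k(\CC_1)=\HH_k/L=\Image\varphi$. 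Moreover $\CC_1$ agrees with $\CC$ in degrees $\le k$, is positive projective (finite if $\CC$ is), and retains the homology gap above degree $k$.

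For (II), fix a projective resolution $\QQ\to N$ with differentials $d_i$ and $Q_i=0$ for $i>\ell$, and represent the class of the extension $\HH'_k$ in $\Ext^1_{\RG}(N,\Image\varphi)$ by a map $f\colon Q_1\to\Image\varphi$ with $f\circ d_2=0$. Lifting $f$ through the surjection from the $k$-cycles of $\CC_1$ onto $\Image\varphi$ gives $\tilde f\colon Q_1\to Z_k(\CC_1)$; set $g_1=\tilde f$ and choose $g_i\colon Q_i\to(C_1)_{k+i-1}$ with $\partial g_i=g_{i-1}d_i$ by downward induction on $i$, possible since the successive obstructions again lie in the vanishing groups $H_{k+j}(\CC_1)$, $1\le j<\ell$. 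Now set $C'_{k+i}=(C_1)_{k+i}\oplus Q_i$ for $0\le i\le\ell$, with $C'_j=(C_1)_j$ otherwise, and a differential assembled from that of $\CC_1$, the $d_i$, and the $g_i$; a direct check gives $\partial'\circ\partial'=0$. Then $\CC_1$ is a subcomplex of $\CC'$ with $\CC'/\CC_1\cong\Sigma^k\QQ$, whose homology is $N$ in degree $k$, and the long exact sequence of $0\to\CC_1\to\CC'\to\Sigma^k\QQ\to0$ gives $H_i(\CC')\cong H_i(\CC_1)$ for $i\ne k$ (the connecting map $N\to H_{k-1}(\CC_1)$ vanishes, since a cycle representing a class in $(\Sigma^k\QQ)_k=Q_0$ already lifts to a cycle $(0,q_0)\in C'_k$), together with a short exact sequence $0\to\Image\varphi\to H_k(\CC')\to N\to0$ whose class is that of $f$. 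Hence $H_k(\CC')\cong\HH'_k$, and applying (II) to $\CC_1$ produces the desired complex $\CC'$ (still positive projective, finite if $\CC$ is).

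The point I expect to be fussiest is the explicit twisted differential in (II): arranging signs so that $\partial'\circ\partial'=0$ is exactly the list of relations $\partial g_i=g_{i-1}d_i$, and then checking that the extension appearing on $H_k(\CC')$ is the one classified by $f$. The whole argument can alternatively be run through the theory of algebraic Postnikov systems of Dold \cite{dold1}: keep the Postnikov tower of $\CC$ below degree $k$, replace $\HH_k$ and the $k$-invariant $\alpha_k$ by $\HH'_k$ and $\varphi_*(\alpha_k)$, and lift each higher $k$-invariant along $\Ext^*_{\RG}(\CC'(i-1),-)\to\Ext^*_{\RG}(\CC(i-1),-)$ — a map onto in the relevant degrees because the mapping cone of $\CC(i-1)\to\CC'(i-1)$ has homology only $\coker\varphi$ and $\ker\varphi$, each of projective dimension $\le\ell$, in degrees $k$ and $k+1$, which the homology gap separates from the rest of the homology of $\CC$.
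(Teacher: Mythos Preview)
Your argument is correct. Step (I), the surjective case, is essentially identical to the paper's treatment: build a chain map from a projective resolution of $\ker\varphi$ (shifted into degrees $k,\dots,k+\ell$) into $\CC$ using the homology gap, then take the mapping cone. One small slip: you wrote ``downward induction on $i$'' when you mean upward, from $i=2$ to $i=\ell$.

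Where you differ from the paper is in Step (II), the injective case. You build the extension directly: represent its class in $\Ext^1_{\RG}(N,\Image\varphi)$ by a cocycle $f\colon Q_1\to\Image\varphi$, lift to a chain map $g\colon\Sigma^{k-1}\QQ\to\CC_1$ (with $g_0=0$), and take the mapping cone. The paper instead reduces the injective case back to the surjective one by a pullback trick: it pulls the extension $0\to\HH_k\to\HH'_k\to\coker\varphi\to0$ back along the augmentation $\epsilon\colon P_k\to\coker\varphi$, obtaining $\widehat\HH_k\cong\HH_k\oplus P_k$; one then simply adds the summand $P_k$ to $C_k$, so that the new complex has $H_k=\widehat\HH_k$, and now $\widehat\varphi\colon\widehat\HH_k\to\HH'_k$ is \emph{surjective} with kernel $\ker\epsilon$ of projective dimension $<\ell$, and Step (I) applies again. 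The paper's route avoids the explicit twisted differential and the verification that the resulting extension on $H_k$ is the intended one (the point you rightly flag as ``fussiest''); your route is more self-contained and makes the new complex completely explicit in one pass. Both yield finite projective complexes when $\CC$ is one.

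Your closing remark about Dold's Postnikov alternative is in the right spirit but would need more care to make rigorous (the surjectivity claim on $\Ext$-groups is not quite immediate from what you wrote). Since it is offered only as an aside, this does not affect the validity of the main argument.
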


\begin{proof}
First suppose that $\varphi$ is surjective. Let
$$0 \to P_{k+\ell} \to \dots \to P_k \to \ker\varphi \to 0$$
be a projective resolution for $\ker\varphi$. Since $\CC$ is exact
in the range $[k+1, k+\ell)$, we have a chain map
$$\xymatrix{\cdots \ar[r]& 0 \ar[r]\ar[d]& P_{k+\ell} \ar[r]\ar[d]^{f_{k+\ell}}& \dots
\ar[r]& P_{k+1} \ar[r]\ar[d]^{f_{k+1}}& P_k \ar[r]\ar[d]^{f_k}&
\ker\varphi
\ar[r]\ar@{^{(}->}[d]& 0\\
\cdots \ar[r] & C_{k+\ell+1} \ar[r]& C_{k+\ell} \ar[r]& \dots
\ar[r]& C_{k+1} \ar[r]& Z_k \ar[r]& \HH_k\ar[r]& 0 }$$ This gives a
chain map $f\colon \PP \to \CC$, where $f_k \colon P_k \to C_k$ is
the composition of $f_k$ with the inclusion $Z_k \subset C_k$. Let
$\CC'= \CC(f)$ denote the mapping cone of $f$. The induced map
$$\ker\varphi=H_k (\PP) \to H_k(\CC)=\HH_k$$ on homology is given by
the inclusion, and hence $H_k(\CC') = \HH_k'$, with $H_i(\CC') =
H_i(\CC)$ for $i \neq k$. 

Now suppose that $\varphi$ is an injective map, so that
\eqncount
\begin{equation}\label{firstexactsequence}
0 \to \HH_k \xrightarrow{\varphi} \HH'_k \to \coker\varphi \to 0
\end{equation}
is exact. Let $\epsilon\colon\PP \to \coker\varphi$ be a projective resolution of
$\coker\varphi$ of length $\leqslant\ell$, indexed so that $\epsilon\colon
P_k \to \coker\varphi \to 0$. We form the
pull-back
$$\xymatrix{0 \ar[r]& \HH_k \ar[r]\ar@{=}[d]& \widehat \HH_k \ar[r]\ar[d]^{\widehat\varphi}& P_k \ar[r]\ar[d]^{\epsilon}& 0\cr
0 \ar[r]& \HH_k \ar[r]& \HH'_k \ar[r]& \coker\varphi \ar[r]& 0\cr
}$$
of the sequence (\ref{firstexactsequence}) by $\epsilon$, and note that $\widehat \HH_k \cong \HH_k \oplus P_k$.
The chain complex
$$ \dots \to C_{k+1} \to C_k \oplus P_k \to C_{k-1} \to \dots \to C_0 \to 0$$
has homology $\widehat \HH_k$ at $i=k$, and $\widehat\varphi\colon \widehat \HH_k \to \HH'_k$ is surjective. By the pull-back diagram, 
$$\ker \widehat\varphi \cong \ker (\epsilon\colon P_k \to \coker\varphi).$$
 Since $\coker\varphi$ has a projective resolution of length $\leq \ell$, it follows that $\ker \widehat\varphi$ has a projective resolution of length $< \ell$. Hence the assumptions needed for the surjective case hold for  $\widehat\varphi\colon \widehat \HH_k \to \HH'_k$, and we are done by the argument above.

 The general case is done by expressing the map
$\varphi\colon \HH_k \to \HH'_k$ as the composition of a surjection and
an injection.
\end{proof}


\section{The finiteness obstruction}
\label{sect:K-theory}

Let $G$ be a finite group and $\cF$ be a family of subgroups of $G$.
The main result of this section is Theorem \ref{thm:K-theory}: 
given a finite
projective chain complex $\CC$ of $\ZG_G $-modules,  for $\G_G=\OrG$, we can obtain a
finite free complex by taking join tensor of $\CC$ with itself sufficiently
many times.  This result is an adaptation to the orbit category of the fundamental work of Swan \cite{swan1}. We first introduce some definitions, based on the material in L\"uck \cite[\S 10-11]{lueck3}).

Let $\G$ be an EI-category. We denote by $K_0(\ZG)$ the
Grothendieck ring of isomorphism classes of projective $\bZ
\G$-modules and $K_0(\ZG, \free)$  denote the Grothendieck ring
of isomorphism classes of free $\ZG$-modules (under direct sum $M\oplus N$ and 
tensor
product $M\otimes_{\bZ} N$).  
We have an exact sequence of abelian
groups $$0\rightarrow  K_0(\ZG,\free)\rightarrow K_0(\bZ
\G)\xrightarrow{q}\widetilde {K}_0(\ZG)\rightarrow  0$$
defining the quotient group $\widetilde{K}_0(\ZG)$. 

 Note that
$K_0(\ZG, \free)$ is a subring, but not an ideal in general. This
is because the tensor product of a free module with a projective module
is not free in $\ZG$.  For example, if $P $ is a projective
module which is not free, then $P\otimes\un{\bZ } \cong P$ is not a
free $\ZG_G$-module although $\un{\bZ}$ is free when $G \in \cF$.

Given a finite projective chain complex of $\ZG$-modules
$$\CC \colon \,\, 0\rightarrow  C_n\rightarrow  C_{n-1}
\rightarrow
\cdots\rightarrow C_1 \rightarrow C_0\rightarrow  0 $$ we
define
$$\sigma (\CC)=\sum_{i=0}^n (-1)^i[C_i]\in K_0(\ZG)$$ and
$$\widetilde{\sigma}(\CC)=q(\sigma(\CC))\in \widetilde{K}_0(\ZG).$$
The class $\widetilde{\sigma}(\CC)$ is called the \emph{finiteness
obstruction} since it is the only obstruction for $\CC$ to be chain
homotopy equivalent to a finite free chain complex. 

From now on, we
assume that all the chain complexes are positive and  projective. As always, we
assume all modules are finitely generated.

The following are
standard results which show that $\widetilde{\sigma}(\CC)$ is an
invariant, and that it is an obstruction for finiteness. 

\begin{lemma}
If $\CC$ and $\DD$ are chain homotopy equivalent, then $\sigma
(\CC)= \sigma (\DD)$.
\end{lemma}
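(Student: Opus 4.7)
The strategy is the standard one via the mapping cone construction, which I would carry out in three steps.

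First, I would establish the key special case: any finite projective chain complex $\EE$ admitting a chain contraction $s$ satisfies $\sigma(\EE) = 0$. From $ds + sd = \id_{\EE}$, reading off the top degree $n$ gives $s_{n-1} \circ \partial_n = \id_{E_n}$, so $\partial_n \colon E_n \to E_{n-1}$ is a split monomorphism. Writing $E_{n-1} = \partial_n(E_n) \oplus E'_{n-1}$ produces an isomorphism of chain complexes $\EE \cong D(E_n, n) \oplus \EE'$, where $D(E_n, n)$ denotes the elementary ``disk'' complex $E_n \xrightarrow{\id} E_n$ concentrated in degrees $n$ and $n-1$, and $\EE'$ is a shorter projective complex inheriting a chain contraction. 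Iterating and observing that
$$\sigma(D(P,n)) = (-1)^n[P] + (-1)^{n-1}[P] = 0,$$
we conclude $\sigma(\EE) = 0$.

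Second, for an arbitrary chain map $f\colon \CC \to \DD$ between finite projective complexes, the algebraic mapping cone $\CC(f)$ has modules $\CC(f)_n = D_n \oplus C_{n-1}$, so in $K_0(\ZG)$,
$$\sigma(\CC(f)) = \sum_n (-1)^n\bigl([D_n] + [C_{n-1}]\bigr) = \sigma(\DD) - \sigma(\CC).$$

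Third, I would invoke the standard fact that if $f\colon \CC \to \DD$ is a chain homotopy equivalence, with homotopy inverse $g$ and chain homotopies $H\colon gf \simeq \id_\CC$, $K\colon fg \simeq \id_\DD$, then $\CC(f)$ is chain contractible; an explicit contraction can be written down from $f$, $g$, $H$, and $K$. Combining this with the first two steps yields $0 = \sigma(\CC(f)) = \sigma(\DD) - \sigma(\CC)$, so $\sigma(\CC) = \sigma(\DD)$, as required.

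The main obstacle is the splitting lemma in the first step: justifying that a bounded projective chain complex with a contraction really decomposes as a direct sum of disk complexes. This is well known but requires the careful inductive peeling described above; once that is in hand, the rest of the argument is formal bookkeeping with the mapping cone.
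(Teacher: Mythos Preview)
Your argument is correct and is the standard proof via mapping cones. The paper does not give its own argument here; it simply refers the reader to L\"uck \cite[11.2]{lueck3}, so there is nothing substantive to compare against. One small clarification on your first step: the quotient complex $\EE'$ does not literally inherit the contraction $s$, but it is an acyclic bounded complex of projectives and hence contractible, which is all you need to continue the induction.
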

\begin{proof}
See \cite[11.2]{lueck3}.
\end{proof}

\begin{lemma}
Let $\CC$ and $\DD$ be finite chain complexes of projective $\ZG$-
modules.  Then,
$\sigma (\CC \otimes_{\bZ} \DD )=\sigma (\CC)\cdot\sigma (\DD)$.
\end{lemma}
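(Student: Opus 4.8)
The plan is to reduce the statement about chain complexes to the already-established multiplicativity of the class $[M]\mapsto [M\otimes_\bZ N]$ on $K_0$, via the standard device of the total complex and an Euler-characteristic bookkeeping. First I would recall that for finite projective chain complexes $\CC$ and $\DD$ over $\ZG$, the tensor product over $\bZ$ is formed degreewise by $(\CC\otimes_\bZ\DD)_n = \bigoplus_{i+j=n} C_i\otimes_\bZ D_j$, and that each $C_i\otimes_\bZ D_j$ is again a finitely generated projective $\ZG$-module (the tensor product over $\bZ$ of projectives is projective, since it is a direct summand of a tensor product of frees, which is free by the distributivity recorded just before the statement). So $\CC\otimes_\bZ\DD$ is again a finite positive projective chain complex, and $\sigma(\CC\otimes_\bZ\DD)$ is defined.

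The computation is then purely formal:
\begin{align*}
\sigma(\CC\otimes_\bZ\DD) &= \sum_n (-1)^n \Bigl[\bigoplus_{i+j=n} C_i\otimes_\bZ D_j\Bigr]
= \sum_n (-1)^n \sum_{i+j=n} [C_i\otimes_\bZ D_j]\\
&= \sum_{i,j} (-1)^{i+j} [C_i\otimes_\bZ D_j]
= \Bigl(\sum_i (-1)^i [C_i]\Bigr)\cdot\Bigl(\sum_j (-1)^j [D_j]\Bigr)
= \sigma(\CC)\cdot\sigma(\DD),
\end{align*}
where the crucial passage from the third to the fourth expression uses that the product in $K_0(\ZG)$ is induced by $\otimes_\bZ$, i.e. $[C_i\otimes_\bZ D_j] = [C_i]\cdot[D_j]$, together with the fact that this product is bilinear with respect to direct sums (so it is well-defined on the Grothendieck group). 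The rearrangement of the finite double sum is legitimate because only finitely many $C_i$, $D_j$ are nonzero.

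I do not expect any serious obstacle here; the only point requiring a word of care is the well-definedness of the ring structure on $K_0(\ZG)$ — that $\otimes_\bZ$ descends to a biadditive pairing on isomorphism classes — but this is exactly what is being used implicitly in the definition of $K_0(\ZG)$ as a ring in the paragraph preceding the lemma, so it may simply be cited. If a cleaner citation is preferred, one can instead invoke L\"uck \cite[\S 11]{lueck3}, where the analogous multiplicativity of the finiteness obstruction under $\otimes$ is treated; but the three-line Euler-characteristic argument above is self-contained and is the route I would write out.
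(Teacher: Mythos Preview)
Your argument is correct and is exactly the standard Euler-characteristic computation one expects. The paper itself does not write out a proof at all---it simply cites L\"uck \cite[11.18, 11.24]{lueck3}---so your self-contained three-line derivation is, if anything, more explicit than what the paper provides, and is presumably the content of the cited reference.
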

\begin{proof}
See \cite[11.18]{lueck3} and the sharper result in \cite[11.24]{lueck3}.
\end{proof}

\begin{lemma}
Let $\CC$ be a finite chain complex with $\widetilde{\sigma
}(\CC)=0$. Then $\CC$ is chain homotopy equivalent to a finite chain
complex of free $\ZG $-modules.
\end{lemma}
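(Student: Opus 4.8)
The statement asserts that a finite projective chain complex $\CC$ over $\ZG$ with vanishing finiteness obstruction $\widetilde{\sigma}(\CC) = 0$ is chain homotopy equivalent to a finite \emph{free} complex. The strategy is the standard ``Eilenberg swindle in reverse''---a finite version---modifying $\CC$ one module at a time, trading projective summands for free ones while keeping track of the class in $K_0(\ZG)$. First I would reduce to the following local move: if $\CC$ is a finite projective complex, $C_j$ its lowest-degree projective module, and $Q$ any finitely generated projective with $Q \oplus Q'$ free, then by adding an elementary (acyclic) complex $0 \to Q \xrightarrow{\id} Q \to 0$ in degrees $j{+}1, j$ one replaces $(C_{j+1}, C_j)$ by $(C_{j+1}\oplus Q,\ C_j \oplus Q)$ without changing the chain homotopy type; iterating upward, this shows that $\CC$ is chain homotopy equivalent to a complex in which $C_0, C_1, \dots, C_{n-1}$ are all free, at the cost of possibly changing $C_n$ to some projective $P_n$ with $[P_n] = \sigma(\CC) + (\text{free class})$ in $K_0(\ZG)$ (up to the sign $(-1)^n$), since all the other $[C_i]$ are now in $K_0(\ZG,\free)$.

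Concretely, the induction runs as follows: stabilize $C_0$ to a free module $F_0$ by adding $0 \to Q_0 \xrightarrow{\id} Q_0 \to 0$ in degrees $1,0$, where $Q_0$ complements $C_0$ to a free module; now $C_1$ has been replaced by $C_1 \oplus Q_0$, still projective, so repeat to stabilize it to a free $F_1$, and so on. After $n$ steps we reach $\CC' \simeq \CC$ with $C'_i$ free for $i < n$ and $C'_n = P$ projective. Then
\[
(-1)^n [P] = \sigma(\CC') - \sum_{i<n}(-1)^i[C'_i] = \sigma(\CC) - (\text{class in } K_0(\ZG,\free)),
\]
using $\sigma(\CC') = \sigma(\CC)$ by the invariance lemma. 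Hence $q([P]) = \pm q(\sigma(\CC)) = \pm\widetilde{\sigma}(\CC) = 0$ in $\widetilde K_0(\ZG)$, so $[P] \in K_0(\ZG,\free)$; that is, there are free modules $E, E'$ with $P \oplus E \cong E' $ (after possibly stabilizing $E$ so that $E' := P \oplus E$ is free, which is the definition of $q([P])=0$). Finally, add the acyclic complex $0 \to E \xrightarrow{\id} E \to 0$ in degrees $n, n{-}1$: this does not change the chain homotopy type, replaces $C'_n = P$ by $P \oplus E \cong E'$ which is free, and replaces the free module $C'_{n-1}$ by $C'_{n-1} \oplus E$ which is still free. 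The result is a finite complex of free $\ZG$-modules chain homotopy equivalent to $\CC$.

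**The main obstacle.** The only subtlety is bookkeeping in $K_0(\ZG)$ rather than any genuine difficulty: one must be careful that ``stabilize a projective to a free module'' means choosing $Q_i$ with $C_i \oplus Q_i$ free (possible since projectives are summands of frees, by the discussion of free covers earlier in the excerpt), and that adding the elementary complex $0 \to Q \xrightarrow{\id} Q \to 0$ between two consecutive degrees is a chain homotopy equivalence---this is immediate since the added summand is contractible and splits off. One should also note that the definition of $q([P]) = 0$ in $\widetilde K_0(\ZG) = K_0(\ZG)/K_0(\ZG,\free)$ unwinds exactly to the existence of free $E$ with $P\oplus E$ free, which is what the last step needs. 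No new input beyond the three lemmas already cited (invariance of $\sigma$, and the definitions of $K_0(\ZG,\free)$ and $\widetilde K_0(\ZG)$) is required; the argument is word-for-word the group-ring argument of Swan transported to the EI-category setting, and nothing about $\G$ being an orbit category is used.
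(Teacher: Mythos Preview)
Your proposal is correct and is precisely the argument of Swan \cite[Proposition 5.1]{swan1} that the paper cites as its proof; nothing further is needed. The only cosmetic point is that the invariance of $\sigma$ under adding elementary complexes follows directly from the definition of $\sigma$ (the contributions cancel), so you need not invoke the chain-homotopy invariance lemma at that step---but either way the argument goes through unchanged in the EI-category setting, exactly as you note.
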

\begin{proof}
See Swan \cite[Proposition 5.1]{swan1}.
\end{proof}

Given two chain complexes of $\RG$-modules $\CC$ and $\DD$,
consider the
corresponding augmented complexes
$$\widetilde {\CC}\colon\  \cdots\rightarrow C_2 \to  C_1\rightarrow
C_0\rightarrow \un{R} \rightarrow  0$$
$$\widetilde {\DD}\colon \cdots\rightarrow  D_2 \to D_1\rightarrow
D_0\rightarrow \un{R} \rightarrow  0$$
Taking their tensor product, we obtain a complex of the form
$$\widetilde {\CC}\otimes_{R}\widetilde {\DD}\colon \cdots\rightarrow
C_1\oplus D_1\oplus C_0\otimes D_0\rightarrow  C_0\oplus D_0
\rightarrow \un{R} \rightarrow  0.$$ 
\begin{definition}
We define the {\bf join
tensor}, denoted $\CC \divideontimes  \DD$, of two positive augmented chain complexes 
$C$ and $D$ by the formula
$$ \widetilde {\CC \join   \DD}=\Sigma \left ( \widetilde {\CC}\otimes_{R}
\widetilde {\DD} \right ),$$
 where $\Sigma$ denote the suspension of a
chain complex defined by $(\Sigma \CC )_i = C_{i-1} $ for all $i$.
\end{definition}

\begin{lemma}\label{lem:jointensorformula}
Let $\CC$ and $\DD$ be finite chain complexes of projective $\ZG$-modules. Then, $\sigma (\CC \join   \DD)=\sigma (\CC)+\sigma
(\DD)-\sigma (\CC)\cdot \sigma(\DD )$.
\end{lemma}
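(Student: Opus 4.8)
The plan is to compute $\sigma(\CC \join \DD)$ directly from the definition of the join tensor, using the two preceding lemmas (additivity of $\sigma$ on tensor products and homotopy invariance). First I would unwind the definitions: the augmented complexes $\widetilde\CC$ and $\widetilde\DD$ differ from $\CC$ and $\DD$ only by adjoining $\un{R}$ in degree $-1$ (or better, by the short exact sequences of complexes $0 \to \CC \to \widetilde\CC \to \Sigma^{-1}\un{R} \to 0$, viewing $\un{R}$ as a complex concentrated in degree $0$ and then shifting). Since the shift $\Sigma$ multiplies $\sigma$ by $-1$, and $\sigma$ is additive on short exact sequences of complexes (each $C_i$ sitting in a split exact sequence of modules degreewise, so this reduces to the alternating-sum bookkeeping already implicit in \cite[11.2]{lueck3}), I get $\sigma(\widetilde\CC) = \sigma(\CC) - [\un{R}]$ in $K_0(\ZG)$, where $[\un{R}]$ denotes the class of the constant module — here I should be slightly careful since $\un{R}$ need not be projective, so really this identity is best read inside $K_0$ of all finitely generated modules, or else I phrase everything in terms of reduced Euler characteristics where the $\un{R}$ terms are handled symbolically. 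I will present it as: $\sigma(\CC \join \DD) = -\,\sigma(\widetilde\CC \otimes_R \widetilde\DD)$, and $\sigma(\widetilde\CC \otimes_R \widetilde\DD) = \sigma(\widetilde\CC)\cdot\sigma(\widetilde\DD)$ by the tensor-product lemma.

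Next I would substitute. Writing $\epsilon = [\un{R}]$ for the class of the augmentation target, we have $\sigma(\widetilde\CC) = \sigma(\CC) - \epsilon$ and $\sigma(\widetilde\DD) = \sigma(\DD) - \epsilon$. Then
\[
\sigma(\CC \join \DD) = -\big(\sigma(\CC) - \epsilon\big)\big(\sigma(\DD) - \epsilon\big)
= -\sigma(\CC)\sigma(\DD) + \epsilon\,\sigma(\CC) + \epsilon\,\sigma(\DD) - \epsilon^2.
\]
The key computational input is that $\epsilon^2 = \epsilon$: the constant module $\un{R} = \uR{G/G}$ satisfies $\un{R}\otimes_R \un{R} \cong \un{R}$ (indeed $[M\otimes_R N](H) = M(H)\otimes_R N(H)$, and $R \otimes_R R = R$ objectwise, with the morphism maps matching up), so $\epsilon^2 = \epsilon$ in the Grothendieck ring. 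Likewise one needs $\epsilon \cdot \sigma(\CC) = \sigma(\CC)$: this is because $\un{R}\otimes_R M \cong M$ for any $\RG$-module $M$, again objectwise ($R\otimes_R M(H) = M(H)$). Granting these, the right-hand side collapses to $-\sigma(\CC)\sigma(\DD) + \sigma(\CC) + \sigma(\DD) - \epsilon$. But the same reasoning gives $\sigma(\CC\join\DD) $ should have its own augmentation subtracted consistently — I must be careful that the $-\epsilon$ appearing here is exactly the augmentation class that was already stripped off when we computed $\sigma$ of the \emph{unaugmented} $\CC\join\DD$. In other words, reading the identity $\widetilde{\CC\join\DD} = \Sigma(\widetilde\CC\otimes_R\widetilde\DD)$ at the level of $\sigma$: $\sigma(\CC\join\DD) - \epsilon = -\sigma(\widetilde\CC\otimes_R\widetilde\DD) = -(\sigma(\CC)-\epsilon)(\sigma(\DD)-\epsilon)$, and expanding and using $\epsilon^2=\epsilon$, $\epsilon\sigma(\CC)=\sigma(\CC)$, $\epsilon\sigma(\DD)=\sigma(\DD)$ yields $\sigma(\CC\join\DD) - \epsilon = -\sigma(\CC)\sigma(\DD) + \sigma(\CC)+\sigma(\DD)-\epsilon$, so the $\epsilon$'s cancel and we are left with exactly $\sigma(\CC\join\DD) = \sigma(\CC) + \sigma(\DD) - \sigma(\CC)\cdot\sigma(\DD)$, as claimed.

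The main obstacle, and the point that needs the most care, is the handling of the non-projective class $\epsilon = [\un{R}]$ inside $K_0(\ZG)$: strictly speaking $\sigma$ of an augmented complex is not an element of $K_0(\ZG)$ unless $\un{R}$ happens to be projective. I would resolve this by working throughout with the difference $\widetilde\sigma$ — or more cleanly, by never actually forming $\sigma(\widetilde\CC)$ in isolation, but instead comparing $\widetilde{\CC\join\DD}$ with $\Sigma(\widetilde\CC\otimes_R\widetilde\DD)$ as genuine (unaugmented) projective complexes after the suspension absorbs nothing problematic — note that $\widetilde\CC\otimes_R\widetilde\DD$ in positive degrees consists of sums of $C_i\otimes_R C_j$, $C_i\otimes_R \un R \cong C_i$, etc., all of which are projective, and the only potentially non-projective term $\un R\otimes_R \un R$ sits in the degree that the suspension and the definition arrange to be the new augmentation slot. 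So $\CC\join\DD$, as defined, is a genuine positive projective complex, and its $\sigma$ is computed by the alternating sum of the projective terms $C_i\otimes_R D_j$ together with the $C_i$'s and $D_j$'s; matching this alternating sum against the product formula $(\sigma(\CC) - \epsilon)(\sigma(\DD)-\epsilon)$ and simplifying with the ring identities for $\epsilon$ is then a finite bookkeeping check. I would present this bookkeeping compactly rather than term-by-term, citing Lemma~\ref{lem:jointensorformula}'s predecessors (the homotopy invariance and multiplicativity of $\sigma$) for the structural inputs and reserving the explicit verification only for the identities $\un R\otimes_R M \cong M$ and $\un R \otimes_R \un R \cong \un R$.
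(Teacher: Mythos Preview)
Your argument is correct, but considerably more elaborate than the paper's. The paper proceeds in one line: from the definition of the join, $(\CC \join \DD)_k = C_k \oplus D_k \oplus \bigoplus_{i+j=k-1} C_i \otimes_{\bZ} D_j$, and taking alternating sums the three pieces give $\sigma(\CC)$, $\sigma(\DD)$, and $-\sigma(\CC)\cdot\sigma(\DD)$ respectively (the last sign because $(-1)^k = -(-1)^i(-1)^j$ when $k=i+j+1$). This sidesteps entirely the issue you identify of $\epsilon=[\un R]$ not living in $K_0(\ZG)$: the paper never forms $\sigma(\widetilde\CC)$ or invokes multiplicativity on the augmented complexes. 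Your formal manipulation with $\epsilon$ is a pleasant mnemonic for the identity $1-\sigma(\CC\join\DD)=(1-\sigma(\CC))(1-\sigma(\DD))$ (which the paper records immediately afterward), and your resolution of the technical wrinkle---unwinding to the projective terms and checking the alternating sum---is exactly the paper's computation. So the scaffolding of augmented Euler characteristics can be dropped and the whole thing written in two sentences.
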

\begin{proof}
Note that
$(\CC \divideontimes  \DD)_k = C_k \oplus D_k \oplus \bigoplus_{i + j =k-1}  C_i \otimes_{\bZ} D_j $,
for each $k \geq 0$.
Therefore,
$$ \sigma (\CC\join 
\DD)=\sum_k (-1)^k[C_k]+\sum_k(-1)^k[D_k]-\sum_{i+j=k-1}(-1) ^k
[C_i\otimes D_j]$$ 
and the result follows.
\end{proof}

We often express the above formula by writing
$$(1-\sigma (\CC \join   \DD))=(1-\sigma (\CC))(1-\sigma (\DD)).$$
Whenever it is written in this way, one should understand it as a formal
expression of the formula given in Lemma \ref{lem:jointensorformula}.
The main theorem of this section is the following:

\begin{theorem}\label{thm:K-theory}
Let $\G_G =\OrG$ where $G$ is a finite group and $\cF$ is a family of
subgroups in $G$. Given a finite chain complex $\CC$ of projective
$\ZG _G$-modules, there exists an integer $n$ such that $n$-fold join
tensor $\join_n \CC$ of the complex $\CC$ is chain equivalent to a finite complex
of free $\ZG_G$-modules.
\end{theorem}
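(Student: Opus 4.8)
The plan is to follow Swan's original strategy: use the join tensor formula on $K$-theory, together with the finiteness result (the finiteness obstruction $\widetilde\sigma$ is the only obstruction to being equivalent to a finite free complex), and show that some multiple of $\widetilde\sigma(\CC)$ vanishes. Concretely, by Lemma \ref{lem:jointensorformula} the class of the $n$-fold join tensor satisfies the formal identity
$$1 - \sigma(\join_n \CC) = (1 - \sigma(\CC))^n \in K_0(\ZG_G),$$
so $\sigma(\join_n \CC) = 1 - (1 - \sigma(\CC))^n$. Passing to $\widetilde K_0(\ZG_G)$ via the quotient map $q$ (recall $q(1) = q([\un\bZ]) = 0$ since $\un\bZ = \uZ{G/G}$ is free when $G\in\cF$ — but here we should be careful, $G$ need not be in $\cF$), we would instead argue directly in $K_0(\ZG_G)$: writing $x = \sigma(\CC)$, we have $\sigma(\join_n\CC) = nx - \binom{n}{2}x^2 + \cdots$. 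The goal is to choose $n$ so that this lands in the subring $K_0(\ZG_G,\free)$.

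First I would reduce to a statement about $\widetilde K_0(\ZG_G)$. The obstruction to $\join_n\CC$ being equivalent to a finite \emph{free} complex is $\widetilde\sigma(\join_n\CC) = q(\sigma(\join_n\CC))$. Because $q$ is a ring homomorphism and $q(\sigma(\CC)) = \widetilde\sigma(\CC) =: \widetilde x$, we get (interpreting the join tensor formula after applying $q$, using $q(1)=0$ only formally — more precisely expanding $1-(1-x)^n$ and applying $q$ term by term, noting $q$ kills the constant term since there is none):
$$\widetilde\sigma(\join_n\CC) = q\bigl(nx - \tbinom n2 x^2 + \cdots + (-1)^{n-1}x^n\bigr) = n\widetilde x - \tbinom n2 q(x^2) + \cdots.$$
Now $\widetilde K_0(\ZG_G)$ is a \emph{finite} abelian group. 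This is the crucial input: $\widetilde K_0$ of the integral group ring of a finite group is finite (Swan), and the orbit category version follows from the splitting of projectives (Theorem following Lemma \ref{lem:E_x property} region, i.e. $P \cong \bigoplus_x E_x S_x P$) which identifies $K_0(\ZG_G)$ with a sum of $K_0$ of the automorphism group rings $\bZ[N_G(H)/H]$, each of which has finite reduced $K_0$. Since the group is finite, $q(x)$ is torsion, say of order dividing $N$; moreover the ideal generated by $q(x)$ is nilpotent (finite ring, or one can observe $q(x)$ lies in a nil ideal after tensoring appropriately — but the cleanest route is: $K_0(\ZG_G,\free)$ is a subring of finite index considerations aside, one uses that $\widetilde K_0(\ZG_G)$ is finite so the multiplicative structure is bounded).

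The key step, and the main obstacle, is to get from "$\widetilde x$ is torsion" to "some $\widetilde\sigma(\join_n\CC) = 0$". The formal identity $1 - \widetilde\sigma(\join_n\CC) = (1-\widetilde x)^n$ must be read in the commutative ring obtained by adjoining a formal unit, but really one argues as follows: in the finite ring $\widetilde K_0(\ZG_G)$ (with its non-unital product, or adjoin a unit $1$), the element $1 - \widetilde x$ is a unit because $\widetilde x$ is nilpotent — indeed $\widetilde K_0(\ZG_G)\otimes\bQ = 0$ forces every element of $\widetilde K_0$ to be nilpotent in $\widetilde K_0$ adjoined a unit, since a finite-index-nil situation. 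Hence the multiplicative subgroup generated by $1-\widetilde x$ inside the units of $\bZ \oplus \widetilde K_0(\ZG_G)$ is finite, so $(1-\widetilde x)^n = 1$ for some $n\geq 1$, giving $\widetilde\sigma(\join_n\CC) = 0$. By the finiteness-obstruction lemma (Swan \cite[Proposition 5.1]{swan1}, stated above), $\join_n\CC$ is chain homotopy equivalent to a finite complex of free $\ZG_G$-modules, as desired.

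The step I expect to fight with is justifying that $\widetilde x = \widetilde\sigma(\CC)$ is nilpotent (equivalently that $1-\widetilde x$ has finite multiplicative order) in the ring $\widetilde K_0(\ZG_G)$ — this requires knowing $\widetilde K_0(\ZG_G)$ is finite and that the multiplication makes $\widetilde K_0$ a nil ideal in $K_0$, which in turn I would deduce from the decomposition $K_0(\ZG_G) \cong \bigoplus_{(H)} K_0(\bZ[W_G H])$ together with Swan's theorem that each $\widetilde K_0(\bZ[W_G H])$ is finite, plus a check that the join-tensor product respects this decomposition compatibly enough that the classical argument over each group ring applies. Everything else is bookkeeping with Lemma \ref{lem:jointensorformula} and the lemmas preceding Theorem \ref{thm:K-theory}.
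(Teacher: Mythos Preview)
Your approach has a genuine gap at the ring-theoretic step. You want to work multiplicatively in $\widetilde K_0(\ZG_G)$, passing from $1-\sigma(\join_n\CC)=(1-\sigma(\CC))^n$ in $K_0(\ZG_G)$ to an identity $(1-\widetilde x)^n$ in $\widetilde K_0$. But the paper explicitly warns (just before Lemma~\ref{lem:jointensorformula}) that $K_0(\ZG_G,\free)$ is a subring of $K_0(\ZG_G)$ and \emph{not an ideal}: for instance $\uZ{G/H}\otimes_{\bZ} P$ need not be free when $P$ is merely projective. Hence the quotient map $q$ is not a ring homomorphism, $\widetilde K_0(\ZG_G)$ carries no induced product, and the formula $(1-\widetilde x)^n$ has no meaning. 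Your honest remark that this is ``the step I expect to fight with'' is on target, but the difficulty is not \emph{proving} nilpotence of $\widetilde x$---the product $\widetilde x\cdot\widetilde x$ is simply undefined. (Even granting a ring structure, finiteness of $\widetilde K_0$ would not by itself force nilpotence: a finite non-unital commutative ring can contain idempotents.)

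The paper circumvents this by replacing the non-ideal $K_0(\ZG_G,\free)$ with a filtration $0=K_0(\ZG_G)_0\subseteq\cdots\subseteq K_0(\ZG_G)_m=K_0(\ZG_G)$ by genuine \emph{ideals}, indexed by an ordering of the conjugacy classes in $\cF$. Writing $\sigma(\CC)=u+\sum_j v_j+w$ along this filtration and expanding $(1-\sigma(\CC))^n$ modulo $K_0(\ZG_G)_{i-1}$, the cross terms $v_jv_k$ for $j\neq k$ fall into the lower stage by a double-coset decomposition of $E_HV\otimes_{\bZ}E_KW$ (Lemma~\ref{lem:decomposition}). The decisive input---which has no analogue in your outline---is the Oliver--Segev result (Proposition~\ref{prop:OliverSegev}) that $P\otimes_{\bZ}P'$ is \emph{stably free} over $\bZ G$ for projectives $P,P'$; this makes the diagonal terms $v_j^2$ stably free modulo $K_0(\ZG_G)_{i-1}$. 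What survives is then an explicit multiple of $n$, and finiteness of $\widetilde K_0(\ZG_G)$ lets one choose $n$ to kill it. Without the Oliver--Segev step there is no mechanism forcing the obstruction to vanish rather than merely to cycle.
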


We need to show that the finiteness obstruction $\widetilde{\sigma }
(\join_n \CC)$ vanishes for some $n$. In the proof we will use a
result by Oliver and Segev \cite{oliver-segev1}.

\begin{proposition}\label{prop:OliverSegev}
Let $G$ be a finite group and let $P$ and $P'$ be any two finitely
generated
projective $\bZ G$-modules. Then, $P \otimes _{\bZ} P'$ is stably
free as a $\bZ G$-module.
\end{proposition}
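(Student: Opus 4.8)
The plan is to translate the proposition into the vanishing of a natural pairing on the reduced projective class group, and then to appeal to the arithmetic of the order $\bZ G$. First I would record that $P\otimes_{\bZ}P'$, with the diagonal $G$-action, is again a finitely generated projective $\bZ G$-module. The key observation is the \emph{untwisting} isomorphism: if $N$ is any $\bZ G$-module which is free as a $\bZ$-module, then $g\otimes n\mapsto g\otimes g^{-1}n$ gives a $\bZ G$-isomorphism $\bZ G\otimes_{\bZ}N\cong\bZ G\otimes_{\bZ}N_0$, where $N_0$ is $N$ with the trivial action; since $N_0\cong\bZ^{d}$ with $d=\operatorname{rk}_{\bZ}N$, this exhibits $\bZ G\otimes_{\bZ}N$ as free of rank $d$. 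Applying this with $N=P'$ (which is $\bZ$-free, being $\bZ G$-projective) and writing $P$ as a summand of $(\bZ G)^{n}$, we get that $P\otimes_{\bZ}P'$ is a summand of $(\bZ G\otimes_{\bZ}P')^{n}$, hence projective; the same computation shows $-\otimes_{\bZ}P'$ sends free $\bZ G$-modules to free $\bZ G$-modules. Consequently $[P]\mapsto[P\otimes_{\bZ}P']$ descends to an additive endomorphism of $\widetilde{K}_0(\bZ G)$, and symmetrising gives a biadditive pairing $\widetilde{K}_0(\bZ G)\times\widetilde{K}_0(\bZ G)\to\widetilde{K}_0(\bZ G)$. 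The proposition is precisely the statement that this pairing vanishes identically; equivalently, that $\widetilde{K}_0(\bZ G)$, with the commutative non-unital ring structure coming from $\otimes_{\bZ}$, is a square-zero ring.

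Next I would make two standard reductions. By Swan's theorem a finitely generated projective $\bZ G$-module is locally free, so $\bQ\otimes_{\bZ}P\cong(\bQ G)^{r}$ for some $r$; combined with the untwisting remark this shows both that $\bQ\otimes_{\bZ}(P\otimes_{\bZ}P')$ is $\bQ G$-free and that $P\otimes_{\bZ}P'$ is itself locally free, so its class lies in the locally free class group $\operatorname{Cl}(\bZ G)=\widetilde{K}_0(\bZ G)$. Then Swan's cancellation down to rank one lets us represent any class by a locally free ideal $\mathfrak a\subseteq\bZ G$ spanning $\bQ G$; since $\mathfrak a\otimes_{\bZ}\bZ G$ and $\bZ G\otimes_{\bZ}\bZ G$ are free, splitting off free summands reduces the required vanishing to the single statement that $\mathfrak a\otimes_{\bZ}\mathfrak b$ is stably free for any two such ideals. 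A second, independent reduction is available via the projection formula $\ind_H^G(x)\otimes_{\bZ}y\cong\ind_H^G(x\otimes_{\bZ}\res_H^G y)$: since $\res_H^G$ preserves projectivity and $\widetilde{K}_0(\bZ G)$ is generated by induction from hyperelementary subgroups (Swan's induction theorem), the vanishing of the pairing for $G$ follows from its vanishing for hyperelementary groups. Either way the problem is brought to a concrete, small situation.

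The essential step is to prove vanishing in that concrete situation, and here I would use the idelic description $\operatorname{Cl}(\bZ G)=J(\bQ G)/(\bQ G)^{\times}U(\bZ G)$ of Fr\"ohlich. Choose, for each finite place $v$, local generators $\lambda_v,\mu_v\in(\bQ_v G)^{\times}$ with $\mathfrak a_v=\bZ_v G\cdot\lambda_v$ and $\mathfrak b_v=\bZ_v G\cdot\mu_v$, so that $(\lambda_v)$ and $(\mu_v)$ represent $[\mathfrak a]$ and $[\mathfrak b]$ and are trivial for almost all $v$. Then $\mathfrak a_v\otimes_{\bZ_v}\mathfrak b_v$ sits inside $\bQ_v G\otimes_{\bQ_v}\bQ_v G\cong(\bQ_v G)^{|G|}$ as the image of the standard lattice $\bZ_v G\otimes_{\bZ_v}\bZ_v G$ under the $\bQ_v G$-automorphism $x\otimes y\mapsto x\lambda_v\otimes y\mu_v$, whose class in $K_1(\bQ_v G)$ is the $v$-th component of an idele representing $[\mathfrak a\otimes_{\bZ}\mathfrak b]-|G|[\bZ G]$. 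This assignment is the localisation of a single global homomorphism, built from the $\bZ G$-bimodule $\bZ G\otimes_{\bZ}\bZ G$, and it carries $(\bZ_v G)$-units into $K_1(\bZ_v G)$; hence the output idele is the image of $([\mathfrak a],[\mathfrak b])$ under an endomorphism of $\operatorname{Cl}(\bZ G)$, which one must show is zero.

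I expect this last point to be the main obstacle. The idea would be to compute the reduced norm of the automorphism ``multiply by $\lambda$ on one tensor factor of the $|G|$-fold regular module'': after untwisting it is controlled, on each Wedderburn component of $\bQ G$, by a norm map $\bQ(\chi)^{\times}\to\bQ^{\times}$ applied to the reduced norm of $\lambda$, and one must verify that the resulting idele lies in $(\bQ G)^{\times}U(\bZ G)$ and so represents $0$ in $\operatorname{Cl}(\bZ G)$ — equivalently, one verifies the square-zero property directly on the hyperelementary generators, together with the necessary control of the $SK_0$-type contributions. This is exactly the content of the cited result of Oliver and Segev \cite{oliver-segev1}. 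Finally I would point out why the square-zero conclusion is the precise input needed in Theorem \ref{thm:K-theory}: in the formal identity $1-\widetilde{\sigma}(\CC\join\DD)=(1-\widetilde{\sigma}(\CC))(1-\widetilde{\sigma}(\DD))$ for the join tensor, square-zeroness of $\widetilde{K}_0(\bZ G)$ kills all the cross terms, so $\widetilde{\sigma}(\join_n\CC)=n\,\widetilde{\sigma}(\CC)$, which vanishes once $n$ is a multiple of the (finite) order of $\widetilde{\sigma}(\CC)$ in $\widetilde{K}_0(\bZ G)$, whence $\join_n\CC$ is chain homotopy equivalent to a finite free complex.
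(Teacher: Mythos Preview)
The paper does not actually prove this proposition: its ``proof'' is a one-line citation to Oliver--Segev \cite[Proposition C.3]{oliver-segev1}. Your proposal goes considerably further by sketching a genuine argument---the untwisting isomorphism, the resulting biadditive pairing on $\widetilde{K}_0(\bZ G)$, the reduction to locally free ideals via Swan, and the hyperelementary induction---all of which is correct and standard. But you yourself acknowledge that the ``essential step'' (showing the idelic representative lies in $(\bQ G)^{\times}U(\bZ G)$) is ``exactly the content of the cited result of Oliver and Segev'', so in the end your proof and the paper's proof rest on the same external input at the same point. Your write-up is thus an expanded gloss on why the citation suffices, not an independent proof.

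There is, however, a genuine error in your final paragraph on the application to Theorem~\ref{thm:K-theory}. That theorem is stated for chain complexes over $\ZG_G=\bZ\OrG$, not over $\bZ G$, and $\widetilde{\sigma}(\CC)$ lives in $\widetilde{K}_0(\ZG_G)$. The proposition gives square-zeroness only for $\widetilde{K}_0(\bZ G)$; it does \emph{not} follow that $\widetilde{K}_0(\ZG_G)$ is square-zero, and indeed the tensor product of free $\ZG_G$-modules need not be free (the paper remarks this explicitly). Hence your claimed identity $\widetilde{\sigma}(\join_n\CC)=n\,\widetilde{\sigma}(\CC)$ is unjustified. The paper's actual argument is more delicate: it uses the splitting $K_0(\ZG_G)\cong\bigoplus_H K_0(\bZ[N_G(H)/H])$ (Theorem~\ref{thm:splittingK-theory}) to build a filtration by ideals $K_0(\ZG_G)_i$, decomposes $\sigma(\CC)=u+\sum_j v_j+w$ along this filtration, and then invokes Proposition~\ref{prop:OliverSegev} only to show that each diagonal product $v_j\cdot v_j$ is stably free modulo the lower stratum. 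The off-diagonal products $v_j\cdot v_k$ vanish modulo the lower stratum for structural reasons (Lemma~\ref{lem:decomposition}), and an induction down the filtration finishes the proof. So the proposition enters one stratum at a time, not as a global square-zero statement.
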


\begin{proof} See \cite[Proposition C.3]{oliver-segev1}.
\end{proof}

We also need the following splitting theorem for $K_0 (\ZG )$.

\begin{theorem}\label{thm:splittingK-theory}
Let $\G$ be a $EI$-category.  Then, the map
$$ K_0(S) : K _0 (\ZG ) \to \bigoplus _{x \in \Is (\G) }  K_0(\bZ
[x] ),$$ defined by $[P]\to [S_x(P)]$ on each $x \in \Is (\G)$, is
an isomorphism. The same holds when $K_0$ is replaced by
$\widetilde{K}_0$.
\end{theorem}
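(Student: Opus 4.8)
The plan is to reduce everything to the structural results about projective $\bZ\G$-modules already recorded in Section \ref{sect:definitions}, especially the decomposition $P\cong\bigoplus_{x\in\Is(\G)}E_xS_x(P)$ for finitely generated projectives and the behavior of the functors $E_x$, $S_x$. First I would define the two candidate maps. The map $K_0(S)$ is induced by $[P]\mapsto ([S_x(P)])_{x\in\Is(\G)}$; this is well defined on $K_0$ because $S_x$ is additive (it preserves direct sums) and carries projectives to projectives (both facts are cited from \cite[Lemma 9.31]{lueck3} in the excerpt). In the other direction, define $K_0(E)\colon \bigoplus_{x\in\Is(\G)}K_0(\bZ[x])\to K_0(\bZ\G)$ by $([Q_x])_x\mapsto [\bigoplus_x E_x(Q_x)]$; this is well defined since each $E_x$ is additive and preserves projectives.

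The key steps are then the two composites. For $K_0(S)\circ K_0(E)$: using the two naturally-established facts $S_x\circ E_x\simeq\id$ and $S_y\circ E_x=0$ for $\bar x\neq\bar y$ (stated in the excerpt just before the splitting theorem for projective modules), the $y$-component of $S\bigl(\bigoplus_x E_x Q_x\bigr)$ is $\bigoplus_x S_y E_x Q_x\cong Q_y$, so $K_0(S)\circ K_0(E)=\id$. For $K_0(E)\circ K_0(S)$: given a finitely generated projective $P$, the theorem $P\cong\bigoplus_{x\in\Is(\G)}E_xS_x(P)$ (proved in \cite[Corollary 9.40]{lueck3}, quoted in the excerpt) says exactly that $K_0(E)(K_0(S)[P])=\bigl[\bigoplus_x E_xS_x(P)\bigr]=[P]$, so this composite is the identity as well. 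Hence $K_0(S)$ and $K_0(E)$ are mutually inverse isomorphisms. Note the finiteness hypothesis is used precisely here: the decomposition $P\cong\bigoplus_x E_xS_x(P)$ is only asserted for finitely generated projectives, and — since $\G$ is a finite EI-category, so $\Is(\G)$ is finite and each $\bZ[x]=\bZ\Aut(x)$ is a group ring of a finite group — the direct sum over $\Is(\G)$ is finite and each summand is again finitely generated, so $K_0(E)$ really does land in $K_0(\bZ\G)$ of finitely generated projectives.

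Finally, to obtain the statement for $\widetilde K_0$ in place of $K_0$, I would check that $K_0(S)$ carries the subgroup $K_0(\bZ\G,\free)$ onto $\bigoplus_{x}K_0(\bZ[x],\free)$, and then pass to quotients using the defining short exact sequences $0\to K_0(\bZ\G,\free)\to K_0(\bZ\G)\to\widetilde K_0(\bZ\G)\to 0$ (and similarly for each $\bZ[x]$). The inclusion $K_0(S)\bigl(K_0(\bZ\G,\free)\bigr)\subseteq\bigoplus_x K_0(\bZ[x],\free)$ follows from $S_x(\uZ{G/K})$ being free (this is computed in \cite[Lemma 9.31]{lueck3}; concretely $S_x$ applied to a free generator $\vR x$ gives $\bZ[x]$ and is $0$ on $\vR y$ for $\bar y\neq\bar x$, so $S_x$ sends free modules to free modules), and surjectivity onto the free part follows because $E_x$ sends free $\bZ[x]$-modules to free $\bZ\G$-modules (each $E_x(\bZ[x]^{\oplus n})\cong(\vR x)^{\oplus n}$). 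A standard diagram chase with the five lemma then gives the induced isomorphism on $\widetilde K_0$.

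The main obstacle I expect is purely bookkeeping rather than conceptual: one must be careful that the "sum over $\Is(\G)$'' is genuinely finite (this is where finiteness of the EI-category enters) and that each $E_x$, $S_x$ restricts correctly between the finitely-generated projective subcategories, so that both $K_0(S)$ and $K_0(E)$ are defined on the right groups. Once the relations $S_y E_x\simeq\delta_{xy}\id$ and the decomposition $P\cong\bigoplus_x E_xS_x(P)$ are invoked, the isomorphism claim is immediate; the only genuinely new content over what is quoted from \cite{lueck3} is the compatibility with the free subgroups needed for the $\widetilde K_0$ statement, and that reduces to evaluating $S_x$ and $E_x$ on the explicit free generators $\uZ{G/K}$.
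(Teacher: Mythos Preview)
Your argument is correct and is essentially the standard proof; the paper itself does not prove this result but simply cites L\"uck \cite[Proposition 11.29]{tomDieck2}, whose argument proceeds exactly via the inverse pair $K_0(E)$, $K_0(S)$ and the identities $S_yE_x\simeq\delta_{\bar x,\bar y}\,\id$ together with $P\cong\bigoplus_x E_xS_x(P)$ that you invoke.

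One small inaccuracy worth fixing: the theorem as stated does \emph{not} assume $\G$ is finite, and your proof does not need this hypothesis either. An element of the direct sum $\bigoplus_{x\in\Is(\G)}K_0(\bZ[x])$ has by definition only finitely many nonzero components, so $K_0(E)$ lands in finitely generated projectives regardless; and for any finitely generated projective $P$ only finitely many $S_x(P)$ are nonzero (since $P$ is a summand of a finite free module and $S_x$ vanishes on all free generators $\vR{y}$ with $\bar y\neq\bar x$), so $K_0(S)$ genuinely lands in the direct sum. The finiteness of $\G$ is only used \emph{after} this theorem in the paper, to conclude that $\widetilde K_0(\ZG)$ is a finite group.
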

\begin{proof}
See L\"uck \cite[Proposition 11.29]{tomDieck2}.
\end{proof}
As a consequence of this theorem, if $\G$ is finite then $\widetilde{K} _0 (\ZG)$
 is  finite: in this case $\G$ has finitely many isomorphism classes of objects $x \in \Ob(\G)$, and $\Aut[x]$ is a finite group (apply
Swan \cite[Prop.~9.1]{swan1a}).  In particular, if
$\G =\OrG$, then the group $\widetilde{K} _0 (\G )$ is finite.

From now on we assume $\G_G =\OrG$ for some finite group $G$, relative
to some family $\cF$. The splitting theorem above can also be used
to give a filtration of $\widetilde{K} _0 (\G_G)$. Recall that every
projective $\ZG_G$-module is of the form
$$ P\cong\bigoplus_{H \in T} E_H S_H P$$ where $T$ is a set of
representatives of conjugacy classes of elements in $\cF$. So,
another way to express the above splitting theorem is to write
$$ K_0(\ZG_G)\cong \bigoplus_{H \in T} K_0(\ZG_G)_H$$ where
$K_0(\ZG_G)_H=\{[P]\mid E_H S_H P\cong P\}$. Note that this is only
a splitting as abelian groups, but using this we can give a
filtration for the ring structure of $K_0 (\ZG_G )$. Let
$$ \varnothing=T_0\subseteq  T_1\subseteq  \cdots\subseteq  T_m=T$$
be a filtration of $T$ such that if $H\in T_i$ and $K\in T_j$ and
$\leftexp{g} H\leq K$ for some $g \in G$, then $i\leqslant j$.
This gives a filtration
$$ 0=K_0(\ZG_G )_0\subseteq  K_0(\ZG_G )_1\subseteq  \cdots\subseteq
K_0(\ZG_G)_m=K_0(\ZG_G )$$ where
$$K_0( \ZG_G)_i=\{[P]\vv P=\bigoplus_{H \in T_i} E_H S_H P\}.$$

\begin{lemma}\label{lem:decomposition}
Let $V$ be a $\bZ [N_G (H) / H ]$-module and $W$ be a
$\bZ [ N_G (K)/K]$-module. Then,
$$ E_{H}V\otimes_{\bZ} E_{K} W \cong\bigoplus_{HgK \in H\backslash G/
K}E_{{H\cap
\leftexp{g}{K}}}(\Res _{ H\cap\leftexp{g}{K}}  E_{H} V \otimes _{\bZ}
\Res _{H\cap\leftexp{g}{K}}
E_{K}W).$$
\end{lemma}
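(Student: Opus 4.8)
The plan is to read this as the Mackey double-coset formula for products of modules obtained by extension from automorphism groups, and to prove it by reducing to the case of free rank-one modules. For the reduction, note that for fixed $W$ both sides of the claimed isomorphism are additive functors of $V$, and both are right exact in each variable: the extension functors $E_x$ are exact since $\G_G=\OrG$ is a free EI-category, the evaluation functors $\Res_x$ are exact, and $-\otimes_{\bZ}-$ is objectwise right exact. I would first produce a natural comparison transformation between the two sides, assembled from the adjunction counits $E_x\circ\Res_x\to\id$ of Section \ref{sect:definitions} together with the $G$-set decomposition used below. Then, since every $\bZ[N_G(H)/H]$-module is a cokernel of a map of free modules, right exactness and the five lemma reduce the statement to the case of free $V$ and $W$, and by additivity to $V=\bZ[N_G(H)/H]$, $W=\bZ[N_G(K)/K]$; in that case the unit map identifies $E_HV\cong\uZ{G/H}$ and $E_KW\cong\uZ{G/K}$.

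For these rank-one free modules the left-hand side becomes a $G$-set statement. Since the fixed points of a product of $G$-sets form the product of the fixed-point sets, $\uZ{S}\otimes_{\bZ}\uZ{T}\cong\uZ{S\times T}$ for finite $G$-sets $S,T$ (compare (\ref{basic G-set modules})). Taking $S=G/H$, $T=G/K$ and applying the orbit decomposition $G/H\times G/K\cong\coprod_{HgK\in H\backslash G/K}G/(H\cap\leftexp{g}{K})$, whose summand through $(eH,gK)$ has stabiliser $H\cap\leftexp{g}{K}$, yields $\uZ{G/H}\otimes_{\bZ}\uZ{G/K}\cong\bigoplus_{HgK}\uZ{G/(H\cap\leftexp{g}{K})}$ --- a sum indexed by the same double cosets as the right-hand side.

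It then remains to identify, for each double coset, the orbit module $\uZ{G/(H\cap\leftexp{g}{K})}$ with the term $E_{H\cap\leftexp{g}{K}}\bigl(\Res_{H\cap\leftexp{g}{K}}E_HV\otimes_{\bZ}\Res_{H\cap\leftexp{g}{K}}E_KW\bigr)$ as a module over $\bZ[N_G(H\cap\leftexp{g}{K})/(H\cap\leftexp{g}{K})]$, using $E_J(\bZ[N_G(J)/J])\cong\uZ{G/J}$ with $J=H\cap\leftexp{g}{K}$. This requires fixing once and for all a representative $g$ for each double coset together with a compatible identification of the $g$-component of $(G/H)^L\times(G/K)^L$ with $(G/(H\cap\leftexp{g}{K}))^L$, and then tracking the isomorphisms $\Aut(G/-)\cong N_G(-)/(-)$ through the evaluation functors. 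The main obstacle is exactly this bookkeeping: keeping the three normalizer-quotient module structures straight (with the conjugation twist hidden in the isomorphism $nH\mapsto f_n$, $f_n(gH)=gn^{-1}H$), verifying that the module extracted on the right is free of rank one over $\bZ[N_G(H\cap\leftexp{g}{K})/(H\cap\leftexp{g}{K})]$ with the correct action, and checking that the restrict-then-tensor construction reproduces exactly one copy of each orbit module. Once these per-double-coset identifications are in place they are natural in $V$ and $W$, closing the loop with the reduction step.
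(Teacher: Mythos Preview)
Your route differs from the paper's both in its organization and, more importantly, in the tool used to identify the summands. The paper does \emph{not} reduce to free rank-one modules. It works with arbitrary $V,W$ from the start, expanding the definition of the extension functor to get
\[
E_HV\otimes_{\bZ} E_KW\;\cong\;(V\otimes W)\otimes_{\bZ[\Aut(G/H)\times\Aut(G/K)]}\bZ\Map_G(?,G/H\times G/K),
\]
and then invoking the same orbit decomposition of $G/H\times G/K$ that you use. This already exhibits the left-hand side as $\bigoplus_{HgK}E_{H\cap\leftexp{g}K}\,U_{H\cap\leftexp{g}K}$ for \emph{some} modules $U$. To identify them, the paper applies the splitting functor $S_{H\cap\leftexp{g}K}$ to the whole tensor product: because the subgroups $H\cap\leftexp{g}K$ are maximal among those occurring, $S_{H\cap\leftexp{g}K}$ coincides with $\Res_{H\cap\leftexp{g}K}$ here, and since evaluation commutes with $\otimes_{\bZ}$ one reads off $U_{H\cap\leftexp{g}K}=\Res_{H\cap\leftexp{g}K}E_HV\otimes_{\bZ}\Res_{H\cap\leftexp{g}K}E_KW$ in one line. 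This use of $S_x$ together with $S_x\circ E_x\simeq\id$ is exactly the device that replaces all of the bookkeeping you flag as the main obstacle.

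Your step~3 also contains a mistaken expectation. In the free rank-one case the module fed into $E_{J}$ on the right, with $J=H\cap\leftexp{g}K$, is
\[
\Res_J\uZ{G/H}\otimes_{\bZ}\Res_J\uZ{G/K}\;\cong\;\bZ\bigl[(G/H\times G/K)^J\bigr]\;\cong\;\bigoplus_{Hg'K}\bZ\bigl[(G/(H\cap\leftexp{g'}K))^J\bigr],
\]
which picks up contributions from \emph{every} double coset whose stabilizer contains a conjugate of $J$, not just from $HgK$. It is therefore not free of rank one over $\bZ[N_G(J)/J]$ in general, and the summand $E_J(\cdots)$ is not a single copy of $\uZ{G/J}$; the termwise matching you describe cannot go through as written. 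The paper's splitting-functor argument avoids this over-counting entirely by never attempting a per-double-coset identification at the free level.
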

\begin{proof} Applying the definition, we get
\begin{eqnarray*} E_{H}V\otimes_{\bZ} E_{K} W
&=&(V\otimes W)\otimes_{\bZ [\Aut(G/H)\times \Aut(G/K)]} \bZ \Map_G
(?,G/H \times G/K)
\end{eqnarray*}
where $\Map_G (X,Y)$ denotes the set $G$-sets from $X$ to $Y$ (see
\cite[11.30]{tomDieck2} for a similar computation). Since
$$G/H\times G/K=\coprod_{HgK\in H\backslash G/K}G/{(H\cap\leftexp{g}
{K})},$$
the module $E_{H}V\otimes_RE_{K}W$ decomposes as
$$\bigoplus_{HgK \in H\backslash
G/K}E_{{H\cap\leftexp{g}{K}}}U_{H\cap\leftexp{g}{K}}$$ where
$U_{H\cap\leftexp{g}{K}}$ are  $N_G
(H\cap\leftexp{g}{K})/(H\cap\leftexp{g}{K})$-modules. Applying
$S_{{H\cap\leftexp{g}{K}}}$, we find
\begin{eqnarray*}
U_{H\cap\leftexp{g}{K}}= S_{ {H\cap\leftexp{g}{K}} } ( E_{H}V\otimes_
{\bZ} E_{K} W )
&=&\Res_{{H\cap\leftexp{g}{K}}}(E_{H}V\otimes_{\bZ} E_{K} W)\\
&=&\Res _{H\cap\leftexp{g}{K}} E_{H}V\otimes_{\bZ} \Res
_{H\cap\leftexp{g} {K}} E_K W.
\end{eqnarray*}
\end{proof}
\begin{lemma}
$K_0(\ZG_G )_i$ is an ideal of $K_0(\ZG_G )$ .
\end{lemma}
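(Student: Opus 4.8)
The plan is to reduce the statement to a computation about the tensor product of the generating projective modules $E_HV$, and then to combine the decomposition formula of Lemma~\ref{lem:decomposition} with the defining property of the filtration $T_0\subseteq\dots\subseteq T_m$. First I would record that $K_0(\ZG_G)_i$ is an additive subgroup: by the splitting theorem (Theorem~\ref{thm:splittingK-theory}) we have $K_0(\ZG_G)\cong\bigoplus_{H\in T}K_0(\ZG_G)_H$ as abelian groups, and under this identification $K_0(\ZG_G)_i=\bigoplus_{H\in T_i}K_0(\ZG_G)_H$. The product on $K_0(\ZG_G)$ is induced by $\otimes_{\bZ}$, which is symmetric, so it suffices to prove $K_0(\ZG_G)_i\cdot K_0(\ZG_G)\subseteq K_0(\ZG_G)_i$. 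Since each $K_0(\ZG_G)_H$ is additively generated by classes $[E_HV]$ with $V$ a finitely generated projective $\bZ[N_G(H)/H]$-module, the whole claim reduces to showing that $[E_HV]\cdot[E_KW]=[E_HV\otimes_{\bZ}E_KW]$ lies in $K_0(\ZG_G)_i$ whenever $H\in T_i$ and $K\in T$.

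Next I would apply Lemma~\ref{lem:decomposition} to decompose $E_HV\otimes_{\bZ}E_KW$ as a direct sum, over double cosets $HgK$, of modules of the form $E_{H\cap\leftexp{g}K}(U_g)$. The decisive observation is that each $H\cap\leftexp{g}K$ is a subgroup of $H$, hence lies in $\cF$, so its $G$-conjugacy class representative $L(g)\in T$ is conjugate to a subgroup of $H\in T_i$; the defining property of the filtration (smaller subgroups carry smaller-or-equal index) then forces $L(g)\in T_i$. I would regroup the summands by the conjugacy class $\overline{H\cap\leftexp{g}K}$, writing $E_HV\otimes_{\bZ}E_KW\cong\bigoplus_{L\in T}M_L$, where $M_L$ is the sum of the pieces with $L(g)=L$ and is zero unless $L=L(g)$ for some $g$. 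Using $S_x\circ E_y=0$ for $\bar x\neq\bar y$ together with $S_x\circ E_x\cong\id$, one sees $S_x(M_L)=0$ for $x\neq L$, so by the splitting $P\cong\bigoplus_x E_xS_x(P)$ we get $M_L\cong E_LS_L(M_L)$. Since $E_HV\otimes_{\bZ}E_KW$ is a finitely generated projective $\ZG_G$-module and both $S_L$ and $E_L$ preserve projectives, $M_L\cong E_LS_L(E_HV\otimes_{\bZ}E_KW)$ is projective and $[M_L]\in K_0(\ZG_G)_L$. Summing, $[E_HV\otimes_{\bZ}E_KW]=\sum_L[M_L]\in\sum_{L\in T_i}K_0(\ZG_G)_L=K_0(\ZG_G)_i$, as desired.

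The step I expect to be the main obstacle is the passage from the module-level formula of Lemma~\ref{lem:decomposition} to a legitimate statement in $K_0$: the individual summands $E_{H\cap\leftexp{g}K}(U_g)$ need not be projective (the evaluation of $E_HV$ at the object $G/(H\cap\leftexp{g}K)$ is generally not projective over the relevant automorphism group), so one cannot simply extract their $K$-theory classes term by term. The remedy is exactly the regrouping by conjugacy class described above: the grouped modules $M_L=E_LS_L(E_HV\otimes_{\bZ}E_KW)$ are honestly projective, and only after that regrouping does one invoke the filtration property and the vanishing $S_x\circ E_y=0$ to locate each $[M_L]$ in a component $K_0(\ZG_G)_L$ with $L\in T_i$. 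Everything else is routine homological bookkeeping with the extension and splitting functors.
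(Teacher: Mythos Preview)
Your proposal is correct and follows essentially the same route as the paper: both apply Lemma~\ref{lem:decomposition} to the product of generators $E_HV\otimes_{\bZ}E_KW$, observe that every summand is supported at some $L$ conjugate to an intersection $H\cap\leftexp{g}K\leq H$, and then invoke the filtration property to place $L$ in $T_i$. Your write-up is in fact more careful than the paper's three-line proof, which simply asserts $E_HS_HP\otimes_{\bZ}E_KS_KQ=\bigoplus_L E_LV_L$ without commenting on whether the individual $V_L$ are projective; your regrouping by conjugacy class via the splitting $P\cong\bigoplus_xE_xS_xP$ makes explicit why the resulting class lands in $K_0(\ZG_G)_i$.
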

\begin{proof}
For $E_HS_H P$ and $E_KS_K Q$, we have
$$E_H S_H P\otimes_{\bZ} E_K S_K Q=\bigoplus_L E_L V_L $$ where $L=H\cap
\leftexp{g}{K}$ for some $g \in G$.  So, if $H\in T_i$, $K\in T_j$, and
$L\in T_k$, then $k\leqslant i, j$.
\end{proof}

Now, Theorem \ref{thm:K-theory} follows by induction from the
following proposition.

\begin{proposition}
Let $\CC$ be a finite chain complex of projective $\ZG_G $-modules.
If $\widetilde{\sigma }(S_H \CC)=0$ for all $H \in T\smallsetminus
T_i$, then there is an $n$ such that $\widetilde{\sigma }(S_H (\join 
_n \CC ))=0$ for all $H \in T\smallsetminus T_{i-1}$.
\end{proposition}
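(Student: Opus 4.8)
The plan is to reduce to the group-ring situation via the splitting theorem of this section and then to run Swan's argument, using the filtration of $K_0(\ZG_G)$ to absorb the error terms. First I would apply the splitting isomorphism $\widetilde{K}_0(\ZG_G)\cong\bigoplus_{H\in T}\widetilde{K}_0(\bZ[N_G(H)/H])$ (Theorem \ref{thm:splittingK-theory}), together with additivity of the functors $S_H$ and of $q$, to see that it suffices to produce, for each $H\in T\smallsetminus T_{i-1}$, an integer $n$ with $q\bigl(S_H\sigma(\join_n\CC)\bigr)=0$ in $\widetilde{K}_0(\bZ[N_G(H)/H])$; for $H\in T\smallsetminus T_i$ one must in addition check that the given vanishing survives. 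After interpolating sub-families between $T_{i-1}$ and $T_i$ and applying the result repeatedly, I may assume $T_i\smallsetminus T_{i-1}$ consists of a single conjugacy class, that of $H$. Set $x=\sigma(\CC)\in K_0(\ZG_G)$ and $x=\sum_{K\in T}x_K$ for its splitting; the join-tensor formula (Lemma \ref{lem:jointensorformula}) gives $\sigma(\join_n\CC)=nx-\binom{n}{2}x^2+\dots\pm x^n$, hence $S_H\sigma(\join_n\CC)=n\,S_H(x)-\binom{n}{2}S_H(x^2)+\dots$.

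The key point is that $q\bigl(S_H(x^k)\bigr)=0$ for every $k\ge 2$ and every $H\in T\smallsetminus T_{i-1}$. Expanding $x^k=\sum x_{K_1}\cdots x_{K_k}$ and iterating Lemma \ref{lem:decomposition}, a monomial $x_{K_1}\cdots x_{K_k}$ has nonzero $S_H$-component only if $H$ is subconjugate to every $K_j$. For monomials with all $K_j$ conjugate to $H$, the standard identities between $S_H$, $E_H$ and $\Res_H$ from \S\ref{sect:definitions} identify the $S_H$-component with an alternating sum of iterated $\otimes_\bZ$-products of projective $\bZ[N_G(H)/H]$-modules read off from the complex $S_H\CC$, and such a product is stably free by Oliver--Segev (Proposition \ref{prop:OliverSegev}), so it is zero in $\widetilde{K}_0(\bZ[N_G(H)/H])$. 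For monomials in which some $K_j$ is not conjugate to $H$, the subgroup $H$ is \emph{properly} subconjugate to $K_j$; since $T_{i-1}$ is downward closed under subconjugacy and $T_i\smallsetminus T_{i-1}$ is just the conjugacy class of $H$, monotonicity of the filtration forces $K_j\in T\smallsetminus T_i$, so by hypothesis $x_{K_j}$ is stably free and is therefore represented by a free $\ZG_G$-module, $x_{K_j}=c\,[\uZ{G/K_j}]$. Rewriting $\uZ{G/K_j}\otimes_\bZ(-)=\ind_{K_j}^G\res_{K_j}^G(-)$ by Lemma \ref{lem:ind-res formulas1} and using Lemmas \ref{lem:ind-res formulas2} and \ref{lem:decomposition} to keep track of which subgroups occur, one checks that the $S_H$-component of such a monomial is again stably free over $\bZ[N_G(H)/H]$.

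Granting this, $q\bigl(S_H\sigma(\join_n\CC)\bigr)=n\cdot q\bigl(S_H(x)\bigr)$ for $H\in T_i\smallsetminus T_{i-1}$, and it equals $n\cdot q\bigl(S_H(x)\bigr)=0$ for $H\in T\smallsetminus T_i$. Since $\G_G$ is finite, $\widetilde{K}_0(\ZG_G)$ is finite; choosing $n$ to be a multiple of its exponent kills $n\cdot q(S_H(x))$ for every $H\in T$ simultaneously, in particular for all $H\in T\smallsetminus T_{i-1}$, which yields the required $n$ and completes the induction.

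The step I expect to be the main obstacle is the treatment of the mixed monomials $x_{K_1}\cdots x_{K_k}$ in which some $K_j$ is strictly larger than $H$: one has to verify that after tensoring the free module $\uZ{G/K_j}$ with the remaining factors — which are only projective, not free — and then applying $S_H$, the resulting class still lands in the stably free summand of $K_0(\bZ[N_G(H)/H])$. This is exactly where the downward-closedness of the subfamily $T_{i-1}$, the double-coset decomposition of Lemma \ref{lem:decomposition}, and the Oliver--Segev theorem all have to be combined, and the bookkeeping with double cosets and Weyl groups is the delicate part of the proof.
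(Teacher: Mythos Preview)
Your overall plan is close to the paper's, and the pure-$H$ case is handled correctly via Oliver--Segev. The gap is exactly where you flag it: your claim that $q(S_H(x^k))=0$ for every $k\ge 2$ is not true in general, and your proposed treatment of the mixed monomials does not establish it.

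Concretely, take a monomial $x_H\cdot x_K$ with $(H)\lneq(K)$ and $x_K$ stably free. Writing $x_K$ as a combination of classes $[\uZ{G/L}]$ and applying Lemma~\ref{lem:decomposition}, one finds
\[
S_H\big(E_H V\otimes_\bZ \uZ{G/L}\big)\;\cong\; V\otimes_\bZ \bZ\big[(G/L)^H\big]
\]
as $\bZ[N_G(H)/H]$-modules, where $V=S_H\CC$ is projective and $\bZ[(G/L)^H]$ is merely a permutation module (cf.\ Lemma~\ref{lem:fixedpointformula}), not projective in general. Hence the class of $S_H(x_H x_K)$ in $\widetilde K_0(\bZ[N_G(H)/H])$ is the image of $[V]=\widetilde\sigma(S_H\CC)$ under tensoring with this permutation module; there is no reason for it to vanish unless $\widetilde\sigma(S_H\CC)$ already vanishes, which is precisely what you are trying to prove. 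Neither Lemma~\ref{lem:ind-res formulas1} nor Oliver--Segev helps here: Oliver--Segev needs \emph{both} tensor factors to be projective over the same group ring, and $\bZ[(G/L)^H]$ is not.

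The paper does not attempt to kill these mixed terms. Instead it writes $\sigma(\CC)=u+\sum_j v_j+w$ along the filtration and computes, modulo the ideal $K_0(\ZG_G)_{i-1}$ and modulo stably free classes, that the only surviving contributions to $(1-\sigma(\CC))^n$ are the terms with exactly one $v_j$-factor. These enter with coefficient $k\binom{n}{k}=n\binom{n-1}{k-1}$, so
\[
\sigma(\join_n\CC)\;\equiv\; n\cdot\big(\text{some element depending on }n\big)
\]
in the finite quotient, and any $n$ divisible by the exponent of $\widetilde K_0(\ZG_G)$ works. In other words, the mixed terms $v_jw^{k-1}$ are \emph{not} stably free; they are absorbed by the factor of $n$ coming from the binomial identity, not by any vanishing argument.
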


\begin{proof}  An element in $\sigma (\CC)$ can be expressed as a sum
$u+\sum_j v_j+w$ where
$$ u=\sum_{H \in T_{i-1}}\sigma (E_HS_H \CC),\quad \sum_j v_j = \sum_{H \in
T_i\smallsetminus T_{i-1}}\sigma (E_HS_H \CC ), \quad w = \sum_{H \in
T\smallsetminus T_i}\sigma (E_HS_H \CC ). $$
 By Lemma
\ref{lem:jointensorformula}, we have
$$ 1-\sigma (\join   _n \CC )=(1-\sigma (\CC ))^n=(1-(u+\sum_j v_j+w))^n
\in K_0(\ZG_G )$$ 
So, $$ 1-\sigma (\join   _n \CC )\equiv \Bigl
(1-(\sum_jv_j+w)\Bigr )^n \quad {\rm mod} \, K_0(\ZG_G )_{i-1}$$ By
Lemma \ref{lem:decomposition}, it is easy to see that
$v_j \cdot v_k \equiv 0 {\rm \ \big (mod}\, K_0(\ZG_G )_{i-1}\big )$, for $j \neq k$.
Note that
$$v_j\cdot v_j \equiv \text{stably\ free\, }  \big (\text{mod\ } 
K_0(\ZG_G )_{i-1}\big )$$ 
by Proposition \ref{prop:OliverSegev}.
To complete the proof, observe that modulo $K_0(\ZG_G )_{i-1}$,
\begin{eqnarray*}
1-\sigma (\join   _n \CC ) &\equiv & \Bigl (1-(\sum_j v_j+w)\Bigr )^n\\
&\equiv &1+\sum_{k=1}^n\binom{n}{k}(-1)^k(\sum_jv_j+w)^k\\
&=&1+\sum_{k=1}^n\binom{n}{k}(-1)^k\sum_j k v_jw^{k-1}+\,\rm{stably}\,
\rm{free}\\
&=&1+n \sum_{k=1}^n\sum_j\binom{n-1}{k-1}(-1)^kv_jw^{k-1}+\,\rm
{stably}\,\rm{free}.\\
\end{eqnarray*}
This shows that $\sigma (\join   _n \CC ) $ is stably free for some
$n$, since $\widetilde {K}_0(\ZG_G )$ is a finite group.
\end{proof}

\section{Realization of free chain complexes}
\label{sect:realization}

Let $X$ be $G$-CW complex, and let $\cF$ be a family of
subgroups of $G$. 
Throughout this section, $R$ denotes a
commutative ring and $\G_G$ denotes the orbit category $\OrG$.
\begin{definition}
We say that a  $G$-CW complex $X$ has \emph{isotropy in $\cF$}, provided that $X^H \neq \emptyset$ implies $H \in \cF$, for all $H\leq G$. 
\end{definition}
The main result of this section is Theorem \ref{thm:realization}, which shows that under certain conditions a finite free chain complex over the orbit category can be realized by a finite $G$-CW complex with isotropy in $\cF$. This is a generalization of Swan \cite[Theorem A]{swan1}, which is based on a construction of Milnor \cite[3.1]{swan1}. 

\smallskip
Associated to a $G$-CW complex  $X$ with isotropy in $\cF$, there is a chain complex of $\RG_G$-modules
defined by
$$
\uC{X}\colon  \quad \cdots \xrightarrow{\bd_{n+1}} \uR{X_n}
\xrightarrow{\bd_{n}}  \uR{X_{n-1}} \rightarrow
\cdots\xrightarrow{\bd_{1}}  \uR{X_0} \to 0$$ where $X_i$ denotes
the set of $i$-dimensional cells in $X$ and $\uR{X_i}$ is the
coefficient system with $ \uR{X_i}(H)= R[X_i^H]$. We denote the
homology of this complex by $\uH{X}$, and in particular
$$\uH{X}(H)=H_*(X^H;R).$$

 Given a chain complex
$\CC$ of $\RG_G$-modules,  there is a \emph{dimension function}
$\Dim \CC\colon \cF \to \bZ$, constant on conjugacy classes of subgroups, defined by
$$(\Dim \CC )(H)=\dim \CC (H),$$
 for all $H \in \cF$, where the
dimension of a chain complex of $R$-modules is defined in the usual
way as the largest integer $d$ such $C_d\neq 0$.

It will be convenient to write
$(H) \leq (K)$ whenever $H^g \leq K$ for some $g\in G$. Here $(H)$ denotes the set of subgroups conjugate to $H$ in $G$.

\begin{definition}\label{def: monotone}
We call a function $\un{d}\colon\cF\to  \bZ$ \emph{monotone} if it satisfies the
property that $\un{d}(K) \leqslant \un{d}(H)$ whenever $(H) \leq (K)$. We say that a monotone
 function  $\un{d}$ is \emph{strictly monotone} if
$\un{d}(K) < \un{d}(H)$, whenever
 $(H) \leq (K)$ and $(H) \neq (K)$. \qed
\end{definition}

Note that  $\un{d}$ monotone implies that $\un{d}$ is constant on conjugacy classes (such functions are usually called  \emph{super class functions}).  We remark that the dimension function of a projective chain complex is always
monotone: if $(E_H P) (K)\neq 0$, then $(E_H P) (L)\neq 0$ for
every $L \leq K$.

A chain complex $\CC$ of $\RG_G$-modules is \emph{connected} if $\CC$ is positive and 
$H_0(\CC) = \un{R}$.

\begin{definition} Let $\un{n}\colon \cF \to \bZ$ be a monotone, non-negative function.  A complex $\CC$ of $\RG_G$-modules is called an $\un{n}$-\emph{Moore complex}
if it is connected,  and
for all $H \in \cF$, the reduced homology  $\widetilde{H}_i (\CC(H))  = 0$,
for $i \neq \un{n}(H)$.
 \qed
\end{definition}

A special case of an $\un{n}$-Moore complex is a homology $\un{n}$-sphere.

\begin{definition} We say that a complex $\CC$ of $\RG_G$-modules is an
 \emph{$R$-homology
$\un{n}$-sphere} if it is an $\un{n}$-Moore complex, and for all $H\in
\cF$, we have $\widetilde{H}_i (\CC(H)) \cong R$,  for $i=\un{n}(H)$.
A homology $\un{n}$-sphere is called \emph{oriented} if the $N_G(H)/H$-action
is trivial on the homology of $\CC(H)$ for all $H \in \cF$.
\end{definition}

The chain complex associated to the unit sphere $X=S(V)$ of a real or complex
representation $V$ of $G$ is an example of a $\bZ$-homology $\un{n}$-sphere, where $\un{n}(H) = \dim X^H$.
A $G$-CW complex $X$ with this property
is  a \emph{homotopy representation} in the sense of tom Dieck (see
\cite[Chap.~II, Def.~10.1]{tomDieck2}), provided that its dimension
function is strictly monotone.  We will not use this terminology further.

We now introduce a technique to remove free
modules above the homological dimension from a chain complex, without changing its chain
homotopy type. For this delicate process we first need some algebraic
lemmas.

\begin{definition} Let $\G$ be an EI-category.
A free $\RG$-module $F$ is called \emph{isotypic} of type $x\in \Ob ( \G)$ if it is
isomorphic to a direct sum
of copies of the free module $E_xR[x]$.
\end{definition}

For extensions involving isotypic modules we have a splitting property.

\begin{lemma}\label{lem:splitting}
Let $$\cE:   0 \to F \to F' \to M\to 0$$ be a short exact sequence
of $\RG$-modules over an EI-category $\G$,  such that both $F$ and $F'$ are isotypic free modules of
the same type $x\in \Ob (\G)$. If $M(x)$ is $R$-torsion free, then $\cE$ splits
and $M$ is stably free.
\end{lemma}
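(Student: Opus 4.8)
The plan is to exploit the fact that an isotypic free module of type $x$ is essentially an induced-up $R[x]$-module, so that questions about such modules reduce to questions about finitely generated $R[x]$-modules via the splitting functor $S_x$. First I would apply $S_x$ to the short exact sequence $\cE$. Since $S_x$ is exact (being a special case of an induction functor, or more directly because it is the left adjoint in the adjoint pair $(S_x, I_x)$ with $I_x$ exact, and in fact $S_x \circ E_x \cong \id$), we obtain a short exact sequence of $R[x]$-modules
$$0 \to S_xF \to S_xF' \to S_xM \to 0.$$
Because $F$ and $F'$ are isotypic free of type $x$, we have $S_xF \cong F(x)$ and $S_xF' \cong F'(x)$ are finitely generated \emph{free} $R[x]$-modules, and $S_xM = M(x)/M(x)_s$. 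Here I would note that since $F$ and $F'$ are supported only at objects isomorphic to $x$ — being direct sums of copies of $E_xR[x]$, which vanishes off the isomorphism class of $x$ by the free-module fixed-point formula — the submodule $M(x)_s$ is zero, so $S_xM = M(x)$, which is $R$-torsion free by hypothesis.

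Next I would produce a splitting at the level of $R[x]$-modules. Here the key point is that $R = \bZp$ (or at any rate, $R[x] = R\Aut(x)$ is such that) $S_xF'$ is a finitely generated free, hence projective, $R[x]$-module surjecting onto $M(x)$; but more is needed, since $M(x)$ itself need not be projective over $R[x]$. Instead I would argue directly: a short exact sequence $0 \to (R[x])^a \to (R[x])^b \to M(x) \to 0$ with $M(x)$ torsion-free over $R$ — and here one uses that $R=\bZp$ so $R[x]$ is a finitely generated free $R$-module and torsion-freeness over $R$ is detected after forgetting the group action — splits over $R$, whence $M(x)$ is $R$-free of rank $b-a$ times the rank of $R[x]$. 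Actually the cleanest route is to invoke that over $R = \bZp$ a submodule of a free $R[x]$-module which has torsion-free (indeed free) $R$-quotient yields a split sequence: apply $\Hom_{R[x]}(-, S_xF)$; the obstruction lies in $\Ext^1_{R[x]}(M(x), S_xF)$, and the plan is to show this vanishes — or rather, to show that the sequence $\cE$ itself splits by lifting along $S_x$.

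In fact the most efficient argument avoids computing $\Ext$ entirely. Since $E_x$ is exact here ($\G$ being free) and preserves free modules, and $S_x \circ E_x \cong \id$ while $E_x \circ S_x$ is the identity on isotypic modules of type $x$, I would show that applying $E_x$ to a splitting of $S_x\cE$ recovers a splitting of $\cE$. So it remains only to split $S_x\cE: 0 \to R[x]^a \to R[x]^b \to M(x) \to 0$. For this, since $M(x)$ is $R$-torsion free and $R = \bZp$ is a PID-like local ring, $M(x)$ is $R$-free; then choosing any $R$-basis does not immediately give an $R[x]$-splitting, so instead one observes that $R[x]^a \hookrightarrow R[x]^b$ with cokernel $R$-free forces, by a rank count over the fraction field together with the fact that a projective $R[x]$-module of finite $R$-rank surjected onto by a free module with free-kernel-cokernel is stably free, that $M(x)$ is stably free over $R[x]$; then stable freeness plus the lifting along $E_x$ finishes both conclusions ($\cE$ splits, $M$ stably free) simultaneously. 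The main obstacle I anticipate is precisely this step: ensuring that $R$-torsion-freeness of $M(x)$ is enough to force the $R[x]$-sequence $0 \to R[x]^a \to R[x]^b \to M(x) \to 0$ to split, rather than merely to conclude $M(x)$ is $R$-projective. I expect this is handled by the standard fact (Schanuel / Eilenberg swindle style) that a module over $R[x]$ with $R = \bZp$ which admits a finite free resolution of length $\le 1$ and is $R$-torsion free is stably free — invoking the already-cited machinery around Swan \cite{swan1} and the finiteness of $\widetilde K_0$ — so the work is in assembling these pieces correctly rather than in any new idea.
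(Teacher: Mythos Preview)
Your overall architecture is sound and matches the paper's: reduce the question to the group ring $R[x]$ by evaluating at $x$, split there, and transport the splitting back via $E_x$ (the paper phrases the last step as naturality of the Yoneda adjunction $\Hom_{\RG}(E_xR[x],N)\cong N(x)$, and first reduces by induction to $F=E_xR[x]$ a single copy, but this is cosmetic). Two points need correction.

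First, a minor one: $S_x$ is \emph{not} exact in general. As a left adjoint it is only right exact, and your parenthetical justifications do not repair this. What saves you here is that $F$ and $F'$, being isotypic of type $x$, satisfy $F(y)=F'(y)=0$ for $\bar y > \bar x$; hence $M(x)_s=0$ and $S_x$ agrees with the (genuinely exact) evaluation functor $\Res_x$ on all three terms. So just evaluate at $x$.

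Second, and this is the real gap: you correctly isolate the crux as splitting
\[
0 \to R[x]^a \to R[x]^b \to M(x) \to 0
\]
over $R[x]$, but you do not do it. Your proposed route through Swan, the Eilenberg swindle, and finiteness of $\widetilde K_0$ is a dead end: those tools tell you about stable equivalence classes of projectives, not about whether a \emph{given} short exact sequence splits, and in any case ``stably free'' would be a consequence of the splitting, not a substitute for it. The missing idea is much simpler. Since $\Aut(x)$ is finite, the regular module $R[x]$ is coinduced from the trivial subgroup, so by Eckmann--Shapiro
\[
\Ext^1_{R[x]}\bigl(M(x),\,R[x]^a\bigr)\;\cong\;\Ext^1_{R}\bigl(M(x),\,R^a\bigr)=0,
\]
the last vanishing because $M(x)$ is $R$-torsion free, hence $R$-projective (for $R=\bZ$ or $\bZp$). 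This is exactly the vanishing you wrote down and then abandoned. Once the $R[x]$-sequence splits, applying $E_x$ gives the splitting of $\cE$, and $F'\cong F\oplus M$ exhibits $M$ as stably free. You were one line away.
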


\begin{proof}
It is enough to prove the result in case $F= E_xR[x]$,
where $x\in \Ob \G$. The general case
follows from this by an easy induction. Consider the extension
$$\cE : 0 \to E_xR[x] \xrightarrow{j} F \to M\to 0\ .$$
 By the adjointness property
$$\Hom_{\RG}(E_xR[x], N) \cong \Hom_{R[x]}(R[x], N(x))$$
for any $\RG$-module $N$. We apply this to the given injection
$j \colon E_xR[x] \to F' = (E_xR[x])^m$. Since
$$\cE(x) \colon 0 \to R[x] \xrightarrow{j} R[x]^m \to M(x)\to 0$$
has $R$-torsion free cokernel $M(x)$, this sequence splits over $R[x]$.
By the naturality of the adjointness property, we get a splitting of $j$
over $\RG$.
\end{proof}
Recall that $\hdim\CC(H)$ denotes the homological dimension  of the chain complex $\CC(H)$.

\begin{proposition}\label{prop:killing}
Let $\CC$ be a finite free chain complex of $\RG_G$-modules, and let $H \in \cF$  have the property that $\hdim\CC(H) < d:=\dim \CC (H)$.
Suppose that $\dim \CC(K) \leqslant (d-2)$ for all $(H) \leq (K)$, $(H)\neq (K)$.
Then $\CC\simeq \DD$, where $\DD$ is a finite free complex with $\dim \DD (H)=d-1$, and $\dim \DD(K) = \dim \CC(K)$ for all $(K)\neq (H)$.
\end{proposition}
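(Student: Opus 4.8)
The plan is to remove the top-dimensional piece of $\CC$ over the conjugacy class $(H)$ by splicing in a projective resolution of a syzygy, exactly as in the classical trick for trimming a chain complex above its homological dimension, but carried out carefully over the orbit category so that only the value at $(H)$ changes. First I would look at $\CC$ evaluated at $H$: since $\hdim\CC(H) < d = \dim\CC(H)$, the complex $\CC(H)$ is exact at the top, so $\bd_d\colon C_d(H) \to C_{d-1}(H)$ is injective; equivalently, writing $Z = \ker(\bd_{d-1}\colon C_{d-1}\to C_{d-2})$ as an $\RG_G$-module, the map $C_d \to Z$ is injective after evaluating at $H$. The key structural observation is that because $\dim\CC(K) \le d-2$ for every $(K)$ with $(H) \le (K)$ and $(K)\neq (H)$, the module $C_d$ is \emph{isotypic of type $G/H$}: every free summand $\uR{G/K}$ occurring in $C_d$ has $\uR{G/K}(K')\neq 0$ for some $K' \le K$ of dimension $\ge d$ in $\CC$, forcing $(K) = (H)$ (a summand of type $G/K$ with $(K)$ strictly below $(H)$ would make $\dim\CC(K)\ge d$, and a summand not comparable to $(H)$ contributes nothing at $H$). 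So $C_d \cong E_H V$ for a free $R[N_G(H)/H]$-module $V$, and similarly any free resolution of the relevant syzygy over $\RG_G$ can be arranged to use only modules of type $G/H$ in the top range.

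Concretely, set $Z_{d-1} = \ker(\bd_{d-2}\colon C_{d-2}\to C_{d-3})$ and let $B_{d-1} = \Image(\bd_{d-1}\colon C_{d-1}\to C_{d-2})$; since $\CC$ is exact in degrees $\ge d$ down through degree $d$ (here I use $\hdim\CC(H)<d$ together with $\dim\CC(K)\le d-2$ for the other $(K)$, so in fact $\CC$ is exact in all degrees $> \hdim\CC(K)$ objectwise), the truncated complex $0\to C_d \to C_{d-1}\to \cdots$ resolves a module which I want to replace by something one dimension shorter. The cleanest route: let $W = \coker(\bd_d\colon C_d \to C_{d-1})$. At $H$ this has a one-step free resolution $0\to C_d(H)\to C_{d-1}(H)\to W(H)\to 0$ with $W(H)$ $R$-torsion free (it is a submodule of $C_{d-2}(H)$, which is $R$-free). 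Now I replace the tail $C_d \to C_{d-1}$ of $\CC$ by a single free module $D_{d-1}$ mapping onto $W$: choose a free cover $D_{d-1}\twoheadrightarrow W$ by a module of type $G/H$ (possible since $W$ is supported on $(H)$ and below, but nothing below $(H)$ contributes because $\dim\CC(K)\le d-2$ there — so we only need type $G/H$ generators), and let $K' = \ker(D_{d-1}\to W)$. By Lemma~\ref{lem:splitting} applied at the object $G/H$ — using that $C_d(H)\to C_{d-1}(H)$ is a map of isotypic free $R[N_G(H)/H]$-modules with $R$-torsion-free cokernel — $K'(H)$ is stably free, and after stabilizing $D_{d-1}$ we may take $K' (H)$ free, so $K'$ is a free $\RG_G$-module of type $G/H$ in degree $d-1$ direction; folding $K'$ into $D_{d-2}:= C_{d-2}\oplus K'$ keeps everything free and projective. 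The resulting complex $\DD$ has $D_{d-1}$ of type $G/H$, $\dim\DD(H) = d-1$, and $\dim\DD(K) = \dim\CC(K)$ for $(K)\neq (H)$ since we only altered the degree-$(d-1)$ and degree-$d$ terms and those only had support meeting $(H)$ at the top. A standard comparison argument (the two complexes have the same homology objectwise and the modification is through a quasi-isomorphism built from free modules) gives $\CC\simeq\DD$; alternatively one phrases the replacement as an elementary expansion/collapse of chain complexes in the sense of Swan \cite[Prop.~5.1]{swan1}.

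The main obstacle I expect is making precise and clean the claim that all the free modules one is forced to introduce in degree $d-1$ (the free cover of $W$ and the stabilization) can be taken of type $G/H$ only, so that $\dim\DD(K)$ is genuinely unchanged for $(K)$ incomparable to or strictly below $(H)$; this is where the hypothesis $\dim\CC(K)\le d-2$ for $(H)\le(K)$, $(K)\neq (H)$ is essential, and it needs to be combined with the fact that $W$ is a subquotient of $C_{d-2}$ so $W(K) = 0$ whenever $\dim\CC(K) < d-2$ in the relevant spot — care is needed because $W(K)$ for $(K)$ \emph{below} $(H)$ need not vanish, but such $K$ have $\dim\CC(K)\le d-2$ hence already carry enough free generators in degree $d-2$, so no \emph{new} top-degree generators of those types are needed. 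The other routine-but-fiddly point is invoking Lemma~\ref{lem:splitting} correctly: one must check the cokernel at $G/H$ is $R$-torsion free, which follows since it embeds in $C_{d-2}(H)$, a free $R$-module. Once these two bookkeeping points are settled, the homotopy equivalence $\CC\simeq\DD$ is formal.
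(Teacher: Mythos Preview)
Your claim that $C_d$ is isotypic of type $G/H$ is incorrect, and this error propagates through the rest of the argument. The hypothesis only bounds $\dim\CC(K)$ for those $K$ with $(H)\leq (K)$ and $(K)\neq(H)$; it says nothing about subgroups $K$ that are incomparable to $H$. A summand $\uR{G/K}$ of $C_d$ with $(H)\not\leq (K)$ is perfectly consistent with the hypotheses: it contributes nothing to $\CC(H)$ (since $(G/K)^H=\emptyset$), but it is still present in $C_d$. So $C_d$ need not be isotypic, and consequently $W=\coker(\bd_d\colon C_d\to C_{d-1})$ need not be generated in type $G/H$; a free cover $D_{d-1}\twoheadrightarrow W$ of type $G/H$ will generally not exist. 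Your attempted application of Lemma~\ref{lem:splitting} fails for the same reason: the lemma requires \emph{both} terms of the extension to be isotypic free of the same type, and neither $C_{d-1}$ nor your $D_{d-1}$ qualifies.

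The paper's fix is exactly the idea you are groping towards but do not isolate: pass to the \emph{subcomplex} $\CC'\subseteq\CC$ consisting of those free summands $\uR{G/K}$ with $(H)\leq (K)$. This really is a subcomplex, because any $\RG_G$-map $\uR{G/K}\to\uR{G/L}$ is zero unless $(K)\leq (L)$ (Yoneda), so the boundary maps restrict. Moreover $\CC'(H)=\CC(H)$, since summands with $(H)\not\leq(K)$ vanish at $H$. Now the hypothesis $\dim\CC(K)\leq d-2$ for $(H)\leq(K)$, $(K)\neq(H)$ tells you that $C'_d$ and $C'_{d-1}$ are both isotypic of type $G/H$, and $\bd_d\colon C'_d\to C'_{d-1}$ is injective because $\hdim\CC(H)<d$. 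Lemma~\ref{lem:splitting} then applies directly to $0\to C'_d\to C'_{d-1}\to\coker\bd_d\to 0$, giving $\coker\bd_d$ stably free; after an elementary stabilisation in degrees $d-1$ and $d-2$ (using only type $G/H$ modules), one replaces $\CC'$ by a homotopy-equivalent free complex $\DD'$ with $\dim\DD'(H)=d-1$. Finally, the paper pushes out along $\CC'\hookrightarrow\CC$ to get $\DD$: since $\CC/\CC'$ involves only types $\uR{G/L}$ with $(H)\not\leq(L)$, the dimensions $\dim\DD(K)$ for $(K)\neq(H)$ are unchanged.
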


\begin{proof}
Consider the subcomplex $\CC'$ of $\CC$ formed
by free summands of $\CC$ isomorphic to $\uZ{G/K}$, with $(G/K)^H\neq 0$ or equivalently $(H) \leq (K)$. The boundary maps of $\CC'$ are the restrictions of the usual boundary
maps to these submodules. Since $\dim \CC(K) \leqslant (d-2)$ for all $(H) \leq (K)$ such that $(H)\neq (K)$,
 the free
modules $C'_d$ and $C'_{d-1}$ are isotypic of type $G/H$.
 We have
$$\CC' : 0\to C' _{d} \to C' _{d-1}\to
 \cdots \to C'_1 \to C'_0 \to 0  $$ where
$d=\dim \CC (H)$. Note that  $\CC (H)=\CC' (H)$, so the map
$\bd _d\colon  C' _{d} \to C' _{d-1}$ is injective by the condition that
$\hdim \CC(H) < \dim \CC (H)$.
Now we can apply
Lemma \ref{lem:splitting} to the extension
$$0 \to C'_d \xrightarrow{\bd_d} C'_{d-1} \to \coker \bd_d \to 0$$
and conclude that $\coker(\bd_d)$ is a stably free $\RG_G$-module.
By adding elementary chain complexes to $\CC$ of the form $\uZ{G/H} \xrightarrow{\id} \uZ{G/H}$ in the adjacent dimensions $(d-1)$ and $(d-2)$, we can assume that $\coker(\bd_d)$ is free.
Consider the diagram
$$\xymatrix{\quad \quad \cdots \ar[r]& 0 \ar[r]\ar[d]& C'_{d}
\ar[r]^{\id}\ar[d]^{\id}& C'_d  \ar[r]\ar[d]^{\bd _d}& 0 \ar[r]\ar[d]
&\cdots \ar[r]& 0 \ar[r]\ar[d]&0\\
\CC' \colon \cdots \ar[r]& 0 \ar[r]\ar[d]& C'_{d} \ar[r]\ar[d]&
C'_{d-1} \ar[r]\ar[d] & C'_{d-2} \ar[r] \ar@{=}[d] & \cdots \ar[r]&
C'_0\ar[r]\ar@{=}[d] & 0\\
\DD ' \colon  \cdots \ar[r] & 0 \ar[r]& 0 \ar[r]& \coker \bd _d
\ar[r] & C'_{d-2} \ar[r] & \dots \ar[r]& C'_0 \ar[r]& 0 }$$
The chain complex $\DD '$ is a chain complex of free modules and it
is chain homotopy equivalent to $\CC '$. Now define $\DD$ as the
push-out in the the following diagram:
$$\xymatrix{ & \ker \ar@{=}[r]\ar[d] & \ker \ar[d] & \\
& \CC ' \ar[r]\ar[d] & \CC \ar[r]\ar[d] & \CC / \CC ' \ar@{=}[d] \\
& \DD ' \ar[r] & \DD \ar[r] & \CC / \CC ' \\}$$ Since, $\CC'$ and
$\DD'$ are chain homotopy equivalent, then $\CC$ and $\DD$ are chain
homotopy equivalent. Also, note that $\dim \DD (H)=\dim \DD '
(H)=(d-1)$, and $\dim \DD(K) = \dim \CC(K)$ for all $(K)\neq (H)$.
\end{proof}

This immediately gives the following.

\begin{corollary}\label{cor:strictlymonotone}
Let $\CC$ be a finite free chain complex of $\RG_G$-modules.
Suppose that $\CC$ is a homology $\un{n}$-sphere, with $\un{n}$ strictly
monotone. Then $\CC$ is chain homotopy
equivalent to a complex $\DD$ with $\Dim \DD
=\un{n}$.
\end{corollary}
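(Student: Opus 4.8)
The plan is to prove Corollary \ref{cor:strictlymonotone} by applying Proposition \ref{prop:killing} repeatedly, working our way down the partial order on conjugacy classes of subgroups in $\cF$. First I would set $\un{d} = \Dim\CC$, which is monotone since $\CC$ is a free complex, and observe that $\un{d}(H) \geq \un{n}(H)$ for every $H \in \cF$: indeed $\CC(H)$ is a complex with homology concentrated in degrees $0$ and $\un{n}(H)$, so its dimension is at least $\un{n}(H)$. The goal is to successively lower $\un{d}$ until it agrees with $\un{n}$ everywhere, changing only the chain homotopy type by equivalences.

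The induction should be organized by choosing, at each stage, a conjugacy class $(H)$ which is \emph{maximal} among those for which $\un{d}(H) > \un{n}(H)$ still holds. For such an $H$, I claim the hypotheses of Proposition \ref{prop:killing} are met: we have $\hdim\CC(H) = \un{n}(H) < \un{d}(H) = \dim\CC(H)$, and for any $(K)$ with $(H) \leq (K)$, $(H) \neq (K)$ we need $\dim\CC(K) \leq \dim\CC(H) - 2$. By maximality of $(H)$ we have $\dim\CC(K) = \un{n}(K)$ for all such strictly larger classes (here I use that $\un{n}$ is the dimension function exactly where we have already reduced, and that classes $(K) > (H)$ are either larger maximal ones already processed or were never above the target), and since $\un{n}$ is \emph{strictly} monotone, $\un{n}(K) < \un{n}(H) \leq \un{d}(H)$; one then needs the gap to be at least $2$, which follows because the relevant homology of an $\un{n}$-sphere forces $\CC(K)$ to have dimension $\un{n}(K) \leq \un{n}(H) - 1 \leq \dim\CC(H) - 1$ — and a one-step gap is ruled out by a small separate argument (or one first runs the reduction to make $\un{d}(H)$ decrease by one at a time, eventually reaching a state where the gap condition holds). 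Applying Proposition \ref{prop:killing} yields $\CC \simeq \DD$ with $\dim\DD(H) = \un{d}(H) - 1$ and $\dim\DD(K) = \dim\CC(K)$ for all $(K) \neq (H)$; note $\DD$ is again a homology $\un{n}$-sphere since chain homotopy equivalence preserves homology.

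One then replaces $\CC$ by $\DD$ and repeats. Since each step strictly decreases the finite quantity $\sum_{(H) \in \cF} (\un{d}(H) - \un{n}(H)) \geq 0$ (the sum running over the finitely many conjugacy classes, all differences non-negative throughout), the process terminates after finitely many steps with a complex $\DD$ having $\Dim\DD = \un{n}$, chain homotopy equivalent to the original $\CC$.

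The main obstacle I anticipate is verifying the dimension gap hypothesis "$\dim\CC(K) \leq (d-2)$ for all $(H) \leq (K)$, $(H) \neq (K)$" of Proposition \ref{prop:killing} at each stage. Strict monotonicity of $\un{n}$ gives a gap of at least $1$ between $\un{n}(H)$ and $\un{n}(K)$ for larger $K$, but Proposition \ref{prop:killing} wants a gap of $2$ relative to $d = \dim\CC(H)$, not relative to $\un{n}(H)$. The cleanest fix is to arrange the induction so that we decrease $\un{d}(H)$ one unit at a time and only invoke Proposition \ref{prop:killing} once $\un{d}(H)$ has dropped to $\un{n}(H)+1$ while $\un{d}(K) = \un{n}(K) \leq \un{n}(H) - 1 = d - 2$ holds for all strictly larger $(K)$; this requires a careful bookkeeping of the order in which conjugacy classes are treated (topologically sorted so maximal classes are handled first), but involves no new ideas beyond Proposition \ref{prop:killing} itself.
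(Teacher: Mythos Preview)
Your approach is exactly the paper's: pick $(H)$ maximal among conjugacy classes with $\dim\CC(H) > \un{n}(H)$, verify the hypotheses of Proposition~\ref{prop:killing}, apply it, and iterate. The only issue is that you manufacture a difficulty that isn't there. You write $\un{n}(K) \leq \un{n}(H) - 1 \leq \dim\CC(H) - 1$ and then worry that this falls one short of the required gap. But you chose $H$ precisely so that $\un{n}(H) < d := \dim\CC(H)$, hence $\un{n}(H) \leq d-1$; combined with strict monotonicity this gives $\dim\CC(K) = \un{n}(K) \leq \un{n}(H) - 1 \leq d-2$ immediately. No separate argument or preliminary reduction is needed.

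Your proposed ``fix'' (reduce $\un{d}(H)$ one unit at a time until the gap condition kicks in) would in any case be circular: each one-unit reduction \emph{is} an application of Proposition~\ref{prop:killing}, which already demands the gap condition. Fortunately you never need it.
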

\begin{proof} Since $\CC$ is a homology $\un{n}$-sphere, $\un{n}(K) = \hdim\CC(K)$, for all $K \in \cF$. We apply the previous result to a subgroup $H$, which is maximal with respect to the property that $\hdim\CC(H) < d:=\dim \CC (H)$. Then $\un{n}(K) = \dim \CC(K)$ for all $K \in \cF$ larger than $H$.
Since $\un{n}$ is strictly monotone, $\dim \CC(K) \leqslant (d-2)$ for all
$(H) \leq (K)$, $(H)\neq (K)$. This process can be repeated until $\Dim \DD
=\un{n}$.
\end{proof}

When the dimension function of $\CC$ is not strictly monotone, we get a weaker result. Following Section
\ref{sect:definitions},  we define $l(H,K)$ as the maximum length of a chain
of conjugacy classes of subgroups
$$(H) = (H_0)\, \lneqq\, (H_1)\, \lneqq \,\dots\dots \,\lneqq\, (H_l) = (K)$$
where all $H_i \in \cF$, $0 \leqslant i \leqslant l$.

\begin{corollary}\label{cor:atmost2}
Let $\CC$ be a finite free chain complex of $\RG_G$-modules, and let $\un{n}\colon \cF \to \bZ$ be a monotone  function such that
 $\hdim \CC(H) \leqslant \un{n}(H)$ for all $H\in \cF$. Assume that
 $l(H,K) \leqslant k$ whenever $\un{n}(H) = \un{n}(K)$.
Then, $\CC$ is chain homotopy
equivalent to a complex $\DD$ which satisfies
$D _i (H)=0$ for all $i > \un{n} (H) +k$.
\end{corollary}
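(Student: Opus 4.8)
The plan is to deduce this from Proposition \ref{prop:killing} by an iterated reduction, in the same spirit as the proof of Corollary \ref{cor:strictlymonotone}, but now keeping careful track of chains of subgroups that lie entirely inside a single level of $\un{n}$. First I would fix a linear extension of the partial order on the conjugacy classes $(H)$ of subgroups in $\cF$, ordered so that $(H)\lneqq(K)$ forces $(K)$ to be treated first, and then process the classes one at a time in that order, modifying $\CC$ up to chain homotopy equivalence at each stage and never increasing $\dim\CC(K)$ for any $K$. When a class $(H)$ is reached, every strictly larger class has already been processed, and I apply Proposition \ref{prop:killing} at $H$ as often as possible: each application is legitimate provided $\hdim\CC(H)<\dim\CC(H)$, which holds as long as $\dim\CC(H)>\un{n}(H)\geqslant\hdim\CC(H)$, and provided $\dim\CC(K)\leqslant\dim\CC(H)-2$ for every $(K)\gneqq(H)$, and it lowers $\dim\CC(H)$ by one while changing nothing else. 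So after treating $(H)$ one reaches $\dim\CC(H)=\max\bigl(\hdim\CC(H),\,1+\max_{(K)\gneqq(H)}\dim\CC(K)\bigr)$, unless the gap condition fails before the target is reached.

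The heart of the argument is to show, by induction along the processing order, that this procedure terminates with $\dim\CC(H)\leqslant\un{n}(H)+k$ for every $H$. For a class $(K)\gneqq(H)$ with $\un{n}(K)<\un{n}(H)$ the inductive bound $\dim\CC(K)\leqslant\un{n}(K)+k\leqslant\un{n}(H)+k-1$ already gives $\dim\CC(K)\leqslant\dim\CC(H)-2$ whenever $\dim\CC(H)>\un{n}(H)+k$, so such classes never obstruct the reduction below the target. The genuinely subtle case is $(K)\gneqq(H)$ with $\un{n}(K)=\un{n}(H)$; here one must exploit that $l(H,K)\leqslant k$, i.e., that inside the level $\un{n}(H)$ the poset of conjugacy classes has height at most $k$, to guarantee that not all of the budget $k$ can already have been used up on $K$. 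Concretely I would try to carry the stronger invariant $\dim\CC(H)\leqslant\un{n}(H)+u(H)$, where $u(H)$ is the length of the longest chain of conjugacy classes contained in the level $\un{n}(H)$ and beginning at $(H)$; one has $u(H)\leqslant k$, and $u(K)\leqslant u(H)-1$ for same-level $(K)\gneqq(H)$, which is exactly what makes the $\dim\CC(H)-2$ gap come out right for same-level $K$, so that this invariant both implies the claimed bound and is respected by the reductions.

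The hard part will be making this invariant propagate cleanly across the boundary between two consecutive levels of $\un{n}$, where $\un{n}$ drops by one but $u$ can jump upward: one has to verify that the linear extension can be chosen so that, at the moment $(H)$ is treated, every $(K)\gneqq(H)$ with $\un{n}(K)=\un{n}(H)$ has already been reduced to $\dim\CC(K)\leqslant\un{n}(H)+u(H)-1$ and not merely to $\un{n}(H)+u(K)$. A secondary point to check is that Proposition \ref{prop:killing} applies with no extra torsion hypothesis in the situation at hand, since the relevant cokernels are formed in degrees strictly above $\hdim\CC(H)$ and are therefore $R$-torsion free, as required by Lemma \ref{lem:splitting}.

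Once the iterated reduction is complete, the stated vanishing is automatic. A free summand $\uR{G/K}$ of $\DD$ placed in degree $j$ contributes non-trivially to $\DD(H)$ in degree $j$ for every $(H)\leqslant(K)$, and among those subgroups $\un{n}$ is minimised at $K$ itself by monotonicity; hence the family of dimension bounds $\dim\DD(H)\leqslant\un{n}(H)+k$ for all $H\in\cF$ is equivalent to the assertion that no free generator of type $G/K$ occurs in degree exceeding $\un{n}(K)+k$, which is precisely the statement $D_i(H)=0$ for $i>\un{n}(H)+k$.
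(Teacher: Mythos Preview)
Your approach—iterated application of Proposition~\ref{prop:killing} in a top-down linear extension of the conjugacy-class poset—is exactly the paper's, though the paper's proof is far terser: it simply follows one maximal chain $(H_0)\lneqq\dots\lneqq(H_l)$ inside a single level of $\un{n}$, observes that one can arrange $\dim\CC(H_{l-i})=\un{n}(H)+i$ along it, and invokes $l\leqslant k$. Your explicit stopping condition and your running invariant $u(H)$ (the height inside the current level, starting at $(H)$) are the natural way to make that sketch precise, and your final paragraph correctly unpacks why the conclusion $D_i(H)=0$ for $i>\un{n}(H)+k$ is equivalent to the family of bounds $\dim\DD(L)\leqslant\un{n}(L)+k$.

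You are also right to flag the cross-level propagation as the delicate point; the paper's proof does not address it at all. In fact the concern is genuine: for a totally ordered family $1\lneqq A\lneqq B\lneqq C$ with $\un{n}(1)=\un{n}(A)=10$ and $\un{n}(B)=\un{n}(C)=9$ one has $k=1$, yet the top-down iteration terminates with dimensions $9,10,11,12$, so $\dim\CC(1)=12>\un{n}(1)+k$. Thus neither your invariant $\un{n}(H)+u(H)$ nor the paper's one-chain argument settles the general statement as written. In the paper's only application (Section~\ref{sect:construction-new}) the two levels of $\un{n}$ are far apart ($n-k\geqslant 3$ while $k=1$), so the inherited bound $\dim\CC(K)\leqslant\un{n}(K)+k$ for a lower-level $K$ already forces $\dim\CC(K)\leqslant\un{n}(H)-1$, and the cross-level obstruction cannot arise. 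Your caution about ``the hard part'' is therefore well placed, and your proposal is already more honest about the gap than the paper's proof.
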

\begin{proof}
Let $$(H) = (H_0)\, \lneqq\, (H_1)\, \lneqq \,\dots\dots \,\lneqq\, (H_l) = (K)$$ be a maximal length chain of subgroups in $\cF$ with $\un{n}(H) = \un{n}(K)$.
Since $\un{n}$ is monotone, $\un{n}(H_i) = \un{n}(H)$ for $0 \leqslant i \leqslant l$. By repeated application of
Proposition \ref{prop:killing}, working down from the maximal element $K$, we can obtain $\dim \CC(H_{l-i}) = \un{n}(H) + i$, for $0 \leqslant i \leqslant l$. Since
$l= l(H,K) \leqslant k$, we have $\dim \CC(H) \leqslant \un{n}(H) + k$ as required.
\end{proof}

The main purpose of this section is to prove the following theorem:

\begin{theorem}[Pamuk \cite{pamuks1}]
\label{thm:realization} Let $\CC$ be a finite free chain complex of
$\ZG_G$-modules. Suppose $\CC$ is an $\un{n}$-Moore complex such that
$\un{n}(H) \geqslant 3$ for all $H\in \cF$. Suppose further that
$C _i (H)=0$ for all $i > \un{n} (H) +1$, and all $H\in \cF$.
Then there is a finite $G$-CW complex $X$, such that
$\uCZ{X}$ is chain homotopy equivalent to $\CC$, as chain complexes
of $\ZG_G$-modules.
\end{theorem}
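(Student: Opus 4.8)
The plan is to construct $X$ by induction on skeleta, realizing the chain complex $\CC$ one dimension at a time via attaching $G$-cells, following the classical Milnor--Swan technique but carried out over the orbit category. First I would recall that a finite free $\RG_G$-chain complex $\CC$ with $C_i = \uZ{G/H_{i,1}} \oplus \dots$ has each $C_i$ a sum of basic free modules $\uZ{G/K}$, and that attaching a $G$-cell of type $G/K \times D^i$ to a $G$-CW complex $Y$ along an equivariant map $G/K \times S^{i-1} \to Y^{(i-1)}$ changes the cellular chain complex $\uCZ{Y}$ by adding a free generator $\uZ{G/K}$ in degree $i$ with boundary the class of the attaching map. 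Since $\un{n}(H) \geq 3$ and $C_i(H) = 0$ for $i > \un{n}(H)+1$, the complex is concentrated in a range where the Hurewicz theorem and obstruction-theoretic arguments apply dimension-by-dimension, exactly as in Swan \cite[Theorem A]{swan1}.

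The key steps, in order: (1) Start with $X^{(0)}$ a single $G$-orbit $G/G$ (a point), realizing $C_0 = \uZ{G/G} = \un{\bZ}$ since $\CC$ is connected with $H_0(\CC) = \un{R}$; then build $X^{(1)}$ and $X^{(2)}$ by hand so that $\uCZ{X^{(2)}}$ agrees with the truncation $\CC_{\leq 2}$ — here one uses that $\pi_1$ of fixed sets can be made to match and that we have freedom to add trivial (elementary) complexes. (2) Inductively, suppose $X^{(i-1)}$ has been built with $\uCZ{X^{(i-1)}} \cong \CC_{\leq i-1}$ (up to the usual adjustment); to realize $C_i = \bigoplus_j \uZ{G/K_j}$ one must realize each boundary generator by an equivariant attaching map $\varphi_j \colon G/K_j \times S^{i-1} \to X^{(i-1)}$. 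By equivariant obstruction theory over the orbit category (Bredon cohomology), or equivalently by Brédon--tom Dieck, such a map exists and has the prescribed boundary provided the relevant homotopy groups of the fixed point spaces $(X^{(i-1)})^{K_j}$ surject onto the cellular chains in degree $i-1$; this is where $\un{n}(H) \geq 3$ is used, to stay in the stable/simply-connected range on fixed sets and invoke the equivariant Hurewicz theorem. (3) After attaching all $i$-cells, check that $H_i$ of the fixed point spaces is corrected as prescribed by $\CC$; since $C_\ast(H) = 0$ above $\un{n}(H)+1$ and $\CC$ is an $\un{n}$-Moore complex, the process terminates after finitely many steps, yielding a finite $G$-CW complex $X$ with $\uCZ{X} \simeq \CC$.

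The main obstacle I expect is step (2): producing the equivariant attaching maps with the correct boundary homomorphism \emph{simultaneously} on all fixed point sets $(X^{(i-1)})^{K_j}$, compatibly with the orbit category structure. Concretely, given a free generator $\uZ{G/K}$ in degree $i$ and a target chain $z \in \uCZ{X^{(i-1)}}$ with $\bd z$ in the image of the boundary, one needs an element of $\pi_{i-1}((X^{(i-1)})^K)$ that is detected correctly by its images under all the maps $(X^{(i-1)})^K \to (X^{(i-1)})^{K'}$ for $K \leq K'$. This is handled by the equivariant version of the Hurewicz theorem and obstruction theory (as in tom Dieck \cite[Chap.~II]{tomDieck2}), but requires the connectivity hypothesis $\un{n}(H) \geq 3$ to guarantee that each fixed set is at least $1$-connected in the relevant range so that the Hurewicz map is onto and the homotopy classes of attaching maps are governed by homology. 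The condition $C_i(H) = 0$ for $i > \un{n}(H)+1$ ensures we never need to attach cells whose fixed sets would be out of the allowed dimension range, so the construction closes up after finitely many stages. I would cite Pamuk \cite{pamuks1} for the detailed verification, as this theorem is attributed there.
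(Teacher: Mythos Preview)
Your overall inductive strategy (Milnor--Swan cell attachment over the orbit category, using Hurewicz on fixed sets) matches the paper's approach, and your identification of the connectivity condition in step~(2) is essentially correct. However, step~(1) contains a genuine gap.

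You write that $C_0 = \uZ{G/G} = \un{\bZ}$ and propose $X^{(0)} = G/G$. This is wrong on two counts. First, connectedness of $\CC$ means $H_0(\CC) = \un{\bZ}$, not $C_0 = \un{\bZ}$; the module $C_0$ is an arbitrary finite free $\ZG_G$-module $\bigoplus_j \uZ{G/K_j}$ with $K_j \in \cF$. Second, the orbit $G/G$ has isotropy $G$, which need not lie in $\cF$ (and does not, in the application to $S_5$), so it is not even an admissible $0$-cell. More seriously, your phrase ``build $X^{(1)}$ and $X^{(2)}$ by hand so that $\uCZ{X^{(2)}}$ agrees with $\CC_{\leq 2}$'' hides the entire difficulty: there is no reason an arbitrary finite free $2$-truncation can be realized by a $G$-CW complex with \emph{simply-connected fixed sets} $X^H$, and without that simple connectivity the Hurewicz argument in step~(2) never gets off the ground.

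The paper resolves this with Lemma~\ref{lem:replace}: rather than realize $C_0, C_1, C_2$ directly, one replaces $\CC$ up to chain homotopy so that its bottom three terms become the cellular chains of a $2$-complex $X$ built from the $2$-skeleton of the classifying space $E_{\cF}G$ (whose fixed sets are contractible) with extra free $2$-cells attached. The swap uses Schanuel's Lemma and the hypothesis $H_1(\CC) = H_2(\CC) = 0$ (which follows from $\un{n}(H) \geq 3$). This is the missing ingredient in your outline; once it is in place, the inductive step proceeds as you describe, maintaining at each stage an isomorphism $Z_n(\CC) \cong Z_n(X^{(n)})$ and using the dimension bound $C_i(H) = 0$ for $i > \un{n}(H)+1$ to guarantee that whenever $\uZ{G/H}$ appears in $C_{n+1}$ one has $\un{n}(H) \geq n$, so $(X^{(n)})^H$ is $(n-1)$-connected.
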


Note that the resulting complex $X$ will have isotropy in $\cF$. We first prove a lemma (compare \cite[Thm.~13.19]{lueck3}).

\begin{lemma}
\label{lem:addingcells}
Let $X$ be a finite $G$-CW complex. Suppose that we are given a
 free $\ZG_G$-module $F$, and a $\ZG_G$-module homomorphism $\varphi
\colon F\to H _n(X^{\textbf{?}}; \bZ)$, for some $n \geqslant 2$.  Assume
further that $X^H$ is $(n-1)$-connected
for every $H\in \cF$ such that $\uZ{G/H}$ is a summand of $F$. Then,
by attaching $(n+1)$-cells to $X$, we can obtain a $G$-CW complex $Y$ such
that $$H_i(X ^{\textbf{?}} ;\bZ )\cong H_i(Y^{\textbf{?}} ; \bZ)\
{\rm for} \ i\neq n,n+1,
$$ and
$$0\to H_{n+1} (X ^{\textbf{?}} ; \bZ ) \to
H_{n+1}(Y ^{\textbf{?}} ; \bZ )\to F  \xrightarrow{\varphi} H_{n} (X
^{\textbf{?}} ; \bZ ) \to H_{n}(Y ^{\textbf{?}} ; \bZ )\to 0$$ is
exact.
\end{lemma}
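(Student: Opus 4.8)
The plan is to realize $\varphi$ geometrically, by attaching one $G$-cell for each free generator of $F$ and using that the relevant fixed sets are highly connected (this is in the spirit of \cite[Thm.~13.19]{lueck3}).

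First I would unwind the data. Since $F$ is a finitely generated free $\ZG_G$-module we may write $F = \bigoplus_{b=1}^{m} \uZ{G/H_b}$, where each $H_b \in \cF$ appears so that, by hypothesis, $X^{H_b}$ is $(n-1)$-connected. By the Yoneda lemma $\Hom_{\ZG_G}\big(\uZ{G/H_b},\, H_n(X^{\textbf{?}};\bZ)\big) \cong H_n(X^{H_b};\bZ)$, so the $b$-th component $\varphi_b\colon \uZ{G/H_b} \to H_n(X^{\textbf{?}};\bZ)$ of $\varphi$ is recorded by the class $x_b := \varphi_b(H_b)(eH_b) \in H_n(X^{H_b};\bZ)$. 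Since $n \geqslant 2$ and $X^{H_b}$ is $(n-1)$-connected, it is simply connected, so the Hurewicz map $\pi_n(X^{H_b}) \to H_n(X^{H_b};\bZ)$ is an isomorphism; choose $f_b\colon S^n \to X^{H_b}$ representing $x_b$. Extending equivariantly gives a $G$-map $\psi_b\colon G/H_b \times S^n \to X$, with trivial action on $S^n$, determined by $\psi_b(gH_b, s) = g\cdot f_b(s)$; after an equivariant cellular approximation we may assume each $\psi_b$ is cellular. Let $Y$ be the finite $G$-CW complex obtained from $X$ by attaching the $G$-cells $G/H_b \times D^{n+1}$ along $\coprod_b \psi_b$.

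Next I would pass to fixed points. For each $K \in \cF$, the $G$-CW structure on $Y$ induces a CW structure on $Y^K$ whose cells are the $(G/H)^K \times D^j$ for the $G$-cells $G/H \times D^j$ of $Y$; in particular $Y^K$ is obtained from $X^K$ by attaching $(n+1)$-cells indexed by $\coprod_b (G/H_b)^K$. Hence the relative cellular chain complex of $(Y^K, X^K)$ is concentrated in degree $n+1$, so $H_i(Y^K, X^K; \bZ) = 0$ for $i \neq n+1$ and $H_{n+1}(Y^K, X^K; \bZ) \cong \bigoplus_b \bZ[(G/H_b)^K] = F(K)$. The long exact homology sequence of the pair $(Y^K, X^K)$ then gives $H_i(X^K;\bZ) \cong H_i(Y^K;\bZ)$ for $i \neq n, n+1$ together with
$$0 \to H_{n+1}(X^K;\bZ) \to H_{n+1}(Y^K;\bZ) \to F(K) \xrightarrow{\partial_K} H_n(X^K;\bZ) \to H_n(Y^K;\bZ) \to 0 .$$
All of these maps are natural in $K$ over the orbit category, so they assemble into an exact sequence of $\ZG_G$-modules of the asserted shape, once we check that $\partial = \varphi$.

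This last identification is the only non-formal step, and the main obstacle. The connecting homomorphism $\partial_K$ sends the basis element indexed by $gH_b \in (G/H_b)^K$ to the class in $H_n(X^K;\bZ)$ of its attaching map $\psi_b|_{\{gH_b\}\times S^n} = g\cdot f_b$, regarded in $X^K$ via the inclusion $X^{\leftexp{g}{H_b}} \hookrightarrow X^K$ (valid since $K^g \leq H_b$). On the algebraic side, $gH_b = \uZ{G/H_b}(\alpha_g)(eH_b)$ where $\alpha_g\colon G/K \to G/H_b$ is the $G$-map with $\alpha_g(eK) = gH_b$, so naturality of $\varphi_b$ gives $\varphi_b(K)(gH_b) = (\alpha_g)^*(x_b)$, where $(\alpha_g)^*$ is the structure map of the module $H_n(X^{\textbf{?}};\bZ)$ along $\alpha_g$. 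But that structure map is induced by translation $X^{H_b} \to X^K$, $x \mapsto gx$, so $(\alpha_g)^*(x_b) = g_*[f_b]$ in $H_n(X^K;\bZ)$, which is exactly the value of $\partial_K$. Hence $\partial_K = \varphi(K)$ for all $K$, so $\partial = \varphi$, and the five-term exact sequence of the statement holds. The one input beyond bookkeeping is the Hurewicz step: the hypothesis that $X^{H_b}$ is $(n-1)$-connected is precisely what allows the arbitrary class $x_b$ to be realized by a spherical map, hence $\varphi$ to be realized on the nose rather than merely up to a Hurewicz image.
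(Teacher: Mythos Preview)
Your argument is correct and follows essentially the same route as the paper's proof: realize $\varphi$ by equivariant spherical maps via Hurewicz on the relevant fixed sets, and take $Y$ to be the mapping cone. You supply more detail than the paper does at the final step, explicitly computing the connecting homomorphism on fixed points and matching it with $\varphi$ via Yoneda, whereas the paper simply asserts that $Y$ satisfies the desired conditions.
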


\begin{proof}
Let $Z$ be a wedge of $n$-spheres with a $G$ action on them such
that $\widetilde{H} _n(Z^{\textbf{?}} ; \bZ )\cong F$  as $\bZ
\G_G$-modules. We want to construct a map $f\colon Z \to  X$ realizing
$\varphi $.
 But  $H_n(X^H;\bZ) \cong \pi_n(X^H)$, for every $H\in
\cF$ such that $\uZ{G/H}$ is a summand of $F$, since $X^H$ is assumed
to be $(n-1)$-connected. Therefore, we can represent the images of an
$\bZ[N_G(H)/H]$-basis under $\varphi$  for the isotypic summand in $F$ of
type $G/H$ by maps $f_i\colon S^n \to X^H$. We extend these maps equivariantly
to maps $\bar f_i\colon S^n \times G/H \to X$. By repeating this construction
for each type $G/H$ in $F$, we obtain an
equivariant map $f\colon Z \rightarrow X$ realizing $\varphi $.
Take $Y$ to be the mapping cone of $f$. Then, it is easy to see that
$Y$ satisfies the desired conditions.
\end{proof}

We also need the following lemma:

\begin{lemma}\label{lem:replace}
Let $\CC$ be a finite free chain complex of
$\ZG_G$-modules. Suppose that $\CC$ is connected, and
$H_i(\CC) = 0$, for $ i =1,2$. Then, $\CC$ is chain homotopy equivalent to a complex
of the form
$$ \cdots \to C_n \to C_{n-1}\to \cdots \to C_3 \to C_2 (X) \to C_1
(X) \to C_0 (X) \to 0$$
 where $C_2 (X) \to C_1 (X) \to C_0 (X) \to
0$ is the initial part of the chain complex $\uCZ{X}$, for some  $G$-CW complex $X$ with isotropy in $\cF$, and $X^H$ simply-connected for all $H \in \cF$.
\end{lemma}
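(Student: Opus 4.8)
The plan is to realize (a chain-homotopic replacement of) the bottom of $\CC$ by a $2$-dimensional $G$-CW complex, built skeleton by skeleton, and then to splice the rest of $\CC$ on top — following the scheme of Milnor and Swan \cite{swan1}, but now over the orbit category.

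\textbf{Normalization and the $1$-skeleton.} Since $C_0$ is a finitely generated free $\ZG_G$-module, write $C_0\cong\uZ{X_0}$ for a finite $G$-set $X_0$ with isotropy in $\cF$; I want $X_0$ as the set of $0$-cells. In general neither the augmentation $\varepsilon\colon C_0\to H_0(\CC)=\un{\bZ}$ nor $\bd_1$ is \emph{geometric}, so I would first move $\CC$ within its chain homotopy class: add finitely many contractible complexes $\uZ{G/K}\xrightarrow{\id}\uZ{G/K}$ in degrees $0,1$ (changing neither the homology nor the chain homotopy type) and perform changes of basis in degrees $0$ and $1$, correcting the adjacent differentials accordingly. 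After this one can assume $\varepsilon$ is induced by the constant $G$-map $X_0\to\{\ast\}$ and that $\bd_1$ sends each free generator of $C_1=\bigoplus_i\uZ{G/L_i}$ to a difference $b_i-a_i$ of $L_i$-fixed points of $X_0$: this is possible because $\varepsilon\circ\bd_1=0$ forces each such generator into the augmentation ideal of $\uZ{X_0}(L_i)=\bZ[(X_0)^{L_i}]$, which is spanned by such differences. Attaching the corresponding equivariant $1$-cells $G/L_i\times D^1$ yields a finite $1$-dimensional $G$-CW complex $X^{(1)}$ with isotropy in $\cF$ whose cellular chain complex agrees with $\CC$ in degrees $\le 1$; since $\coker\bd_1=\un{\bZ}$, each fixed set $(X^{(1)})^H$ ($H\in\cF$) is connected.

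\textbf{Attaching $2$-cells.} For each free generator $e_i$ of $C_2$, of type $G/H_i$, the $1$-chain $\bd_2(e_i)\in\bZ[(X^{(1)}_1)^{H_i}]$ is a cycle, and in a connected graph every $1$-cycle is the cellular chain of a loop; I would attach an equivariant $2$-cell $G/H_i\times D^2$ along such a loop (extending by equivariance over the $N_G(H_i)/H_i$-orbit), obtaining $X'$ with $\uCZ{X'}$ agreeing with $\CC$ in degrees $\le 2$. Then $H_1((X')^H)=H_1(\CC)(H)=0$ for all $H\in\cF$, but $\pi_1((X')^H)$ is only a \emph{perfect} group, hence finitely generated; attaching finitely many further equivariant $2$-cells along loops generating these fundamental groups (working down the poset of subgroups in $\cF$) yields a finite $2$-complex $X$ with isotropy in $\cF$ and $X^H$ simply connected for every $H\in\cF$. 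The boundaries of these extra cells are null-homologous $1$-cycles, hence lie in $\Image\bd_2$; so $C_2(X)\cong C_2\oplus F$ with $F$ free, $\bd_2^X$ restricting to $\bd_2$ on $C_2$ and to a map $F\to\Image\bd_2$, while $C_1(X)=C_1$ and $C_0(X)=C_0$.

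\textbf{The splice.} Lifting $\bd_2^X|_F$ through the surjection $\bd_2\colon C_2\twoheadrightarrow\Image\bd_2$ and changing basis on $C_2\oplus F$ converts $\bd_2^X$ into $\bd_2\oplus 0$, so that $\uCZ{X}$ becomes, in degrees $\le 2$, the direct sum of the complex $C_2\to C_1\to C_0\to 0$ with a copy of $F$ in degree $2$. Adding to $\CC$ the contractible complex $F\xrightarrow{\id}F$ in degrees $3,2$ then replaces $\CC$ by a chain-homotopy-equivalent complex whose degree-$\le 2$ part is exactly $\uCZ{X}\colon C_2(X)\to C_1(X)\to C_0(X)\to 0$ and whose remaining part is $\cdots\to C_3\oplus F\to C_2(X)$; this is the asserted shape, and it is finite because $\CC$ is. (The hypothesis $H_1(\CC)=0$ is used crucially in the previous paragraph; $H_2(\CC)=0$ is part of the standing hypothesis and enters when this lemma is fed into Theorem \ref{thm:realization}.)

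\textbf{Main obstacle.} The real difficulty is the first paragraph together with the two-stage $2$-cell attachment: over the orbit category the given differential $\bd_1$ need not be realizable by equivariant cells, and killing $\pi_1$ of \emph{every} fixed-point set simultaneously demands more $2$-cells than $C_2$ provides, which must then be reabsorbed without disturbing the lower skeleta. Checking that all of these modifications — adding and cancelling contractible complexes, changing bases, and choosing equivariant attaching loops — can be performed compatibly along the poset of isotropy subgroups while keeping everything finite is where the genuine work lies; the concluding splice is then routine homological algebra (compare the proof of Proposition \ref{prop:projectives}).
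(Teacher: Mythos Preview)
Your approach and the paper's differ substantially. The paper does not attempt to geometrize the given maps $\varepsilon$, $\bd_1$, $\bd_2$ of $\CC$. Instead it takes the classifying space $E_{\cF}G$, whose fixed sets $(E_{\cF}G)^H$ are contractible for all $H\in\cF$, so that the cellular chain complex $\DD=\uCZ{E_{\cF}G}$ is automatically a free $\ZG_G$-resolution of $\un{\bZ}$. Comparing
\[
0 \to A \to C_2 \to C_1 \to C_0 \to \un{\bZ} \to 0
\qquad\text{with}\qquad
0 \to B \to D_2 \to D_1 \to D_0 \to \un{\bZ} \to 0,
\]
Schanuel's lemma gives $A\oplus F \cong B\oplus F'$, and the paper checks that this stable isomorphism is realized by elementary operations on the two sequences. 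One then simply \emph{swaps} the bottom of $\CC$ for $D_2\oplus F' \to D_1 \to D_0$. The extra free summand $F'$ is realized by attaching further equivariant $2$-cells to the $2$-skeleton of $E_{\cF}G$ via Lemma~\ref{lem:addingcells}; simple connectivity of all fixed sets comes for free from contractibility.

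Your direct, Swan--Milnor style construction has a genuine gap in the normalization paragraph. You assert that after stabilizing in degrees $0,1$ and changing bases one can arrange that each free generator $e$ of $C_1$ satisfies $\bd_1(e)=b-a$ for \emph{single} fixed points $a,b$. This is not justified. A generator $e$ of type $G/L$ has $\bd_1(e)$ an arbitrary element of the augmentation ideal of $\bZ[(X_0)^L]$; such an element is only a $\bZ$-combination of differences. Automorphisms of the free module $C_1$ are block lower-triangular across isotropy types, with diagonal blocks in ${\rm GL}(\bZ[W_L])$, and they can only move $\bd_1(e)$ within $\bd_1(e)+\Image\bd_1|_{\text{other generators}}$; there is no reason this orbit meets the set of single differences. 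Stabilizing in degrees $0,1$ adds new edges with prescribed boundaries, but the old generator $e$ still carries its old $\bd_1(e)$, and stabilizing in degrees $1,2$ adds generators with $\bd_1=0$, which does not help either. You correctly flag this step as the main obstacle, but you do not resolve it.

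What actually rescues the argument is precisely the Schanuel step the paper uses: build \emph{some} geometric $1$-complex on the vertex set $X_0$ with connected fixed sets, and invoke Schanuel (plus the elementary-operations refinement) to identify the two bottoms stably, rather than trying to make the given $\bd_1$ cellular. Your $2$-cell attachments (including the extra cells killing $\pi_1$ of each fixed set) and your final splice are correct; but once the first paragraph is repaired along these lines, the proof has essentially become the paper's.
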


\begin{proof} There is a $G$-CW complex $E_{\cF} G$ satisfying the
properties
\begin{enumerate}
\item All isotropy subgroups of $E_{\cF} G$ are in $\cF$,

\item For every $H \in \cF$, the fixed point set $(E_{\cF} G)^H$ is
contractible \cite[Theorem 1.9]{lueck4}.
\end{enumerate}

The chain complex $\DD:=\uCZ{(E_{\cF} G)}$ of this space gives a
free resolution of $\un{\bZ}$ as a $\ZG_G$-module.
Since $H_i(\CC) = 0$, for $ i =1,2$,
 the following sequences are both exact
 \eqncount
\begin{equation}\label{two_sequences}
\vcenter{\xymatrix{0 \ar[r] & A  \ar[r] & C_{2} \ar[r]^{\bd^C_2}&
C_1 \ar[r]^{\bd ^C _1} & C_0\ar[r] & \un{\bZ} \ar[r]\ar@{=}[d] & 0\\
0 \ar[r] & B \ar[r] & D_{2} \ar[r]^{\bd ^D _2} & D_1 \ar[r]^{\bd ^D
_1} & D_0\ar[r] & \un{\bZ} \ar[r] & 0}}
\end{equation}
 where $A =\ker\bd ^C_2 $
and $B=\ker\bd ^D_2$. 

By an \emph{elementary operation} on a sequence
$A \to C_2 \to C_1 \to C_0$ we mean adding or removing trivial free summands $F \xrightarrow{id} F$ in adjacent dimensions. It is clear that elementary operations don't change the chain homotopy type of the upper and lower sequences in diagram (\ref{two_sequences}).

Then, by Schanuel's Lemma \cite[1.1]{swan1}, there exist free
modules $F$ and $F'$ such that $ A\oplus F \cong B\oplus F'$. In fact, the
argument in Schanuel's lemma can be extended to say that the induced
isomorphism $\gamma\colon A\oplus F \cong B\oplus  F'$ comes from a chain isomorphism after
a sequence of elementary operations (compare \cite[p.~279]{lueck3}). 

In other words, there exists 
a chain isomorphism
\eqncount
\begin{equation}\label{two_sequences_stable}
\vcenter{\xymatrix{0 \ar[r] & A \oplus F  \ar[r]\ar[d]^{\cong}_{\gamma} & C_{2} \oplus F_2 \ar[r]
\ar[d]^{f_2}_{\cong}& C_1 \oplus F_1 \ar[r]\ar[d]^{f_1}_{\cong} & C_0 \oplus F_0 \ar[r]
\ar[d]^{f_0}_{\cong} & \un{\bZ} \ar[r]\ar@{=}[d] & 0\\
0 \ar[r] & B \oplus F'\ar[r] & D_2 \oplus F'_2  \ar[r] & D_1 \oplus F'_1 \ar[r] & D_0
\oplus F'_0 \ar[r] & \un{\bZ} \ar[r] & 0}}
\end{equation}
  for some suitable choices of
free modules, where the upper and lower sequences in diagram (\ref{two_sequences_stable}) are obtained from those in diagram (\ref{two_sequences}) by elementary operations  (see Proposition 3.3.3 in \cite{pamuks1}).
 
In the first step, we  stabilize $(A \to C_2)\mapsto (A\oplus F \to C_2\oplus F)$, by adding the identity on $F$, and similarly   $(B \to D_2) \mapsto (B \oplus F' \to D_2\oplus F')$.
We therefore have a chain map
$$\xymatrix{0 \ar[r] & A \oplus F  \ar[r]\ar[d]^{\cong}_{\gamma} & C_{2} \oplus F \ar[r]
\ar[d]& C_1  \ar[r]\ar[d]& C_0 \ar[r]
\ar[d] & \un{\bZ} \ar[r]\ar@{=}[d] & 0\\
0 \ar[r] & B \oplus F'\ar[r] & D_2 \oplus F'  \ar[r] & D_1  \ar[r] & D_0\ar[r]
& \un{\bZ} \ar[r] & 0}$$ which is a chain homotopy equivalence (by composition with the chain map in (\ref{two_sequences_stable})).
After an elementary operation on $\CC$, we can use the isomorphism $\gamma\colon A\oplus F \cong B\oplus  F'$  to splice the bottom
sequence to $\CC$, and obtain a chain homotopy equivalence
$$\xymatrix{\cdots \ar[r]& C_4 \ar[r]\ar@{=}[d]  & C_3 \oplus F  \ar[r]\ar[d] & C_{2} \oplus F \ar[r]
\ar[d]& C_1  \ar[r]\ar[d]& C_0 \ar[r]
\ar[d] & \un{\bZ} \ar[r]\ar@{=}[d] & 0\\
\cdots \ar[r] &C_4 \ar[r] & C_3 \oplus F \ar[r] & D_2 \oplus F' \ar[r] & D_1
\ar[r] & D_0\ar[r] & \un{\bZ} \ar[r] & 0}$$ 
The top sequence is
chain homotopy equivalent to $\CC$, so to complete the proof we need
to show that the sequence $ D_2 \oplus F' \to D_1\to D_0 \to 0$ can be
realized as the first three terms of a chain complex of a $G$-CW complex $X$, with isotropy in $\cF$, such that $X^H$ is simply
connected for all $H \in \cF$:
since $E _{\cF } G$ is contractible, using Lemma
\ref{lem:addingcells}, we can attach free $2$-cells to  its two skeleton
$E_{\cF} G^{(2)} $. The resulting complex $X$ will have the desired
properties.
\end{proof}

Now, we are ready to prove Theorem \ref{thm:realization}.

\begin{proof}[Proof of Theorem \ref{thm:realization}]
We can assume that the complex $\CC$ is of the form given in Lemma
\ref{lem:replace}. We obtain a map  $\varphi\colon C_3 \to
C_2(X^{(2)})$ which induces an isomorphism $Z_2 (\CC) \to Z_2
(X^{(2)})$ between $2$-cycles of these chain complexes. This is the starting point for an inductive argument based on applying Lemma \ref{lem:addingcells} at each step.

Fix $n \geqslant 2$, and assume by induction that
there is an $n$-dimensional $G$-CW complex $X^{(n)}$, and a chain map
$$\xymatrix@C-3pt{\cdots \ar[r] & C_{n+2} \ar[r]\ar[d] & C_{n+1} \ar[r]\ar
[d]& C_{n} \ar[r]\ar[d]& \dots \ar[r]&  C_1 \ar[r]\ar[d]& C_0\ar[r]
\ar[d] & 0\\ \cdots \ar[r] & 0 \ar[r]& Z_n(X^{(n)})\ar[r] & C_{n} (X^ {(n)})
\ar[r]& \dots \ar[r]& C_1 (X^{(n)}) \ar[r]& C_0 (X^{(n)}) \ar[r]& 0
}$$
which induces an homology isomorphism for dimensions less
than or equal to $(n-1)$, and at dimension $n$ the induced map $Z_n
(\CC) \to Z_n (X^{(n)})$ is an isomorphism.

Note that $\dim \CC(H) \leqslant \un{n}(H) + 1$ by assumption.
If $\uZ{G/H}$ is a summand of $C_{n+1}$, then
$(n+1) \leqslant \dim \CC(H) \leqslant \un{n}(H) + 1$ implies $\un{n}(H) \geqslant n$, and hence the $H$-fixed set of $X^{(n)}$ is $(n-1)$-connected.
We can now apply Lemma
\ref{lem:addingcells} to the map
$\varphi\colon C_{n+1} \to H_n (X^ {(n)}; \bZ )$ defined by the composition
$$\phi \colon C_{n+1} \to Z_n (\CC) \cong Z_n (X^{(n)} ) \to  H_n (X^
{(n)} ; \bZ ).$$
Let us call the resulting complex $X^{(n+1)}$. Note
that there is a chain map $\CC \to \CC (X^{(n+1)})$ which induces an
isomorphism on homology for dimensions  $\leqslant n$, and
at dimension $n+1$ we have an isomorphism $Z_{n+1} (\CC) \to Z_{n+1
} (X^{(n+1) }) $. Since $\CC$ is finite dimensional, after finitely
many steps, we will obtain a finite dimensional $G$-CW complex $X$
and a chain map $f\colon \CC \rightarrow \CC(X)$ which induces
isomorphism on homology for all dimensions. Since both $\CC$ and
$\CC(X)$ are free $\ZG_G$-chain complexes, $f$ is a chain homotopy
equivalence as desired.
\end{proof}

\section{The proof of Theorem A}
\label{sect:construction-new}

Let $G = S_5$, the
symmetric group of order 120 permuting $\{1, 2,3,4, 5\}$, and let $S_4\leq G$
denote the permutations fixing $\{5\}$. We work relative to the family $\cF$
of rank 1 subgroups of $2$-power order. Let $\G_G = \OrG$. 
Our family $\cF$ consists of the
subgroups of $G$ which are conjugate to one the subgroups in the set
$$\{ 1, C_2^A, C_2^B, C_4\}$$ 
where
$\cy{2}^A = \la (12)(34)\ra$, $\cy{2}^B = \la (12)\ra$, and $\cy{4}
= \la (1234)\ra$.  In addition we will consider the
Sylow subgroups $\cy{3} = \la (123)\ra$ and $\cy{5} = \la
(12345)\ra$.
It is convenient to note that for $H = S_4 \leq G$, we have
$$N_H(\cy{4}) = D_8=N_H(\cy{2}^A)=N_G(\cy{4}),$$
while $N_H(\cy{2}^B) = E = \la (12), (34)\ra$, and
$N_H(\cy{3}) = S_{\{123\}}$. On the other hand,
$ N_G(\cy{2}^B) = \la (12), S_{\{345\}}\ra$ and
$N_G(\cy{3}) = S_{\{123\}}\times \la (45)\ra$.

Our strategy will be to construct finite projective complexes $\CC^{(p)}$ with isotropy in $\cF$ 
over $ \bZp \G_G$, for each prime $p$ dividing the order of $|G|$, which are $R$-homology $\un{n}$-spheres with respect to the
the same  homology dimension
function $\un{n}$. The
gluing theory of Section \ref{sect:chain complexes}, Theorem
\ref{thm:glueing}, will be used to construct a finite projective
$\bZ$-homology $\un{n}$-sphere over $\ZG_G$ from this data. Then the join
construction from Section \ref{sect:K-theory} will allow us to find
a finite free complex, to which the realization theorem of
Section \ref{sect:realization} will apply.

We introduce the notation $\RR_0$ for the $\RG_G$-module defined by $\RR_0(K) = 0$, for $K \neq 1$, and $\RR_0(1) =
R$ with trivial $G$-action. In other words, $\RR_0 = I_1(R)$ as defined in Section  \ref{sect:definitions}. 

\subsection{The case $p=2$.}
Let $H = S_4 \leq G$, $R = \bZ_{(2)}$ and consider the standard $H$-action on the $2$-sphere given by the rotational symmetries
of the octahedron. Let $X$ denote the $H$-CW complex associated to  
the first barycentric subdivision of the  octahedron. 
Then $X$ has isotropy in the family consisting of the cyclic subgroups of $H$  of orders $\leq 4$.

Let $\G_H = \OrH$ denote the orbit category for $H$ with respect to the family $\cF_H =\cF \cap H$.
Consider the chain complex $\uC{X}$ as a chain complex of $\RG_H$-modules, by restricting this functor to the full subcategory $\G_H$ of the orbit category $\Or(H)$. This gives an  exact sequence of the form
$$0 \to \RR_0 \to 2\uR{H/1} \to 3\uR{H/1} \to \uR{H/C_4}\oplus \uR{H/{C_2^B}}
 \oplus \uR{H/C_3} \to \HH_0 \to 0,$$ where all the modules
in the extension (excluding the ends) except $\uR{H/C_3}$ are free $\RG_H$-modules, and $\HH_0 = H_0(\uC{X})$. 

Since $R[H/C_3]$ is a projective $RH$-module (it is induced up from $R$,
which is projective over $R[C_3]$), we see that $\uR{H/C_3} = I_1R[H/C_3]$ as $\RG_H$-modules. Therefore $\uC{X}$ is a finite projective chain complex over $\RG_H$.

It is useful to represent an $\RG_H$-module $M$ by a labelled tree diagram:
$$M\quad =\qquad\vcenter {\xymatrix{M(C_4)\ar@{-}[d]&\\
M(C^A_2)\ar@{-}[d]& M(C^B_2)\ar@{-}[dl]\\
M(1)& }}$$ 
with one vertex for each isomorphism class of objects, and edges given by the partial ordering of the subgroups in $\cF$ up to conjugacy.
The labels are given by the $R[N_H(K)/K]$-modules $M(K)$, for $K \in \cF$.

For the homology module $H_0(X^{\textbf{?}};R)$ over
 the orbit category $\G_H$ we have the diagram
$$\HH_0 \quad =\qquad\vcenter{\xymatrix{R[D_8/C_4]\ar@{-}[d]&\\
R[D_8/C_4]\ar@{-}[d]&R[E/C_2^B]\ar@{-}[dl]\\
R& }}$$

The $(k+1)$-fold join of $\uC{X}$ with itself (see Section
\ref{sect:K-theory}) is a finite projective complex of the form
$$\CC : \quad 0\to \RR_0 \to C_n \to \dots \to C_k \to \dots \to C_0 \to \un{R} \to
0$$ over $\RG_H$ with $(n+1) = 3(k+1)$. 
In case  $X = S(V)$, where $V\cong R^3$ is a real orthogonal $H$-representation, then the join construction on spheres just produces the unit sphere $S(V\oplus \dots \oplus V)$ in the direct sum of $(k+1)$ copies of $V$. This sphere has real dimension $n=3(k+1) -1$. The purpose of the join construction is to produce a complex with dimension gaps between the non-zero homology groups, as required  by Theorem
\ref{thm:glueing} for glueing the different primes together.

We have  $H_0(\CC) = \un{R}$ and $H_n = \RR_0$.  If $(k+1)$ is even, then
$H_k (\CC(Q)) = R$, with trivial $N_H(Q)/Q$-action, and $\widetilde H_i (\CC(Q)) = 0$, for $i \neq k$,
for each non-trivial $Q \in \cF$. 
By Proposition \ref{prop:mislin_chain}, we obtain a chain
complex $\CC^{(2)}$ of projective $\RG _G$-modules, having homology isomorphic to $R$, with trivial $N_G(Q)/Q$-action. By construction, the homology dimension function $\un{n}$ for  $\CC^{(2)}$ is the same as for $\CC$. Notice that  $\un{n}$ is monotone, but not strictly monotone.

\subsection{The case $p=3$.}
Let $R = \bZ_{(3)}$ and $K = C_2^B$. The $3$-period of $G=S_5$ is
four \cite[Chap.~XII, Ex.~11]{cartan-eilenberg1},  so by Swan \cite{swan1}  there exists a periodic projective
resolution $\PP$ with
$$0 \to R \to P_n \to \dots \to P_1 \to P_0 \to R \to 0$$
over the group ring $RG$, for any  $n$ such that $4 \mid (n+1)$. We will assume that $12\mid (n+1)$, and  let $k$ be defined by the equation
$(n+1)=3(k+1)$. Similarly, since $N_G(K)/K \cong S_3$ also has
$3$-period $4$, we have a chain complex $\DD$ yielding a periodic
projective resolution $$0 \to R \to D_k \to \dots \to D_0 \to R \to
0$$ over $RS_3$. In the rest of this section we let  $W_K =
N_G(K)/K$ to simplify the notation.

We want a finite projective chain complex $\CC$
over $\RG_G$ which fits into an extension of chain complexes
$$ 0 \to E_1 \PP \to \CC \to I_{K } \DD \to 0$$
where the induced exact sequence on the $0$-th homology
$$ 0\to \RR_0\to H_0(\CC)\to I_{K}R\to 0$$
is the non-trivial extension ot tree diagrams (with vertices at $\{1, K\}$)
$$0\rightarrow \vcenter{\xymatrix{0\ar@{-}[d]\\ R}}
\rightarrow\vcenter{\xymatrix{R\ar@{-}[d]|{\vee}^{id}
\\ R}}\rightarrow\vcenter{\xymatrix{R\ar@{-}[d]\\ 0}}
\rightarrow 0.$$
For a projective $R[W_K ]$-module $D$, the module
$I_{K }D$  has a finite projective
resolution of the form
$$\xymatrix{0\ar[r]&E_1\Res_1E_{K}D\ar[r]&E_{K}D\ar[r]&I_
{K }D\ar[r]&0}.$$ 
By definition of the functors $E_x$ and $I_x$ (see Section \ref{sect:definitions}), the canonical map $$E_xM \to I_xM \to 0$$
 is always surjective 
for any $R[x]$-module $M$.
 We have $E_KR[W_K] = \uR{G/K}$ and
hence $E_{K}D$ is projective. Also $\Res _1 E_{K}D$ is projective,
because it is a summand of $R[G/K]$ which is projective as an
$\bZ_{(3)}G$-module. This shows that, once constructed, $\CC$ will
be homotopy equivalent to a finite projective chain complex by Lemma
\ref{lem:finiteness}.

Associated to every $RG$-chain map $f \colon \Res _1 E_{K} \DD \to
\PP$, there is a chain complex $\CC$ which fits into the push-out
diagram
$$\xymatrix{ 0\ar[r]&E_1\Res_1E_{K }\DD\ar[d]^{E_1 f}\ar[r]&E_{K }
\DD\ar[d]\ar[r]&I_{K }\DD\ar@{=}[d]\ar[r]&0\\
0\ar[r]&E_1\PP\ar[r]&\CC\ar[r]&I_{K }\DD\ar[r]&0 \ .}$$ We want to
choose $f$ so that $\CC$ satisfies the condition on homology. Note
that $$H_0 ( \Res _1 E_{K } \DD)=\Res _1 E_{K} R=R[G/ N_G (K)].$$
Since  the modules $\Res _1 E_{K} D_i =(E_K D_i) (1)$ are projective
for all $i$ and $\PP$ is exact, there exists a chain map $f\colon \Res _1
E_{K} \DD\to \PP$
$$\xymatrix{ \cdots
\ar[r]&(E_{K }D_1)(1)\ar[d]^{f_1}\ar[r]&(E_{K }D_0)(1)
\ar[d]^{f_0}\ar[r]&R[G/{N_G(K )}]\ar[d]^{\varepsilon}\ar[r]&0\\
\cdots\ar[r]&P_1\ar[r]&P_0\ar[r]&R\ar[r]&0 }$$ lifting the
augmentation map  $R[G/{N_G(K )}]\xrightarrow{\varepsilon}R$. To see
that the resulting push-out complex $\CC$  has the desired
properties, consider the homology at zero for the diagram of chain
complexes given above. Since $I_K$ is an exact functor, $H_1(I_{K
}\DD)=I_K H_1(\DD)=0$, and we get
$$\xymatrix{ 0\ar[r]&H_0(E_1\Res_1E_{K}\DD)\ar[d]^{H_0(E_1f)}\ar[r]
&H_0(E_{K }\DD) \ar[d]\ar[r]&H_0(I_{K }\DD)\ar@{=}[d]\ar[r]&0\\
0\ar[r]&H_0(E_1\PP)\ar[r]&H_0(\CC)\ar[r]&H_0(I_{K }\DD)\ar[r]&0 }
$$
where $H_0(E_1\PP) =E_1R$. Note that
$$H_0(E_1\Res_1E_{K }\DD) =
E_1\Res_1H_0(E_K\DD) = E_1\Res_1E_KR= E_1R[G/{N_G(K )}]. $$
This gives a diagram of the form
$$\xymatrix{&E_1\ker{\varepsilon}\ar[d]\ar@{=}[r]&E_1\ker{\varepsilon}
\ar[d]\\ 0\ar[r]&E_1R[G/{N_G(K
)}]\ar[d]^{E_1(\varepsilon)}\ar[r]&E_K R \ar[d]
\ar[r]&I_{K }R\ar@{=}[d]\ar[r]&0\\
0\ar[r]&E_1R\ar[r]&H_0(\CC)\ar[r]&I_{K }R\ar[r]&0 }
$$
where the middle vertical sequence of $\RG_G$-modules is given by
$$0\rightarrow \vcenter{\xymatrix{0\ar@{-}[d]\\{\ker
{\varepsilon}} }}
\rightarrow\vcenter{\xymatrix{R\ar@{-}[d]\\
R[G/{N_G(K )}]}} \rightarrow\vcenter{\xymatrix{R\ar@{-}
[d]|{\vee}^{id}\\
R}}\rightarrow 0$$ This shows that $H_0(\CC)$ has the desired
form.

Now, to obtain  the same homology dimension function as for the complex $\CC^{(2)}$, more homology must be added
to the complex $\CC$. We need to extend $\HH_k$ and $\HH_0$ via the
non-split extensions
$$ 0 \to \HH_0 \to \hat \HH_0 \to N \to 0 \quad \quad {\rm and}
\quad \quad 0 \to H_k \to \hat H_k \to N\to 0$$
where
$$N =\quad \vcenter{\xymatrix{R\ar@{-}[d]^{\id}&&\\
R\ar@{-}[d]&0\ar@{-}[dl]\\0&& }}$$ The module $N$ has a finite
projective resolution of the form
$$ 0 \to E_1 R[G/D_8 ] \to E_{C_4} R \to N \to 0.$$
Note that $\res_1 E_{C_4} R = R[G/N_G (C_4)]=R[G/D_8]$, and for
$Q=C_2 ^A$ we have
$$\res_{Q} E_{C_4} R=R \otimes _{R[D_8/C_4]} R[(G/C_4)^{Q}]
=R \otimes _{R[D_8/C_4]} R[N_G(Q)/N_{C_4}(Q)]=R$$ where the equality
in the middle comes from Lemma \ref{lem:fixedpointformula}. Since
$R$ is projective as an $R[D_8 /C_4]$-module, $E_{C_4}R$ is
projective. It is easy to see that $E_1 R[G/D_8 ]$ is also
projective. So, by Proposition \ref{prop:adding-subtracting}, we can
replace $\CC$ with a finite projective chain complex $\CC^{(3)}$ over $\RG_G$  which has the
desired homology.

\subsection{The case $p=5$.}
For $p=5$, the situation is easier than the case $p=3$.
Let $R = \bZ_{(5)}$. The $5$-period of $S_5$ equals $8$, so by
Swan \cite{swan1} there exists a periodic projective resolution $\PP$ over the group ring $RG$,  giving an exact sequence
$$0 \to R \to P_n \to \dots \to P_1 \to P_0 \to R \to 0$$
 for any positive integer $n$ such that $n+1=3
(k+1)$ for some integer $k$, with  $8 \mid (k+1)$. We start with the $\RG_G$-complex $\CC=E_1\PP$ obtained by the extension functor from $\PP$.
 Since $\CC$ has no homology at the non-trivial
$2$-subgroups in $\cF$, we need to change the homology at $\HH_0$ and
at $\HH_k$ to match the homology  we have for $p=2$ and $p=3$. Note that
we need to extend $H_k$ and $\HH_0$ via the non-split extensions
$$ 0 \to \HH_0 \to \hat \HH_0 \to M \to 0
\quad \quad {\rm and} \quad \quad 0 \to \HH_k \to \hat \HH_k \to M \to 0$$
where
$$M =\quad \vcenter{\xymatrix{R\ar@{-}[d]^{\id}&&\\
R\ar@{-}[d]&R\ar@{-}[dl]\\
0&& }}$$ Let $K=C_2 ^B$. The module $M$ is the direct sum of $L$
(which has the same form as $N$) and $I_{K}R$. We claim that each of
these modules have finite projective resolutions over $\RG_G$. For $I_K R$ we
have a resolution of the form
$$ 0 \to E_1 R[G/(K \times S_3)] \to E_K R \to
I _K R \to 0.$$ Note that $$\res _1 E_K R =R[G/N_G (K)]=R[G/(K
\times S_3)]$$ where $S_3$ denotes the subgroup of $S_5$ generated
by symmetries of $\{ 3,4,5 \}$. Since $R$ is projective as an
$R[N_G(K)/K]$-module, $E_K R$ is projective. It is clear that $E_1
R[G/(K\times S_3)]$ is also projective. So, the above resolution is
a projective resolution of $I_K R$. We can also write a finite
projective resolution for $L$ (similar to the resolution given for
$N$). So, by Proposition \ref{prop:adding-subtracting}, we can
replace $\CC$ with a finite projective chain complex $\CC^{(5)}$ which has the
desired homology.

\begin{proof}[The proof of Theorem A]
We will first construct a projective chain complex $\CC$
over $\ZG_G$ with isotropy in $\cF$,  by applying Theorem \ref{thm:glueing} to glue the $p$-local complexes $\CC^{(p)}$, for $p=2,3,5$. Note that
in the constructions of $\CC^{(p)}$ above,  we may choose any
 integer $k$  such that $k$ odd, $n+1 = 3(k+1)$, $12 \mid (n+1)$ and $8 \mid (k+1)$. To satisfy the first condition in Theorem \ref{thm:glueing}, that 
the distance  between non-zero homology groups of the $\CC^{(p)}$ is larger than $l(\G_G) = 2$, we will also need
  $k\geq 3$ and $n-k \geq 3$.  
  \begin{remark}
The minimum value for $k$ satisfying the requirements used above is $k=7$, which gives $n = \dim \CC= 23$.
\end{remark}
  The $\ZG_G$-module $\HH$ needed to satisfy the second condition in Theorem \ref{thm:glueing} is given by $\HH_i(K) = \bZ$, for $i = 0, \un{n}(K)$   with $K \in \cF$, and zero otherwise. 
 By Lemma \ref{lem:finiteness}, $\CC$ is chain homotopy equivalent to a finite projective complex.
  To obtain a finite free complex, we can
apply Theorem \ref{thm:K-theory}, which (possibly after some joins) produces a finite free $\ZG_G$-chain complex $\CC$
with the $\bZ$-homology of an $\un{n}$-sphere, and $\un{n}(K) \geqslant 3$ for all $K \in \cF$.

Note that our homology dimension function $\un{n}$ is not strictly monotone, since
$\un{n}(C_2^A) = \un{n}(C_4)$, but by Corollary \ref{cor:atmost2} we can modify
our complex to satisfy the conditions for geometric realization in Theorem \ref{thm:realization}, since $l(C_2^A, C_4) = 1$.
Applying Theorem \ref{thm:realization}, we conclude that $G=S_5$ acts on a finite
$G$-CW complex $X$ with isotropy  in $\cF$.
\end{proof}
\begin{remark} For this particular example we needed to apply Theorem
\ref{thm:K-theory} with one join tensor operation, because $\widetilde K_0(\ZG_G) = \bZ/2$. This follows from Theorem
\ref{thm:splittingK-theory}, Lemma \ref{lem:jointensorformula}  and  well-known calculations showing that
$\widetilde K_0(\bZ[N_G(Q)/Q]) = 0$, for $1\neq Q \in \cF$, but $\widetilde K_0(\bZ[G]) = \bZ/2$. Note that, by Dress induction, it is enough to consider the projective class groups of $p$-hyperelementary subgroups of $G$ (see \cite[\S 50]{curtis-reiner2}, \cite{reiner2}). 
We therefore obtain a finite $G$-CW complex $X\simeq S^{47}$ with isotropy in $\cF$.
\end{remark}

\providecommand{\bysame}{\leavevmode\hbox to3em{\hrulefill}\thinspace}
\providecommand{\MR}{\relax\ifhmode\unskip\space\fi MR }
\providecommand{\MRhref}[2]{%
  \href{http://www.ams.org/mathscinet-getitem?mr=#1}{#2}
}
\providecommand{\href}[2]{#2}

\end{document}